\newcommand{\Dcirc}{\overset%
{\raisebox{-.3ex}[0ex][-.3ex]{\mbox{$\scriptscriptstyle\circ$}}\mskip-5mu}D}
\def\Hess{{\text{Hess }}}
\def\al{{\alpha}}
\def\be{{\beta}}
\def\ep{{\varepsilon}}
\def\eps{{\epsilon}}
\def\de{{\delta}}
\def\ga{{\gamma}}
\def\Ga{{\Gamma}}
\def\io{\iota}
\def\la{{\lambda}}
\def\La{{\Lambda}}
\def\om{{\omega}}
\def\Om{{\Omega}}
\def\ve{\varepsilon}
\def\vp{\varphi}
\def\si{\sigma}
\def\Si{\Sigma}
\def\rh{\rho}
\def\ra{\rightarrow}
\def\lg{\langle}
\def\rg{\rangle}
\def\A{{\mathcal A}}
\def\cc{{\mathcal C}}
\def\E{{\mathcal E}}
\def\fc{{\mathcal F}}
\def\H{{\mathcal H}}
\def\Js{{\mathcal J}}
\def\Jr{{\mathcal J}_{reg}}
\def\L{{\mathcal L}}
\def\M{{\mathcal M}}
\def\Mod{\M(x^-,x^+,H;\J)}
\def\Modti{\widetilde{\M}(x^-,x^+,H;\J)}
\def\Od{{\mathcal O}}
\def\P{{\mathcal P}}
\def\Sp{{\mathcal S}}
\def\C{{\mathbb C}}
\def\F{{\mathbb F}}
\def\N{{\mathbb N}}
\def\Pr{{\mathbb P}}
\def\R{{\mathbb R}}
\def\Z{{\mathbb Z}}
\def\aa{{\mathbf a}}
\def\J{{\mathbf J}}
\def\Tm{T^*M}
\def\demi{\frac{1}{2}}
\def\gl{G^-_n}
\def\gg{{G^+_n}}
\def\gs{G_{n,s}}
\def\hl{H^-}
\def\hg{H^+}
\def\hs{H_s}
\def\Sg{{S_\ga}}
\def\fg{{f_\ga}}
\def\lga{{l_\ga}}
\def\Fd{{F_\delta}}
\def\Fnd{{F_{n,\delta}}}
\def\gam{{\ga_{\min}}}
\def\gaM{{\ga_{\max}}}
\def\oga{{\overline{\ga}}}
\def\uga{{\underline{\ga}}}
\def\Modgg{\M(S_\oga, S_\uga; H, \J)}
\def\Modpq{\M(p, q; H, \{f_\ga \}, \J)}
\def\Modmpq{\M_m (p, q; H, \{f_\ga \}, \J)}
\def\evo{{\overline{\rm ev}}}
\def\evu{{\underline{\rm ev}}}
\def\Fr{{\mathcal F}_{reg}}
\DeclareMathOperator{\ind}{ind}
\def\ms{\medskip}
\def\ni{\noindent}
\def\llb{\llbracket}
\def\rrb{\rrbracket}
\newcommand{\re}{{\rm I\!R}}
\newcommand{\qe}{{\rm I\hspace{-6pt}Q}}
\newcommand{\id}{{\rm Id}}
\DeclareMathOperator{\im}{im}
\DeclareMathOperator{\sign}{sign}
\DeclareMathOperator{\crit}{crit}
\DeclareMathOperator{\rank}{rank}
\DeclareMathOperator{\diag}{diag}
\DeclareMathOperator{\Mat}{Mat}
\newtheorem{thm}{Theorem}
\newtheorem{cor}{Corollary}
\newtheorem{lem}{Lemma}[section]
\newtheorem{prop}{Proposition}[section]
\newtheorem{definition}{Definition}[section]
\newtheorem*{thm1}{Theorem A}
\newtheorem*{thm2}{Theorem B}
\newtheorem*{thm3}{Theorem C}
\newtheorem*{conj}{Weinstein conjecture}
\newtheorem*{thm*}{Theorem}
\newtheorem*{thmG}{Theorem (Gromov)}
\newtheorem*{definition*}{Definition}
\theoremstyle{remark}
\newtheorem{rmk}{Remark}
\newtheorem*{rmk*}{Remark}
\theoremstyle{remark}
\theoremstyle{remark}
\renewcommand{\headrulewidth}{0.5pt}
\renewcommand{\footrulewidth}{0pt}
\renewcommand{\headrulewidth}{0pt} 
\newcommand{\clearemptydoublepage}{%
\newpage{\pagestyle{empty}\cleardoublepage}}
\begin{document}
\pagenumbering{roman}


\begin{titlepage}
\begin{center}
\begin{Large}\textbf{Universit\'e Libre de Bruxelles} \end{Large} \\
\vspace{0.5cm}
\begin{Large}\textbf{Universit\'e de Neuch\^atel}\end{Large}\\
\vspace{5cm}
\begin{huge}\textbf{The Weinstein conjecture with multiplicities on spherizations}\end{huge}\\
\vspace{2cm}
Th\`ese pr\'esent\'ee en vue de l'obtention\\
du grade de Docteur en Sciences\\
\vspace{0.25cm}
par\\
\vspace{0.25cm}
\begin{large} Muriel HEISTERCAMP \end{large}

\vspace{3cm}
\end{center}
\begin{flushleft}
\textbf{Directeurs de th\`ese}: F. Schlenk, Universit\'e de Neuch\^atel\\
	\hspace{4.31cm}F. Bourgeois, Universit\'e Libre de Bruxelles
			
\end{flushleft}
\begin{center}
\begin{large}Ann\'ee acad\'emique 2010-2011\end{large}
\end{center}
\fancyhf{}
\newpage
$\mbox{}$
\newpage

\vspace{10cm}
$\mbox{}$\\
$\mbox{}$\\
$\mbox{}$\\
$\mbox{}$\\
$\mbox{}$\\
$\mbox{}$\\
$\mbox{}$\\
$\mbox{}$\\
$\mbox{}$\\
$\mbox{}$\\
$\mbox{}$\\
$\mbox{}$\\
$\mbox{}$\\
$\mbox{}$\\
$\mbox{}$\\
$\mbox{}$\\
$\mbox{}$\\
$\mbox{}$\\
\begin{center}
\textit{\`A Nelle.}
\end{center}

\end{titlepage}
\clearemptydoublepage

\fancyhf{} 
\fancyhead[LE,RO]{\bfseries\thepage} \fancyhead[LO]{\bfseries
Contents} \fancyhead[RE]{\bfseries Contents}
\renewcommand{\headrulewidth}{0.5pt}
\renewcommand{\footrulewidth}{0pt}
\addtolength{\headheight}{0.5pt} 

\tableofcontents

\clearemptydoublepage

\pagenumbering{roman}%
\titleformat{\chapter}[display]
{\normalfont\filcenter\huge\bf}{\chaptertitlename\ \thechapter}
{1ex} {\titlerule
  \vspace{1ex}%
   \filcenter\LARGE\bf}
  [\vspace{2ex}%
  ]

\chapter*{Abstract}
\addcontentsline{toc}{chapter}{\protect\numberline{}{Abstract}}

\fancyhf{} 
\fancyhead[LE,RO]{\bfseries\thepage} \fancyhead[LO]{\bfseries
Introduction}
\fancyhead[RE]{\bfseries Abstract}
\renewcommand{\headrulewidth}{0.5pt}
\renewcommand{\footrulewidth}{0pt}
\addtolength{\headheight}{0.5pt} 
\fancypagestyle{plain}{ 
\fancyhead{} 
\renewcommand{\headrulewidth}{0pt}}

Let $M$ be a smooth closed manifold and $\Tm$ its cotangent bundle endowed with the usual symplectic structure $\om=d\la$, where $\la$ is the Liouville form. A hypersurface $\Si\subset T^*M$ is said to be {\it fiberwise starshaped} if for each point $q\in M$ the intersection $\Si_q := \Si \cap T^*_q M$ of $\Si$ with the fiber at $q$ is starshaped with respect to the origin $0_q \in T^*_q M$. \\

\ni
In this thesis we give lower bounds of the growth rate of the number of closed Reeb orbits on a {\it fiberwise starshaped hypersurface} in terms of the topology of the free loop space of $M$. We distinguish the two cases that the fundamental group of the base space $M$ has an exponential growth of conjugacy classes or not. If the base space $M$ is simply connected we generalize the theorem of Ballmann and Ziller on the growth of closed geodesics to Reeb flows.

\clearemptydoublepage

\chapter*{Acknowlegments}
\addcontentsline{toc}{chapter}{\protect\numberline{}{Acknowlegments}}

\fancyhf{} 
\fancyhead[LE,RO]{\bfseries\thepage} \fancyhead[LO]{\bfseries
Introduction}
\fancyhead[RE]{\bfseries Acknowlegments}
\renewcommand{\headrulewidth}{0.5pt}
\renewcommand{\footrulewidth}{0pt}
\addtolength{\headheight}{0.5pt} 
\fancypagestyle{plain}{ 
\fancyhead{} 
\renewcommand{\headrulewidth}{0pt}}

I would like to thank my advisor Felix Schlenk for his support troughout this work and the opportunity to complete this project. Furthermore I thank Fr\'ed\'eric Bourgeois, Urs Frauenfelder, Agn\`es Gadbled and Alexandru Oancea for valuable discussions. I also thank Will Merry and Gabriel Paternain for having pointed out to me that the methods used in the proof of Theorems A and B can also be used to prove Theorem C. 

\clearemptydoublepage

\chapter*{Introduction}
\addcontentsline{toc}{chapter}{\protect\numberline{}{Introduction}}

\fancyhf{} 
\fancyhead[LE,RO]{\bfseries\thepage} \fancyhead[LO]{\bfseries
Introduction}
\fancyhead[RE]{\bfseries Introduction}
\renewcommand{\headrulewidth}{0.5pt}
\renewcommand{\footrulewidth}{0pt}
\addtolength{\headheight}{0.5pt} 
\fancypagestyle{plain}{ 
\fancyhead{} 
\renewcommand{\headrulewidth}{0pt}}

Let $M$ be a smooth closed manifold and denote by $T^*M$ the cotangent bundle over $M$ endowed with its usual symplectic structure $\om=d\la$ where $\la=p\,dq=\sum_{i=1}^m p_i \, dq_i$ is the Liouville form. A hypersurface $\Si\subset T^*M$ is said to be {\it fiberwise starshaped} if for each point $q\in M$ the intersection $\Si_q := \Si \cap T^*_q M$ of $\Si$ with the fiber at $q$ is starshaped with respect to the origin $0_q \in T^*_q M$. There is a flow naturally associated to $\Si$, generated by the unique vector field $R$ along $\Si$ defined by 
$$ d\la (R,\cdot)=0, \quad \la(R) = 1.$$ 
The vector field $R$ is called the {\it Reeb vector field} on $\Si$ and its flow is called the {\it Reeb flow}. The main result of this thesis is to prove that the topological structure of $M$ forces, for all fiberwise starshaped hypersurfaces $\Si$, the existence of many closed orbits of the Reeb flow on $\Si$. More precisely, we shall give a lower bound of the growth rate of the number of closed Reeb-orbits in terms of their periods.\\

\ni 
The existence of one closed orbit was conjectured by Weinstein in 1978 in a more general setting.
\begin{conj}
A hypersurface $\Si$ of contact type and satisfying $H^1 (\Si) = 0$ carries a closed characteristic. 
\end{conj}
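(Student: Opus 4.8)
The statement as phrased is the Weinstein conjecture in its original generality; it is a theorem in dimension three (Taubes, via Seiberg--Witten Floer theory) and in many structured situations, but is still open for general high-dimensional contact-type hypersurfaces, so what I sketch is the pseudo-holomorphic-curve program of Gromov and Hofer, which is complete in every case this thesis needs. The plan is to exhibit a closed characteristic as a closed Reeb orbit detected by Floer theory. First I would unpack the hypotheses: contact type supplies, near $\Si$, a Liouville vector field $X$ with $\L_X\om=\om$ transverse to $\Si$, so that $\al:=\iota_X\om|_\Si$ is a contact form on $\Si$ whose Reeb vector field spans the characteristic line field; thus \emph{closed characteristic on $\Si$} and \emph{closed Reeb orbit of $\al$} name the same object. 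The condition $H^1(\Si)=0$ rules out flows all of whose periodic orbits are non-contractible and, together with contact type, furnishes a well-behaved filling $(W,\om)$ with $\partial W=\Si$ --- the compact region bounded by $\Si$ if $\Si$ separates, an abstract Liouville domain otherwise --- which, after attaching the positive half-symplectization $([0,\infty)\times\Si,\,d(e^t\al))$, becomes a complete manifold $\widehat W$ with a cylindrical end.

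The core is then a Floer-theoretic argument, and for the present purpose I would run the symplectic-homology version. Choose an $\om$-compatible almost complex structure on $\widehat W$ that is cylindrical on the end, together with a Hamiltonian linear at infinity; the Floer complex is generated by the rest points in $W$, which reproduce the ordinary homology $H_*(W,\partial W)$, and by the closed Reeb orbits of $\al$. Hence, were $\al$ to carry no closed orbit, $SH_*(W)$ would reduce to $H_*(W,\partial W)$; equivalently --- this is the mechanism behind Hofer's original arguments --- a one-parameter family of holomorphic curves in $\widehat W$, set up so that its Gromov--Hofer compactification must acquire a boundary stratum, could not in fact do so in the absence of closed Reeb orbits (the boundary strata being buildings whose ends, by Hofer's lemma, are closed Reeb orbits), a contradiction. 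In either formulation a closed characteristic appears as soon as $SH_*(W)$ differs from $H_*(W,\partial W)$. For a fiberwise starshaped $\Si\subset\Tm$ the filling is a domain $W\subset\Tm$ with $SH_*(W)\cong H_*(\La M)$, the homology of the free loop space of $M$, which is infinite-dimensional while $H_*(W,\partial W)$ is finite-dimensional; the two cannot agree, so closed Reeb orbits exist --- in fact infinitely many, which is the quantitative refinement the remainder of this thesis makes precise.

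The hard part is the holomorphic-curve analysis in arbitrary dimension: achieving transversality in the presence of multiply covered curves, and controlling the compactness of the relevant moduli spaces (excluding or bookkeeping degenerations into holomorphic buildings), are exactly the obstructions that keep the general Weinstein conjecture open. They are circumvented in dimension three by Taubes's translation of Reeb dynamics into Seiberg--Witten theory, for hypersurfaces in $\R^{2n}$ by Viterbo's variational and symplectic-capacity arguments, and --- in precisely the generality this thesis needs --- by the robust computation of $SH_*(\Tm)$ in terms of loop-space homology (Viterbo, Salamon--Weber, Abbondandolo--Schwarz), which is the route I would take to make the present statement, and then its strengthening with multiplicities, rigorous.
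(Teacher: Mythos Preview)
The paper does not give a proof of this statement: it is recorded as the \emph{Weinstein conjecture} and left as such. Immediately after stating it, the paper only remarks that Weinstein and Rabinowitz established it for starshaped hypersurfaces in $\re^{2n}$, and that in the setting of the thesis (fiberwise starshaped $\Si\subset T^*M$) it was proved by Hofer--Viterbo \cite{HV88}, even without the hypothesis $H^1(\Si)=0$. There is therefore no proof in the paper to compare your proposal against.

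Your proposal is appropriate in that you recognise the general statement is open and then specialise to the cotangent-bundle case via symplectic homology and the loop-space isomorphism. This is very much in the spirit of the thesis, which uses Hamiltonian Floer homology together with the Abbondandolo--Schwarz isomorphism to $H_*(\La M)$; note, however, that this is not the route of the cited Hofer--Viterbo paper (1988, predating symplectic homology), so if you want to match the literature the thesis invokes you would point to their original variational argument rather than the $SH_*$ computation. One small inaccuracy: $H^1(\Si)=0$ is not there to ``rule out flows all of whose periodic orbits are non-contractible''; Weinstein imposed it so that the contact-type hypothesis, which a priori only gives a \emph{local} primitive $\al$ of $\om|_\Si$, upgrades to a global contact form on $\Si$.
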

\ni
Independently, Weinstein \cite{Wein78} and Rabinowitz \cite{Rab78} established the existence of a closed orbit on star-like hypersurfaces in $\re^{2n}$. In our setting the Weinstein conjecture without the assumption $H^1 (\Si) = 0$ was proved in 1988 by Hofer and Viterbo, \cite{HV88}. The existence of many closed orbits has already been well studied in the special case of the geodesic flow, for example by Gromov \cite{Gr78}, Paternain \cite{Pa97,Pa99} and Paternain--Petean \cite{PP06}. In this thesis we will generalize their results.\\

\ni
The problem at hand can be considered in two equivalent ways. First, let $H\colon T^*M \ra \re$ be a smooth Hamiltonian function such that $\Si$ is a regular level of $H$. Then the Hamiltonian flow $\vp_H$ of $H$ is orbit-equivalent to the Reeb flow. Therefore, the growth of closed orbits of $\vp_H$ equals the growth of closed orbits of the Reeb flow. Secondly, let $S M$ be the cosphere bundle over $M$ endowed with its canonical contact structure $\xi = \ker \la$. The contact manifold $(S M, \xi) $ is called the spherization of $M$. Our main results are equivalent to saying that for {\it any} contact form $\al$ for $\xi$, i.e. $\xi=\ker \al$, the growth rate of the number of closed orbits of the Reeb flow of $\al$ in terms of their period depends only on $M$ and is bounded from below by homological data of $M$.

\subsubsection{The free loop space}
The complexity of the Reeb flow on $\Si\subset T^*M$ comes from the complexity of the free loop space of the base manifold $M$. Let $(M,g)$ be a $C^\infty$-smooth, closed, connected Riemannian manifold. Let $\La M$ be the free loop space of $M$, i.e. the set of loops $q:S^1 \ra M$ of Sobolev class $W^{1,2}$. This space has a canonical Hilbert manifold structure, see \cite{Kli95}. The energy functional $\E= \E_g : \La M \ra \re$ is defined by 
$$
\E (q) := \demi \int_0^1 |\dot{q}(t)|^2 \,dt
$$
where $|q(t)|^2 = g_{q(t)}(\dot{q}(t),\dot{q}(t))$. For $a>0$ we consider the sublevel sets
$$
\La^a := \{q\in \La M \mid \E(q) \leq a \}.
$$
Now let $\Pr$ be the set of prime numbers and write $\Pr_0 := \Pr \cup \{0\}$. For each prime number $p$ denote by $\F_p$ the field $\Z / p \Z$, and write $\F_0 := \qe$. Throughout, $H_*$ will denote singular homology and
$$
\io_k: H_k (\La^a ;\F_p) \ra H_k(\La M;\F_p)
$$
the homomorphism induced by the inclusion $\La^a M \hookrightarrow \La M$. Following \cite{FS06} we make the  

\begin{definition*}
The Riemannian manifold $(M,g)$ is energy hyperbolic if 
$$
C(M,g) := \sup_{p\in \Pr} \liminf_{n\ra\infty} \frac{1}{n} \log \sum_{k\geq 0} \dim \io_k H_k \bigl(\La^{\demi n^2} ;\F_p \bigr) >0.
$$
\end{definition*}

\ni
Since $M$ is closed, the property {\it energy hyperbolic} does not depend on $g$ while, of course, $C(M,g)$ does depend on $g$. We say that the closed manifold $M$ is {\it energy hyperbolic} if $(M,g)$ is energy hyperbolic for some and hence for any Riemannian metric $g$ on $M$.\\

\ni
We also consider the {\it slow growth} of the homology given by
$$
c(M,g) := \sup_{p\in \P} \liminf_{n\ra\infty} \frac{1}{\log n} \log \sum_{k\geq 0} \dim \io_k H_k \bigl(\La^{\demi n^2} ;\F_p \bigr).
$$

\ni
Denote by $\La_\al M$ the component of a loop $\al$ in $\La M$ and by $\La_0 M$ the component of contractible loops. The components of the loop space $\La M$ are in bijection with the set $\cc (M)$ of conjugacy classes in the fundamental group $\pi_1(M)$, i.e. 
$$
\La M \,=\, \coprod_{c\, \in \, \cc (M)} \La_c M .
$$

\ni
For each element $c\in \cc(M)$ denote by $e(c)$ the infimum of the energy of a closed curve representing $c$. Let $\cc^a (M) := \left\{ c \in \cc(M) \mid e(c) \le a \right\}$, and define
\begin{eqnarray*}
E(M) &:=& 
\liminf_{a \to \infty} \frac 1 a    \log \# \cc^a(M) , \\
e(M) &:=& 
\liminf_{a \to \infty} \frac 1{\log a} \log \# \cc^a(M) .
\end{eqnarray*}
Note that $E(M)$ and $e(M)$ do not depend on the metric~$g$ and that $C(M,g) \geq E(M)$.

\subsubsection{Fiberwise starshaped hypersurfaces in $T^*M$}
Let $\Si$ be a smooth connected hypersurface in $T^*M$. We say
that $\Si$ is {\it fiberwise starshaped}\, if for each point $q
\in M$ the set $\Si_q := \Si \cap T_q^*M$ is the smooth
boundary of a domain in $T_q^*M$ which is strictly starshaped with
respect to the origin $0_q \in T^*M$. This means that the radial
vector field $\sum_i p_i\;\partial p_i$ is transverse to each
$\Si_q$. We assume throughout that $\dim M \geq 2$. 
Then $T^*M \setminus \Sigma$ has two components, 
the bounded inner part $\Dcirc (\Sigma)$ containing the zero section 
and the unbounded outer part $D^c(\Sigma) = T^*M \setminus D(\Sigma)$,
where $D(\Sigma)$ denotes the closure of $\Dcirc (\Sigma)$.

\subsubsection{Formultation of the results}
Let $\Si \subset T^*M$ be as above and denote by $\vp_R$ the Reeb flow on $\Si$. For $\tau>0$ let $\Od_R (\tau)$ be the set of closed orbits of $\vp_R$ with period $\leq \tau$. We measure the growth of the number of elements in $\Od_R (\tau)$ by

\begin{eqnarray*}
N_R &:=& \liminf_{\tau \to \infty} \frac 1 \tau \log \left( \# \Od_R (\tau) \right) ,\\
n_R &:=& \liminf_{\tau \to \infty} \frac 1{\log \tau} \log \left( \# \Od_R (\tau) \right) . 
\end{eqnarray*}

\ni
The number $N_R$ is the {\it exponential growth rate} of closed orbits, while $n_R$ is the {\it polynomial growth rate}. The following three theorems are the main result of this thesis.

\begin{thm1}
Let $M$ be a closed, connected, orientable, smooth manifold and let $\Si \subset T^*M$ be a fiberwise starshaped hypersurface. Let $\vp_R$ be the Reeb flow on $\Si$, and let $N_R$, $n_R$, $E(M)$ and $e(M)$ be defined as above. Then

\begin{itemize}
\item[(i)]
$N_R \geq E(M)$;
\item[(ii)]
$n_R \geq e(M) - 1$.
\end{itemize}

\end{thm1}

\ni
We will say that $\Si$ is generic if each closed Reeb orbit is transversally nondegenerate, i.e.
$$\det(1-d\vp_R^\tau (\ga(0))|_\xi) \neq 0.$$

\begin{thm2}
Let $M$ be a closed, connected, orientable smooth manifold and let $\Si \subset \Tm$ be a generic fiberwise starshaped hypersurface. Let $\vp_R$ be the Reeb flow on $\Si$, and let $N_R$, $n_R$, $C(M,g)$ and $c(M,g)$ be defined as in section \ref{section:free loop space}. Then

\begin{itemize}
\item[(i)]
$N_R \geq C(M,g)$.

\item[(ii)]
$n_R \geq c(M,g)-1$.
\end{itemize}
\end{thm2}

\ni
The hypothesis of genericity of $\Si$ will be used to achieve a Morse-Bott situation for the action functional that we will introduce.\\

\ni
The idea of the proofs is as follows. Let $\Si\subset \Tm$ be a fiberwise starshaped hypersurface. If $\Si$ is the level set of a Hamiltonian function $F: \Tm \ra \re$, then the Reeb flow of $\la$ is a reparametrization of the Hamiltonian flow. We can define such a Hamiltonian by the two conditions
\begin{equation}
F|_\Si \equiv 1, \quad F(q,sp)=s^2F(q,p), \quad s\geq 0 \mbox{ and }(q,p)\in \Tm.
\end{equation}
This Hamiltonian is not smooth near the zero section, we thus define a cut-off function $f$ to obtain a smooth function $f\circ F$. We then use the idea of sandwiching develloped in Frauenfelder--Schlenk \cite{FS05} and Macarini--Schlenk \cite{MS10}. By sandwiching the set $\Si$ between the level sets of a geodesic Hamiltonian, and by using the Hamiltonian Floer homology and its isomorphism to the homology of the free loop space of $M$, we shall show that the number of 1-periodic orbits of $F$ of action $\leq a$ is bounded below by the rank of the homomorphism
$$
\io_k: H_k (\La^{a^2} ;\F_p) \ra H_k(\La M;\F_p)
$$
induced by the inclusion $\La^{a^2} M \hookrightarrow \La M$.\\

\ni
The Hamiltonian Floer homology for $F$ is not defined, since all the periodic orbits are degenerate. We thus need to consider small pertubations of $F$. In the proof of Theorem~A, we will add to $F$ small potentials of the form $V_l (t,q)$. Assuming $\| V_{l}(t,q) \|_{C^\infty} \, \ra \, 0$ for $l\ra \infty$, we will show the existence of a periodic orbit of $F$ in every non-trivial conjugacy class as the limit of periodic orbits of $F +V_l$. This strategy cannot be applied for Theorem~B. We thus use the assumption of genericity to achieve a Morse--Bott situation following Frauenfelder \cite[Appendix A]{Frauen04} and Bourgeois--Oancea \cite{BO09} and use the {\it Correspondence Theorem} between Morse homology and Floer homology due to Bourgeois--Oancea, \cite{BO09}, to obtain our result.

\begin{rmk*}
A proof of rough versions of Theorems A and B is outlined in Section 4a of Seidel's survey \cite{Sei08}.
Meanwhile, a different (and difficult) proof of these theorems, with coefficients in $\Z_2$ only, was given by Macarini--Merry--Paternain in \cite{MMP11}, where a version of Rabinowitz--Floer homology is contructed to give lower bounds for the growth rate of leaf-wise intersections. 
\end{rmk*}


\subsubsection{Spherization of a cotangent bundle}
The hyperplane field $\xi|_\Si = \ker \la|_\Si \subset T\Si$ is a contact structure on $\Si$. If $\Si'$ is another fiberwise starshaped hypersurface, then $(\Si, \xi_\Si)$ and $(\Si', \xi_{\Si'})$ are contactomorphic. In fact the differential of the diffeomorphism obtained by the radial projection maps $\xi_\Si$ to $\xi_{\Si'}$. The identification of these contact manifolds is called the {\it spherization} $(S M, \xi)$ of the cotangent bundle $(T^*M,\om)$.  Theorem~A and Theorem~B gives lower bounds of the growth rate of closed orbits for {\it any Reeb flow on the spherization $S M$ of $\Tm$}.\\

\ni 
Special examples of fiberwise starshaped hypersurfaces are unit cosphere bundles $S_1 M(g)$ associated to a Riemmanian metric $g$,
$$
S_1 M(g) := \{(q,p) \in \Tm \mid |p| = 1\}. 
$$
The Reeb flow is then the geodesic flow. In this case, Theorem~A is a direct consequence of the existence of one closed geodesic in every conjugacy classes. If $M$ is simply connected, Theorem~B for geodesic flows follows from the following result by Gromov~\cite{Gr78} 
\begin{thmG}
Let $M$ be a compact and simply connected manifold. Let $g$ be a bumpy Riemannian metric on $M$. Then there exist constants $\al = \al(g) >0$ and $\be =\be(g)>0$ such that there are at least 
$$
 \frac{\al \displaystyle\sum_{i=1}^{\be t} b_i(M)}{t}
$$
periodic geodesics of length less than $t$, for all $t$ sufficiently large. 
\end{thmG}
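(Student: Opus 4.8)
The plan is to run Morse--Bott theory for the energy functional on $\La M$ against Gromov's estimate controlling how fast the homology of $\La M$ is built up by its sublevel sets, and then to convert a count of critical circles into a count of geometrically distinct geodesics by the usual iteration bookkeeping.

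Since $g$ is bumpy, $\E=\E_g:\La M\to\re$ satisfies the Palais--Smale condition and is Morse--Bott: apart from the component $M$ of constant loops, its critical set consists of the circles $S^1\cdot\ga$ swept out by the closed geodesics $\ga$ and all their iterates, each a nondegenerate critical submanifold diffeomorphic to $S^1$. A nondegenerate critical circle contributes at most $2$ to the total Betti number over any field $\F$, so, writing $\mathcal N(a)$ for the number of such circles with $\E\le a$, the Morse--Bott inequalities give
$$
2\,\mathcal N(a)+\dim H_*(M;\F)\;\ge\;\dim H_*(\La^a M;\F)\;\ge\;\rank\bigl(\io_*\colon H_*(\La^a M;\F)\to H_*(\La M;\F)\bigr).
$$
A closed geodesic of length $\ell$, parametrised with constant speed on $S^1$, has energy $\ell^2/2$, so $\mathcal N(t^2/2)$ counts the pairs $(\ga,m)$ with $\ga$ a prime closed geodesic, $m\ge1$ and $m\,\ell(\ga)\le t$. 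Compactness gives a positive lower bound $\ell_0>0$ for the length of any closed geodesic (e.g. twice the injectivity radius), so each prime geodesic contributes at most $t/\ell_0$ such pairs; hence if $\#_t$ denotes the number of geometrically distinct closed geodesics of length $<t$, then $\mathcal N(t^2/2)\le (t/\ell_0)\,\#_t$, and therefore
$$
\#_t\;\ge\;\frac{\ell_0}{2t}\Bigl(\rank\io_*-\dim H_*(M;\F)\Bigr).
$$

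The topological input is Gromov's lemma \cite{Gr78}: there is a constant $c=c(M,g)>0$ such that every class in $H_k(\La M;\F)$ is represented by a cycle of loops of length $\le c(k+1)$, equivalently $\io_k\colon H_k(\La^a M;\F)\to H_k(\La M;\F)$ is onto as soon as $a\ge\demi c^2(k+1)^2$. The quadratic dependence $a\sim k^2$ is the crucial point, and it is already sharp for round spheres, where degree $k$ classes come from roughly $k/(\dim M-1)$-fold iterates of a great circle, hence from loops of length linear in $k$. Taking $a=t^2/2$ shows that $\io_*$ is surjective in all degrees $i\le\be t$ for a suitable constant $\be=\be(g)>0$ (one may take $\be$ slightly below $1/c$), so $\rank\io_*\ge\sum_{i=1}^{\lfloor\be t\rfloor}b_i(\La M;\F)$, the Betti numbers being those of the free loop space and this being the quantity relevant to Gromov's estimate; simple connectivity of $M$ enters here by making $\La M$ connected, so that all the growth sits in the homology of a single component. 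Substituting into the previous display, noting that for simply connected $M$ other than a point $H_*(\La M;\F)$ is infinite dimensional so that these partial sums tend to $\infty$ and for large $t$ dominate the bounded term $\dim H_*(M;\F)$, and choosing the field $\F$ maximising the right-hand side, we obtain, for all $t$ large,
$$
\#_t\;\ge\;\frac{\al}{t}\sum_{i=1}^{\lfloor\be t\rfloor}b_i(\La M;\F)
$$
for a suitable $\al=\al(g)>0$ (e.g. $\al=\ell_0/4$), which is the asserted bound.

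The main obstacle is Gromov's lemma, i.e. the quantitative, metric-dependent control of how the filtration of $\La M$ by energy fills in $H_*(\La M;\F)$: proving it requires genuine isoperimetric-type estimates on the loop space, and it is precisely the quadratic growth of the energy threshold in the homological degree that produces an exponent linear in $t$. A secondary technical point is setting up Morse--Bott theory on the Hilbert manifold $\La M$ — checking that $\E$ satisfies condition (C) and that the bumpy hypothesis really yields nondegenerate critical circles contributing boundedly to homology — after which the length-versus-energy conversion and the counting of iterates are elementary.
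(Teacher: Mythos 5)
The paper does not prove this theorem; it is stated as background and cited from \cite{Gr78}, so there is no in-paper proof to compare against directly. Your argument is correct, with one caveat about the statement as printed: it reads $b_i(M)$ where it must mean $b_i(\La M)$ (with $b_i(M)$ the sum $\sum_{i\le\beta t}b_i(M)$ eventually stabilizes and the claimed bound decays like $1/t$; Gromov's actual theorem, and the paper's Theorem~C, concern Betti numbers of the free loop space), and your proof correctly establishes the $b_i(\La M)$ version.

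The closest in-paper analogue is the proof of Theorem~C, the Ballmann--Ziller strengthening, which the paper proves in the more general Reeb setting via Floer homology rather than directly through Morse--Bott theory for $\E$ on $\La M$. The substantive difference is the treatment of iterates. You use the crude estimate that a prime geodesic of length $\ge\ell_0$ contributes at most $t/\ell_0$ iterates of length $\le t$; this is precisely what produces the $1/t$ factor in the bound. Theorem~C instead invokes the Salamon--Zehnder index iteration formula (Lemma~\ref{lemma:iteration formula}) to show that, for any \emph{fixed} index $i$ in the window $\bigl(\beta\tau/2,\beta\tau\bigr]$, only a $\tau$-independent number $N_0$ of iterates of any prime orbit can have that index, which allows one to replace $\frac{1}{t}\sum_{i\le\beta t}b_i(\La M)$ by $\max_{1\le i\le\beta t}b_i(\La M)$, a genuinely sharper conclusion. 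Both arguments rest on the same hard input, Gromov's surjectivity estimate (Theorem~\ref{thm:gromov}, proved in Appendix~\ref{appendix:gromov's work}), and your identification of that as the crux is accurate. A small point worth recording explicitly: over $\F_2$ a nondegenerate critical circle contributes exactly $2$ to the Morse--Bott bound, while over $\Q$ or $\F_p$ with $p$ odd a circle whose negative normal bundle is nonorientable contributes $0$; in all cases the per-circle bound of $2$ holds, so your inequality $2\mathcal N(a)+\dim H_*(M;\F)\ge\dim H_*(\La^a;\F)$ is valid as stated.
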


\ni
The assumption on the Riemannian metric to be {\it bumpy} corresponds to our genericity assumption. Generalizations to geodesic flows of larger classes of Riemannian manifolds were proved in Paternain \cite{Pa97,Pa99} and Paternain--Petean \cite{PP06}.\\

\ni 
The exponential growth of the number of Reeb chords in spherizations is studied in \cite{MS10}. Results on exponential growth rate of the number of closed orbits for certain Reeb flows on a large class of closed contact 3-manifolds are proved in \cite{CH08}.

\subsubsection{The simply connected case}
In \cite{BZ82} Ballman and Ziller improved Gromov's theorem in the case of simply connected Riemannian manifolds with bumpy metrics. They showed that the number $N_g(T)$ of closed geodesics of length less than or equal to $T$ is bounded below by the maximum of the $kth$ betti number of the free loop space $k\leq T$, up to some constant depending only on the metric. Following their idea we shall prove the following

\begin{thm3}
\label{prop:ballmann-ziller}
Suppose that $M$ is a compact and simply connected $m$-dimensional manifold. Let $\Si$ be a generic fiberwise starshaped hypersurface of $T^*M$ and $R$ its associated Reeb vector field. Then there exist constants $\al = \al(R) >0$ and $\be = \be(R)>0$ such that
$$
\# \Od_R (\tau) \geq \al \max_{1 \leq i\leq \be \tau} b_i (\La M)
$$
for all $\tau$ sufficiently large.
\end{thm3}

\subsubsection{Two questions}

\begin{itemize}
\item[I.]We assume the hypersurface $\Si$ to be fiberwise starshaped {\it with respect to the origin}. Could this assumption be omitted? In the case of Reeb chords it cannot, see \cite{MS10}. 

\item[II.]The assumption on $\Si$ to be fiberwise starshaped is equivalent to the assumption that $\Si$ is of restricted contact type with respect to the Liouville vector field $Y=p\partial p$. Are Theorem~A and Theorem~B true for any hypersurface $\Si \subset T^*M$ of restricted contact type? 
\end{itemize}

\medskip
\ni
The thesis is organized as follows: In Chapter~1 we introduce the definitions and tools that we will use throughout this work. Chapter~2 provides the tool of sandwiching used here to compare the growth of closed Reeb orbits with the growth of closed geodesics. In Chapter~3 we recall the definition of Morse--Bott homology which is used in the proof of Theorem~B. In Chapter~4 we prove Theorem~A, Theorem~B and Theorem~C. In Chapter~5 we shall evaluate our results on several examples introduced in Chapter~1. 

In Appendix~A we review some tools to prove the compactness of moduli spaces introduced in section \ref{section:definition of hf}. In Appendix~B we recall the definition of the Legendre transform. In Appendix~C we give a proof of the existence of Gromov's constant, see Theorem~\ref{thm:gromov}.

\clearemptydoublepage

\pagenumbering{arabic}
 
 \setcounter{page}{1}
 
\fancyhf{} 
\fancyhead[LE,RO]{\bfseries\thepage} \fancyhead[LO]{\bfseries
\leftmark} \fancyhead[RE]{\bfseries \rightmark}

\chapter{Definitions and Tools}

In this chapter we introduce the definitions and tools that we will use throughout this work. In section \ref{section:free loop space} we describe the free loop space $\La M$ of a manifold $M$ and introduce topological invariant measuring the topological complexity of the free loop space. Section \ref{section:cotangent bundles} gives an overview of Hamiltonian dynamic on cotangent bundles and fiberwise starshaped hypersurfaces. We discuss the relation between Reeb orbits on a fiberwise starshaped hypersurface and the 1-periodic orbits of a Hamiltonian flow for which the hypersurface is an energy level. In section \ref{maslov index} we recall the definitions and properties of Maslov type indexes introduced by Conley and Zehnder in \cite{CZ84} and Robbin and Salamon in \cite{RS93}.

\section{The free loop space}
\label{section:free loop space}

Let $(M,g)$ be a connected, $C^\infty$-smooth Riemannian manifold. Let $\La M$ be the set of loops $q:S^1 \ra M$ of Sobolev class $W^{1,2}$. $\La M$ is called the {\it free loop space of $M$}. This space carries a canonical structure of  Hilbert manifold, see \cite{Kli95}. \\

\ni
The energy functional $\E= \E_g : \La M \ra \re$ is defined as 
$$
\E (q) := \demi \int_0^1 |\dot{q}(t)|^2 \,dt
$$
where $|q(t)|^2 = g_{q(t)}(\dot{q}(t),\dot{q}(t))$. It induces a filtration on $\La M$. For $a>0$, consider the sublevel sets $\La^a \subset \La M$ of loops whose energy is less or equal to $a$,
$$
\La^a := \{q\in \La M \mid \E(q) \leq a \}.
$$

\ni
The length functional $\L := \L_g: \La M \ra\re$ is defined by 
$$
\L(q) = \int_0^1 |\dot{q}(t)| dt.
$$
Similarly, for $a>0$ we can consider the sublevel sets 
$$
\L^a := \{q\in \La M \mid \L(q) \leq a\}.
$$

\ni
Applying Schwarz's inequality
$$
\left( \int_0^1 fg \, dt \right)^2 \leq \left( \int_0^1 f^2 \, dt \right) \left( \int_0^1 g^2 \, dt \right)
$$
with $f(t)=1$ and $g(t) = |\dot{q}(t)|$ we see that
$$
\demi \L^2 (q) \leq \E(q),
$$
where equality holds of an only if $q$ is parametrized by arc-length.\\

\ni
Denote by $\La_\al M$ the connected component of a loop $\al$ in $\La M$. The components of the loop space $\La M$ are in bijection with the set $\cc (M)$ of conjugacy classes of the fundamental group $\pi_1(M)$,  
$$
\La M \,=\, \coprod_{c\, \in \, \cc (M)} \La_c M .
$$

\subsubsection{Counting by counting conjugacy classes in $\pi_1$}

Let $X$ be a path-connected topological space. Denote by $\cc(X)$ the set of conjugacy classes in $\pi_1(X)$ and by $\fc(X)$ the set of free homotopy classes in $\La X$. Given a loop $\al: (S^1,0) \ra (X,x_0)$ we will denote its based homotopy class in $\pi_1(X)$ by $[\al]$ and its free homotopy class in $\fc(X)$ by $\llb \al \rrb$.

\begin{prop}
Let $X$ be a path-connected topological space and $x_0$ a base point. Then 
$$\Phi: \cc(X)\ra \fc(X): [\al] \mapsto \llb \al \rrb$$
is a bijection between the set of conjugacy classes in $\pi_1(X)$ and the set of free homotopy classes in $\La X$. 

Furthermore, if $f: (X,x_0) \ra (Y,y_0)$ is a continuous map between based topological spaces, we have
$$\Phi \circ f_* = f_* \circ \Phi.$$ 
\end{prop}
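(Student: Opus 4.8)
The plan is to construct the inverse of $\Phi$ directly and check both compositions are the identity, then verify naturality.

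First I would recall the standard correspondence from covering space / homotopy theory. Given a free homotopy class $F \in \fc(X)$, i.e. a path-component of $\La X$, pick any loop $\beta \colon S^1 \to X$ representing $F$. Since $X$ is path-connected, choose a path $\gamma$ from $x_0$ to $\beta(0)$; then $\gamma \cdot \beta \cdot \gamma^{-1}$ is a based loop at $x_0$, giving an element of $\pi_1(X,x_0)$. I would define $\Psi(F)$ to be the conjugacy class of $[\gamma \cdot \beta \cdot \gamma^{-1}]$. The first substantive step is to show $\Psi$ is well-defined: changing $\gamma$ by another path $\gamma'$ replaces the based loop by $[\gamma \gamma'^{-1}] \cdot (\gamma' \beta \gamma'^{-1}) \cdot [\gamma\gamma'^{-1}]^{-1}$, i.e. a conjugate, so the conjugacy class is unchanged; changing $\beta$ within its free homotopy class means there is a homotopy $H \colon S^1 \times [0,1] \to X$, and tracking the image of the basepoint $H(0,\cdot)$ gives a path that can be absorbed into the choice of $\gamma$, so again the conjugacy class is unchanged.

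Next I would check $\Psi \circ \Phi = \id$ and $\Phi \circ \Psi = \id$. For $\Psi \circ \Phi$: starting from $[\alpha] \in \cc(X)$ with $\alpha$ based at $x_0$, the loop $\alpha$ viewed in $\La X$ can be based-connected to $x_0$ by the constant path, so $\Psi(\llbracket \alpha \rrbracket)$ is the conjugacy class of $[\alpha]$ itself. For $\Phi \circ \Psi$: given $F$ with representative $\beta$ and connecting path $\gamma$, the based loop $\gamma \cdot \beta \cdot \gamma^{-1}$ is freely homotopic to $\beta$ (slide the $\gamma$ and $\gamma^{-1}$ collars around), so $\llbracket \gamma \cdot \beta \cdot \gamma^{-1} \rrbracket = F$. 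This shows $\Phi$ is a bijection.

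Finally, for naturality: given $f \colon (X,x_0) \to (Y,y_0)$, both $f_* \colon \cc(X) \to \cc(Y)$ and $f_* \colon \fc(X) \to \fc(Y)$ are induced by post-composition with $f$, and since $\Phi$ and the construction above are also defined by post-composition / concatenation operations that commute with applying $f$, the identity $\Phi \circ f_* = f_* \circ \Phi$ follows by chasing representatives: $\Phi(f_*[\alpha]) = \llbracket f \circ \alpha \rrbracket = f_*\llbracket \alpha \rrbracket = f_*(\Phi[\alpha])$. The only mild subtlety — the main obstacle, such as it is — is being careful that all the homotopies used in the well-definedness of $\Psi$ are genuinely independent of the auxiliary choices; once the bookkeeping of connecting paths is set up cleanly this is routine, and there is no deep content here beyond the classical bijection between conjugacy classes in $\pi_1$ and free homotopy classes of loops.
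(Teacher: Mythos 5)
Your proposal is correct and uses essentially the same geometric ingredients as the paper's proof: the construction of $\Psi$ by choosing a connecting path is exactly the paper's surjectivity argument, the well-definedness of $\Psi$ under a change of representative $\beta$ (tracking the basepoint through the free homotopy) is the paper's injectivity argument, and $\Phi \circ \Psi = \id$ (sliding the $\gamma$-collars) is the paper's argument that $\overline\Phi$ descends to conjugacy classes. You package these moves as constructing an explicit two-sided inverse rather than as well-definedness plus surjectivity plus injectivity, but the underlying homotopies are the same and the naturality check is identical.
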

 
\begin{proof}
Let $f,g : (S^1,0) \ra (X,x_0)$ be two continuous maps. If $f$ is homotopic to $g$ then $f$ is also freely homotopic to $g$. Thus we get a well defined map 
$$
\overline{\Phi}: \pi_1(X) \ra \fc(X)
$$
sending a based homotopy class $[\ga]$ to its free homotopy class $\llb \ga \rrb$. Let $\ga_0, \ga_1,\al : (S^1,0) \ra (X,x_0)$ such that 
$$
[\al] [\ga_0] [\al]^{-1} = [\ga_1]
$$
which is equivalent to
$$
[\al \cdot \ga_0 \cdot \al^{-1}] = [\ga_1].
$$
Consider the homotopy $F:[0,1] \times [0,1] \ra X$ defined by $F(s,t) = \al ( 1-(1-s)(1-t))$. Then $F(0,t) = \al(t)$ and $F(1,t) = x_0$, meaning that $F$ is a free homotopy of curves from $\al$ to $x_0$. Using $F$, one can construct a free homotopy of loops between $ \al \cdot \ga_0 \cdot \al^{-1}$ and $\ga_0$.
As $\al \cdot \ga_0 \cdot \al^{-1}$ is based homotopic to $\ga_1$ it follows that $\ga_0$ is free homotopic to $\ga_1$ and thus $\overline{\Phi}([\ga_0]) = \overline{\Phi} ([\ga_1])$. Thus $\overline{\Phi}$ descends to a map 
$$
\Phi : \cc(X) \ra \fc(X).
$$
\\

\ni
Now consider a loop $\ga : S^1 \ra X$ and take a continuous path $\al: [0,1] \ra X$ with $\al(0) = \ga (0)$ and $\al (1) = x_0$. Then $\al \cdot \ga \cdot \al^{-1}$ is a continuous loop with base point $x_0$ which is freely homotopic to $\ga$. This implies that $ \Phi ([\al \cdot \ga \cdot \al^{-1}]) = \llb \ga \rrb$ which yields the surjectivity of $\Phi$. \\

\ni
Let $[f_0]$ and $[f_1]$ be two elements of $\pi_1(X,x_0)$ with $\overline{\Phi} ([f_0]) = \overline{\Phi} ([f_1])$ and $H: [0,1] \times S^1 \ra X$ a free homotopy from $f_0$ to $f_1$. Define $g: S^1 \ra X$ by $g(s) := H(s,0)$. Then $g \cdot f_0 \cdot g^{-1}$ is homotopic to $f_1$ and thus $[f_0]$ and $[f_1]$ are conjugate. This proves the injectivity of $\Phi$. \\

\ni
The naturality follows from the definition of $\Phi$ as

$$\begin{aligned}
\Phi \circ f_* \,([\ga]) & = \Phi ([f\circ \ga]) \\
 & = \llb f \circ \ga \rrb \\
 & = f_*\, \llb \ga \rrb \\
 & = f_* \, \Phi ([\ga]).
\end{aligned}$$  

\end{proof}

\subsection{Growth coming from $\cc(M)$} 
\label{section:growth from cc}

Consider the set $\cc(M)$ of conjugacy classes of the fundamental group $\pi_1(M)$. For each element $c\in \cc(M)$ denote by $e(c)$ the infimum of the energy of a closed curve representing $c$. We denote by $\cc^a(M)$ the set of conjugacy classes whose elements can be represented by a loop of energy at most $a$, 
$$
\cc^a (M) := \left\{ c \in \cc(M) \mid e(c) \le a \right\}.
$$
The exponential and polynomial growth of the number of conjugacy classes as a function of the energy are measured by  
\begin{eqnarray*}
E(M) &:=& 
\liminf_{a \to \infty} \frac 1 a    \log \# \cc^a(M) ,\quad \mbox{and} \\
e(M) &:=& 
\liminf_{a \to \infty} \frac 1{\log a} \log \# \cc^a(M) .
\end{eqnarray*}

\subsection{Energy hyperbolic manifolds}
\label{section:energy hyperbolic manifold}

Recall that for $a>0$, $\La^a$ denotes the subset of loops whose energy is less or equal to $a$,
$$
\La^a := \{q\in \La M \mid \E(q) \leq a \}.
$$
Let $\Pr$ be the set of prime numbers, and write $\Pr_0 := \Pr \cup \{0\}$. For each prime number $p$ denote by $\F_p$ the field $\Z / p \Z$, and abbreviate $\F_0 := \qe$. Throughout, $H_*$ denotes singular homology. Let
$$
\io_k: H_k (\La^a ;\F_p) \ra H_k(\La M;\F_p)
$$
be the homomorphism induced by the inclusion $\La^a M \hookrightarrow \La M$. It is well-known that for each $a$ the homology groups $H_k (\La^a M; \F_p)$ vanish for all large enough $k$, see \cite{Benci86}. Therefore, the sums in the following definition are finite. Following \cite{FS06} we make the

\begin{definition}
The Riemannian manifold $(M,g)$ is energy hyperbolic if 
$$
C(M,g) := \sup_{p\in \P} \liminf_{n\ra\infty} \frac{1}{n} \log \sum_{k\geq 0} \dim \io_k H_k \bigl(\La^{\demi n^2} ;\F_p \bigr) >0
$$
\end{definition}

\ni
Since $M$ is closed, the property {\it energy hyperbolic} does not depend on $g$ while, of course, $C(M,g)$ does depend on $g$. We say that the closed manifold $M$ is {\it energy hyperbolic} if $(M,g)$ is energy hyperbolic for some and hence for any Riemannian metric $g$ on $M$.\\

\ni
We will also consider the {\it rational growth} of the homology given by
$$
c(M,g) := \sup_{p\in \P} \liminf_{n\ra\infty} \frac{1}{\log n} \log \sum_{k\geq 0} \dim \io_k H_k \bigl(\La^{\demi n^2} ;\F_p \bigr).
$$

\ni
Fix a Riemannian metric $g$ and $p\in\Pr$. It holds that
$$
\dim \io_0 H_0 \bigl(\La^{\demi a} ;\F_p \bigr) = \# \cc^a (M).
$$
Thus $E(M)$, respectively $e(M)$, is a lower bound for $C(M,g)$, respectively $c(M,g)$.


\subsection{Examples}
\label{section:examples}

\subsubsection{Negative curvature manifolds}
\label{section:negative curvature}

Suppose our manifold $M$ carries a Riemannian metric of negative curvature. 

\begin{prop}
\label{prop:negative curvature}
If $M$ posses a Riemannian metric $g$ of negative curvature, then the component of contractible loops $\La_0 M$ is homotopy equivalent to $M$, and all other components are homotopy equivalent to $S^1$. 
\end{prop}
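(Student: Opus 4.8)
The plan is to use the fact that a compact manifold of negative curvature is aspherical, together with the classical description of the free loop space of an aspherical space in terms of the fundamental group. Since $(M,g)$ has negative sectional curvature, its universal cover $\widetilde M$ is diffeomorphic to $\R^m$ (Cartan--Hadamard), hence contractible; thus $M$ is a $K(\pi,1)$ with $\pi = \pi_1(M)$. Moreover, for each nontrivial conjugacy class $c$ there is, by a classical theorem of Cartan, a unique closed geodesic $\ga_c$ representing $c$, and every closed curve in the class $c$ can be freely homotoped onto $\ga_c$; the key analytic input is that the free homotopy is canonical because the lift to $\widetilde M$ of any loop in the class $c$ has a unique axis, namely the lift of $\ga_c$.

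First I would treat the component $\La_0 M$ of contractible loops. The evaluation map $\mathrm{ev}\colon \La_0 M \to M$, $q \mapsto q(0)$, is a fibration with fiber $\Om_0 M$, the space of contractible based loops. Since $\widetilde M$ is contractible, $\Om_0 M \simeq \Om \widetilde M$ is weakly contractible, so $\mathrm{ev}$ is a weak homotopy equivalence; as both spaces have the homotopy type of CW-complexes (Hilbert manifolds, cf.\ \cite{Kli95}), it is a genuine homotopy equivalence, giving $\La_0 M \simeq M$. A cleaner alternative: the inclusion of constant loops $M \hookrightarrow \La_0 M$ is a homotopy equivalence because its composition with $\mathrm{ev}$ is the identity and the fiber is contractible.

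Next, for a nontrivial class $c \in \cc(M)$, I would show the component $\La_c M$ deformation retracts onto the circle of rotations $\{ t \mapsto \ga_c(t+s) : s \in S^1 \}$ of the unique closed geodesic. The main obstacle—and the step deserving the most care—is constructing this retraction. The natural approach is the negative gradient flow of the energy functional $\E$ on $\La_c M$: by Cartan's theorem $\ga_c$ (up to rotation) is the unique critical point of $\E|_{\La_c M}$, it is a nondegenerate critical submanifold (a circle, or a point if $\ga_c$ is constant—impossible here since $c\neq 1$), and the Palais--Smale condition holds, so the flow deformation retracts $\La_c M$ onto this critical circle. Alternatively one argues more directly: lift a loop $q$ in class $c$ to a path $\tilde q$ in $\widetilde M$ with $\tilde q(1) = \gamma \cdot \tilde q(0)$ where $\gamma$ is the deck transformation corresponding to $c$; the axis of $\gamma$ is a geodesic line $\ell$, and one retracts $\tilde q$ onto $\ell$ by a fiberwise geodesic homotopy (using that $\widetilde M$ is Cartan--Hadamard, so geodesics between points are unique and depend smoothly on endpoints), equivariantly in the deck action, then push down to $M$; the image is the closed geodesic $\ga_c$ traversed once, and keeping track of the starting point $\tilde q(0)$ shows the retract is the rotation circle $S^1$. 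Either way one concludes $\La_c M \simeq S^1$, completing the proof.
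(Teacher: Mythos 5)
Your proposal is correct, but it differs from the paper's proof in an interesting way. The paper treats both components uniformly by Morse--Bott theory on the energy functional: it invokes Cartan's theorem to identify the critical set in each component (the circle $S_c$ of a unique closed geodesic for $c\neq 1$, the submanifold $S_0 = M$ of constant loops for $c = 1$), observes that all Morse indices vanish (no conjugate points in negative curvature), resolves the critical circles into nondegenerate critical points via Gromoll--Meyer, and then applies Milnor's handle-attachment argument between levels to conclude. You instead split the two cases: for $\La_0 M$ you use asphericity (Cartan--Hadamard $\Rightarrow$ $\widetilde M$ contractible $\Rightarrow$ $\Om_0 M \simeq \Om \widetilde M$ weakly contractible) and the evaluation fibration $\mathrm{ev}\colon \La_0 M \to M$ to get a weak, hence genuine, homotopy equivalence; for $\La_c M$ with $c\neq 1$ you use the negative gradient flow of $\E$ (or the axis-retraction in the universal cover). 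Your treatment of $\La_0 M$ is cleaner and more elementary than the paper's, since it bypasses Morse theory entirely and uses only asphericity, and it makes explicit the logical input (contractibility of $\widetilde M$) that the paper leaves implicit in the phrase "methods of Milnor."

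One small point worth making explicit in the Morse-theoretic part: the claim that the critical circle $S_{\ga_c}$ is a \emph{nondegenerate} Bott critical manifold does require negative curvature (no nontrivial periodic Jacobi fields), not merely that $\ga_c$ is the unique closed geodesic in its class. You allude to this, but a sentence citing the absence of conjugate points would close that gap. Your alternative geometric retraction via the axis of the deck transformation $\gamma$ is a complete and independent argument that avoids the nondegeneracy discussion altogether, and is arguably the most self-contained route.
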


\ni
Using the result of the previous section, this yields

\begin{cor}
$\La M \,\simeq M \coprod_{[\al]\in\cc (M)} S^1 $
\end{cor}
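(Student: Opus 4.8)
The plan is simply to assemble the two results that precede this corollary. First recall the decomposition $\La M = \coprod_{c \in \cc(M)} \La_c M$ together with the Proposition identifying $\cc(X)$ with the set $\fc(X)$ of free homotopy classes: the connected components of $\La M$ are in bijection with the conjugacy classes of $\pi_1(M)$, and under this bijection the trivial class corresponds to the component $\La_0 M$ of contractible loops. So it suffices to exhibit, componentwise, the asserted homotopy equivalences and then patch them together.

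For the componentwise statements I would invoke Proposition~\ref{prop:negative curvature}: since $g$ has negative curvature, $\La_0 M \simeq M$ and $\La_c M \simeq S^1$ for every nontrivial $c \in \cc(M)$. Concretely, the equivalence $\La_0 M \simeq M$ is realized by the inclusion of the constant loops $M \hookrightarrow \La_0 M$, a homotopy equivalence because the universal cover of $M$ is contractible (Cartan--Hadamard), so each contractible loop can be canonically contracted; and for a nontrivial class $c$ the equivalence $S^1 \simeq \La_c M$ is realized by the inclusion of the $S^1$-family of parametrizations of the unique closed geodesic in the free homotopy class $c$. These are exactly the contents of Proposition~\ref{prop:negative curvature}, which we are free to assume.

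Finally I would record the elementary general fact that a continuous map which matches connected components bijectively and restricts to a homotopy equivalence on each component is itself a homotopy equivalence — one builds a homotopy inverse and the two homotopies component by component, and a map out of a disjoint union (or coproduct) is continuous iff its restrictions are. Applying this to $\La M = \La_0 M \sqcup \coprod_{c \neq 1} \La_c M$ and to $M \sqcup \coprod_{c \neq 1} S^1$ yields the stated homotopy equivalence; the notation $\coprod_{[\al] \in \cc(M)} S^1$ in the statement is of course understood to contribute one circle per \emph{nontrivial} conjugacy class, the trivial class being accounted for by the factor $M$. There is no real obstacle here: all the geometry sits in Proposition~\ref{prop:negative curvature}, and the only points requiring a word of care are the identification of the trivial conjugacy class with $\La_0 M$ and the (standard) stability of homotopy equivalences under disjoint unions.
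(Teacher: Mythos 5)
Your proof is correct and follows the same route as the paper's: the corollary is obtained by combining Proposition~\ref{prop:negative curvature} with the decomposition $\La M = \coprod_{c\in\cc(M)} \La_c M$ coming from the bijection between conjugacy classes and free homotopy classes. Your remarks about the trivial class being accounted for by the factor $M$ and about gluing componentwise homotopy equivalences are a welcome spelling-out of what the paper leaves implicit in the phrase ``this yields.''
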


\begin{proof}
Consider the energy functional $\E:= \E_g: \La M \ra \re$ with respect to the metric $g$. It's a Morse--Bott, i.e. 
$$
\crit(\E) := \{ q \in \La M \mid d\E(q) = 0 \}
$$
is a submanifold of $\La M$ and 
$$
T_q \crit(\E) = \ker (\Hess (\E) (q)).
$$
Moreover its critical points are closed geodesics. Let $c$ be a closed geodesic on $M$. Then $c$ gives rise to a whole circle of geodesics whose parametrization differ by a shift $t\in S^1$. We denote by $S_c$ the set of such geodesics. Consider the following result of Cartan~\cite[Section 3.8]{Kli95}.

\begin{thm}{\bf (Cartan)}
Let $M$ be a compact manifold with strictly negative curvature. Then there exists, up to parametrization, exactly one closed geodesic $c$ in every free homotopy class which is not the class of the constant loop. $c$ is the element of minimal length in its free homotopy clas. All closed geodesics on $M$ are of this type.  
\end{thm}

\ni
Thus $\E$ has a unique critical manifold $S_c$ in every component which is not the component of the constant loops. While the component of the constant loop has as critical manifold $S_0$ the subspace of constant loops. Moreover all the Morse indices are equal to zero. Following \cite{GM169}, one can resolve every critical submanifolds $S_c$ into finitely many non-degenerate critical points $c_1, \ldots,c_l$ corresponding to critical points of a Morse function $h: S_c \ra\re$. The index of a non-degenerate critical point $c_i$ is then given by the sum $\la+\la_i$ where $\la$ is the Morse index of $c$ with respect to $\E$ and $\la_i$ the Morse index of $c_i$ with respect to the Morse function $h$. \\

\ni
Let $a<b$ be regular values of $\E$ and $c_1, \cdot, c_k$ of $\E$ in $\E{-1}[a,b]$. Let $c_{i1}, \ldots c_{i{l_i}}$ be the corresponding non-degenerate critical points of indices $\la_{i1}, \ldots , \la_{i{l_i}}$. Then Lemma 2 of \cite{GM169} tells us that $\La^a$ is diffeomorphic to $\La^b$ with a handle of index $\la_{ij}$ attached for each non-degenerate critical point $c_{ij}$, $1 \leq i \leq k$, $1 \leq j \leq k_i$. The diffeomorphism can be chosen to keep $\La^a$ fixed. Using the methods of Milnor in \cite[Section 3]{Mi63}, we obtain that the component of the contractible loop has the homotopy type of the space of constant loops while every other component has the homotopy type of $S^1$.
\end{proof}

\ni
Consider the counting function $CF(L)$ for periodic geodesics, where
$$
CF(L) = \#\{\mbox{periodic geodesics of length smaller than or equal to $L$}\}.
$$
Proposition \ref{prop:negative curvature} tells us that in the negative curvature case, every periodic geodesic correspond to an element of $\cc(M)$. Setting $a=\demi L^2$, we have the following equality
$$\#\cc^a(M) = CF(L).$$

\ni
A lower bound for $E(M)$ can be deduced from a result of Margulis.

\begin{thm}{\bf(Margulis 1969 \cite{Ma69})}
\label{thm:margulis}
On a compact Riemannian manifold of negative curvature it holds that
$$h_{top}(g) = \lim_{L\ra\infty} \frac{\log CF(L)}{L},$$
where $h_{top}(g)$ is the topological entropy of the geodesic flow.
\end{thm}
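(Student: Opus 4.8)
This is a celebrated theorem of Margulis, and the approach I would take is dynamical. First, a reduction: by Cartan's theorem quoted above, in negative curvature the closed geodesics are in bijection with the nontrivial conjugacy classes of $\pi_1(M)$, the length of the geodesic representing a class being minimal in its free homotopy class; hence $CF(L)$ agrees, up to a factor polynomial in $L$ (coming from iterates), with the number of periodic orbits of period $\le L$ of the geodesic flow $\phi^t$ on the cosphere bundle $S_1M(g)$. Since $g$ has negative curvature, $\phi^t$ is an Anosov flow, so it is expansive, topologically mixing, and has Bowen's specification property. Writing $S(L,\ep)$ for the maximal cardinality of an $(L,\ep)$-separated set of $\phi^t$, the topological entropy of the flow is
$$
h_{top}(g)\ =\ \lim_{\ep\ra 0}\ \limsup_{L\ra\infty}\ \frac{1}{L}\log S(L,\ep),
$$
the inner quantity being nondecreasing as $\ep\ra0$. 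The theorem will follow once I establish the two inequalities
$$
\limsup_{L\ra\infty}\frac{1}{L}\log CF(L)\ \le\ h_{top}(g)\ \le\ \liminf_{L\ra\infty}\frac{1}{L}\log CF(L).
$$

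For the upper bound I would fix an expansivity constant $\ep_0$ of $\phi^t$ and pick, for every closed geodesic of length $\le L$, a point on the corresponding orbit in $S_1M(g)$. A standard expansivity argument --- with the usual care for the flow's time reparametrization --- shows that two such points coming from distinct geodesics cannot stay $\ep_0$-close along all of $[0,L]$: both underlying orbits being periodic of period $\le L$, proximity on $[0,L]$ propagates to all times and forces the two points onto a single orbit. The chosen points therefore form an $(L,\ep_0)$-separated set, so $CF(L)\le S(L,\ep_0)$ up to the polynomial factor, and
$$
\limsup_{L\ra\infty}\frac{1}{L}\log CF(L)\ \le\ \limsup_{L\ra\infty}\frac{1}{L}\log S(L,\ep_0)\ \le\ h_{top}(g).
$$

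The lower bound is the hard part. Fix $\de>0$, choose $\ep>0$ with $\limsup_L\frac{1}{L}\log S(L,\ep)\ge h_{top}(g)-\de$, and take, for large $L$, an $(L,\ep)$-separated set $E_L$ with $\#E_L\ge e^{(h_{top}(g)-2\de)L}$. By the specification property there is a constant $C=C(\ep)$ so that each orbit segment $\{\phi^t x:0\le t\le L\}$, $x\in E_L$, is $\frac{\ep}{4}$-shadowed by an honest periodic orbit $\ga_x$ of period in $[L,L+C]$. If $\ga_x=\ga_{x'}$, then the segments of $x$ and $x'$ shadow the same closed orbit at two ``phases''; if these differed by less than a threshold $\rho=\rho(\ep)>0$ the two segments would be $\ep$-close on $[0,L]$, which is impossible since $E_L$ is $\ep$-separated. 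Hence $x\mapsto\ga_x$ is at most $\lceil (L+C)/\rho\rceil$-to-one, and for a constant $c=c(\ep)>0$,
$$
CF(L+C)\ \ge\ \#\{\ga_x: x\in E_L\}\ \ge\ \frac{c}{L}\,e^{(h_{top}(g)-2\de)L},
$$
so that $\liminf_L\frac{1}{L}\log CF(L)\ge h_{top}(g)-2\de$; letting $\de\ra0$ finishes the proof. The obstacle is concentrated precisely here --- manufacturing \emph{genuine} closed geodesics, with controlled periods and essentially distinct, out of abstract separated orbit segments --- and it is exactly where the hyperbolicity of $\phi^t$ is indispensable; the upper bound used only expansivity.

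Two alternative treatments are worth recording. Margulis's original proof uses instead the mixing of the Bowen--Margulis measure of maximal entropy and yields the sharp asymptotic $CF(L)\sim e^{h_{top}(g)L}/\bigl(h_{top}(g)L\bigr)$. A more geometric route combines Manning's identity $h_{top}(g)=\lim_{R\ra\infty}\frac{1}{R}\log\vol B_{\widetilde M}(\widetilde x,R)$ with cocompactness of the $\pi_1(M)$-action on the universal cover $\widetilde M$, reducing matters to counting elements --- then conjugacy classes --- of bounded displacement: the upper bound becomes the clean injection $CF(L)\le\#\{\ga\in\pi_1(M):d(\widetilde x,\ga\widetilde x)\le L+2D\}$, where $D$ is the diameter of a fundamental domain, while the lower bound again rests on the fine geometry of translation axes in a Gromov-hyperbolic space.
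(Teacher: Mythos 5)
The paper does not contain a proof of Theorem~\ref{thm:margulis}: it is stated as a known result of Margulis \cite{Ma69} and used immediately as a black box (to deduce $\#\cc^a(M)=CF(L)>e^{h_{top}(g)L}/(2L)$ for $L$ large). There is therefore no in-text argument to compare against; what you have written is a free-standing sketch of a classical theorem that the thesis simply cites.

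Taken on its own merits, your proposal follows the Bowen route (entropy equals exponential growth of closed orbits for Anosov flows) rather than Margulis's original measure-theoretic argument, and you correctly flag that the latter gives the sharper asymptotic $CF(L)\sim e^{h_{top}(g)L}/(h_{top}(g)L)$. The lower bound via specification, together with the polynomial-in-$L$ bound on the multiplicity of the shadowing map $x\mapsto\ga_x$, is the standard Bowen--Katok argument and is sound. The upper bound as you phrase it has a real gap: you assert that two periodic points of period $\le L$ that remain $\ep_0$-close throughout $[0,L]$ must, ``by periodicity,'' remain close for all time and hence lie on the same orbit. This propagation is immediate when the two periods coincide, but when $p_1<p_2\le L$ (in particular when both are much smaller than $L$) the phases drift --- $\phi^{t}x_1$ and $\phi^{t}x_2$ have no common period and $\ep_0$-closeness on $[0,L]$ does not extend to all of $\re$; the ``care for time reparametrization'' you invoke is a different issue and does not fix this. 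A standard repair is to count, for each $T$, periodic orbits whose period lies in a unit window $[T,T+1]$ and bound these by fixed points of the time-$T$ map in small flow boxes using the hyperbolicity directly, then sum; alternatively, the Manning-style route you mention at the end ($CF(L)\le\#\{\ga\in\pi_1(M):d(\widetilde x,\ga\widetilde x)\le L+2D\}$ together with $h_{top}(g)=\lim_{R}\frac1R\log\vol B_{\widetilde M}(\widetilde x,R)$) sidesteps the issue entirely and is arguably the cleanest way to obtain the upper bound.
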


\ni
For a definition see \cite{Gr07, Be03}. Theorem~\ref{thm:margulis} implies that for $L$ large enough, 
$$
\#\cc^a(M) = CF (L) > \frac{e^{h_{top}(g)L}}{2L}
$$

\ni
For example if $M = \Si_\ga$ is an orientable surface of genus $\ga$ and constant curvature $-1$, then $h_{top}(g) = 1$, see \cite[Section 10.2.4.1]{Be03}, and thus $$\#\cc^{\demi L^2}(\Si_\ga) > \frac{e^L}{2L}.$$

\subsubsection{Products}

\begin{lem}
Let $M, N$ be two manifolds. Then $$\La (M\times N) \cong \La M \times \La N.$$
\end{lem}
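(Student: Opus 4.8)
The plan is to construct an explicit homeomorphism (in fact a diffeomorphism of Hilbert manifolds) between $\La(M\times N)$ and $\La M\times\La N$ using the universal property of the product. A loop $q\colon S^1\to M\times N$ of Sobolev class $W^{1,2}$ is the same thing as a pair $(\pi_M\circ q,\pi_N\circ q)$ where $\pi_M,\pi_N$ are the two projections; conversely a pair of loops $(q_1,q_2)$ with $q_1\colon S^1\to M$ and $q_2\colon S^1\to N$ assembles into the single loop $t\mapsto(q_1(t),q_2(t))$. So first I would define
$$
\Psi\colon \La(M\times N)\lra \La M\times\La N,\qquad \Psi(q)=\bigl((\pi_M)_*q,(\pi_N)_*q\bigr)=(\pi_M\circ q,\pi_N\circ q),
$$
and the candidate inverse $\Psi^{-1}(q_1,q_2)(t)=(q_1(t),q_2(t))$.

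Next I would check that both maps are well defined on the level of $W^{1,2}$-loops: composition with the smooth projections $\pi_M,\pi_N$ preserves Sobolev class $W^{1,2}$, and conversely a pair of $W^{1,2}$-loops gives a $W^{1,2}$-loop into the product since, in any product chart, the coordinate functions of $t\mapsto(q_1(t),q_2(t))$ are exactly the coordinate functions of $q_1$ and $q_2$. Then $\Psi^{-1}\circ\Psi=\id$ and $\Psi\circ\Psi^{-1}=\id$ are immediate from the set-theoretic description. Finally I would note that both $\Psi$ and $\Psi^{-1}$ are smooth for the canonical Hilbert manifold structures of \cite{Kli95}: in the product charts on $\La(M\times N)$ built from charts on $M\times N$ (which are products of charts on $M$ and $N$), the map $\Psi$ reads as the identity on the model Hilbert space $W^{1,2}(S^1,\R^m)\times W^{1,2}(S^1,\R^n)\cong W^{1,2}(S^1,\R^{m+n})$, so it is smooth with smooth inverse.

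I do not expect any genuine obstacle here; the only thing requiring a line of care is matching up the Hilbert manifold charts, i.e. checking that the atlas on $\La(M\times N)$ coming from the Riemannian metric $g_M\oplus g_N$ on $M\times N$ corresponds under $\Psi$ to the product atlas on $\La M\times\La N$ coming from $g_M$ and $g_N$ separately. This follows because the exponential map of a product metric is the product of the exponential maps, so the standard charts of $\La$ (given by $\xi\mapsto(t\mapsto\exp_{q(t)}\xi(t))$ along a loop $q$) are compatible. If one only wants the topological statement $\La(M\times N)\cong\La M\times\La N$ asserted in the lemma, it suffices to observe that $\Psi$ and $\Psi^{-1}$ are continuous for the $W^{1,2}$-topologies, which is clear from the same chart computation, and then the diffeomorphism statement is a harmless strengthening.
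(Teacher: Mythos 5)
Your proof constructs exactly the same map as the paper (sending a loop into $M\times N$ to the pair of its projections), so the approach is identical; the paper simply states the map in one line without the chart-level verification. Your extra care about $W^{1,2}$ well-definedness and the Hilbert-manifold structure is a correct and harmless elaboration of the same idea.
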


\begin{proof}
Consider the map $\phi : \La (M\times N) \ra \La M \times \La N$, sending the loop $\al: S^1 \ra M\times N : t \mapsto (\al_1 (t), \al_2(t))$ onto $(\al_1 (t), \al_2 (t))$.
\end{proof}

\ni
We will show in section \ref{section:p1 finite} that the product of two spheres $S^l\times S^n$ has $c(M,g)>0$.



\subsubsection{Lie groups}
\label{section:lie groups}

Let $G$ be a compact connected Lie group, i.e. a compact, connected smooth manifold with a group structure in which the multiplication and inversion maps $G\times G \ra G$ and $G\ra G$ are smooth. \\

\ni
The fundamental group $\pi_1(G)$ of a connected Lie group $G$ is abelian. In fact, considering the universal cover $\widetilde{G}$, the kernel of the projection $p:\widetilde{G} \ra G$ is then isomorphic to $\pi_1(G)$. This is a discrete normal subgroup of $\widetilde{G}$. Let $\ga \in \pi_1(G)$. Then $\tilde{g}\ra \tilde{g}\ga \tilde{g}^{-1}$ is a continuous map $\widetilde{G} \ra \pi_1(G)$. Since $\widetilde{G}$ is connected and $\pi_1(G)$ discrete, it is constant, so $\tilde{g}\ga \tilde{g}^{-1}=\ga$ for all $\tilde{g}$. Hence $\pi_1(G)$ is central in $\widetilde{G}$ and in particular, it is abelian. This yields
$$
\La G = \coprod_{[\al] \in \pi_1(G)} \La_\al G.
$$

\ni
Denote by $\La_1$ the component of the constant loop. For $a>0$ we consider the sublevel sets
$$
\La_1^a := \{q\in \La_1 \mid \E(q) \leq a \}.
$$
Choose a constant loop $\ga_1$ representing $\La_1$ and consider another component $\La_i$ of $\La G$ represented by a loop $\ga_i$. Fix $\ep \in (0,1)$. For $\ga,\ga' \in \La G$ we define
$$
(\ga *_\ep \ga') (t) = 
\begin{cases}
h\ga(\frac{t}{\ep}), & 0\leq t \leq \ep, \\
\ga' (\frac{t-\ep}{1-\ep}), & \ep\leq t \leq \ep,
\end{cases} 
$$
where $h\in G$ is such that $h\ga(0) = \ga'(0)$. Notice that $\tilde{\ga}(t) := h\ga(t)$ is homotopic to $\ga$. Then the map $\Ga_i: \La_1 \ra \La_i: \ga \mapsto \ga_i *_\ep \ga$, is a homotopy equivalence with homotopy inverse $\La_i \ra \La_1: \ga \mapsto \ga_i^{-1} *_\ep \ga$.  
We have
$$
\E(\ga *_\ep \ga') = \frac{1}{\ep} \E(h\ga) + \frac{1}{1-\ep} \E(\ga'), \quad \mbox{for all } \ga,\ga' \in \La G.
$$
Abbreviating $\La_i^a = \La_i \cap \La^a$ and $E_i = \max \{\E(h\ga) \mid h\in G\}$, we therefore have
$$
\Ga_i (\La_1^a) \subset \La_i^{\frac{a}{1-\ep} + \frac{E_i}{\ep}}.
$$ 
For $c \in \pi_1(G)$, recall that $e(c)$ denotes the infimum of the energy of a closed curve representing $c$ and $\cc^a(G) := \{c \in \pi_1(G) \mid e(c) \leq a\}$. Set $\ep = \demi$. Since $\Ga_c : \La_1 \ra \La_c$ is a homotopy equivalence, it follows that 
$$
\dim \io_k H_k (\La_1^a; \F_p) \leq \dim \io_k H_k (\La_c^{2a+2e(c)}; \F_p)
$$
for all $a>0$. We can estimate
$$
\begin{aligned}
\dim \io_k H_k (\La^{4a}; \F_p)  & = \sum_{c \in \pi_1(G)} \dim \io_k H_k (\La_c^{4a}; \F_p) \\
	 					& \geq \sum_{c \in \pi_1(G)} \dim \io_k H_k (\La_1^{2a-e(c)}; \F_p) \\
						& \geq \sum_{c \in \cc^a(G)} \dim \io_k H_k (\La_1^{a}; \F_p).
\end{aligned}
$$

\ni
We conclude that 
$$
\sum_{k\geq 0} \dim \io_k H_k (\La^{2 n^2}; \F_p) \geq \#\cc^{\demi n^2}(G) \cdot \sum_{k\geq 0} \dim \io_k H_k (\La_1^{\demi n^2}; \F_p).
$$
Considering the homomorphism
$$
\io_k: H_k (\La_1^a ;\F_p) \ra H_k(\La M;\F_p)
$$
induced by the inclusion $\La_1^a M \hookrightarrow \La M$, we can look at the growth rates 
$$
C_1(M,g) := \sup_{p\in \Pr} \liminf_{n\ra\infty} \frac{1}{n} \log \sum_{k\geq 0} \dim \io_k H_k \bigl(\La^{\demi n^2} ;\F_p \bigr) 
$$
and
$$
c_1(M,g) := \sup_{p\in \Pr} \liminf_{n\ra\infty} \frac{1}{\log n} \log \sum_{k\geq 0} \dim \io_k H_k \bigl(\La^{\demi n^2} ;\F_p \bigr).
$$
It follows from the definitions of $E(M)$ and $e(M)$ that for a Lie group,
$$
C(M,g) \geq E(M) + C_1(M,g)
$$
and 
$$
c(M,g) \geq e(M) + c_1(M,g).
$$






\section{Cotangent bundles}
\label{section:cotangent bundles}

Let $M$ be a smooth, closed manifold of dimension $m$. Let $T^*M$ be the corresponding cotangent bundle, and $\pi:T^*M\ra M$ the usual projection. We will denote local coordinates on $M$ by $q=(q_1,\ldots,q_m)$, and on $T^*M$ by $x=(q,p)=(q_1,\ldots,q_m,p_1,\ldots,p_m)$. 
\\ \\
We endowed $T^*M$ with the {\it standard symplectic form} $\om=d\la$ where $\la=p\,dq=\sum_{i=1}^m p_i \, dq_i$ is the {\it Liouville form}. The definition of $\la$ does not depend on the choice of local coordinates. It has also a global interpretation on $T^*M$ as
$$\la(x)(\xi)=p\,(d\pi\, \xi)$$ 
for $x\in T^*M$ and $\xi \in T_x T^*M$.
\\ \\
A {\it symplectomorphism} $\phi : (T^*M,\om) \ra (T^*M,\om)$ is a diffeomorphism such that the pullback of the symplectic form $\om$ is $\om$, i.e. $\phi^* \om = \om$. 
\\ \\
An {\it Hamiltonian} function $H$ is a smooth function $H: T^*M \ra\re$. Any Hamiltonian function $H$ determines a vector field, the {\it Hamiltonian vector field} $X_H$ defined by 
\begin{equation}
\label{e:hamiltonian vector field}
\om (X_H(x) , \cdot ) = -dH(x).
\end{equation}
Let $H:S^1\times T^*M \ra \re$ be a $C^\infty$-smooth time dependent $1$-periodic family of Hamiltonian functions. Consider the Hamiltonian equation 
\begin{equation}
\dot{x}(t) = X_{H} (x(t)), 
\label{hameq}
\end{equation}
In local coordinates it takes the physical form
\begin{equation}
\begin{cases}
\dot{q}=\phantom{-} \partial_p H(t,q,p) \\
\dot{p}= -\partial_q H(t,q,p). 
\end{cases}
\label{ham syst}
\end{equation}

\ni
The solutions of \eqref{hameq} generate a family of symplectomorphisms $\vp^t_H$ via
$$\frac{d}{dt} \vp^t_H = X_H \circ \vp^t_H, \quad \vp^0_H = \id.$$
The 1-periodic solutions of \eqref{hameq} are in one-to-one correspondence with the fixed points of the time-1-map $\vp_H = \vp^1_H$. We denote the set of such solutions by
$$\P (H) = \{ x:S^1 \ra \re \mid \dot{x}(t) = X_H (x(t)) \}.$$

\ni
A $1$-periodic solution of \eqref{hameq} is called {\it non-degenerate} if $1$ is not an eigenvalue of the differential of the Hamiltonian $d\vp_H (x(0)): T_{x(0)}\Tm \ra T_{x(0)}\Tm$, i.e.
$$
\det (I - d\vp_H (x(0)) \neq 1.
$$
If $H$ is time-independent and $x$ a non-constant $1$-periodic orbit, then it is necessarily degenerate because $x(a+t)$, $a \in \re$, are also $1$-periodic solutions. We will call such an $x$ {\it transversally nondegenerate} if the eigenspace to the eigenvalue $1$ of the map $d\vp^1_H (x(0)): T_{x(0)}\Tm \ra T_{x(0)}\Tm$ is two-dimensional. \\

\ni
An {\it almost complex structure} is a complex structure $J$ on the tangent bundle $TT^*M$ i.e. an automorphism $J: TT^*M \ra TT^*M$ such that $J^2 = -\id$. It is said to be {\it $\om$-compatible} if the bilinear form
$$\langle \cdot , \cdot \rangle \equiv g_J (\cdot,\cdot) := \om( \cdot , J \cdot)$$
defines a Riemannian metric on $T^*M$. Given such an $\om$-compatible almost complex structure, the Hamiltonian system \eqref{ham syst} becomes
$$X_H (x) = J(x)\nabla H(x).$$ 

\ni
The {\it action functional} of classical mechanics $\A_H: \La(\Tm) \ra \re$ associated $H$ is defined as 
$$
\A_H(x(t)) := \int_0^1(\la(\dot{x}(t)) - H(t,x(t)) \,dt.
$$
This functional is $C^\infty$-smooth and its critical points are precisely the elements of the space $\P(H)$.

\subsection{Fiberwise starshaped hypersurfaces}
Let $\Si$ be a smooth, connected hypersurface in $T^*M$. We say that $\Si$ is {\it fiberwise starshaped} if for each point $q\in M$ the set $\Si_q := \Si \cap T^*_q M$ is the smooth boundary of a domain in $T^*M$ which is strictly starshaped with respect to the origin $0_q \in T^*_q M$. \\

\ni
A hypersurface $\Si\subset \Tm$ is said to be of {\it restricted contact type} if there exists a vector field $Y$ on $\Tm$ such that
$$
\L_Y\om = d\io_Y\om = \om
$$
and such that $Y$ is everywhere transverse to $\Si$ pointing outwards. Equivalently, there exists a 1-form $\al$ on $\Tm$ such that $d\al=\om$ and such that $\al\wedge (d\al)^{m-1}$ is a volume form on $\Si$. Our assumption that $\Si$ is a fiberwise starshaped hypersurface thus translates to the assumption that $\Si$ is of restricted contact type with respect to the Liouville vector field 
$$
Y=\sum_{i=1}^m p_i \,\partial p_i
$$
or, equivalently: the Liouville form $\la$ defines a contact form on $\Si$. \\ 

\begin{figure}[h]
  \centering
  \psfrag{a}[][][1]{$T^*_qM$}
  \psfrag{b}[][][1]{$\Sigma_q$}
  \psfrag{c}[][][1]{$0_q$}
  \includegraphics[width=10cm]{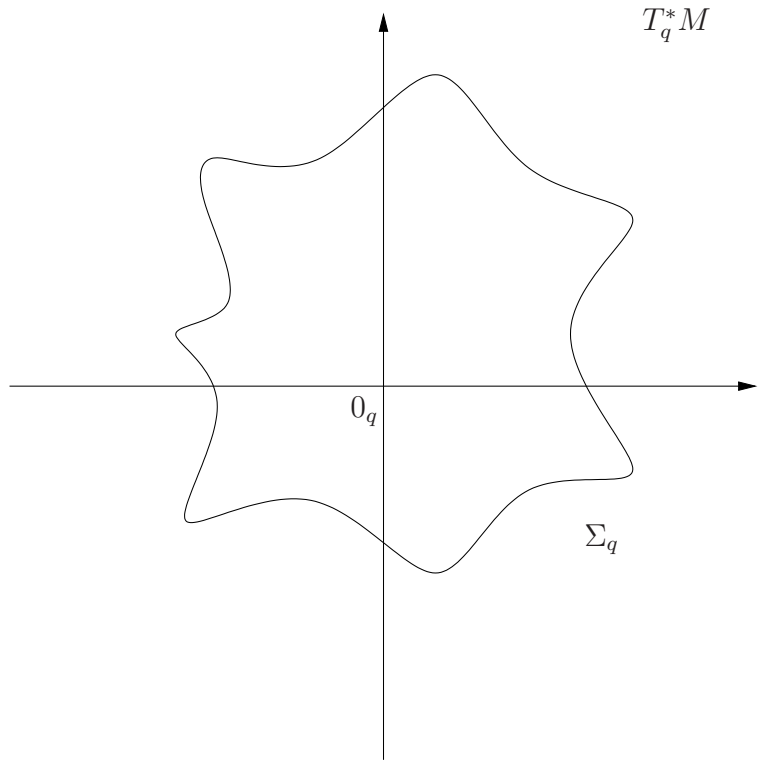}
  \caption{Fiberwise starshaped hypersurface.}
\end{figure}

\ni
There is a flow naturally associated with $\Si$, generated by the unique vector field $R$ along $\Si$ defined by
$$
d\la (R, \cdot) = 0, \quad \la(R) = 1.
$$
The vector field $R$ is called the {\it Reeb vector field} on $\Si$, and its flow $\vp^t_R$ is called the {\it Reeb flow}. \\

\ni
For $\tau>0$ let $\Od_R (\tau)$ be the set of closed orbits of $\vp^t_R$ with period $\leq \tau$. We measure the growth of the number of elements in $\Od_R (\tau)$ by

\begin{eqnarray*}
N_R &:=& \liminf_{\tau \to \infty} \frac 1 \tau \log \left( \# \Od_R (\tau) \right) ,\\
n_R &:=& \liminf_{\tau \to \infty} \frac 1{\log \tau} \log \left( \# \Od_R (\tau) \right) . 
\end{eqnarray*}

\ni
The number $N_R$ is the {\it exponential growth rate} of closed orbits, while $n_R$ is the {\it polynomial growth rate}.

\subsection{Dynamics on fiberwise starshaped hypersurfaces}

Given a fiberwise starshaped hypersurface $\Si \subset\Tm$, we can define an Hamiltonian function $F:\Tm\ra\re$ by the two conditions
\begin{equation}
\label{e:homogeneous hamiltonian}
F|_\Si \equiv 1, \quad F(q,sp)=s^2F(q,p), \quad s\geq 0 \mbox{ and }(q,p)\in \Tm.
\end{equation}
This function is of class $C^1$, fiberwise homogeneous of degree 2 and smooth off the zero-section.

\begin{lem}
Let $\Si\subset \Tm$ be a fiberwise starshaped hypersurface. If $\Si$ is the level set of a Hamiltonian function $H: \Tm \ra \re$, then the Reeb flow of $\la$ is a reparametrization of the Hamiltonian flow.
\end{lem}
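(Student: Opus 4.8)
The plan is to show that on $\Si$, the Hamiltonian vector field $X_H$ and the Reeb vector field $R$ are parallel, with a nowhere-vanishing proportionality factor; this immediately yields that the two flows have the same unparametrized orbits, i.e. one is a time-reparametrization of the other. First I would observe that since $\Si$ is a regular level set of $H$, the restriction $dH|_{T\Si} = 0$, so for every $v \in T_x\Si$ we have $\om(X_H(x),v) = -dH(x)(v) = 0$. Thus $X_H(x) \in (T_x\Si)^{\om}$, the symplectic orthogonal complement, which is one-dimensional. On the other hand, the defining equations $d\la(R,\cdot) = 0$ on $T\Si$ and $\la(R) = 1$ also force $R(x) \in (T_x\Si)^{\om}$ (here one uses $\om = d\la$ and that $\om$ restricted to $T\Si$ has exactly $(T_x\Si)^{\om}$ as its kernel, which is the content of $\Si$ being of contact type, so $R$ is a well-defined nowhere-zero section of this line field). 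Hence there is a function $\rho\colon \Si \to \re$ with $X_H(x) = \rho(x)\,R(x)$.

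Next I would check that $\rho$ is nowhere zero, which is where the starshaped/contact hypothesis really enters. Evaluating $\la$ on both sides gives $\rho(x) = \la(X_H(x))$. Now $\la(X_H) = \om(Y, X_H) = -dH(Y)$ where $Y = \sum_i p_i\,\partial_{p_i}$ is the Liouville vector field (using $\io_Y \om = \la$). Since $Y$ is transverse to $\Si$ pointing outward and $H$ has $\Si$ as a regular level with, say, $\{H < c\}$ on the inner side, the derivative $dH(Y)$ is strictly negative (or strictly positive) along $\Si$; in particular $\rho(x) = -dH(Y)(x) \neq 0$ for all $x \in \Si$. Therefore $X_H|_\Si = \rho\, R$ with $\rho$ smooth and nowhere vanishing.

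Finally I would conclude the reparametrization statement: if $\ga(s)$ solves $\dot\ga = R(\ga)$ and we set $s = \sigma(t)$ with $\dot\sigma(t) = \rho(\ga(\sigma(t)))$ (solvable since $\rho\neq 0$, so $\sigma$ is a diffeomorphism between the relevant time intervals), then $x(t) := \ga(\sigma(t))$ satisfies $\dot x(t) = \dot\sigma(t)\, R(\ga(\sigma(t))) = \rho(x(t))\, R(x(t)) = X_H(x(t))$, so $x$ is a Hamiltonian trajectory; conversely every Hamiltonian trajectory on $\Si$ arises this way. I expect the main (though still mild) obstacle to be the bookkeeping around the contact-type condition — namely verifying cleanly that $(T_x\Si)^{\om}$ is one-dimensional and transverse to $T_x\Si$, and keeping track of the sign of $dH(Y)$ — rather than anything conceptually deep; the homogeneity condition \eqref{e:homogeneous hamiltonian} is not needed for this lemma, only the fact that $\Si$ is a regular level set of contact type.
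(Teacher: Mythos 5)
Your proof takes essentially the same route as the paper's: both identify $X_H$ and $R$ as nowhere-zero sections of the one-dimensional kernel of $\om|_{T\Si}$, conclude $X_H = \rho\,R$ with $\rho$ nowhere vanishing, and hence that the flows are time-reparametrizations of one another. You additionally spell out why $\rho = \la(X_H) = -dH(Y) \neq 0$ and write down the explicit time change, which the paper leaves implicit, but the core argument is the same.
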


\begin{proof}
The restriction of the 2-form $\om=d\la$ to $T\Si$ is degenerate and of rank $(2m-2)$. Its kernel is therefore 1-dimensional. By definition of both Reeb and the Hamiltonian vector fields, they also define this kernel since
$$
\io_R d\la|_{T\Si} = 0
$$
and
$$
\io_{X_H}\om|_{T\Si} = -dH|_{T\Si} = 0.
$$
Therefore 
$$
X_H (x)= a(x)R(x)
$$
for every $x\in\Si$, with a nowhere vanishing smooth function $a$.
\end{proof}

\ni
The first condition in \eqref{e:homogeneous hamiltonian} thus implies that the Hamiltonian vector field $X_F$ restricted to $\Si$ is a positive reparametrization of the Reeb vector field. Consequently 
$$
\vp_F^t(x) = \vp_R^{\si(t,x)} (x)
$$
for every $x\in\Si$ and for a smooth positive function $\si$ on $\re\times\Si$. In particular, we have that 
$$
\vp_F^1(x) = \vp_R^{s(x)} (x).
$$
where $s$ is smooth and positive. Since $\Si$ is compact, $s$ is bounded from above, say $s(x)\leq b$. Thus for a fixed metric, the image of any segment of length one is an orbit segment of length at most $b$. Hence the image of a segment of length at most $t$ has length less than $(t+1)b$ which for $t\geq 1$ is less than  $2bt$. This yields that the image of any periodic orbit of $X_F$ of period $\leq t$ has period $\leq 2bt$. Thus, if we denote by $\Od_F (t)$ the set of closed orbits of $\vp_F$ with period $\leq t$, we have that
$$
\Od_R(t) \geq \Od_F(t)/2b.
$$
The growth of the function $t\mapsto \#\Od_F(t)$ is thus equal to the growth of $t \mapsto \#\Od_R(t)$\\

\ni
Now we want to establish a correspondence between $1$-periodic solutions of $X_H$ and closed orbit of $X_H$ on $\Si$.
Consider the map $c_s : \Tm \ra \Tm$,
$$
c_s (x):= sx := (q,sp)
$$
and define the vector field $Y$ on $\Tm$ by
$$
Y(x) := \frac{d}{ds}\Big\vert_{s=1} c_s(x).
$$
In local coordinates $Y(q,p) = \sum_i p_i\partial p_i$. Differentiating $c_s^* \la = \sum_i sp_i dq_i = s\la$ with respect to $s$, we obtain
$$
\la = \L_Y \la = i_Y \om.
$$
Differentiating $F(sx) = s^2 F(x)$ with respect to $s$ at $s=1$ we get Euler's identity
$$
2F(x) = dF(x) (Y(x)) = - \om (X_F(x), Y(x)) = \la (X_H(x)).
$$
In view of $c_s^* F = s^2 F$ and $c_s^*\, \om = s\, \om$, we get
$$
\begin{aligned}
c_s^*\, i_{((c_s)_* X_F)} \om &= i_{X_F} (c_s^*\, \om ) \\
	& = s \, i_{X_F} \om \\
	& = - s \, dF \\
	& = -\frac{1}{s} \, d(c_s^*F) \\
	& = \frac{1}{s} \, c_s^*\, i_{X_F}\om.
\end{aligned}
$$
Thus $(c_s)_* X_F = \frac{1}{s} X_F$ or equivalently $dc_s (x) (X_F)(x) = \frac{1}{s} X_F (sx)$. \\

\ni
Fix a $1$-periodic solution $x$ of $X_F$ and define $x_s$ by $x_s(t) := sx(t)$. Then 
$$
\dot{x}_s = dc_s (x) (X_F) (x) = \frac{1}{s} X_F (x_s).
$$
Suppose $\A_F (x) = a > 0$. Euler's identity yields
$$
\A_F(x) = \int_0^1 \la(X_F(x))-F(x) = F(x).
$$
So if we set $s := 1/\sqrt{a}$, then $x_s$ satisfies $F(x_s) = s^2 F(x) = 1$. Thus $x_s$ is a closed orbit of period $\sqrt{a}$.\\

\ni
Conversely if $y:S^1 \ra \Si$ is a closed characteristic, then $\dot{y} = \frac{1}{s} X_F(y)$ for some $s>0$. Thus $x(t) := \frac{1}{s} y(t)$ is a $1$-periodic solution of $X_F$ and $x_s = y$. \\

\ni
We obtain that the map $x\mapsto x_{1/\sqrt{a}}$ is a bijection between $1$-periodic solutions of $X_F$ with action $a$ and the closed orbits with period $\sqrt{a}$.

\subsection{Spherization of a cotangent bundle}

Let $M$ be a closed connected submanifold and $\Si\subset \Tm$ a fiberwise starshaped hypersurface in $\Tm$. The hyperplane field
$$
\xi_\Si := \ker (\la|_\Si) \; \subset T\Si
$$
is a contact structure on $\Si$. Consider another fiberwise starshaped hypersurface $\Si'$. The radial projection in each fiber induces a map $\psi_q: \Si_q \ra \re$ such that  for every $p \in \Si_q$, $\psi_q(p) p \in \Si'_q$.
Then the differential of the diffeomorphism
$$
\Psi_{\Si\Si'} : \Si \ra \Si' : \quad (q,p)\mapsto (q,\psi_q(p)p)
$$
satisfies
$$
\Psi_{\Si\Si'}^*(\la|_\Si) = \psi \la|_\Si
$$
where $\psi(q,p) = \psi_q(p)$. Thus $\Psi_{\Si\Si'}$ maps $\xi_\Si$ to $\xi_\Si'$ and hence is a contactomorphism 
$$(\Si, \xi_\Si)\ra(\Si', \xi_\Si').$$

\ni
The induced identification of those contact manifold is called the {\it spherization} $(SM,\xi)$ of the cotangent bundle. The unit cosphere bundle $(S_1M(g), \ker \la)$,
$$
S_1M(g) := \{(q,p) \in \Tm \mid |p| = 1\}
$$
associated to a Riemannian metric $g$ on $M$ is a particular representative.\\

\ni
Fix a representative $(\Si, \xi_\Si)$. For every smooth positive function $f:\Si \ra \re$, $\xi_\Si = \ker(f\la|_\Si)$ holds true. We can thus consider the associated Reeb vector field $R_f$ on $T\Si$ defined as the unique vector field such that
$$
d(f\la) (R_f, \cdot) = 0, \quad f\la(R_f) = 1.
$$
We have seen in the previous section that for $f\equiv 1$ any Hamiltonian function $H:\Tm \ra \re$ with $H^{-1}(1)=\Si$, where $1$ is a regular value, the Reeb flow of $\R_f$ is a time change of of the Hamiltonian flow $\vp^t_H$ restricted to $\Si$. Note that for a different function $f$ the Reeb flows on $\Si$ can be completely different.\\ 

\ni
Given another representative $\Si'$, the associated contactomorphism $\Psi_{\Si\Si'}$conjugates the Reeb flows on $(\Si, \la)$ and $(\Si',\psi^{-1}\la)$, in fact
$$
d\Psi_{\Si\Si'}(R_\la) = R_{\psi^{-1}\la}. 
$$
This yields that the set of Reeb flows on $(SM,\xi)$ is in bijection with Hamiltonian flows on fiberwise starshaped hypersurfaces, up to time change. \\

\ni
Theorem A and Theorem B thus give lower bounds for the growth rate of closed orbits {\it for any Reeb flow on the spherization $SM$ of $\Tm$}.

\section{Maslov index}
\label{maslov index}

Consider $\re^{2m}$ endowed with its standard symplectic structure
$$\om_0 = dp\wedge dq, \quad (q,p)\in\re^m \times\re^m ,$$
and it's standard complex structure
$$
J_0 = \left( \begin{array}{cc} 0 & I \\
						  -I & 0 \end{array} \right).
$$ 
Denote by $Sp(2m)$ the set of symplectic automorphisms of $(\re^{2m}, \om_0)$, i.e.
$$
Sp(2m) := \{ \Psi \in \Mat(2m\times 2m, \re) \mid \Psi^t J_0 \Psi = J_0 \},
$$
by $\L(m)$ the space of Lagrangian subspaces of $(\re^{2m}, \om_0)$, i.e. 
$$
\L(m) := \{ L \subset \re^{2m} \mid \om_0(v,w)=0 \mbox{ for all } v,w \in L \mbox{ and }\dim L = m \},
$$
and by $\la_0$ the vertical Lagrangian subspace $\la_0 = \{0\} \times \re^m$.

\subsection{Maslov index for symplectic path}

 In \cite{CZ84} Conley and Zehnder introduced a Maslov type index that associates an integer $\mu_{CZ} (\Psi)$ to every path of symplectic automorphisms belonging to the space
$$
\Sp P :=\{\Psi  :[0,\tau] \ra Sp(2m) \mid \Psi(0)= I, \det(I-\Psi(1)) \neq 0 \}.
$$
The following description is presented in \cite{SZ92}.\\ 

\ni 
It is well known that the quotient $Sp(2m)/U(m)$ is contractible and so the fundamental group of $Sp(2m)$ is isomorphic to $\Z$. This isomorphism can be represented by a natural continuous map 
$$
\rho: Sp(2m) \ra S^1
$$
which restricts to the determinant map on $Sp(2m) \cap O(2m) \simeq U(m)$. Consider the set
$$
Sp(2m)^* := \{\Psi \in Sp(2m) \mid \det(I -\Psi) \neq 0\}.
$$
It holds that $Sp(2m)^*$ has two connected components
$$
Sp(2m)^\pm := \{\Psi \in Sp(2m) \mid \pm \det(I -\Psi) > 0\}.
$$
Moreover, every loop in $Sp(2m)^*$ is contractible in $Sp(2m)$. \\

\ni
For any path $\Psi: [0,\tau] \ra Sp(2m)$ choose a function $\al: [0,\tau] \raÊ\re$ such that $\rho(\Psi(t)) = e^{i\al(t)}$ and define
$$
\Delta_\tau (\Psi) = \frac{\al(\tau) - \al(0)}{\pi}.
$$
For $A \in Sp(2m)^*$ choose a path $\Psi_A (t) \in Sp(2m)^*$ such that $\Psi_A (0) = A$ and $\Psi_A (1) \in \{-I , \diag(2,-1,\ldots,-1,\demi,-1,\ldots,-1)\}$. Then $\Delta_1 (\Psi_A)$ is independent of the choice of this path. Define
$$
r(A) = \Delta_1 (\Psi_A), \quad A\in Sp(2m)^*.
$$
\\

\ni
The {\it Conley-Zenhder index} of a path $\Psi \in \Sp P$ is define as the integer
$$
\mu_{CZ}(\Psi) = \Delta_\tau(\Psi) + r(\Psi(\tau)).
$$
The following index iteration formula, proved in \cite{SZ92}, will be used in section \ref{section:the simply connected case}.
\begin{lem}
\label{lemma:iteration formula}
Let $\Psi(t) \in Sp(2m)$ be any path such that
$$\Psi(k\tau+t) = \Psi(t) \Psi(\tau)^k$$
for $t\geq 0$ and $k\in \N$. Then
$$\Delta_{k\tau}(\Psi) = k \Delta_1(\Psi)$$
for every $k\in \N$. Moreover, $|r(A)| <n$ for every $A\in Sp(2m)^*$.
\end{lem}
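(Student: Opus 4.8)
The plan is to reduce the iteration formula to a statement over one period and then to symplectic normal forms. Putting $k=1,\ t=0$ in the hypothesis forces $\Psi(0)=I$. On $[0,k\tau]$ the path $\Psi$ is the concatenation of the $k$ pieces $\gamma_j\colon[0,\tau]\to Sp(2m)$ given by $\gamma_j(u):=\Psi\bigl((j-1)\tau+u\bigr)=\Psi(u)\,\Psi(\tau)^{j-1}$, with $\gamma_1=\Psi|_{[0,\tau]}$. Since $\Delta$ is additive under concatenation of paths — choose a continuous phase $\al$ with $\rho(\Psi(t))=e^{i\al(t)}$ on $[0,k\tau]$ and telescope $\al(k\tau)-\al(0)=\sum_{j=1}^{k}\bigl(\al(j\tau)-\al((j-1)\tau)\bigr)$ — one gets $\Delta_{k\tau}(\Psi)=\sum_{j=1}^{k}\Delta_\tau(\gamma_j)$. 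So the asserted identity $\Delta_{k\tau}(\Psi)=k\,\Delta_\tau(\Psi)$ amounts to the vanishing of the defect $\de_k(\Psi):=\sum_{j=2}^{k}\bigl(\Delta_\tau(\gamma_j)-\Delta_\tau(\Psi)\bigr)$.

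Next I would show that $\de_k$ depends only on $A:=\Psi(\tau)$. Two paths $[0,\tau]\to Sp(2m)$ from $I$ to $A$ differ by a loop at $I$; since $\rho$ induces the isomorphism $\pi_1(Sp(2m))\cong\Z$, such a loop adds an even integer $2n$ to $\Delta_\tau$, and the same loop translated on the right to be based at $A^{j-1}$ adds that same $2n$ — right translation by $A^{j-1}$ is homotopic to $\id$ on $Sp(2m)$, hence trivial on $\pi_1$, and $\Delta$ of a loop equals twice its $\rho$-winding number. These terms cancel in $\de_k$, so $\de_k=\de_k(A)$ is well defined; taking interpolating paths of the form $s\mapsto\exp(sX/\tau)$, $\exp X=A$, shows $A\mapsto\de_k(A)$ is continuous, and taking block-diagonal, respectively conjugated, interpolating paths, together with the facts that the map $\rho$ of \cite{SZ92} is multiplicative under symplectic direct sums and invariant under $Sp(2m)$-conjugation, shows $\de_k$ is additive over symplectic direct sums and constant on conjugacy classes. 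Hence it suffices to check $\de_k=0$ on each elementary block of the symplectic normal form of $A$.

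On a semisimple block with no negative real eigenvalue — an elliptic block (eigenvalues $e^{\pm i\te_l}$), a positive-hyperbolic block, or a complex-hyperbolic block — write the block as $\exp X$, $X\in\mathfrak{sp}$, and take $\Psi(s)=\exp(sX/\tau)$; then the iterated path is literally $s\mapsto\exp(sX/\tau)$ on $[0,k\tau]$, along which the unit eigenvalues sweep the angles $\te_l\,s/\tau$ while the remaining eigenvalues stay off the unit circle, so the phase of $\rho$ is the \emph{linear} function $\al(s)=c+\sum_l\te_l\,s/\tau$; thus $\al(k\tau)-\al(0)=k\bigl(\al(\tau)-\al(0)\bigr)$ and $\de_k=0$. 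A block $-I_{2d}$, or a negative-(complex-)hyperbolic block, reduces to these cases by factoring off $-I_{2d}$, i.e. by prepending a loop-free rotation path from $I_{2d}$ to $-I_{2d}$ (of linear phase $d\pi s/\tau$); and the non-semisimple blocks follow because semisimple matrices are dense in $Sp(2m)$ and $\de_k$ is continuous. Summing over blocks, $\de_k\equiv0$, which is the first assertion. For the second, $r(A)=\Delta_1(\Psi_A)$ is independent of the chosen path $\Psi_A$ in $Sp(2m)^*$ from $A$ to $-I$ or to $W=\diag(2,-1,\dots,-1,\tfrac12,-1,\dots,-1)$, because two such differ by a loop in $Sp(2m)^*$, which is contractible in $Sp(2m)$ and so contributes $0$ to $\Delta$; decomposing $A\in Sp(2m)^*$ into normal-form blocks (no $+1$-eigenvalue block occurs, since $\det(I-A)\neq0$) and taking $\Psi_A$ block-diagonal, each $2d$-dimensional block can be joined inside $Sp(2d)^*$ to $-I_{2d}$ or to $\diag(2,\tfrac12)\oplus(-I_{2d-2})$ by a path whose $\rho$-phase moves by strictly less than $d\pi$ in absolute value (an elliptic block by rotating its angles $\te_l\in(0,2\pi)\setminus\{\pi\}$ to $\pi$, each by less than $\pi$; a hyperbolic block with no phase change), so $|r(A)|<\sum_l 1=m$.

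The hard step is the block computation: it rests on the \emph{specific} map $\rho$ of \cite{SZ92}, which is built from the eigenvalue (Gelfand--Lidskii) data of a symplectic matrix and whose rotation-number phase is genuinely additive under iteration on semisimple matrices. The naive realization $\Psi\mapsto\det_\C Q$ coming from a polar decomposition $\Psi=PQ$, $Q\in U(m)$, also restricts to $\det$ on $U(m)$ and induces the isomorphism on $\pi_1$, yet it does \emph{not} satisfy the iteration formula — already for $m=1$ and $\Psi(\tau)=\bigl(\begin{smallmatrix}-1&1\\0&-1\end{smallmatrix}\bigr)$ one computes $\Delta_{2\tau}(\Psi)-2\,\Delta_\tau(\Psi)=\tfrac14-\tfrac2\pi\arctan\tfrac12\neq0$. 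So the real content is to import and use the fine structure of $\rho$ from \cite{SZ92}; the homotopy bookkeeping in the second step and the normal-form list are then routine.
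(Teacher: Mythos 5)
The paper does not give a proof of this lemma: it is stated (with the typos $\Delta_1$ for $\Delta_\tau$ and $n$ for $m$) and referred directly to \cite{SZ92}, so there is no in-text argument to compare against. Your reconstruction is, in substance, the Salamon--Zehnder argument, and its two key moves are sound. Telescoping $\Delta$ across the right-translated pieces $\gamma_j(u)=\Psi(u)\Psi(\tau)^{j-1}$ and noting that the defect $\delta_k$ depends only on $A=\Psi(\tau)$ is correct, because right translation by a fixed element of $Sp(2m)$ is homotopic to the identity, so it preserves the winding under $\rho$ of any loop, and the loop corrections cancel pairwise in $\delta_k$. Reducing to normal-form blocks via the multiplicativity of $\rho$ under symplectic direct sums and its conjugation invariance, and then computing $\delta_k=0$ on a semisimple block by taking the path $s\mapsto\exp(sX/\tau)$, along which the $\rho$-phase is linear, is exactly the content that makes the iteration formula hold. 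You have also correctly isolated the delicate point: the result is not a soft consequence of $\rho|_{U(m)}=\det_\C$ together with $\rho_*$ being the $\pi_1$-isomorphism, but genuinely uses the eigenvalue construction of $\rho$ in \cite{SZ92}; your polar-decomposition counterexample makes that vivid.

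Two small patches. For the continuity of $\delta_k$ you should not appeal to $s\mapsto\exp(sX/\tau)$ directly, since a real symplectic matrix with a negative real eigenvalue of odd algebraic multiplicity has no real symplectic logarithm; instead fix $\Psi_{A_0}$ and, for $A$ near $A_0$, take $\Psi_A(s):=\Psi_{A_0}(s)\exp\bigl((s/\tau)\log(A_0^{-1}A)\bigr)$, which is continuous in $A$, has the correct endpoints, and agrees with $\Psi_{A_0}$ at $A=A_0$. For $|r(A)|<m$, the contraction of a $4$-dimensional complex-hyperbolic block to its target inside $Sp(4)^*$ deserves a sentence: along any path whose spectrum avoids the unit circle, $\rho$ takes values in $\{\pm1\}$ and is therefore constant, so such a block can first be deformed, with zero phase change, into a pair of real-hyperbolic $2\times2$ blocks by sending the argument of the eigenvalue quadruple to $0$ or $\pi$ while keeping the modulus away from $1$; the strict inequality then comes from the elliptic angles $\te_\ell\in(0,2\pi)\setminus\{\pi\}$ alone, each of which moves by strictly less than $\pi$.
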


\subsection{The Maslov index for Lagrangian paths}
A generalization of the Conley--Zehnder index is due to Robbin and Salamon in \cite{RS93}. They associated a Maslov type index to any path regardless of where its endpoints lie.\\ 

\ni
Let $\La(t) :[0,\tau] \ra \L(m)$ be a smooth path of Lagrangian subspaces. For each $t_0$ we define a quadratic form $Q_{t_0}$ on $\La(t_0)$ as follows. Take a Lagrangian complement $W$ of $\La(t_0)$. For $v\in \La(t_0)$ and $t$ near $t_0$ define $w(t)\in W$ by $v+w(t) \in \La(t)$. Then $Q_{t_0} (v) := \frac{d}{dt}|_{t=t_0} \om_0(v,w(t))$ is independent of the choice of $W$.
Now fix a Lagrangian subspace $V$ and define
$$
\Si_k(V) := \{L \in\L(m) \mid \dim(L\cap V) = k\} \quad \mbox{and} \quad \Si(V) := \bigcup_{k=1}^m \Si_k (V).
$$
Without loss of generality we can assume that $\La$ has only regular crossings with $\Si(V)$ (this can be achieved by homotopy). Then the crossing form $\Ga_t := Q_t |_{\La(t)\cap V}$ is a nonsingular quadratic form whenever $\La(t) \in \Si(V)$. \\

\ni
The {\it Robbin--Salamon index} of a path $\La$ is defined as the half integer
$$
\mu_{RS} (\La, V) := \demi \sign \Ga_0 + \sum_{\substack{0<t<\tau \\ \La(t) \in \Si(V)}} \sign \Ga_t + \demi \sign \Ga_t,
$$ 
where the signature $\sign$ is the number of positive minus the number of negative eigenvalues of the quadratic form. \\

\ni
If  $\Psi:[0,\tau] \ra Sp(2n)$ is a path of symplectic automorphisms, then the graphs of $\Psi(t)$, $gr(\Psi(t))$, form a path of Lagrangian subspaces in $(\re^{2m} \oplus \re^{2m}, (-\om_0)\oplus\om_0)$. The {\it Robbin--Salamon index} of a path $\Psi$ is defined as the half integer
$$
\mu_{RS} (\Psi) = \mu_{RS} (gr(\Psi), \Delta),
$$
where $\Delta$ is the diagonal of $\re^{2m} \oplus \re^{2m}$. If $\Psi \in \Sp P$, this index is equal to the Conley--Zehnder index $\mu_{CZ} (\Psi)$.

\subsection{Maslov index for periodic orbits}
Let $H:[0,1]\times \Tm \ra\re$ be a time dependent Hamiltonian.
In order to define the Maslov index of a $1$-periodic solution of \eqref{hameq} we can proceed as follows. \\

\ni
Let $x \in \P(H)$. Then the symplectic vector bundle $x^*(T\Tm)$ admits a symplectic trivialization $\Phi: S^1\times \re^{2m} \ra x^*(T\Tm)$ such that
\begin{equation}
\label{e:trivialization}
\Phi (t) \la_0 = T_{x(t)}^v \Tm \quad \mbox{for all } t\in S^1,
\end{equation}
see \cite[Lemma 1.1]{AS06}. By acting on the differential of the Hamiltonian flow along $x$, the trivialization $\Phi$ provides a path in $Sp(2m)$,
$$
\Psi_x(t) = \Phi^{-1}(t) d\vp^t_H (x(0)) \Phi(0).
$$ 
If $x \in \P(H)$, neither the Conley--Zehnder index $\mu_{CZ} (\Psi_x)$, if well defined, nor the Robbin--Salamon index $\mu_{RS} (\Psi_x)$ depends on the symplectic trivialization $\Phi$ satisfying \eqref{e:trivialization}, for two such trivializations are then homotopic.\\

\ni
We define the {\it Maslov index} of the $1$-periodic orbit $x$ by setting
$$
\mu (x) := \mu_{RS} (\Psi_x).
$$
If $x$ is non-degenerate, $\Psi_x$ belongs to the set $\Sp P$ and thus $\mu(x)$ is an integer equal to $\mu_{CZ} (\Psi_x)$. If $x\in \P(H)$ is transversally nondegenerate, $gr(\Psi_x (1)) \in \Si_1 (\Delta)$ and thus $\mu(x)$ is not an integer.

 \clearemptydoublepage

\chapter{Convex to Starshaped} 
\label{chapter:convex to starshaped}
In this chapter we follow the idea of sandwiching develloped in Frauenfelder--Schlenk, \cite{FS05}, and Macarini--Schlenk, \cite{MS10}. By sandwiching the set $\Si$ between the level sets of a geodesic Hamiltonian, and by using the Hamiltonian Floer homology and its isomorphism to the homology of the free loop space of $M$, we shall show that the number of 1-periodic orbits of $K$ of action $\leq a$ is bounded below by the rank of the homomorphism
$$
\io_k: H_k (\La^{a^2} ;\F_p) \ra H_k(\La M;\F_p)
$$
induced by the inclusion $\La^{a^2} M \hookrightarrow \La M$.

\section{Relevant Hamiltonians}
\label{section:relevant hamiltonians}
Let $\Si$ be a fiberwise starshaped hypersurface in $T^*M$. We can define a Hamiltonian function $F:\Tm\ra\re$ by the two conditions
$$F|_\Si \equiv 1, \quad F(q,sp)=s^2F(q,p), \quad s\geq 0 \mbox{ and }(q,p)\in \Tm.$$
This function is of class $C^1$, fiberwise homogeneous of degree 2 and smooth off the zero-section. To smoothen $F$ choose a smooth function $f:\re\ra\re$ such that 
$$\begin{cases}
f(r) = 0 & \mbox{if } r\leq \ep^2,\\
f(r) = r  & \mbox{if } r\geq \ep  \\
f'(r) >0 & \mbox{if } r > \ep^2, \\
0\leq f'(r)\leq2 & \mbox{for all } r,
\end{cases}$$
where $\ep\in(0,\frac{1}{4})$ will be fixed in section \ref{section:non-crossing lemma}. Then $f\circ F$ is smooth. \\

\begin{figure}[h]
  \centering
  \includegraphics[width=5cm]{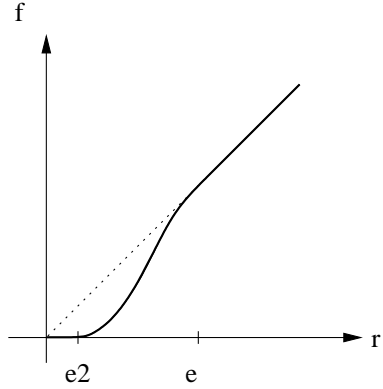}
  \caption{The cut off function $f$.}
\end{figure}

\ni
Recall that $(q, p)$ denotes canonical coordinates on $T^*M$. Fix a Riemannian metric $g$ on $M$, let $g^*$ be the Riemannian metric on $T^*M$ induced by $g$, and define 
$$
G(q,p) := \demi g^*(q)(p,p).
$$
In the following we will often write $G(q,p)=\demi|p|^2$. Our sign convention in the definitions of the symplectic form $\om$ and the Hamiltonian vector field \eqref{e:hamiltonian vector field} is such that the flow $\vp^t_G$ is the geodesic flow.\\

\ni
For $r>0$ we abbreviate
$$D(r) = \{(q,p) \in T^*M \mid |p| \leq r\}.$$

\ni
Recall that $M$ is compact and that $\Si$ is fiberwise starshaped. After multiplying $g$ by a constant, we can assume that $G \leq F$. Choose $\si >0$ such that $\si G \geq F$. Moreover we ask that $G < F < \si G $ on $D^c(\ep^2)$.\\

\ni
The Hamiltonian Floer homology for $F$ or $G$ is not defined since all the periodic orbits are degenerate. As we shall consider multiples $nF$ and $nG$ of $F$ and $G$, $n \in \N$, we associate to them the followings perturbations. \\

\ni
Fix $n\in \N$. Choose $c \in (0,\frac{1}{4})$. \\
We shall add to $nF$ a perturbation $V_n:S^1 \times T^*M \ra \re$ such that 
\begin{itemize}
\item[(V1)] All 1-periodic solutions of $\dot{x}(t)=X_{n(f\circ F)+V_n}(x(t))$ are nondegenerate, and
\item[(V2)] $\|V_n (t,q,p)\|_{C^1} < \min \{c,\displaystyle\frac{c}{\phantom{-}\| X_{n(f\circ F)} \|_{C^0}}, \frac{c}{\phantom{-}\| p\|_{C^0}} \}  $,
\end{itemize}
where $\|V(t,x)\|_{C^1} := \sup\{|V(t,x)| +|dV(t,x)| \mid (t,x) \in S^1\times \Tm\}$.\\

\ni
We shall add to $nG$ and $n\sigma G$ a perturbation $W_n:S^1 \times M \ra \re$ such that
\begin{itemize}
\item[(W1)] All 1-periodic solutions of $\dot{x}(t)=X_{n(f\circ G)+W_n}(x(t))$ and  $\dot{x}(t)=X_{n\si G+W_n}(x(t))$ are nondegenerate, 
\item[(W2)] $\| W_n (t,q) \|_{C^1} < \min\{c_n, \displaystyle\frac{c_n}{\phantom{-}\| X_{n\si G}\|_{C^0}} \}$ for some constant $0< c_n < \min\{c, \frac{d_n}{4} \} $, where $d_n < 1$ will be fixed in section \ref{section:continuation homomorphisms}, 
\item[(W3)] $nG+W_n \leq nF+V_n \leq n \si G+W_n$,
\item[(W4)] $nF - V_n \leq n \si G - W_n$ on $D^c(\Si)$.
\end{itemize}

\begin{rmk}
We could add to $nF$, $nG$ and $n\si G$ the same perturbation $W_n(t,q)$ such that all 1-periodic solutions are nondegenerate and $\| W_n (t,q) \|_{C^1} <c_n <c$. However in the proof of Theorem B, the perturbations $V_n$ that we will consider will not be potentials. Therefore we made this more general choice of perturbations.
\end{rmk}

\ni
We alter $n (f\circ F)+V_n$ near infinity to a perturbed Riemannian Hamiltonian.

\ni
Choose smooth functions $\tau_n:\re\ra\re$ such that 
$$\begin{array}{ll}
\tau_n(r)=0 &\mbox{if } r\leq 3, \\
\tau_n(r)=1 &\mbox{if } r\geq 6 \mbox{ and} \\
\tau'_n(r)\geq 0 & \mbox{for all } r\in\re.
\end{array}$$
Set 
$$\begin{aligned}
\gg(t,q,p) & := n\si G(q,p) +W_n (t,q), \\
K_n(t,q,p) & := (1-\tau_n(|p|) \left(n (f\circ F) + V_n\right) (t,q,p)+\tau_n(|p|)\;  \gg(t,q,p), \\
\gl(t,q,p) & := (1-\tau_n(|p|) \left(n (f\circ G) + W_n \right)(t,q,p)+\tau_n(|p|)\;  \gg(t, q,p) \\
 	      & \phantom{i}= n \bigl( (1-\tau_n(|p|)  (f\circ G )(q,p)+\tau_n(|p|)\;  \si G(q,p) \bigr) + W_n(t,q).
\end{aligned}$$
Then 
$$\gl\leq K_n \leq \gg, \quad \mbox{for all } n \in \N.$$ 
where 
$$K_n=n( f\circ F)+V_n \; \mbox{ and } \; \gl = n( f\circ G) + W_n \mbox{ on } D(3).$$
Since $D(\Si) = \{F \leq 1\} \subset \{nF+V \leq 2n\} \subset \{nG+W_n \leq 2n\} \subset D(3)$, we thus have in particular that
$$ K_n = n (f\circ F) +V_n  \mbox{ on } D(\Si) .$$
Moreover,
$$ \gl = K_n = \gg  \mbox{ on } \Tm\backslash D(6).$$ 

\begin{figure}[h]
  \centering
  \psfrag{a}[][][1]{$G^{-1}(1)$}
  \psfrag{b}[][][1]{$K^{-1}(1)$}
  \psfrag{c}[][][1]{$\sigma G^{-1}(1)$}
  \includegraphics[width=10cm]{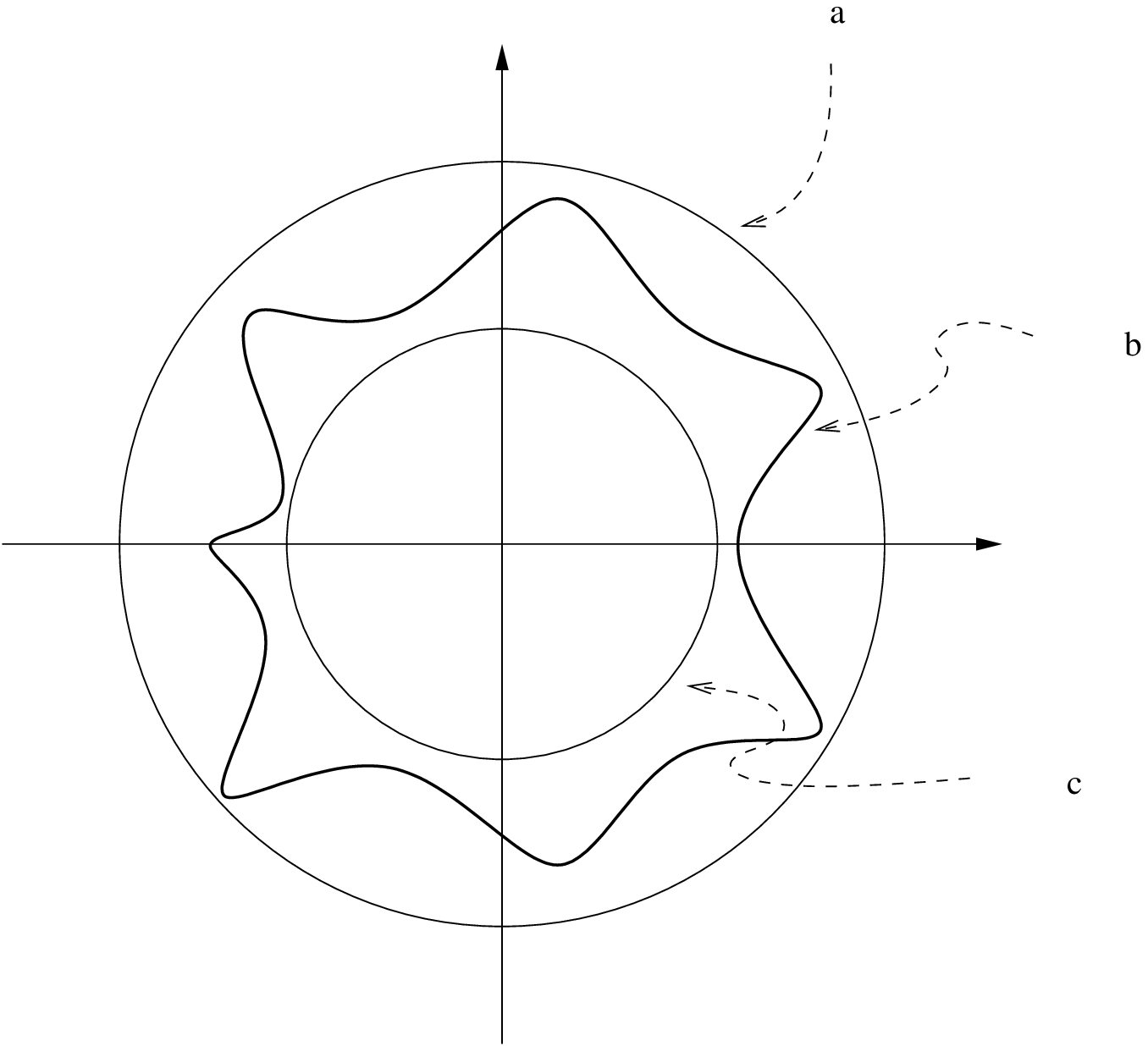}
  \caption{Sandwiching.}
\end{figure}

\section{Action spectra}
\label{section:action spectra}
The action spectrum $\Sp (H)$ of a proper Hamiltonian $H:S^1\times T^*M\ra\re$ is the set of critical values of the action functional $\A_H:\La (T^*M)\ra\re$, that is
$$\Sp(H):= \{\A_H(x)\mid x\in \P(H)\}.$$ 
The following proposition follows from \cite[Proposition 3.7]{Schw00}.
\begin{prop}
The action spectrum $\Sp (H)$ is the union of countably many nowhere dense subsets of $\re$.
\label{spectrum}
\end{prop}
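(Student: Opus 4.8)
The statement is the analogue, for proper Hamiltonians on $\Tm$, of \cite[Proposition 3.7]{Schw00}, and the plan is to adapt that argument: I would write $\Sp(H)$ as a countable union of compact sets and show that each of them has empty interior, by a local finite dimensional reduction together with Sard's theorem.

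First I would decompose the critical set. Recall $D(n)=\{(q,p)\in\Tm\mid |p|\le n\}$ and put
$$
\P_n:=\{x\in\P(H)\mid x(S^1)\subset D(n)\}.
$$
Every $1$-periodic orbit has compact, hence bounded, image, so $\P(H)=\bigcup_{n\ge 1}\P_n$, and since $\Sp(H)=\A_H(\P(H))$ this gives $\Sp(H)=\bigcup_{n\ge 1}\A_H(\P_n)$. Each $\P_n$ is compact: its elements solve $\dot x=X_H(x)$ with image in the compact set $D(n)$, on which $X_H$ is bounded together with all of its derivatives, so from the equation one gets a priori bounds $|x^{(k)}|\le C_k(n)$ for all $k$; by Arzel\`a--Ascoli the loops in $\P_n$ then form a precompact subset of $C^\infty(S^1,\Tm)$, and a $C^\infty$-limit of such loops again solves the equation and has image in the closed set $D(n)$. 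Since $\A_H$ is continuous, each $\A_H(\P_n)$ is compact, hence closed; it therefore suffices to show that each $\A_H(\P_n)$ has Lebesgue measure zero, which forces empty interior and hence nowhere density.

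Fix $n$ and a point $x_0\in\P_n$. Near $x_0$ the set $\P(H)$ is the zero set of the smooth section $x\mapsto \dot x-X_H(x)$ of the bundle over $\La(\Tm)$ whose fibre over $x$ consists of the $L^2$-sections of $x^*T\Tm$. Using a symplectic trivialization $\Phi$ of $x_0^*T\Tm$ as in \eqref{e:trivialization}, the linearization of this section at $x_0$ becomes an operator of the form $\xi\mapsto\dot\xi-A(t)\xi$ on loops, which is Fredholm of index $0$; its kernel is (under $\Phi$) the fixed space of $d\vp_H^1(x_0(0))$ for the eigenvalue $1$ and in particular is finite dimensional. By the implicit function theorem (Lyapunov--Schmidt reduction) there are a neighbourhood $U_{x_0}$ of $x_0$ in $\La(\Tm)$ and a finite dimensional submanifold $N_{x_0}\subset U_{x_0}$, of dimension $\dim\ker(d\vp_H^1(x_0(0))-\id)$, with $\P(H)\cap U_{x_0}\subset N_{x_0}$. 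Because $\P(H)=\crit(\A_H)$, every point of $\P(H)\cap N_{x_0}$ is a critical point of $\A_H$, a fortiori of the smooth function $\A_H|_{N_{x_0}}$ on the finite dimensional manifold $N_{x_0}$; hence $\A_H(\P(H)\cap U_{x_0})$ is contained in the set of critical values of $\A_H|_{N_{x_0}}$, which has measure zero by Sard's theorem.

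Finally, cover the compact set $\P_n$ by finitely many neighbourhoods $U_{x_0^{(1)}},\dots,U_{x_0^{(r)}}$ of the above type. Then $\A_H(\P_n)\subset\bigcup_{j=1}^{r}\A_H(\P(H)\cap U_{x_0^{(j)}})$ is a finite union of sets of measure zero, so it has measure zero; being also compact, it is closed with empty interior, i.e. nowhere dense. Therefore $\Sp(H)=\bigcup_{n\ge 1}\A_H(\P_n)$ is a countable union of nowhere dense subsets of $\re$, as claimed. The one substantial ingredient is the local finite dimensional reduction near a critical point, which rests on the Fredholm property of the linearized flow equation on $S^1$ and on the smoothness of $\A_H$ on $\La(\Tm)$ with $\crit(\A_H)=\P(H)$ (both available from the discussion preceding the proposition); everything else is a routine compactness-and-Sard argument, and I do not expect the properness of $H$ to enter beyond guaranteeing that the objects involved are well defined.
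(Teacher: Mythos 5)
Your argument is correct and self-contained; note that the paper itself offers no proof, only a citation of \cite[Proposition 3.7]{Schw00}, and Schwarz's argument rests on the same two ingredients you use, namely a local finite-dimensional (Lyapunov--Schmidt) reduction of $\crit(\A_H)$ combined with Sard's theorem. Your extra step of exhausting $\P(H)$ by the compact sets $\P_n$ of orbits contained in the disc bundles $D(n)$ is exactly what is needed to transplant the closed-manifold argument of \cite{Schw00} to the non-compact cotangent bundle, and you are also right that the properness of $H$ plays no essential role here since every individual $1$-periodic orbit already has compact image.
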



\ni
We now look at the action spectra of perturbed homogeneous Hamiltonians of degree~2. 

\begin{lem}
\label{lemma:savior}
Let $H$ be fiberwise homogeneous of degree~2 and let $V(t,q,p)$ be a perturbation such that $\|V\|_{C_1} <\frac{c}{\phantom{-}\|X_H\|_{C^0}}$. Let $x\in \P(H+V)$ and assume that $H(x(t_0)) = a$ for some $t_0 \in S^1$. Then it holds that
$$
a-c < H(x(t)) < a+c
$$
for all $t\in S^1$.
\end{lem}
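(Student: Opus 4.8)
The plan is to control the variation of $H$ along the orbit by differentiating $t \mapsto H(x(t))$ and bounding the derivative by $c$. Since $x \in \P(H+V)$ and Hamiltonian vector fields depend linearly on the Hamiltonian, $\dot x(t) = X_{H+V_t}(x(t)) = X_H(x(t)) + X_{V_t}(x(t))$, where $V_t := V(t,\cdot)$; and since $H$ is time-independent,
$$
\frac{d}{dt} H(x(t)) \;=\; dH_{x(t)}\bigl(\dot x(t)\bigr) \;=\; dH_{x(t)}\bigl(X_H(x(t))\bigr) + dH_{x(t)}\bigl(X_{V_t}(x(t))\bigr) .
$$
The first summand vanishes, since $dH(X_H) = -\om(X_H,X_H) = 0$. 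For the second I would use the identity $dH(X_{V_t}) = -\,dV_t(X_H)$, which follows from $\om(X_H,X_{V_t}) = -dH(X_{V_t})$, $\om(X_{V_t},X_H) = -dV_t(X_H)$, and the antisymmetry of $\om$. Hence
$$
\frac{d}{dt} H(x(t)) \;=\; -\,dV_t(X_H)(x(t)) \qquad \text{for every } t .
$$

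Next I would estimate the right-hand side pointwise. With respect to the $\om$-compatible metric $g_J(\cdot,\cdot) = \om(\cdot,J\cdot)$, Cauchy--Schwarz gives
$$
\bigl| dV_t(X_H)(x(t)) \bigr| \;\le\; \bigl| dV_t(x(t)) \bigr|\,\bigl| X_H(x(t)) \bigr| \;\le\; \|V\|_{C^1}\,\|X_H\|_{C^0} \;<\; c ,
$$
where the last inequality is precisely the hypothesis $\|V\|_{C^1} < c/\|X_H\|_{C^0}$. So $\bigl|\tfrac{d}{dt}H(x(t))\bigr| < c$ for all $t$. Finally, given $t\in S^1$ with $t\ne t_0$, I would pick a lift of the arc in $S^1$ joining $t_0$ to $t$; its length $\ell$ satisfies $\ell < 1$, and integrating along it,
$$
\bigl| H(x(t)) - H(x(t_0)) \bigr| \;\le\; \int \Bigl| \tfrac{d}{ds} H(x(s)) \Bigr|\, ds \;<\; c\,\ell \;\le\; c ,
$$
whence $a - c < H(x(t)) < a + c$; the case $t = t_0$ is trivial. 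This gives the lemma.

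I do not anticipate a genuine obstacle here: the statement is an a priori bound of Gronwall type, and the argument is essentially the three displays above. The only points that need a little care are the sign bookkeeping leading to $dH(X_{V_t}) = -dV_t(X_H)$; the remark that in the convention $\|V\|_{C^1} = \sup(|V| + |dV|)$ no $\partial_t V$ term ever enters the estimate, because $H$ (and hence $X_H$) is time-independent, so only the spatial differential of $V(t,\cdot)$ appears and $|dV_t| \le \|V\|_{C^1}$; and keeping the arc length on $S^1$ strictly below $1$ so that the final inequality is strict. The fibrewise homogeneity of degree $2$ is not used for this particular estimate.
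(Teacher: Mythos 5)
Your proof is correct and follows the same route as the paper: differentiate $t\mapsto H(x(t))$, kill the $dH(X_H)$ term, express $dH(X_{V_t})$ as $\pm dV_t(X_H)$ via $\om$-antisymmetry, bound by $\|V\|_{C^1}\|X_H\|_{C^0}<c$, and integrate over an arc of $S^1$. The only (harmless) cosmetic difference is that the paper phrases the final step as a contradiction via the mean value theorem, while you integrate directly along a sub-unit-length arc, which is cleaner.
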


\begin{proof}
By definition of the Hamiltonian vector field we have
$$\begin{aligned}
\Bigl|\frac{d}{dt} H(x(t))\Bigr| & = \,\mid dH (x(t)) \; \dot{x}(t) \mid \\
			      & = \,\mid\om(X_H(x(t)), X_{H+V}(x(t))) \mid \\
			      & = \,\mid\om(X_H(x(t)), X_H (x(t)) +X_V (x(t))) \mid \\
			      & =\, \mid\om(X_H(x(t)), X_{V}(x(t))) \mid \\
			      &= \,\mid dV(x(t)) \; X_H(x(t))\mid \\
			      &\leq \| V \|_{C^1} \; \| X_H \|_{C^0} \\
			      & <c.
\end{aligned}$$
Assume there exist $t_1\in S^1$ such that $|H(x(t_1)) - H(x(t_0))| \geq c$. Then there exists $t \in S^1$ such that 
$$
\Bigl|\frac{d}{dt} H(x(t))\Bigr|  \geq c
$$
which is a contradiction.
\end{proof}

\ni
In the sequel, Lemma~\ref{lemma:savior} will always be applied under assumption (V2) or (W2).

\begin{lem}
Let $H: \Tm \ra\re$ be fiberwise homogeneous of degree 2 and let $W(t,q)$ be a perturbation such that $\|W\|_{C^1} <c$. Let $h:\re\ra\re$ be a smooth function and let $r>0$. Then it holds that 
\begin{itemize}
\item[(i)] for $x\in\P(h\circ H + W)$ we have $$\A_{h\circ H + W}(x)= \int_0^1 2h'(H(x)) H(x)-h(H(x)) - W(t,x)\, dt,$$
and that
\item[(ii)] $\Sp(rH + W)\subset \frac{1}{r}\Sp(H + W) + [-c, c]$.
\end{itemize}
\label{action}
\end{lem}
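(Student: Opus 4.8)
The statement concerns a Hamiltonian of the form $h\circ H + W$ where $H$ is fiberwise homogeneous of degree~$2$ and $W=W(t,q)$ depends only on the base point. The key geometric fact is Euler's identity for degree-$2$ homogeneous functions: differentiating $H(q,sp)=s^2H(q,p)$ in $s$ at $s=1$ gives $dH(x)(Y(x))=2H(x)$, where $Y=\sum_i p_i\,\partial p_i$ is the radial (Liouville) vector field, and since $\la=\io_Y\om$ this reads $\la(X_H(x))=2H(x)$. This was already derived in the section on dynamics on fiberwise starshaped hypersurfaces, so I may quote it. Note also that $W(t,q)$, being a function of $q$ alone, satisfies $\la(X_W)=dW(Y)=\sum_i p_i\,\partial_{p_i}W=0$, so $\la(X_{h\circ H+W})=\la(X_{h\circ H})$.

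For part (i): let $x\in\P(h\circ H+W)$. By the chain rule, $X_{h\circ H}=h'(H)\,X_H$, so $\la(\dot x)=\la(X_{h\circ H+W}(x))=h'(H(x))\,\la(X_H(x))=2h'(H(x))H(x)$, using Euler's identity. Substituting into the definition $\A_{h\circ H+W}(x)=\int_0^1\bigl(\la(\dot x(t))-(h\circ H+W)(t,x(t))\bigr)\,dt$ yields exactly
$$
\A_{h\circ H+W}(x)=\int_0^1\bigl(2h'(H(x))H(x)-h(H(x))-W(t,x)\bigr)\,dt,
$$
which is (i). This step is essentially a one-line computation once Euler's identity is invoked.

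For part (ii): apply (i) with $h(s)=rs$, so $h'\equiv r$ and $h(H)=rH$; then $2h'(H)H-h(H)=2rH-rH=rH$, giving $\A_{rH+W}(x)=\int_0^1\bigl(rH(x(t))-W(t,x(t))\bigr)\,dt$ for $x\in\P(rH+W)$. Now I want to compare this with the spectrum of $H+W$. The natural device is the rescaling map $c_s(q,p)=(q,sp)$ used earlier in the chapter, which satisfies $c_s^*\om=s\,\om$ and $c_s^*H=s^2H$, hence $(c_s)_*X_{rH}=\tfrac1s X_{rH}(s\,\cdot)$; taking $s=1/r$ shows that $x\mapsto x_{1/r}:=c_{1/r}\circ x$ sends periodic orbits of $rH+W$ to periodic orbits of a Hamiltonian conjugate to $H+\widetilde W$, where $\widetilde W(t,q)=W(t,q)$ is unchanged because $W$ does not depend on $p$ and $c_{1/r}$ acts only on the fibers. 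Tracking the action under this conjugation multiplies the $H$-part by $1/r$ and leaves the $W$-part's contribution controlled in size: since $\|W\|_{C^1}<c$ one gets $|\int_0^1 W|<c$, so $\A_{rH+W}(x)\in\tfrac1r\A_{H+W}(\text{corresponding orbit})+[-c,c]$, hence $\Sp(rH+W)\subset\tfrac1r\Sp(H+W)+[-c,c]$.

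The main obstacle is bookkeeping in part (ii): one must be careful that the conjugation/rescaling argument genuinely identifies $\P(rH+W)$ with $\P(H+W)$ (rather than with $\P(H+r W)$ or some reparametrized version), and that the action transforms in the claimed way with the $W$-term contributing only a bounded error in $[-c,c]$ rather than being rescaled. An alternative, cleaner route that avoids the rescaling subtlety is to observe directly from $\A_{rH+W}(x)=\int_0^1(rH(x)-W(t,x))\,dt$ and $\A_{H+W}(y)=\int_0^1(2H(y)-H(y)-W(t,y))\,dt=\int_0^1(H(y)-W(t,y))\,dt$ that, on the level of the shared set of orbits after rescaling, the $H$-values differ by the factor $1/r$ while the $W$-integral is bounded by $c$ in absolute value; this is the inclusion claimed. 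I would present part (ii) via whichever of these two formulations the surrounding chapter has already set up most cleanly, most likely the rescaling map $c_s$, since it is introduced and its effect on $\om$, $H$ and $\A$ is already recorded just above in the excerpt.
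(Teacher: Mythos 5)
Part (i) is correct and is essentially the paper's own computation: expand $\la(\dot x)=\om(Y,X_{h\circ H+W})$, use $dW(Y)=0$ (since $W$ is independent of $p$) and Euler's identity $dH(Y)=2H$.

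For part (ii), you identify the right concern but do not resolve it, and the concern is in fact a genuine gap — one that the paper's own proof shares. The heuristic ``$\widetilde W=W$ is unchanged because $W$ does not depend on $p$ and $c_{1/r}$ acts only on the fibers'' is exactly backwards. Because $W(t,q)$ depends only on the base, its Hamiltonian vector field $X_W=(0,-\partial_q W)$ points \emph{purely} in the fiber direction, and so it \emph{is} rescaled by the fiberwise map $c_s(q,p)=(q,sp)$. Concretely, from $c_s^*\om=s\,\om$, $c_s^*H=s^2H$ and $c_s^*W=W$ one gets
$$
(c_s)_* X_K = X_{\,s\,(K\circ c_{1/s})}, \qquad\text{so}\qquad (c_{1/r})_*X_H=X_{rH}, \quad (c_{1/r})_*X_W=X_{\frac1r W}.
$$
Hence $c_{1/r}$ carries $\P(H+W)$ bijectively onto $\P\bigl(rH+\tfrac1r W\bigr)$, not onto $\P(rH+W)$; a quick check in coordinates with $H=\tfrac12|p|^2$ confirms this. (Equivalently, $c_r$ carries $\P(rH+W)$ onto $\P(H+rW)$.) Pushing the action through the correct correspondence gives the exact identity $\Sp\bigl(rH+\tfrac1r W\bigr)=\tfrac1r\Sp(H+W)$ with no error term at all; the $[-c,c]$ window in the lemma has to come from relating $\Sp(rH+W)$ to $\Sp\bigl(rH+\tfrac1r W\bigr)$, i.e. from perturbing the potential term by $(1-\tfrac1r)W$, and neither your argument nor the paper's supplies that step. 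Your ``alternative, cleaner route'' does not avoid this: the phrase ``on the level of the shared set of orbits after rescaling'' presupposes precisely the identification that fails. So (i) matches the paper; (ii) reproduces the paper's argument including its unresolved gap, which you rightly flagged but should state explicitly is not merely bookkeeping: the potential genuinely changes under the fiber rescaling.
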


\begin{proof}
(i) Set $Y=\sum_i p_i\partial p_i$. For each $t\in[0,1]$ we have $\dot{x}(t)=X_{h\circ H + W}(x(t))$ and thus
$$\begin{aligned}
\la(\dot{x}(t))& =\om(Y,X_{h\circ H + W}(x(t)))\\
  &= \om (Y, X_{h\circ H}(x(t))) + \om (Y,X_W(x(t))) \\
  &=d(h\circ H)(x(t)) (Y) + dW(x(t)) (Y) \\
  &=h'(H(\ga(t)))\,dH(x(t))(Y).
\end{aligned}$$
Since $H$ is fiberwise homogeneous of degree 2, Euler's identity yields
$$dH( x(t))(Y)=2H(x(t))$$
whence 
$$\la(\dot{x}(t))=2h'(H(x(t)))H(x(t)).$$
Therefore 
$$\begin{aligned}
\A_{h\circ H +W }(x(t)) &=\int_0^1 (\la(\dot{x}(t))-h(H(x(t)))-W(x(t))dt \\
 & = \int_0^1 2h'(H(x(t)))H(x(t))-h(H(x(t))) - W(x(t)) \, dt
\end{aligned}$$
as claimed.\\

(ii)By definition of the Hamiltonian vector field, we have that $X_{cH}(q,p)=cX_H (q,p)$ and thus $X_H (q,\frac{1}{r}p)= \frac{1}{r}X_H(q,p)$ since $H$ is homogeneous of degree 2. Also, since $W$ does not depend on $p$, $X_W (t,q,\frac{1}{r}p) = X_W (t,q,p)$. Hence
$$\begin{aligned}
X_{rH + W}(t,q,\frac{1}{r}p) &=rX_H(q,\frac{1}{r}p)+ X_W (t,q,\frac{1}{r}p) \\
 &=X_H (q,p) + X_W (t,q,p) \\
 &=X_{H+W} (t,q,p).
\end{aligned}$$
To the orbit $x(t)=(q(t),p(t))$ in $\P(H + W)$ therefore corresponds to $x_r (t)=(q(t),\frac{1}{r}p(t))$ in $\P(rH + W)$. Claim (i) then yields 
$$\begin{aligned}
\A_{rH+W}(x_r) &= \int_0^1 rH(x_r) - W(t,x_r) \,dt \\
 &=\int_0^1  rH(q(t), \frac{1}{r}p(t)) - W(t,x_r) \,dt \\ 
 &=\int_0^1  \frac{1}{r}H(q(t),p(t))- W(t,x_r) \,dt \\ 
 &=\frac{1}{r}\A_{H+W} (x) + \frac{r-1}{r} \int_0^1 W(t,x_r) \,dt ,
\end{aligned}$$
and thus $\Sp(rH+W) \subset \frac{1}{r}\Sp(H+ W)+[-c,c]$.
\end{proof} 

\ni
We next have a look at the action spectrum of $K_n$.
\begin{prop}
Let $\ga\in \P(K_n)$. If $F(\ga(t_0)) < 1$ for some $t_0 \in S^1$, then $\A_{K_n}(\ga)< n+ 2c$. If $F(\ga(t_0)) > 1$ for some $t_0 \in S^1$, then $\A_{K_n}(\ga)>n- 2c $.  
\label{action2}
\end{prop}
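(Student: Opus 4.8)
The plan is to rewrite the action in the form adapted to the dilation vector field $Y=\sum_i p_i\,\partial p_i$ and then estimate its integrand region by region. Since $\la=\io_Y\om$, every $\ga\in\P(K_n)$ satisfies
$$\A_{K_n}(\ga)=\int_0^1\bigl(dK_n(Y)-K_n\bigr)(\ga(t))\,dt ,$$
so it suffices to control the function $dK_n(Y)-K_n$ along $\ga$, pointwise, according to where $\ga(t)$ lies. The first and main step is a confinement estimate. On $D(3)$ one has $K_n=n(f\circ F)+V_n$, so along any sub-arc of $\ga$ contained in $D(3)$ the curve solves $\dot\ga=X_{n(f\circ F)+V_n}(\ga)$; repeating the computation in the proof of Lemma~\ref{lemma:savior} with $H=n(f\circ F)$ and perturbation $V_n$, and invoking (V2), gives $\bigl|\tfrac{d}{dt}\,n(f\circ F)(\ga(t))\bigr|<c$ on that sub-arc. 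Because $G\le F$ forces $D(\Si)=\{F\le1\}\subset D(\sqrt2)$, well inside $D(3)$, while $\{|p|=3\}\subset\{F\ge\tfrac92\}$ so that $n(f\circ F)\ge\tfrac{9n}{2}$ on $\partial D(3)$, and because $n(f\circ F)(\ga(t_0))<n$ whenever $F(\ga(t_0))<1$ whereas $n(f\circ F)(\ga(t_0))>n$ whenever $F(\ga(t_0))>1$, a curve whose value of $n(f\circ F)$ moves at speed $<c<\tfrac14$ over a time interval of length at most $1$ cannot travel from $\{F<1\}$ to $\partial D(3)$. This yields: if $F(\ga(t_0))<1$, then $\ga$ stays in $\{F<1+c/n\}\subset D(2)$; if $F(\ga(t_0))>1$, then $F(\ga(t))>1-c/n>\ep$ for all $t$, where one argues separately in the case that $\ga$ stays in $D(3)$ (the speed bound then holds all along $S^1$) and in the case that $\ga$ leaves $D(3)$ (on each maximal sub-arc of $\ga$ inside $D(3)$ the value $n(f\circ F)(\ga)$ stays within $c$ of its endpoint value $\tfrac{9n}{2}$).

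Granting the confinement, the first assertion is short. With $\ga$ confined to $\{F<1+c/n\}$, where $K_n\equiv n(f\circ F)+V_n$, Euler's identity $dF(Y)=2F$ turns the action into
$$\A_{K_n}(\ga)=\int_0^1\bigl(2n f'(F)F-n f(F)+dV_n(Y)-V_n\bigr)(\ga(t))\,dt .$$
On $\{F(\ga)\ge\ep\}$ one has $f\circ F=F$ and $f'\equiv1$, so $2n f'(F)F-n f(F)=nF<n+c$ there; on $\{F(\ga)<\ep\}$ the same expression is at most $4n\ep<n$, because $0\le f'\le2$, $0\le f\le\ep$ and $\ep<\tfrac14$. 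Together with the (V2)-bounds on $V_n$ and on $dV_n(Y)$ along $\ga$ (note $|p|<2$ there), this gives $\A_{K_n}(\ga)<n+2c$.

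For the second assertion the confinement gives $F(\ga(t))>1-c/n>\ep$ for all $t$. Where $\ga(t)\in D(3)$ the integrand equals $nF+dV_n(Y)-V_n$ at $\ga(t)$, which by the (V2)-bounds exceeds $n-2c$. Where $\ga(t)\notin D(3)$, i.e. $|p(\ga(t))|>3$, one has $F(\ga)>\tfrac92$ and $\si G(\ga)>\tfrac92$; writing $K_n=(1-\tau_n)(n(f\circ F)+V_n)+\tau_n\gg$ with $\gg=n\si G+W_n$ and differentiating along $Y$ (using $Y|p|=|p|$, Euler's identity for $F$ and for $G$, and $dW_n(Y)=0$), the integrand at $\ga(t)$ becomes
$$\tau_n'|p|\bigl[(n\si G+W_n)-(nF+V_n)\bigr]+(1-\tau_n)\bigl(nF+dV_n(Y)-V_n\bigr)+\tau_n\bigl(n\si G-W_n\bigr) ,$$
in which $\tau_n'\ge0$, the bracket is $\ge0$ by (W3), and the remaining two terms together exceed $n$ (being of order $n\si$); hence the integrand exceeds $n-2c$ there too, and $\A_{K_n}(\ga)>n-2c$.

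The step I expect to be the main obstacle is precisely the confinement estimate: one must see that an orbit of the patched Hamiltonian $K_n$ which dips into $\{F<1\}$ cannot escape $D(3)$, and more generally keep $F(\ga(t))$ under control along the whole orbit. This is where the smallness conditions (V2) — and (W3) for the outer estimate — together with the elementary inclusions $\{F\le1\}\subset D(\sqrt2)\subset D(3)$ and $\{|p|=3\}\subset\{F\ge\tfrac92\}$ do the real work. Once confinement is available the action bounds are routine consequences of Euler's identity and of the defining properties of the cut-offs $f$ and $\tau_n$; orbits that genuinely leave $D(3)$ cause no trouble, since there $K_n$ agrees with the geodesic Hamiltonian $n\si G+W_n$ and their action is far larger than $n$.
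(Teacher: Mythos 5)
Your proof is correct and follows essentially the same strategy as the paper's: rewrite the action as $\int_0^1 (dK_n(Y)-K_n)(\ga)\,dt$, use a Lemma~\ref{lemma:savior}-type derivative bound to confine $F\circ\ga$ near where it starts, and then estimate the integrand region by region via Euler's identity and the defining properties of $f$, $\tau_n$, $V_n$, $W_n$. You are in fact a bit more careful than the paper at two places — you make the confinement to $D(3)$ explicit (the paper applies Lemma~\ref{lemma:savior} directly to the patched function $K_n-V_n$, which tacitly presupposes that $\ga$ stays where $K_n-V_n=n(f\circ F)$), and in the second assertion your split into $D(3)$ versus its complement lets you bypass assumption (W4), which the paper invokes to combine the $(1-\tau_n)$ and $\tau_n$ terms.
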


\begin{proof}
Assume first that $F(\ga(t_0)) < 1$ for some $t_0 \in S^1$. Then 
$$
(K_n - V_n) (\ga(t_0)) = n(f\circ F)(\ga(t_0)) < n
$$
and by Lemma~\ref{lemma:savior} 
$$
(K_n - V_n) (\ga(t)) = n(f\circ F)(\ga(t)) < n + c .
$$
Thus $\ga\in\P( n (f\circ F )+V_n)$. Hence our choice of $f$ and (V2) yield that
$$
\begin{aligned}
\A_{K_n} (\ga) &= \A_{n(f\circ F) + V_n} (\ga) \\
 			&=  \int_0^1 2nf'(F(\ga)) F(\ga) + \la(X_{V_n} (x))- nf(F(\ga)) - V_n(\ga)\,dt \\
			&= \int_0^1 2nf'(F(\ga)) F(\ga) - nf(F(\ga)) \,dt + \int_0^1 dV_n(x) (Y) - V_n(\ga)\,dt \\
 			&< n+2c.
\end{aligned}
$$
Assume now that $F(\ga(t_0)) > 1$ for some $t_0 \in S^1$. Then 
$$
\bigl( K_n - ( (1-\tau_n(|p|)) V_n + \tau_n(|p|) W_n ) \bigr) (\ga(t_0)) > n
$$
and by Lemma~\ref{lemma:savior}
$$
\bigl( K_n - ( (1-\tau_n(|p|)) V_n + \tau_n(|p|) W_n ) \bigr) (\ga(t)) > n -c.
$$
Then $nF(\ga)>n-c$ and $n(f \circ F) (\ga) = nF(\ga)$. Let again $Y=\sum_i p_i\partial p_i$. Using the definition of $K_n$ we compute at $\ga(t) = (q,p)$, for $t\in[0,1]$ that
$$\begin{aligned}
d(K_n)(\ga(t)) (Y) = & -\tau'_n(|p|)(nF+V_n) (t,q,p) |p| + (1-\tau_n (|p|)) d(nF+V_n) (q,p) (Y) \\
 & + \tau'_n (|p|) (n\si G+W_n) (t,q,p) |p| + \tau_n (|p|) d(n\si G) (q,p) (Y). 
\end{aligned}$$
Since $\tau'_n (|p|) \geq 0$ and $n\si G +W_n \geq nF+V_n $, the sum of the first and third summands is $\geq 0$. Together with Euler's identity applied to the functions $nF$ and $n\si G$ we obtain that
$$\begin{aligned}
d(K_n) (\ga(t)) (Y) - K_n(\ga(t))
 & \geq \bigl(1-\tau_n(|p|)\bigr)  \bigl(d(nF+V_n) (\ga(t)) (Y) - (nF+V_n)(\ga(t)\bigr) \\
 &\phantom{\geq} \; + \tau_n(|p|) \bigl( d(n\si G) (\ga(t)) (Y)  - (n\si G+W_n) (\ga(t)) \bigr) \\
 & = \bigl(1- \tau_n(|p|) \bigr) (nF-V_n)(\ga(t)) + \tau_n(|p|) (n\si G-W_n)(\ga(t)) \\
 & \phantom{\geq} + \bigl(1- \tau_n(|p|) \bigr) dV_n (\ga(t)) (Y)\\ 
 & \geq (nF-V_n)(\ga(t)) + \bigl(1- \tau_n(|p|) \bigr) dV_n (\ga(t)) (Y).
\end{aligned}$$ 
where in the inequality we use the fact that $\tau_n(|p|) \geq 0$ and $n\si G -W_n \geq nF-V_n $. Recalling (V2) and $0 \leq 1- \tau_n(|p|) \leq 1$, this yields 
$$
\begin{aligned}
\A_{K_n} (\ga) &=\int_0^1 \bigl( d(K_n)(\ga(t)) (Y) - K_n(\ga(t)) \bigr) \;dt \\
			& \geq \int_0^1 nF(\ga(t)) \,dt -\int_0^1 V_n (\ga(t)) - \bigl(1- \tau_n(|p|) \bigr) dV_n (\ga(t)) (Y) \, dt\\
			& > n- 2c.
\end{aligned}
$$
as claimed.
\end{proof}

\subsection{The Non-crossing lemma}
\label{section:non-crossing lemma}

Consider the space of Hamiltonian functions 
$$
\H_6 (\gg) = \{ H:S^1\times \Tm\ra\re \mid H=\gg \mbox{ on } S^1\times \Tm \backslash D(6)\}.
$$ 
Note that $\gl$ and $K_n$ belong to $\H_6 (\gg)$. For $a\in\re$ set
$$
\H^a_6 (\gg) = \{H\in \H_6(\gg) \mid a \not\in \Sp (H)\}.
$$
Now fix a smooth function $\be:\re\ra[0,1]$ such that
\begin{equation}
\begin{array}{ll}
\be(s)=0 & \mbox{for } s\leq0,\\
\be(s)=1 & \mbox{for } s\geq 1\mbox{ and} \\ 
\be'(s) \geq 0 & \mbox{for all } s\in \re.
\end{array}
\label{beta}
\end{equation} 
For $s\in [0,1]$ define the functions
\begin{equation}
\gs  =(1-\be(s)) \gl + \be(s) \gg .
\label{gs}
\end{equation}
Then $\gs \in \H_6(\gg)$ for each $s\in [0,1]$.
\\ 
\ni 
Recall that $\si \geq 1$. Choose $a\in\, ]n-2c,(n+1)-2c[$ and define the function $a(s):[0,1]\ra\re$ by 
$$a(s)=\frac{a}{1+\be(s)(\si-1)}.$$
Note that $a(s)$ is monotone decreasing with minimum $a(1)=a/\si$. Recall $\ep\in(0,\frac{1}{4})$ entering the definition of the smoothing function $f$. We assume from now on that it also fulfills
\begin{equation}
\ep^2 < \frac{1-2c}{2 \si^2}.
\label{epsilon}
\end{equation}  

\begin{lem}
If $[a-c_n , a+c_n]\cap \Sp(\gl) = \emptyset$, then $a(s)\notin \Sp(\gs)$ for $s\in[0,1]$.
\label{crossing}
\end{lem}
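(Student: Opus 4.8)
The plan is to prove the contrapositive in spirit: fix $s\in[0,1]$ and suppose, for contradiction, that $a(s)\in\Sp(\gs)$, i.e. there is $\ga\in\P(\gs)$ with $\A_{\gs}(\ga)=a(s)$. I will show this forces a periodic orbit of $\gl$ with action in the forbidden window $[a-c_n,a+c_n]$. The first step is to pin down where $\ga$ lives. Since $\gs=(1-\be(s))\gl+\be(s)\gg$ with $\gl\leq K_n\leq\gg$ and (for $|p|\leq 3$) $\gl=n(f\circ G)+W_n$, $\gg=n\si G+W_n$, the Hamiltonian $\gs$ is itself (for $|p|\leq 3$) of the form $n\,h_s(|p|^2/2)+W_n$ for a suitable smoothing $h_s$, hence fiberwise homogeneous-of-degree-2 away from a small ball, up to the potential $W_n$. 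Using Lemma~\ref{lemma:savior} (under (W2)) together with Lemma~\ref{action}, I get that along $\ga$ the value of $G$ stays in a narrow band, and combining this with the assumption $\ep^2<\frac{1-2c}{2\si^2}$ one checks that $\ga$ cannot meet the region $\{|p|\leq\ep\}$ where the smoothing $f$ is active nor the region $\{|p|\geq 6\}$ where $\tau_n$ has kicked in — so on the support of $\ga$ we simply have $\gs=(1-\be(s))\,nG+\be(s)\,n\si G+W_n = n\,\lambda(s)\,G+W_n$ where $\lambda(s):=1+\be(s)(\si-1)\geq 1$.

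The second step is the scaling argument, exactly in the style of Lemma~\ref{action}(ii). Because $\gs=\big(n\lambda(s)\big)G+W_n$ on the region where $\ga$ lives and $W_n$ is $p$-independent, the reparametrization $x\mapsto x_r$ with $r=\lambda(s)$ sends $\ga\in\P(n\lambda(s)G+W_n)$ to an orbit $\tilde\ga\in\P(nG+W_n)$, and Lemma~\ref{action}(ii) (applied with $H=nG$, the perturbation $W_n$, scaling factor $\lambda(s)$) gives
$$
\A_{nG+W_n}(\tilde\ga)\in \lambda(s)\,\A_{n\lambda(s)G+W_n}(\ga)+[-c_n,c_n]\cdot(\text{const})
= \lambda(s)\,a(s)+[\ldots] = a+[\ldots],
$$
the point being that $\lambda(s)\,a(s)=a$ by the very definition $a(s)=a/(1+\be(s)(\si-1))$. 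One must be a little careful with the $W_n$-error terms: the clean statement of Lemma~\ref{action}(ii) gives an error controlled by $\|W_n\|_{C^0}<c_n$, and since we are comparing $\gs$ to $\gl$ (whose action spectrum on $D(3)$ differs from that of $nG+W_n$ only by the smoothing $f$, which is inactive here by Step~1), the resulting action of the corresponding $\gl$-orbit lies in $[a-c_n,a+c_n]$. That contradicts the hypothesis $[a-c_n,a+c_n]\cap\Sp(\gl)=\emptyset$, and we are done.

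The main obstacle, and where the bookkeeping has to be done honestly, is Step~1: showing that the orbit $\ga$ is genuinely trapped in the "clean" region $\ep<|p|<6$ where all the cut-offs ($f$ near the zero section, $\tau_n$ near infinity) are inactive, so that $\gs$ really is just $n\lambda(s)G+W_n$ there. For the outer bound one uses that $\A_{\gs}(\ga)=a(s)\leq a<(n+1)-2c$ together with a Lemma~\ref{lemma:savior}-type estimate to bound $G(\ga)$, hence $|p|$, from above; the constants $\si,\ep$ and the hypothesis \eqref{epsilon} are exactly what make this work. For the inner bound one argues that if $\ga$ dipped into $\{|p|\leq\ep^2\}$ then $f\circ G$ (or $f\circ F$) would vanish there and the action would be too small to equal $a(s)$ — here $a(s)\geq a/\si>(n-2c)/\si$ and $\ep^2<\frac{1-2c}{2\si^2}$ combine to give the needed strict inequality. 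Once the orbit is known to avoid both cut-off regions, Steps~2 and~3 are essentially the computation already carried out in Lemma~\ref{action}, so no genuinely new idea is needed beyond careful tracking of the $c_n$-sized errors.
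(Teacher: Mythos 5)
Your proposal takes essentially the same route as the paper: show that an orbit $\ga$ of $\gs$ with action $a(s)$ cannot meet the cutoff regions, so that on $\ga$ one has $\gs = n\bigl(1+\be(s)(\si-1)\bigr)G + W_n$ exactly, and then invoke the scaling argument of Lemma~\ref{action}(ii) to produce an orbit of $nG+W_n = \gl$ with action in the forbidden window $[a-c_n,a+c_n]$. The paper organizes this as a three-way case split ($|p|>3$: action too large; $|p|\le\ep$: action too small; the middle region: scaling gives the contradiction) rather than ``pin down the orbit, then rescale,'' but the estimates invoked are identical, and your remark that $\lambda(s)\,a(s)=a$ by design is exactly the heart of the middle case.

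One concrete slip to fix: the cutoff $\tau_n$ is already nonzero once $|p|>3$ (it only reaches $1$ at $|p|=6$), so the region where $\gs$ collapses to $n\lambda(s)G+W_n$ is $\{\ep<|p|\le 3\}$, not $\{\ep<|p|<6\}$. As written, your Step~2 formula fails on $\{3<|p|<6\}$, and your Step~1 does not exclude this band. You need the action estimate for $|p|>3$ (the paper's first case, showing $\A_{\gs}(\ga)\ge 2n > a\ge a(s)$) to close that window. Similarly, the inner threshold at which $f\circ G$ becomes the identity is governed by $G\ge\ep$, i.e.\ $|p|\ge\sqrt{2\ep}$, not $|p|>\ep^2$; the paper's inner case uses $|p|\le\ep$ and the bound $f'\le 2$ together with hypothesis \eqref{epsilon} precisely to dispose of this transition zone. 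Neither issue changes the architecture of the argument, but both need the paper's quantitative bounds to go through.
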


\begin{proof}
Take $\ga = (q(t),p(t)) \in \P(\gs)$. \\
Assume first that $|p(t_0)| > 3$ for some $t_0 \in S^1$. Then 
$$
(\gs - W_n) (\ga(t_0)) \geq nG(\ga(t_0)) > n\frac{9}{2}.
$$
By Lemma~\ref{lemma:savior} we therefore have
$$
n\si G(\ga) \geq (\gs - W_n) (\ga) > n\frac{9}{2}-c_n.
$$
Then $\ga \subset D^c(\ep)$ and $f\circ G(\ga)= G(\ga)$. Hence 
\begin{equation}
\label{e:gs}
\gs = n\bigl( 1-\be(s)\bigr)  \bigl((1-\tau_n(|p|) G+\tau_n(|p|)\si G\bigr) + n\be(s)\si G + W_n.
\end{equation} 
Doing a similar computation as in the proof of Proposition \ref{action2} where we replace $F$ by $G$, we find
$$\begin{aligned}
d(\gs)(\ga(t)) (Y) - \gs(\ga(t)) 
&\geq  n\bigl(1-\be(s)\bigr) \Bigl((1-\tau_n(|p|)) (G (\ga(t)))  + \tau_n(|p|) ( \si G (\ga(t)) \Bigr) \\
 & \quad \; + n\be(s) \bigl( \si G (\ga(t))\bigr) - W_n(\ga(t))\\
 &\geq n\bigl(1-\be(s)\bigr) \bigl(G (\ga(t))\bigr) + n\be(s) \bigl( \si G (\ga(t))\bigr) - W_n(\ga(t)) \\
 &\geq n G(\ga(t)) - W_n(\ga(t)) \\
 &\geq 2n
\end{aligned}$$
where the last inequality follows from \eqref{e:gs}, Lemma~\ref{lemma:savior} and the observation
$$
(\gs - W_n) (\ga(t)) = n \bigl( C_{n,s} \;G(\ga(t)) \bigr) \geq n \bigl( C_{n,s} \; \frac{9}{2} \bigr) -c_n
$$
where $C_{n,s}$ is a positive constant.
Thus 
$$\A_{\gs} \geq 2n \geq n+1 > a \geq a(s).$$
Assume next that $|p(t_0)| \leq \ep$ for somme $t_0 \in S^1$. Then
$$
(\gs - W_n) (\ga(t_0)) \leq n \si G(\ga(t_0)) \leq n \si \frac{\ep^2}{2}.
$$
By Lemma~\ref{lemma:savior} we therefore have
$$
nG(\ga) \leq (\gs-W_n) (\ga) \leq n \si \frac{\ep^2}{2} + c_n.
$$
Then $\tau_n (|p(t)|) = 0$ for all $t\in [0,1]$, which yields $\gl (\ga) = n(f\circ G) (\ga) + W_n(\ga)$ and
$$\begin{aligned}
\gs(\ga) & = n\Bigl(\bigl( 1-\be(s) \bigr) \bigl( f\circ G \bigr) (\ga) + \be(s) \si G (\ga) \Bigr) +W_n(\ga)\\
              & = n\Bigl( \bigl (1-\be(s) \bigr)  f\circ  + \be(s) \si \Bigr) G(\ga) + W_n(\ga).
\end{aligned}$$ 
By Lemma \ref{action} (i) we therefore have 
$$
\A_{\gs} (\ga) = \int_0^1 2n  \Bigl( \bigl (1-\be(s) \bigr)  f' \bigl(G(\ga) \bigr) \Bigr) G(\ga) - \gs (\ga) \,dt.
$$
Together with $f'\leq 2$ and the choice of $\ep$ we obtain that
$$\begin{aligned}
\A_{\gs} (\ga)& \leq \int_0^1 4 nG(\ga) -  W_n(\ga)\,dt \\
		& \leq 2n \si \ep^2 +  5 c_n \\
		& < \frac{n-2c}{\si} \\ 
		& \leq a(1) \leq a(s). 
\end{aligned}$$
Assume finally that $\ga$ lies in $D(3) \backslash D(\ep)$. Then $\tau_n(|p(\ga)|) = 0$ and $(f\circ G) (\ga) = G(\ga)$. Hence $\gl (\ga) = nG (\ga) + W_n(\ga)$ and 
$$\gs (\ga) =  \bigl(1+ \be(s) (\si-1) \bigr) nG (\ga) + W_n(\ga).$$
If $\A_{\gs} (\ga) = a(s) $, then, in view of Lemma \ref{action} (ii) and the definition of $a(s)$, we obtain that $[a-c_n, a+c_n] \cap \Sp (nG+W_n) \neq \emptyset$ which is a contradiction to our hypothesis.
\end{proof}

\section{Floer Homology for convex Hamiltonians}

\noindent
Floer Homology was invented by Floer in a series of seminal papers, see \cite{Flo881, Flo882, Flo889}.

\subsection{Definition of $HF^a_*(H;\F_p)$}
\label{section:definition of hf}

\subsubsection{The chain groups}
Let $H \in \H_6(\gg)$ such that all 1-periodic solutions of 
$$
\dot{x}(t) = X_H(x(t))
$$
are nondegenerate. For $a< (n+1)-2c$ define
$$
\P^a(H) := \{x\in \P(H) \mid \A_H(x) \leq a\}
$$
For $\ga(t) =(q(t),p(t)) \in \P(H)$ such that $|p(t_0)| \geq 6$ for some $t_0 \in S^1$ we have by Lemma~\ref{lemma:savior}
$$
nG (\ga) > 3n - c_n.
$$
Then by Lemma~\ref{action} (i) we have
$$
\A_H(\ga) = \A_\gg (\ga) = \int_0^1 n\si G(\ga) - W_n(\ga) dt \geq 3n-2c_n  \geq 2n \geq n+1
$$
whence

\begin{equation}
\P^a(H) \subset D(6)
\label{e:subset}
\end{equation}

\ni
By Lemma \ref{spectrum}, the set $\P^a(H)$ is finite. For each $x\in \P(H)$ the Maslov index $\mu(x)$ is a well-defined integer, see Section \ref{maslov index}. Define the $k^{th}$ Floer chain group $CF_k^a(H ; \F_p)$ as the finite-dimensional $\F_p$-vector space freely generated by the elements of $\P^a(H)$ of Maslov index $k$, and define the full Floer chain group as 
$$
CF^a_*(H;\F_p) = \bigoplus_{k\in \Z} CF^a_k (H;\F_p).
$$

\subsubsection{Almost complex structures}
\label{almost complex structures}
Recall that an almost complex structure $J$ on $T^*M$ is $\om$-compatible if
$$
\lg \cdot, \cdot \rg \equiv g_J(\cdot,\cdot) := \om (\cdot, J, \cdot)
$$
defines a Riemannian metric on $T^*M$. Consider the Liouville vector field $Y = \sum_i p_i\partial p_i$ and its semi-flow $\psi_t$, for $t \geq 0$, on $\Tm \backslash \Dcirc(6)$. Denote by
$$
\xi = \ker (\io_Y\om |_{\partial D(6)})
$$
the contact structure on $\partial D(6)$. \\

\ni
An $\om$-compatible almost complex structure $J$ on $T^*M$ is {\it convex} on $T^*M \backslash \Dcirc(6)$ if
$$ \begin{aligned}
J\xi = \xi,& \\
\om(Y(x),J(x)Y(x))=1,& \quad \mbox{for } x\in \partial D(6) \\
d\psi_t(x)J(x) = J(\psi_t(x))d\psi_t(x), & \quad \mbox{for } x\in \partial D(6) \mbox{ and } t\geq 0.
\end{aligned}$$
Following \cite{CFH95, BPS03} we consider the set $\Js$ of $t$-dependent smooth families $\J = \{J_t\}$, $t\in S^1$, of $\om$-compatible almost complex structures on $T^*M$ such that $J_t$ is convex and independent of $t$ on $T^*M \backslash \Dcirc(6)$. The set $\Js$ is non-empty and connected. 

\subsubsection{Compactness}
\label{section:compactness}
For $\J \in \Js$, for smooth maps $u$ from the cylinder $\re\times S^1$ to $T^*M$, and for $x^\pm\in \P^a(H)$ consider Floer's equation given by

\begin{equation}
\begin{cases}
\partial_s u + J_t (u) (\partial_t u - X_{H}(t,u))=0 \\
\lim_{s\ra±\infty} u(s,t)=x^± (t) \text{ uniformly in t.}
\end{cases}
\label{floereq}
\end{equation}

\begin{lem}
Solutions of Floer's equation \eqref{floereq} are contained in $D(6)$.
\label{l:compactness}
\end{lem}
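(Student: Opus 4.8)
This is a \emph{no-escape} lemma, and the plan is to prove it by a maximum principle for the function $G\circ u=\demi|p|^2\circ u$. Recall that $H\in\H_6(\gg)$, so $H=\gg=n\si G+W_n$ on $\Tm\setminus D(6)$, and that on $\Tm\setminus\Dcirc(6)$ the almost complex structures $J_t$ are convex and independent of $t$. Note first that the action estimate preceding \eqref{e:subset}, being a strict inequality, shows that an orbit $\ga=(q,p)\in\P^a(H)$ satisfies $|p(t)|<6$ for all $t$: if $|p(t_0)|\geq6$ for some $t_0$ then $\A_H(\ga)\geq n+1>a$. Hence $\P^a(H)\subset\Dcirc(6)$, so the asymptotic orbits $x^\pm$ of a solution $u$ of \eqref{floereq} lie in the open ball $\Dcirc(6)$. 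Suppose, for contradiction, that $u$ is not contained in $D(6)$, and set
$$
\Omega := u^{-1}\bigl(\Tm\setminus D(6)\bigr)\subset\re\times S^1 .
$$
Then $\Omega$ is open and, by assumption, nonempty. Since $u(s,\cdot)\to x^\pm$ uniformly in $t$ and $x^\pm\subset\Dcirc(6)$, there is $S>0$ with $u(s,t)\in\Dcirc(6)$ for $|s|\geq S$; thus $\Omega\subset[-S,S]\times S^1$, so $\ov\Omega$ is compact, $u(\ov\Omega)\subset\Tm\setminus\Dcirc(6)$, and $\partial\Omega\subset u^{-1}(\partial D(6))$.

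The heart of the argument is the following differential inequality. On $\ov\Omega$ the map $u$ takes values in the region where $H=n\si G+W_n$ and $J$ is convex; put $\rho:=G\circ u=\demi|p|^2\circ u$, so that $\rho\equiv18$ on $\partial\Omega$. Substituting Floer's equation $\partial_s u+J_t(u)\bigl(\partial_t u-X_H(t,u)\bigr)=0$ into $\Delta\rho$, where $\Delta=\partial_{ss}+\partial_{tt}$, and using the three defining properties of a convex $J$ together with the fiberwise homogeneity of $G$, one obtains on $\Omega$ an inequality of the form
$$
\Delta\rho\;\geq\;\lg b,\nabla\rho\rg
$$
for a continuous vector field $b$ on $\ov\Omega$; that is, $\rho$ is a subsolution of a linear second order elliptic operator with no zeroth order term. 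The conditions $J\xi=\xi$, $\om(Y,JY)=1$ on $\partial D(6)$ and $d\psi_t\,J=J\,d\psi_t$ are exactly what forces the contributions of the monotone radial Hamiltonian $n\si G$ to have the correct sign, while the perturbation $W_n$, being $C^1$-small ($\|W_n\|_{C^1}<c_n$) and independent of $p$, contributes only a bounded first order term that is absorbed into $b$. This is the computation of \cite{CFH95, BPS03} (see also \cite{CFH95}).

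Granting this, the conclusion is immediate: by the weak maximum principle on the compact set $\ov\Omega$ we get $\max_{\ov\Omega}\rho=\max_{\partial\Omega}\rho=18$, and if the maximum were attained at an interior point of some connected component of $\Omega$, the strong maximum principle would force $\rho$ to be constant equal to $18$ there. In either case $\rho\leq18$ on $\ov\Omega$, which contradicts the fact that every point of $\Omega$ satisfies $\rho>18$. Hence $\Omega=\emptyset$, i.e.\ $u(\re\times S^1)\subset D(6)$.

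\textbf{Main obstacle.} The only non-formal point is establishing the inequality $\Delta\rho\geq\lg b,\nabla\rho\rg$: one must verify carefully that the convexity of $J$ on $\Tm\setminus\Dcirc(6)$, combined with the precise form $H=n\si G+W_n$ there, makes $\rho$ a genuine subsolution of an elliptic operator \emph{without} zeroth order term — in particular that the $X_{W_n}$–contribution, which does have a $\partial_p$–component, remains a lower order term, which is where the smallness of $\|W_n\|_{C^1}$ is used. The remaining steps — reducing to the compact domain $\ov\Omega$ via $\P^a(H)\subset\Dcirc(6)$ and invoking the maximum principle — are routine.
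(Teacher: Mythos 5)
Your proof takes essentially the same approach as the paper: both reduce the lemma to Viterbo's maximum principle (Appendix~\ref{convexity}), using that the asymptotic orbits $x^\pm\in\P^a(H)$ lie in $\Dcirc(6)$ and that $J$ is convex and $H=\gg$ outside $D(6)$. Your write-up is more detailed about the reduction (the open set $\Omega$, its compactness, and the boundary condition $\rho\equiv18$ on $\partial\Omega$), and you correctly single out the differential inequality as the one non-formal step, deferring its verification to \cite{CFH95,BPS03} just as the paper defers to \cite{CFHW96,FS07}.

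One small caution about your description of the $W_n$-contribution: you say it is ``a bounded first order term that is absorbed into $b$'', but if one runs the computation of Theorem~\ref{convex1} with $H=n\si G+W_n$, the extra terms involve $\partial_s u$, $\partial_t u$ (not $\partial_s\rho$, $\partial_t\rho$), so from the viewpoint of the PDE in $\rho=f\circ u$ they are effectively zeroth-order terms, not first-order ones. The paper instead asserts that $\om(Y,JX_H)=0$ (which would make the offending cross-term vanish outright); since $dW_n(Y)=0$ the radial part of $X_H$ creates no problem, and the delicate issue is precisely the vertical correction coming from $X_{W_n}$. Either way this is the point where the cited references do the work, so this is a matter of precision in phrasing rather than a gap; but if you wanted to make the lemma self-contained you would need to carry out the computation of Appendix~\ref{convexity} for $H=h(f)+W$ rather than $H=h(f)$, and the resulting term is not of the form $\lg b,\nabla\rho\rg$.
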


\begin{proof}
By $\P^a (H) \subset D(6)$ we have $x^\pm \subset D(6)$, whence
\begin{equation}
\lim_{s\ra \pm\infty} u(s,t) = x^\pm (t) \subset D(6).
\label{e:lim}
\end{equation}
In view of the strong maximum principle, the lemma follows from the convexity of $J$ outside $D(6)$ and from \eqref{e:lim} together with the fact that $H=\gg$ outside $D(6)$ implies $\om(Y,JX_H) = 0$, see Appendix \ref{convexity} and \cite{CFHW96, FS07}.
\end{proof}

\ni
We denote the set of solutions of \eqref{floereq} by $\M(x^-, x^+, H; \J)$. The elements of $\P^a(H)$ are the stationary solutions of Floer's equation \eqref{floereq} and if $u\in \Mod$, then $\A_H(x^-) \geq \A_H(x^+)$, see the more general Lemma \ref{l:decreasing action} below. So $\M(x,x,H; \J)$ contains only the elements $x$, and
$$
\Mod = \emptyset \quad \mbox{if } \A_H(x^-) < \A_H(x^+) \mbox{ and } x^- \neq x^+.
$$

\ni
The compactness of the manifolds $\M(x^-,x^+,H;\J)$ follows from Lemma~\ref{l:compactness} and the fact that there is no bubbling-off of $\J$-holomorphic spheres. Indeed, $[\om]$ vanishes on $\pi_2 (T^*M)$ because $\om=d\la$ is exact. See ~\cite{Flo882,Sal90} for details. \\

\subsubsection{The boundary operators}
\label{floer boundary operator}

There exists a residual subset $\Jr(H)$ of $\Js$ such that for each $\J \in \Jr(H)$ the linearized operator to Floer's equation is surjective for each solutions of \eqref{floereq}. For such a regular $\J$ the moduli space $\Mod$ is a smooth manifold of dimension $\mu(x^-) - \mu(x^+)$ for all $x^\pm \in \P^a(H)$, see \cite{FHS95}. \\

\ni
Fix $\J\in \Jr(H)$. It is shown in \cite[Section 1.4]{AS06} that the manifold $\Mod$ can be oriented in a way which is coherent with gluing. In particular, when $\mu(x^-)-\mu(x^+) = 1$, $\Mod$ is an oriented one-dimensional manifold.\\ 

\ni
Note that the group $\re$ freely acts on $\M(x^-, x^+, H; \J)$ by time-shift. We will use the notation
$$
\Modti := \Mod / \re.
$$
Denoting by $[u]$ the equivalence class of $u$ in the zero-dimensional manifold $\Modti$, we define
$$
\eps([u]) \in \{-1,1\}
$$
to be $+1$ if the $\re$-action is orientation preserving on the connected component of $\Mod$ containing $u$ and $-1$ in the opposite case.\\

\ni
For $x^\pm \in \P^a(H)$ with $\mu(x^-) = \mu(x^+)+1$ let
$$
n(x^-,x^+,H;\J) := \sum_{[u] \in \Modti} \eps([u])
$$
be the oriented count of the finite set $\Modti$. For $k\in\Z$ one can define the Floer boundary operator
$$
\partial_k(\J) : CF^a_k(H;\F_p) \ra CF^a_{k-1}(H;\F_p)
$$
as the linear extension of
$$
\partial_k(\J) x^- = \sum n(x^-,x^+,H;\J)x^+
$$
where $x^- \in \P^a(H)$ has index $\mu(x^-)=k$ and the sum runs over all $x^+ \in \P^a(H)$ of index $\mu(x^+) = k-1$. Then $\partial_{k-1}(\J) \circ \partial_k(\J) = 0$ for each $k$. The proof makes use of the compactness of the 0- and the 1-dimensional components of $\Modti$, see \cite{Flo881,Schw93,AS06}. 

\subsubsection{The Floer homology group}
The $k^{th}$ Floer homology group is defined by
$$
HF^a_k(H;\F_p) := \frac{\ker \partial_k(\J)}{\im \partial_{k+1}(\J)}
$$
As our notations suggests, it does not depend on the choices involved in the construction. They neither depend on coherent orientations up to canonical isomorphisms, see \cite[Section 1.7]{AS06}, nor on $\J\in\Jr$ up to natural isomorphisms, as a continuation argument shows, see \cite{Flo881,Schw93}. The groups $HF^a_k(H;\F_p)$ do depend, however, on $a< 2(n+1)$ and $H\in \H_6(\gg)$. \\

\ni
In the sequel, the field $\F_p$ is fixed throughout. We shall therefore often write $CF^a_*(H)$ and $HF^a_*(H)$ instead of $CF^a_*(H;\F_p)$ and $HF^a_*(H;\F_p)$.

\subsection{Continuation homomorphisms} 
\label{section:continuation homomorphisms}

Let $\be:\re\ra[0,1]$ be the function from \eqref{beta}. Given two functions $\hl, \hg \in \H_6(\gg)$ with $\hl(t,x)\leq \hg(t,x)$ for all $(t,x)\in S^1\times\Tm$, we define the monotone homotopy
\begin{equation}
\hs= (1-\be(s)) \hl  + \be(s) \hg.
\label{hs} 
\end{equation}
Then $\hs\in\H_\rh(\gg)$ for each $s$, and 
$$
\hs = \begin{cases} \hl & \mbox{for }s\leq 0 \\
					    \hg & \mbox{for }s\geq 1
\end{cases}.$$
Consider the equation 
\begin{equation}
\begin{cases}
\partial_s u + J_{s,t} (u) (\partial_t u - X_{H_{s,t}} (u))=0 \\
\lim_{s\ra±\infty} u(s,t)=x^± (t) \text{ uniformly in t,}
\end{cases}
\label{floereq3}
\end{equation}
where the map $s\mapsto \{J_{s,t}\}$ for $s\in \re$ and $t\in [0,1]$, is a {\it regular homotopy} of families $\{J_t\}$ of almost complex structures on $T^*M$. This means that 
\begin{itemize}
\item $J_{s,t}$ is $\om$-compatible, convex and independent of $s$ and $t$ outside $D(6)$,
\item $J_{s,t}=J_t^- \in \J_{reg}(\hl)$ for $s\leq 0$, 
\item $J_{s,t}=J_t^+ \in \J_{reg}(\hg)$ for $s\geq 1$.
\end{itemize}

\ni
The following lemma is well-known. 

\begin{lem}
Assume that $u:\re\times S^1\ra T^*M$ is a solution of equation \eqref{floereq3}. Then
$$\A_{\hg} (x^+) \leq \A_{\hl} (x^-) - \int_0^1 \int^\infty_{-\infty} \be'(s) (H_1-H_0) (t,u(s,t))\;ds\;dt.$$
\label{l:decreasing action}
\end{lem}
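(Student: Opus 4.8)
The plan is to compute the difference of action values $\A_{\hl}(x^-) - \A_{\hg}(x^+)$ along a solution $u$ of the $s$-dependent Floer equation \eqref{floereq3} by differentiating the action along the cylinder and integrating over $s\in\re$. This is the standard energy identity for continuation maps, and the only subtlety is keeping track of the $s$-dependence of $H_{s,t}$, which produces exactly the extra integral term in the statement. First I would set up the function $s\mapsto \A_{H_{s,\cdot}}(u(s,\cdot))$, where for each fixed $s$ we evaluate the action functional of the (frozen) Hamiltonian $H_{s,\cdot}$ on the loop $t\mapsto u(s,t)$. Since $u$ converges to $x^\pm$ as $s\to\pm\infty$ and $H_{s,\cdot} = \hl$ for $s\le 0$, $H_{s,\cdot}=\hg$ for $s\ge1$, the boundary values of this function are $\A_{\hl}(x^-)$ at $s=-\infty$ and $\A_{\hg}(x^+)$ at $s=+\infty$.

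The key step is the derivative computation. Differentiating $\A_{H_{s,\cdot}}(u(s,\cdot))$ in $s$ gives two contributions: one from the variation of the loop, which by the first variation formula of the action functional equals $\int_0^1 \om(\partial_s u,\, \partial_t u - X_{H_{s,t}}(u))\,dt$, and one from the explicit $s$-dependence of the Hamiltonian, which is $-\int_0^1 (\partial_s H_{s,t})(u(s,t))\,dt$. Using Floer's equation \eqref{floereq3} to substitute $\partial_t u - X_{H_{s,t}}(u) = -J_{s,t}(u)\,\partial_s u$ and the $\om$-compatibility of $J_{s,t}$, the first contribution becomes $-\int_0^1 g_{J_{s,t}}(\partial_s u,\partial_s u)\,dt = -\int_0^1 |\partial_s u|^2\,dt \le 0$. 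For the second contribution, from the definition \eqref{hs} we have $\partial_s H_{s,t} = \be'(s)(\hg - \hl)(t,\cdot)$. Integrating the total $s$-derivative from $-\infty$ to $+\infty$ therefore yields
$$
\A_{\hg}(x^+) - \A_{\hl}(x^-) = -\int_{-\infty}^{\infty}\int_0^1 |\partial_s u|^2\,dt\,ds - \int_{-\infty}^\infty\int_0^1 \be'(s)(\hg-\hl)(t,u(s,t))\,dt\,ds,
$$
and dropping the nonpositive first term gives the asserted inequality (with $H_1 = \hg$, $H_0 = \hl$).

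The main obstacle is not conceptual but analytic: one must justify that the integrals converge and that differentiation under the integral sign and the fundamental theorem of calculus apply, which rests on exponential decay of $u$ and its derivatives toward the nondegenerate asymptotic orbits $x^\pm$, together with the $C^0$-bounds on $u$ provided by Lemma~\ref{l:compactness} (solutions stay in $D(6)$, so $\hg-\hl$ and its relevant derivatives are bounded on the image of $u$). Granting these standard facts — for which I would cite \cite{Flo881,Schw93,AS06} — the computation above is essentially the whole proof; it is a routine adaptation of the closed-string energy identity to the cotangent bundle setting already used throughout this chapter.
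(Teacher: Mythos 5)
Your proof is correct and amounts to the same energy-identity computation as the paper's. The paper caps $u$ off to a sphere $S^2 \to T^*M$ and applies Stokes on the exact form $\omega = d\lambda$ to isolate the boundary terms $\int_{x^\pm}\lambda$, then expresses $\int_{-\infty}^{\infty}\int_0^1 g_{s,t}(\partial_s u, \partial_s u)\,dt\,ds$ directly in terms of the action difference and the curvature integral; you obtain the identical identity by differentiating $s\mapsto \A_{H_{s,\cdot}}(u(s,\cdot))$ and integrating in $s$, with the role of Stokes played by the first-variation formula for the action functional. One sign remark: the substitution $\partial_t u - X_{H_{s,t}} = -J_{s,t}\,\partial_s u$ that you use is what follows from the continuation equation written as $\partial_s u - J_{s,t}(\partial_t u - X_{H_{s,t}}) = 0$, the Abbondandolo--Schwarz convention; the displayed \eqref{floereq3} carries a plus sign in front of $J_{s,t}$, under which the energy term would land on the wrong side and reverse the inequality. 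Your sign is the one compatible with the decreasing-action conclusion and with the paper's own displayed chain of equalities, which likewise replaces $J_{s,t}(\partial_t u - X_{H_s})$ by $\partial_s u$ without a sign change.
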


\begin{proof}
Extend the map $u:Z=\re\times S^1\ra T^*M$ to a smooth map 
$$\widehat{u}:S^2=D_- \cup Z \cup D^+ \ra \Tm.$$
Since $\om=d\la$ is exact, we find, using Stoke's Theorem and taking orientations into account, that 
$$0=\int_{S^2} d(\widehat{u}^* \la) = \int_{\widehat{u}(S^2)} d\la = \int_{\widehat{u}(D_-)} d\la + \int_{u(Z)} d\la + \int_{\widehat{u}(D_+)} = \int_{x^-} \la + \int_{u(Z)} d\la -\int_{x^+} \la.$$
Moreover, by the definition of the homotopy \eqref{hs} we obtain that
$$\frac{d}{ds}\hs (t,u(s,t)) = d\hs(t,u(s,t)) (\partial_s u) + \be'(s) (H_1-H_0) (t,u(s,t)).$$
This and the asymptotic boundary condition in \eqref{floereq3} yield
$$\begin{aligned}
\int_0^1 \int^\infty_{-\infty} d\hs (t,u(s,t)) (\partial_s u)\;ds\;dt=& \int_0^1 \hg (t,x^+) \;dt - \int_0^1 \hl(t,x^-)\;dt\\
& - \int_0^1 \int^\infty_{-\infty} \be'(s) (H_1 - H_0) (t,u(s,t))\; ds\; dt.
\end{aligned}$$
Together with the compatibly $g_{s,t}(v,w)=\om(v,J_{s,t}w)$, Floer's equation in \eqref{floereq3}, Hamilton's equation \eqref{hameq} and the definition of the action functional we obtain that
$$\begin{aligned}
0& \leq \int_0^1 \int_{-\infty}^\infty g_{s,t}(\partial_s u, \partial_s u) \;ds\;dt \\
  & = \int_0^1 \int_{-\infty}^\infty g_{s,t}(\partial_s u, J_{s,t} (\partial_t u -X_{\hs}(u)))\;ds\;dt \\
  & = - \int_{Z} u^* \om - \int_0^1 \int_{-\infty}^\infty \om(X_{\hs} (u), \partial_t u)\;ds\;dt \\
  & = \int_{x^-} \la - \int_{x^+} \la + \int_0^1 \hg(t,x^+) \; dt - \int_0^1 \hl(t,x^-) \; dt \\ 
  & \phantom{=} -\int_0^1\int_{-\infty}^\infty \be'(s) (H_1 - H_0) (t,u(s,t)) \;ds\;dt \\
  & = \A_{\hl} (x^-) - \A_{\hg} (x^+) - \int_0^1\int_{-\infty}^\infty \be'(s) (H_1 - H_0) (t,u(s,t)) \;ds\;dt
\end{aligned}$$
as claimed.
\end{proof}

\ni
In view of this lemma, the action decreases along solutions $u$ of \eqref{floereq3}. By counting these solutions one can therefore define the Floer chain map
$$\phi_{\hg\hl}: CF^a_* (\hl) \ra CF^a_* (\hg),$$
see \cite{CFH95}. The induced {\it continuation homomorphism}
$$\Phi_{\hg\hl}: HF^a_* (\hl) \ra HF^a_* (\hg)$$
on Floer homology does not depends on the choice of the regular homotopy $\{J_{s,t}\}$ used in the definition. An important property of these homomorphisms is naturality with respect to concatenation,

\begin{equation}
\Phi_{H_3 H_2} \circ \Phi_{H_2 H_1} = \Phi_{H_3 H_1} \quad \mbox{for } H_1 \leq H_2 \leq H_3.
\label{concatenation}
\end{equation}

\ni
Another important fact is the following invariance property, which is proved in \cite{CFH95} and \cite[Section 4.5]{BPS03}.

\begin{lem}
If $a\not\in \Sp(\hs)$ for all $s\in [0,1]$, then $\Phi_{\hg\hl}: HF^a_* (\hg) \ra HF^a_* (\hl)$ is an isomorphism.
\label{isom}
\end{lem}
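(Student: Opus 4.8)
The plan is to deduce the statement from a purely local one by compactness together with the concatenation property \eqref{concatenation}. First I would observe that the set $\{(s,b)\in[0,1]\times\re\mid b\in\Sp(H_s)\}$ is closed: a sequence of $1$-periodic orbits of $H_{s_k}$ with $s_k\to s$ is confined to $D(6)$ as in \eqref{e:subset}, hence by elliptic bootstrapping subconverges in $C^\infty$ to a $1$-periodic orbit of $H_s$, with actions converging as well. By hypothesis this set is disjoint from the compact set $[0,1]\times\{a\}$, so there is $\delta>0$ with
$$[a-\delta,a+\delta]\cap\Sp(H_s)=\emptyset\qquad\text{for all }s\in[0,1].$$
Since $s\mapsto H_s$ is monotone increasing (by \eqref{hs} and $\hl\le\hg$) and uniformly continuous, I can choose $0=s_0<\dots<s_N=1$ with $\|H_{s_i}-H_{s_{i-1}}\|_{C^0}<\delta/2$ for all $i$, and then perturb each $H_{s_i}$ to a nondegenerate Hamiltonian in $\H_6(\gg)$, $C^1$-close enough that the spectral gap $(a-\delta,a+\delta)$ persists and that (after adding, if needed, a tiny constant) the chain is still monotone with consecutive $C^0$-distance $<\delta$. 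By \eqref{concatenation},
$$\Phi_{\hg\hl}=\Phi_{H_{s_N}H_{s_{N-1}}}\circ\cdots\circ\Phi_{H_{s_1}H_{s_0}},$$
so it is enough to prove the local claim: if $H_-\le H_+$ are nondegenerate members of $\H_6(\gg)$ with $a\notin\Sp(H_\pm)$ and $c:=\|H_+-H_-\|_{C^0}<\delta$, then $\Phi_{H_+H_-}$ is an isomorphism.

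To prove the local claim I would use the sandwich $H_-\le H_+\le H_-+c\le H_++c$. Adding a constant $c$ shifts $\A$ by $-c$, so $\P^a(H+c)=\P^{a+c}(H)$ with the same differential, hence $HF^a_*(H+c)\cong HF^{a+c}_*(H)$; in particular $a\notin\Sp(H_-+c)$ and $a\notin\Sp(H_++c)$ because $a+c\in(a,a+\delta)$. By \eqref{concatenation}, $\Phi_{H_-+c,\,H_+}\circ\Phi_{H_+,\,H_-}=\Phi_{H_-+c,\,H_-}$, and under the identification above $\Phi_{H_-+c,\,H_-}$ is the map $HF^a_*(H_-)\to HF^{a+c}_*(H_-)$ induced by inclusion of sublevel complexes; since $\Sp(H_-)\cap(a,a+c]=\emptyset$ one has $CF^a_*(H_-)=CF^{a+c}_*(H_-)$, so this inclusion is the identity, hence an isomorphism. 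Therefore $\Phi_{H_+,\,H_-}$ is injective and $\Phi_{H_-+c,\,H_+}$ is surjective. The same argument applied to $H_+\le H_-+c\le H_++c$ shows $\Phi_{H_++c,\,H_-+c}\circ\Phi_{H_-+c,\,H_+}=\Phi_{H_++c,\,H_+}$ is an isomorphism, so $\Phi_{H_-+c,\,H_+}$ is also injective, hence bijective; feeding this back into $\Phi_{H_-+c,\,H_+}\circ\Phi_{H_+,\,H_-}=\Phi_{H_-+c,\,H_-}$ shows $\Phi_{H_+,\,H_-}$ is bijective.

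Two ingredients make all of this legitimate at the truncation level $a$: Lemma \ref{l:decreasing action}, which shows the action strictly decreases along the monotone continuation equations (up to the $\int\!\!\int\be'(s)(H_1-H_0)$ term, bounded by the $C^0$-distance $<\delta$), so that the truncated chain maps and the identity \eqref{concatenation} are well defined on $CF^a_*$; and Lemma \ref{l:compactness}, which confines all relevant solutions to $D(6)$. The step I expect to require the most care is the perturbation of the intermediate Hamiltonians $H_s$: one must know that a $C^1$-small perturbation cannot produce a new $1$-periodic orbit with action in $(a-\delta,a+\delta)$ — i.e. that the spectral gap is stable — and that the perturbed Hamiltonians can simultaneously be kept monotone and inside $\H_6(\gg)$. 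This is the only genuinely delicate point, and it is precisely here that the hypothesis ``$a\notin\Sp(H_s)$ for \emph{all} $s\in[0,1]$'', and not merely at the endpoints, is indispensable.
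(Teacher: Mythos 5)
The paper does not prove Lemma~\ref{isom} itself; it cites \cite{CFH95} and \cite[Section 4.5]{BPS03}, so there is no in-text argument to compare against. Your proof is a correct reconstruction of the standard argument found in those references: extract a uniform spectral gap $\delta$ from the closedness of the parametrized action spectrum (legitimate here because orbits of action near $a$ are confined to $D(6)$, as in \eqref{e:subset}), chop the monotone homotopy $\hs$ into short pieces via \eqref{concatenation}, and prove the local isomorphism by the sandwich $H_-\le H_+\le H_-+c\le H_++c$ together with the observation that adding the constant $c$, when $\Sp(H_\pm)\cap(a,a+c]=\emptyset$, acts as the identity on the truncated chain complex. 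The two-out-of-three bookkeeping with the three concatenation identities is sound and yields bijectivity of $\Phi_{H_+,H_-}$.

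One point you pass over silently: $H_-+c$ and $H_++c$ equal $\gg+c$, not $\gg$, outside $D(6)$, so they lie outside the class $\H_6(\gg)$ on which $CF^a_*$, the continuation maps, and \eqref{concatenation} were defined. This is harmless --- adding a constant leaves $X_H$ and hence the Floer equation unchanged, so Lemma~\ref{l:compactness} and all compactness arguments persist --- but it should be said that one is working in the slightly larger class $\bigcup_{c\ge 0}\H_6(\gg+c)$. The point you do flag explicitly, perturbing the intermediate $H_{s_i}$ to nondegenerate Hamiltonians while preserving monotonicity and the $\delta$-gap, is indeed where the real care is required, and it is exactly what makes the hypothesis ``$a\notin\Sp(\hs)$ for all $s$'' (and not merely at $s=0,1$) load-bearing.
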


\section{From Floer homology to the homology of the free loop space}

\subsection{Continuation homomorphisms}
\label{section:Continuation homomorphisms2}
The goal of the section is to relate the groups 
$$
HF_*^a(\gl), \; HF_*^a(K_n) \mbox{ and } HF_*^a(\gg).
$$
By Proposition \ref{spectrum}, the set 
$$
\Sp (n)= \bigl(\Sp(\gl) \cup \Sp(K_n) \bigr) \cap\; ]n - 2c, (n+1) -2c[
$$
is finite. In particular, 
$$
\delta_n := \min \{s\in \Sp(K_n) \mid s> n-2c\} > n-2c.
$$

\ni
The set
$$
\Sp(\gl)\;\cap \;]n-2c,(n+1)-2c[\; \cap \; ]n-2c,\delta_n[
$$
is also finite. Define $d_n$ as the minimal distance between two elements of this set. Recall from (W2), we assume that the constant $c_n$ fulfills
\begin{equation}
0<c_n<\frac{d_n}{4}.
\label{d:cn}
\end{equation}
Choose 
\begin{equation}
a_n \in \bigl(\;]n-2c, (n+1)-2c[\; \cap \; ]n,\delta_n[\;\bigr) \backslash \Sp(n),
\label{def:a}
\end{equation} 
and
\begin{equation}
b_n\in \;]a_n, (n+1) -2c[
\end{equation}
such that
$$
]a_n-c_n, b_n+c_n[\; \cap \; \Sp(\gl) = \emptyset.
$$
Then $a_n$ is not in the action spectrum of $\gl$ and $K_n$, and by Proposition~\ref{action2}
$$
\A_{K_n} (x) \leq a_n \Longrightarrow  x\in\P(K_n) \cap D(\Si).
$$

\begin{rmk}
\label{rmk:action level} 
In the degenerate case when $c = 0$ we have that
$$\A_{K_n} (x) \leq a_n \Longleftrightarrow  x\in\P(K_n) \cap D(\Si).$$
\end{rmk}

\ni
Next we want to show that $HF^{a_n} (\gl)$ and $HF^{a_n/\si}(\gg)$ are naturally isomorphic when $n\geq 2$. This is a special case of a generalization of Lemma \ref{isom} stated  in \cite[Proposition 1.1]{Vit99}.\\

\ni
Set
$$
a_n(s):=\frac{a_n}{1+\be(s) (\si-1)}
$$
and
$$
b_n(s) := \frac{b_n}{1+\be(s) (\si-1)}.
$$

\ni
For $(s,t)$ in the band bounded by the graphs of $a_n(s)$ and $b_n(s)$ we have that $t\not\in\Sp(\gs)$ in view of the Non-crossing Lemma \ref{crossing}. Choose a partition $0=s_0 <s_1<\ldots <s_k<s_{k+1}=1$ so fine that
$$
b_n(s_{j+1}) >a_n(s_j) \mbox{ for } j= 1,\ldots,k.
$$
Abbreviate $\aa_j=a_n(s_j)$ and $G_j=G_{n,s_j}$. Then 
$$
\aa_j \not\in \Sp(\gs) \mbox{ for } s\in [s_j,s_{j+1}].
$$

\begin{figure}[h]
  \centering
  \psfrag{a}[][][1]{$s_0=0$}
  \psfrag{b}[][][1]{$s_{k+1}=1$}
  \psfrag{c}[][][1]{$c_n$}
  \psfrag{d}[][][1]{$b_n$}
  \psfrag{e}[][][1]{$a_n$}
  \psfrag{f}[][][1]{$\aa_1$}
  \psfrag{g}[][][1]{$\aa_2$}
  \psfrag{h}[][][1]{$\aa_k$}
  \psfrag{i}[][][1]{$s_1$}
  \psfrag{j}[][][1]{$s_2$}
  \psfrag{k}[][][1]{$s_k$}
  \includegraphics[width=10cm]{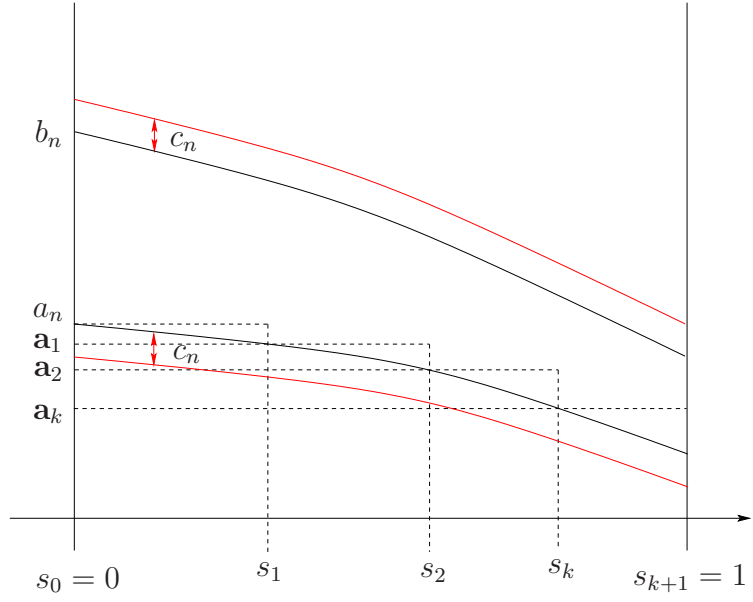}
  \caption{The curves $a_n(s)$ and $b_n(s)$.}
  \label{label2}
\end{figure}

\ni
Together with Lemma \ref{isom} we find that 
$$
\Phi_{G_{j+1}G_j} : HF_*^{\aa_j}(G_j) \ra HF_*^{\aa_{j}}(G_{j+1})
$$ 
is an isomorphism. Since $[\aa_{j+1}-c_n,\aa_j+c_n] \cap \Sp (G_{j+1}) = \emptyset $, we have $HF_*^{\aa_{j}}(G_{j+1})=HF_*^{\aa_{j+1}}(G_{j+1})$ and thus 
$$
\widehat{\Phi}_{G_{j+1}G_j} \equiv \Phi_{G_{j+1}G_j} : HF_*^{\aa_j}(G_j) \ra HF_*^{\aa_{j+1}}(G_{j+1})
$$
is an isomorphism. Recalling that $\aa_0=a_n$ and $\aa_{k+1}=a_n/\si$ we obtain that the composition
$$
\widehat{\Phi}_{\gg \gl} := \widehat{\Phi}_{G_{k+1}G_k} \circ \ldots \circ \widehat{\Phi}_{G_{2}G_1}\circ \widehat{\Phi}_{G_{1}G_0} : HF_*^{a_n}(\gl) \ra HF_*^{a_n/\si}(\gg)
$$
is an isomorphism. Let 
$$
 HF (\io_n) : HF_*^{a_n/\si}(\gg) \ra HF_*^{a_n}(\gg)
 $$
be the homomorphism induced by the inclusion $\io_n : CF_*^{a_n/\si}(\gg) \ra CF_*^{a_n}(\gg)$.

\begin{prop}
\label{p:diagram}
For each $k$ there is a commutative diagram of homomorphisms
\begin{equation}
\xymatrix{
  & HF^{a_n/\si}_k(\gg) \ar[rd]^{HF_k (\io_n)} & \\
  HF^{a_n}_k(\gl) \ar[ru]^{\widehat{\Phi}_{\gg \gl}} \ar[rr]^{\Phi_{\gg \gl}} \ar[rd]_{\Phi_{K_n \gl}} & & HF^{a_n}_k(\gg)   \\
&  HF^{a_n}_k(K_n) \ar[ru]_{\Phi_{\gg K_n}} & }
\label{diagram}
\end{equation}
Moreover $\widehat{\Phi}_{\gg \gl}$ is an isomorphism.
\end{prop}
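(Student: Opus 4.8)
The statement packages three facts, only one of which requires genuine work. The plan is: (i) recall that $\widehat{\Phi}_{\gg\gl}$ is an isomorphism, which was already established in the discussion immediately preceding the Proposition; (ii) obtain the lower triangle directly from the concatenation property \eqref{concatenation}; and (iii) prove the upper triangle by a telescoping argument based on the compatibility of continuation maps with the action-filtration inclusions. For (i) nothing new is needed: by the Non-crossing Lemma~\ref{crossing} the value $\aa_j$ lies outside $\Sp(G_{n,s})$ for $s\in[s_j,s_{j+1}]$, so Lemma~\ref{isom} makes $\Phi_{G_{j+1}G_j}\colon HF^{\aa_j}_*(G_j)\to HF^{\aa_j}_*(G_{j+1})$ an isomorphism, and composing with the tautological identification $HF^{\aa_j}_*(G_{j+1})=HF^{\aa_{j+1}}_*(G_{j+1})$ (legitimate since there is no point of $\Sp(G_{j+1})$ in $[\aa_{j+1},\aa_j]$) gives $\widehat{\Phi}_{G_{j+1}G_j}$, whose composition is $\widehat{\Phi}_{\gg\gl}$. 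For (ii): one has $\gl\le K_n\le\gg$ pointwise, so \eqref{concatenation} yields $\Phi_{\gg K_n}\circ\Phi_{K_n\gl}=\Phi_{\gg\gl}$ at every action level, in particular at level $a_n$, which is the commutativity of the lower triangle.

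The content is (iii). The two naturality inputs I would use are standard for filtered Floer homology. First, for $a\le b$ and $H\in\H_6(\gg)$ the chain inclusion $CF^a_*(H)\hookrightarrow CF^b_*(H)$ induces maps $\iota^H_{a,b}\colon HF^a_*(H)\to HF^b_*(H)$ that are functorial, $\iota^H_{b,c}\circ\iota^H_{a,b}=\iota^H_{a,c}$, and are isomorphisms whenever $[a,b]\cap\Sp(H)=\emptyset$ (these are the identifications used above). Second, by Lemma~\ref{l:decreasing action} the solutions of \eqref{floereq3} decrease the action, so the continuation chain map counts the same solutions in every action window; hence for $a\le b$ the square relating $\phi_{H_1H_0}$ at levels $a$ and $b$ to the inclusions commutes, and on homology $\iota^{H_1}_{a,b}\circ\Phi_{H_1H_0}=\Phi_{H_1H_0}\circ\iota^{H_0}_{a,b}$. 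Now take the monotone homotopy $G_{n,s}$ of \eqref{gs}; it is monotone since $\gl\le\gg$ and $\be'\ge0$, so applying \eqref{concatenation} to $G_0\le G_1\le\cdots\le G_{k+1}$ gives $\Phi_{\gg\gl}=\Phi_{G_{k+1}G_k}\circ\cdots\circ\Phi_{G_1G_0}$ as a map $HF^{a_n}_k(\gl)\to HF^{a_n}_k(\gg)$. In the composition $HF_k(\io_n)\circ\widehat{\Phi}_{\gg\gl}$, push every intermediate group forward to action level $a_n$ via the maps $\iota$ (each $\aa_j\le\aa_0=a_n$ since $a_n(s)$ is decreasing): by functoriality of the $\iota$'s the tautological identifications $HF^{\aa_j}_*(G_{j+1})=HF^{\aa_{j+1}}_*(G_{j+1})$ become compatible with these pushforwards, and by the second naturality property each $\Phi_{G_{j+1}G_j}$ commutes with them. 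Since $\aa_0=a_n$, the whole composition collapses exactly to $\Phi_{G_{k+1}G_k}\circ\cdots\circ\Phi_{G_1G_0}$ read at level $a_n$, i.e. to $\Phi_{\gg\gl}$. This establishes $HF_k(\io_n)\circ\widehat{\Phi}_{\gg\gl}=\Phi_{\gg\gl}$, the commutativity of the upper triangle.

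The only genuinely delicate point is the chain-level compatibility of the continuation maps with the action-window inclusions (the second naturality property), together with the bookkeeping that makes the identifications built into the definition of $\widehat{\Phi}_{\gg\gl}$ disappear once every group is pushed to level $a_n$; one also tacitly uses that $\Phi_{\gg\gl}$ computed from the monotone homotopy $G_{n,s}$ coincides with the one from the linear homotopy \eqref{hs}, which holds because continuation homomorphisms do not depend on the chosen regular homotopy. Beyond these observations the argument is a formal consequence of \eqref{concatenation} and of the construction of $\widehat{\Phi}_{\gg\gl}$ given before the statement.
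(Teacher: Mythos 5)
Your proof is correct and follows essentially the same route as the paper. You correctly identify that the only nontrivial content is the upper triangle, obtain the lower triangle from $\gl\le K_n\le\gg$ and the concatenation property~\eqref{concatenation}, and establish the upper triangle by observing that $HF_k(\io_n)\circ\widehat{\Phi}_{\gg\gl}$ is the composition of the continuation maps $\Phi_{G_{j+1}G_j}$ read uniformly at action level $\aa_0=a_n$, which by~\eqref{concatenation} collapses to $\Phi_{\gg\gl}$. The only presentational difference is that the paper runs this argument directly at the chain level --- the chain maps $\widehat{\phi}_{G_{j+1}G_j}\colon CF^{\aa_j}_*(G_j)\to CF^{\aa_{j+1}}_*(G_{j+1})\subset CF^{\aa_j}_*(G_{j+1})$ are honest inclusions of subcomplexes, so no separate naturality argument for the $\iota_{a,b}$ maps is needed --- whereas you work at the homology level with explicitly stated filtration naturality. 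Your extra remark that $\Phi_{\gg\gl}$ is independent of the choice of regular monotone homotopy, and that $\gs$ is itself a valid choice, is a legitimate (and slightly hidden) step that the paper absorbs into~\eqref{concatenation}; making it explicit is a small improvement in clarity, not a gap.
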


\begin{proof}
By construction, the isomorphism $\widehat{\Phi}_{\gg \gl}$ is induced by the composition of Floer chain maps
$$\widehat{\phi}_{G_{j+1}G_j}:CF^{\aa_j}_* (G_j) \ra CF^{\aa_{j+1}}_* (G_{j+1}) \subset CF^{\aa_j}_* (G_{j+1}).$$
Therefore, $\io_n \circ \widehat{\Phi}_{\gg \gl}$ is induced by the composition of Floer chain maps
$$\phi_{G_{j+1}G_j} :CF^{\aa_{0}}_* (G_j) \ra CF^{\aa_0}_* (G_{j+1}).$$
By the concatenation property \eqref{concatenation} this composition induces the same map in Floer homology as
$$\phi_{\gg \gl} : CF^{\aa_{0}}_* (\gl) \ra CF^{\aa_0}_* (\gg).$$
The upper triangle therefore commutes. The lower triangle commutes in view of $\gl \leq K_n \leq \gg$ and according to \eqref{concatenation}.
\end{proof}

\begin{cor}
\label{corollary:bouded homology}
$\dim HF^{a_n}_k (K_n) \geq \rank \left( HF_k (\io): HF^{a_n / \si}_k (\gg) \ra HF^{a_n}_k (\gg) \right) $. 
\end{cor}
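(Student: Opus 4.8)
The plan is to deduce the corollary directly from the commutative diagram of Proposition~\ref{p:diagram}, using that $\widehat{\Phi}_{\gg\gl}$ is an isomorphism; there is no further analytic input needed, since all the work (action estimates, the Non-crossing Lemma~\ref{crossing}, and the invariance Lemma~\ref{isom}) is already contained in that proposition. Note first that the map called $HF_k(\io)$ in the corollary is exactly $HF_k(\io_n)\colon HF^{a_n/\si}_k(\gg)\to HF^{a_n}_k(\gg)$ appearing in diagram~\eqref{diagram}, all maps being taken in the fixed degree $k$ over the fixed field $\F_p$, so that ``rank'' means $\F_p$-dimension of the image.

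I would then read off the two triangles of~\eqref{diagram}. Commutativity of the upper triangle gives
$$
HF_k(\io_n)\circ\widehat{\Phi}_{\gg\gl}=\Phi_{\gg\gl},
$$
and commutativity of the lower triangle gives $\Phi_{\gg K_n}\circ\Phi_{K_n\gl}=\Phi_{\gg\gl}$, whence
$$
HF_k(\io_n)\circ\widehat{\Phi}_{\gg\gl}=\Phi_{\gg K_n}\circ\Phi_{K_n\gl}.
$$
Since $\widehat{\Phi}_{\gg\gl}\colon HF^{a_n}_k(\gl)\to HF^{a_n/\si}_k(\gg)$ is an isomorphism by Proposition~\ref{p:diagram}, I may invert it to obtain
$$
HF_k(\io_n)=\Phi_{\gg K_n}\circ\Phi_{K_n\gl}\circ\widehat{\Phi}_{\gg\gl}^{-1}.
$$

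From this factorization the estimate is immediate: the image of $HF_k(\io_n)$ is contained in the image of $\Phi_{\gg K_n}\colon HF^{a_n}_k(K_n)\to HF^{a_n}_k(\gg)$, so
$$
\rank HF_k(\io_n)\ \le\ \rank\Phi_{\gg K_n}\ \le\ \dim HF^{a_n}_k(K_n),
$$
the last inequality because the rank of a linear map cannot exceed the dimension of its domain. This is exactly the assertion of Corollary~\ref{corollary:bouded homology}. There is essentially no obstacle; the only things to be careful about are that all maps are taken in a single degree over the fixed coefficient field (so that composing and comparing ranks is legitimate) and that the inverse $\widehat{\Phi}_{\gg\gl}^{-1}$ exists, which is precisely the ``moreover'' clause of Proposition~\ref{p:diagram}.
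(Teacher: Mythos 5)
Your argument is correct and is exactly the intended (implicit) deduction: the paper states the corollary immediately after Proposition~\ref{p:diagram} without further proof, because it follows from the diagram and the ``moreover'' clause precisely as you describe. The factorization $HF_k(\io_n)=\Phi_{\gg K_n}\circ\Phi_{K_n\gl}\circ\widehat{\Phi}_{\gg\gl}^{-1}$ and the two rank inequalities are the whole content.
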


\subsection{To the homology of the free loop space}

The goal of this section is to prove that the rank of the map $HF_k (\io): HF^{a_n / \si}_k (\gg) \ra HF^{a_n}_k (\gg)$ is bounded below by the rank of $\io_k:  H_k (\La^{n a_n}) \ra H_k (\La)$. This will be done by first applying the Abbondandolo--Schwarz isomorphism, \cite{AS06},  between the Floer homology of $\gg$ and the Morse homology of its Legendre transform $L$, and by then applying the Abbondandolo--Mayer isomorphisms,\cite{AM06}, from the latter Morse homology to the homology of the free loop space $\La M$ of the base.

\begin{thm}
Let $(M,g)$ be a smooth, closed, orientable Riemannian manifold and $K_n$ be as above. It holds that
$$\dim HF^{a_n}_k (K_n; \F_p) \geq \rank \io_k H_k (\La^{n a_n} ; \F_p).$$
\label{t:exp growth}
\end{thm}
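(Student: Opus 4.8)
The plan is to chain together two established isomorphisms so that the Floer homology $HF^{a_n}_*(K_n)$ controls the homology of a sublevel set of the energy functional on $\Lambda M$. First, by Corollary~\ref{corollary:bouded homology} we already know that
$$
\dim HF^{a_n}_k(K_n;\F_p) \;\geq\; \rank\Bigl( HF_k(\io): HF^{a_n/\si}_k(\gg) \to HF^{a_n}_k(\gg) \Bigr),
$$
so it suffices to bound the rank on the right below by $\rank \io_k H_k(\Lambda^{n a_n};\F_p)$. Since $\gg = n\si G + W_n$ is (a small perturbation of) $n\si$ times the kinetic-energy Hamiltonian, its Floer homology in a given action window is computed by the Abbondandolo--Schwarz isomorphism \cite{AS06} as the Morse homology of the Legendre-dual Lagrangian action functional on $\Lambda M$, and these isomorphisms commute with the continuation maps induced by raising the action bound. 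Composing with the Abbondandolo--Mayer isomorphism \cite{AM06} between that Morse homology and the singular homology of the corresponding sublevel set of $\E$ (again compatibly with inclusion-induced maps), the map $HF_k(\io)$ is identified with the inclusion-induced map $\io_k: H_k(\Lambda^{c_1};\F_p)\to H_k(\Lambda^{c_2};\F_p)$ for suitable energy levels $c_1 \le c_2$ determined by $a_n/\si$ and $a_n$ together with the rescaling $\vp_{n\si G}\leftrightarrow$ geodesics.

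The key bookkeeping step is the identification of the energy levels. Under the correspondence between $1$-periodic orbits of $rG$ and closed geodesics, an orbit of action $a$ for $rG$ corresponds to a geodesic loop of energy $a/r$ (this is essentially the computation in Lemma~\ref{action}(i) with $H = G$, $h = r\cdot\mathrm{id}$, so $\A_{rG}(x) = \tfrac1r\E$-value along the underlying loop), and the small potential $W_n$ perturbs action values by at most $c_n < d_n/4$, hence does not affect which sublevel set one lands in. So $HF^{a_n/\si}_k(\gg)$ corresponds to $H_k(\Lambda^{b_1};\F_p)$ and $HF^{a_n}_k(\gg)$ to $H_k(\Lambda^{b_2};\F_p)$ where, using $a_n > n$ from \eqref{def:a} and $\gg = n\si G + W_n$, the larger level satisfies $b_2 = a_n/(n\si)\cdot\si = a_n/n$... more carefully: $\gg$ has Hamiltonian coefficient $n\si$, so an orbit of $\A_{\gg}$-action $a$ sits over a loop of energy roughly $a/(n\si)$; thus $HF^{a_n/\si}$ sees energies $\le a_n/(n\si^2)$ and $HF^{a_n}$ sees energies $\le a_n/(n\si)$. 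The target homology group $H_k(\Lambda^{n a_n})$ in the statement then appears because one wants the map into the full loop space homology, and $n a_n$ is chosen large enough — indeed $n a_n \geq a_n/(n\si)$ for the relevant range of $n$, so $\Lambda^{a_n/(n\si)} \hookrightarrow \Lambda^{n a_n}$, and the rank of $\io_k: H_k(\Lambda^{a_n/(n\si^2)})\to H_k(\Lambda^{n a_n})$ is at least that of $\io_k: H_k(\Lambda^{a_n/(n\si^2)})\to H_k(\Lambda^{a_n/(n\si)})$, while factoring through $H_k(\Lambda^{a_n/(n\si)})$. I would arrange the inequalities so that the composite inclusion $\Lambda^{n a_n} \hookrightarrow \Lambda M$-induced map has rank dominated by $\rank HF_k(\io)$; concretely one checks $\rank \io_k H_k(\Lambda^{n a_n};\F_p) \le \rank\bigl(HF^{a_n/\si}_k(\gg)\to HF^{a_n}_k(\gg)\bigr)$ by slotting the two Abbondandolo isomorphisms into the commuting square relating the Floer continuation map and the singular inclusion map.

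Concretely the steps are: (1) invoke Corollary~\ref{corollary:bouded homology} to reduce to bounding $\rank HF_k(\io)$; (2) apply the Abbondandolo--Schwarz isomorphism to rewrite $HF^{*}_k(\gg)$ as the Morse homology $HM^{*}_k(\mathbb{S}_{L_n})$ of the Lagrangian action functional $\mathbb{S}_{L_n}$ on $\Lambda M$, where $L_n$ is the Legendre transform of $\gg = n\si G + W_n$, noting that this isomorphism intertwines the continuation map $HF_k(\io)$ with the map induced by the inclusion of sublevel sets $\{\mathbb{S}_{L_n}\le a_n/\si\}\hookrightarrow\{\mathbb{S}_{L_n}\le a_n\}$; (3) apply the Abbondandolo--Mayer isomorphism to identify $HM^{a}_k(\mathbb{S}_{L_n})$ with $H_k(\Lambda^{\,e(a)}M;\F_p)$ for the explicit energy threshold $e(a)$ coming from Legendre duality and the quadratic homogeneity (Lemma~\ref{action}(i)), again compatibly with sublevel inclusions; (4) track the constants through the rescaling $x\mapsto x_r$ of Section~\ref{section:cotangent bundles}/Lemma~\ref{action}(ii) and through the $c_n$-sized effect of $W_n$, to verify $e(a_n/\si) \le n a_n$ and that the inclusion $\Lambda^{e(a_n/\si)} \hookrightarrow \Lambda M$ factors as $\Lambda^{e(a_n/\si)}\hookrightarrow\Lambda^{e(a_n)}\hookrightarrow\Lambda^{na_n}\hookrightarrow\Lambda M$ in a way making $\io_k H_k(\Lambda^{na_n})$ a quotient/subquotient bounded by $\rank HF_k(\io)$; (5) combine. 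The main obstacle is step~(4): getting the chain of energy-level inequalities to come out in the right direction for all sufficiently large $n$, while honestly accounting for the cut-off function $f$ (which only modifies $G$ on $D(\ep)$, i.e. at energies $\le \ep^2/2$, hence below all action windows in play) and for the perturbation $W_n$. The use of $f\circ G$ rather than $G$ and the interpolation defining $\gg$ off $D(6)$ must be checked not to introduce spurious low-action orbits — but Proposition~\ref{action2}-type estimates and Lemma~\ref{lemma:savior} already confine all relevant orbits to $D(6)$ where the Hamiltonians agree with genuine multiples of $G$, so this is routine once set up carefully.
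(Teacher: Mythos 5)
Your overall strategy is the same as the paper's: invoke Corollary~\ref{corollary:bouded homology}, then use the Abbondandolo--Schwarz isomorphism followed by Abbondandolo--Mayer, commuting with the inclusion-induced maps, and track the energy levels. However, your step~(4) contains a computational error that, taken at face value, would make the argument collapse. You write the correct formula $\A_{rG}(x) = \tfrac1r\E$, but then read it backwards, asserting that an orbit of action $a$ for $rG$ sits over a loop of energy $a/r$. It is the opposite: $\A = \tfrac1r\E$ means $\E = r\,\A$, so energy scales \emph{up} by the Hamiltonian coefficient, not down. (Equivalently, the Legendre transform of $\be\demi|p|^2$ is $\tfrac1\be\demi|v|^2$, so $\E_L = \tfrac1\be\E$, and $\La^a_L$ corresponds to $\La^{\be a}$, as in Lemma~\ref{lem:legendre}.) With the correct scaling, $HF^{a_n/\si}_k(\gg)\cong H_k(\La^{n\si\cdot a_n/\si}) = H_k(\La^{na_n})$ and $HF^{a_n}_k(\gg)\cong H_k(\La^{n\si a_n})$ — which is exactly Proposition~\ref{p:commut} — and then the inclusions $\La^{na_n}\subset\La^{n\si a_n}\subset\La M$ give
$\rank\bigl(\io_k\colon H_k(\La^{na_n})\to H_k(\La M)\bigr)\le\rank\bigl(H_k(\La^{na_n})\to H_k(\La^{n\si a_n})\bigr)=\rank HF_k(\io)$,
because a map that factors through an intermediate group has rank bounded by that of the first factor. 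This is precisely what the theorem needs.

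Your subsequent attempt to salvage the wrong scaling also has the inequality direction inverted: you claim the rank of $H_k(\La^{a_n/(n\si^2)})\to H_k(\La^{na_n})$ is \emph{at least} that of $H_k(\La^{a_n/(n\si^2)})\to H_k(\La^{a_n/(n\si)})$ ``while factoring through'' the smaller sublevel. Factoring through an intermediate space never increases rank; it can only decrease it. So even with your (incorrect) energy levels, the chain of inequalities would go the wrong way. Redo step~(4) with the scaling $\E\approx\be a$ and the factorization $\La^{na_n}\hookrightarrow\La^{n\si a_n}\hookrightarrow\La M$, and the argument matches the paper's.
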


\ni
We start with proving

\begin{prop}
For each $k$ there is a commutative diagram of homomorphisms
$$
\xymatrix{
HF^{a_n /\si}_k(\gg;\F_p) \ar[r]^\cong \ar[d]_{HF_k (\io)} & H_k(\La^{n a_n } ;\F_p) \ar[d]^{H_k(\io)} \\
HF^{a_n}_k( \gg;\F_p) \ar[r]^\cong &  H_k(\La^{n \si a_n} ;\F_p)  } 
$$
where the horizontal maps are isomorphisms and the right map $H_k(\io)$ is induced by the inclusion $\La^{n a_n} \hookrightarrow \La^{n \si a_n}$.
\label{p:commut}
\end{prop}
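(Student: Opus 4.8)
The plan is to obtain the square by composing two known isomorphisms in their \emph{filtered}, natural forms --- the Abbondandolo--Schwarz isomorphism \cite{AS06} between the Floer homology of the physical Hamiltonian $\gg$ and the Morse homology of the associated Lagrangian action functional, and the Abbondandolo--Mayer isomorphism \cite{AM06} identifying the latter with the singular homology of sublevels of $\La M$ --- and then to match these sublevels with the energy sublevels $\La^{na_n}$ and $\La^{n\si a_n}$. First I would make the Lagrangian side explicit. Since $\gg(t,q,p)=n\si\,G(q,p)+W_n(t,q)=\tfrac{n\si}{2}|p|^2+W_n(t,q)$ is fiberwise strictly convex and superlinear, its fiberwise Legendre transform is the Tonelli Lagrangian $L_n(t,q,v)=\tfrac{1}{2n\si}|v|^2-W_n(t,q)$ on $TM$, with action functional
$$
\mathbb S_{L_n}(q)\;=\;\int_0^1 L_n\bigl(t,q(t),\dot q(t)\bigr)\,dt\;=\;\frac1{n\si}\,\E(q)-\int_0^1 W_n\bigl(t,q(t)\bigr)\,dt
$$
on $\La M$. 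This functional is smooth, bounded below and satisfies the Palais--Smale condition, being a $C^1$-small perturbation of a positive multiple of $\E$; its critical points are the Euler--Lagrange loops of $L_n$, which under Legendre duality are exactly the $1$-periodic orbits of $X_\gg$, and these are nondegenerate by (W1). Hence $\mathbb S_{L_n}$ is Morse on the Hilbert manifold $\La M$, and standard Morse theory applies at every sublevel $\{\mathbb S_{L_n}\le a\}$ whose boundary level is regular.

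Next I would invoke the two theorems. By \cite{AS06}, for $a<(n+1)-2c$ there is an isomorphism $HF^a_k(\gg;\F_p)\xrightarrow{\;\cong\;}HM^a_k(\mathbb S_{L_n};\F_p)$ induced by a chain map that respects the action filtrations: the Hamiltonian action $\A_\gg(x)$ of a $1$-periodic orbit $x$ of $X_\gg$ equals the Lagrangian action $\mathbb S_{L_n}(\pi\circ x)$ of the corresponding critical loop, so the isomorphism intertwines the maps induced by the sublevel inclusions for $a\le a'$. By \cite{AM06} the Morse homology $HM^a_k(\mathbb S_{L_n};\F_p)$ is canonically isomorphic to $H_k\bigl(\{\mathbb S_{L_n}\le a\};\F_p\bigr)$, again compatibly with sublevel inclusions. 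Taking $a=a_n/\si$ and $a=a_n$ (here $a_n/\si\le a_n$ since $\si\ge1$) assembles a commutative square with horizontal isomorphisms $HF^{a_n/\si}_k(\gg)\cong H_k(\{\mathbb S_{L_n}\le a_n/\si\})$ and $HF^{a_n}_k(\gg)\cong H_k(\{\mathbb S_{L_n}\le a_n\})$, left vertical $HF_k(\io)$, and right vertical the map induced by $\{\mathbb S_{L_n}\le a_n/\si\}\hookrightarrow\{\mathbb S_{L_n}\le a_n\}$.

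To finish I would identify the two sublevels with $\La^{na_n}$ and $\La^{n\si a_n}$. I would introduce the homotopy of functionals $\mathbb S_s:=\tfrac1{n\si}\E-s\int_0^1 W_n(t,q(t))\,dt$, $s\in[0,1]$, running from $\mathbb S_0=\tfrac1{n\si}\E$, whose sublevel at level $a$ is exactly $\La^{n\si a}$, to $\mathbb S_1=\mathbb S_{L_n}$; since $\|W_n\|_{C^0}<c_n$ one has $\La^{n\si(a-c_n)}\subseteq\{\mathbb S_s\le a\}\subseteq\La^{n\si(a+c_n)}$ for every $s$. The remaining freedom in the choice of $a_n$ in \eqref{def:a} and of $c_n$ in \eqref{d:cn} lets me require in addition that $na_n$ and $n\si a_n$ be regular values of $\E$ and that $c_n$ be so small that no critical value of $\E$, and no critical value of any $\mathbb S_s$, lies in the closed $n\si c_n$-neighborhood of $na_n$ or of $n\si a_n$ (the critical values of $\mathbb S_s$ in a bounded range lie within $O(c_n)$ of those of $\tfrac1{n\si}\E$). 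The deformation lemma applied to the uniformly Palais--Smale family $\{\mathbb S_s\}$ then yields homotopy equivalences $\{\mathbb S_{L_n}\le a_n\}\simeq\La^{n\si a_n}$ and $\{\mathbb S_{L_n}\le a_n/\si\}\simeq\La^{na_n}$ compatible with the inclusion between the two sublevels; substituting them into the square of the previous paragraph turns the right vertical map into $H_k(\io)\colon H_k(\La^{na_n})\to H_k(\La^{n\si a_n})$, which is the assertion.

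The step I expect to be the main obstacle is the first one: checking that the Floer homology $HF^a_*(\gg)$ constructed in Section~\ref{section:definition of hf}, with almost complex structures convex at infinity and the action truncation below $(n+1)-2c$, is the one to which \cite{AS06} applies, and that the Abbondandolo--Schwarz and Abbondandolo--Mayer isomorphisms are genuinely available in the filtered and functorial forms used above. The secondary technical point is the bookkeeping of the last paragraph, where the $C^1$-small potential $W_n$ gets amplified by the large factor $n\si$ upon passing to energy levels, so that the choices of $a_n$ and $c_n$ must be coordinated with the fixed set of critical values of $\E$.
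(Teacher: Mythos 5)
Your approach is essentially the same as the paper's: compose the filtered Abbondandolo--Schwarz and Abbondandolo--Mayer isomorphisms, then match the Lagrangian action sublevels $\{\mathbb S_{L_n}\le a\}$ with the energy sublevels $\La^{n\si a}$. The paper packages your final deformation step (the sandwich $\La_L^{a-c_n}\subset\La^{n\si a}\subset\La_L^{a+c_n}$ plus a Palais--Smale deformation) as Lemma~\ref{lem:legendre} in Appendix~B, invoked directly under the action-spectrum gap guaranteed by the choice of $a_n$ and $c_n$.
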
  

\begin{proof}
Let $L:S^1\times TM \ra \re$ be the Legendre transform of $\gg$, let
$$\E_L (q) := \int_0^1 L(t,q(t), \dot{q}(t))\,dt$$
be the corresponding functional on $\La M$, and let 
$$\La^b_L := \{ q \in \La M \mid \E_L (q) \leq b\}.$$
Applying Theorem 3.1 of Abbondandolo--Schwarz in \cite{AS06} we obtain for each $b>0$ the isomorphism
$$HF_k^b(\gg; \F_p) \xleftarrow{\Theta^b_k} HM^b_k (L ; \F_p),$$
where $HM^b (L ; \F_p)$ denotes the Morse homology below level $b$ of $\E_L $ constructed in \cite{AM06}, see also \cite[Section 2]{AS06}. The Abbondandolo--Schwarz chain isomorphisms 
$$CF^b_k (\gg;\F_p) \xleftarrow{\theta^b_k} CM^b_k (L;\F_p)$$
between the Floer and the Morse chain complexes commute with the inclusions
$$CF^b_k (\gg;\F_p) \hookrightarrow CF^{b'}_k (\gg;\F_p)$$
and
$$ CM^b_k (L;\F_p)  \hookrightarrow CM^{b'}_k (L;\F_p)$$
for $b'>b$, see \cite[p. 298]{AS06}. Therefore, the induced diagram of homology groups commutes, 

$$
\xymatrix{
HF^{b}_k(\gg;\F_p)  \ar[d]_{HF_k (\io)} & HM^b_k(L,\F_p) \ar[l]_{\Theta^b_k}\ar[d]^{HM_k(\io)} \\
HF^{b'}_k( \gg;\F_p)  &  HM^{b'}_k(L;\F_p)\ar[l]_{\Theta^{b'}_k}  } 
$$

\ni
Moreover, Abbondandolo and Mayer constructed chain isomorphisms
$$CM^b_k (L;\F_p) \xrightarrow{\tau^b_k} C_k(\La^{b}_L;\F_p)$$
between the Morse and the singular chain complexes which commute with the inclusions
$$ CM^b_k (L;\F_p)  \hookrightarrow CM^{b'}_k (L;\F_p)$$
and 
$$ C_k (\La^b_L;\F_p)  \hookrightarrow C_k (\La^{b'}_L;\F_p)$$
for $b'>b$ (see \cite{AM06} and \cite[Section 2.3]{AS06}). Thus the induced diagram of homology groups commutes, i.e.

$$
\xymatrix{
HF^{b}_k(\gg;\F_p)\ar[r]^{T^b_k}  \ar[d]_{HF_k (\io)} & H_k(\La^{b}_L,\F_p) \ar[d]^{HM_k(\io)} \\
HF^{b'}_k( \gg;\F_p) \ar[r]_{T^{b'}_k}   &  H_k(\La^{b'}_L;\F_p) } 
$$
Notice now that $L(t,q,v) = \frac{1}{n\si} \demi |v|^2 - W_n(t,q)$, whence by Lemma \ref{lem:legendre} $\La^{a_n}_L$ retracts on $\La^{n\si a_n}$ as $[a_n - c_n, a_n +c_n]$ does not belong to the spectrum of $\gl$. Proposition~\ref{p:commut} follows.
\end{proof}

\ni
Consider now the commutative diagram 

\begin{equation} 
\label{e:diagram loop space}
\xymatrix{
H_k(\La^{na_n};\F_p) \ar[d] \ar[dr]^{\io_k} \\
H_k(\La^{n\si a_n};\F_p) \ar[r] & H_k(\La M;\F_p)   }
\end{equation}
induced by the inclusion $\La^{na_n} \subset \La^{n\si a_n} \subset \La M$. In view of Proposition \ref{p:commut} and \eqref{e:diagram loop space} we have that 
$$
\rank \left( HF_k (\io): HF^{a_n / \si}_k (\gg;\F_p) \ra HF^{a_n}_k (\gg;\F_p) \right)
$$
is bounded below by 
$$
\rank \left( \io_k: H_k(\La^{na_n};\F_p) \ra H_k(\La M;\F_p) \right).
$$
Together with Corollary \ref{corollary:bouded homology} this yields
$$
\dim HF^{a_n}_k (K_n; \F_p) \geq \rank \left( \io_k: H_k(\La^{na_n};\F_p) \ra H_k(\La M;\F_p)\right).
$$
Which conclude the proof of Theorem \ref{t:exp growth}.\\

\ni
Suppose $(M,g)$ is energy hyperbolic, $h:= C(M,g)>0$. By definition of $C(M,g)$, there exist $p\in \Pr$ and $N_0 \in \N$ such that for all $m\geq N_0$,
$$
\sum_{k\geq 0} \dim \io_k H_k (\La^{\demi m^2}; \F_p) \geq e^{\frac{1}{\sqrt{2}}hm}.
$$ 
Therefore there exists $N\in \N$ such that for all $n\geq N$, 
$$
\sum_{k\geq0} \rank \io_k \geq e^{hn}.
$$
Together with Proposition \ref{p:commut} we find that 
\begin{equation}
\sum_{k\geq0} \rank HF_k(\io) \geq \sum_{k\geq0} \rank \io_k \geq e^{hn}.
\label{e:energy hyp ineq}
\end{equation}

\ni
Similarly, if $c(M,g)>0$ we find that there exists $p\in \Pr$ and $N\in \N$ such that for all $n\geq N$,
\begin{equation}
\sum_{k\geq0} \rank HF_k(\io) \geq \sum_{k\geq0} \rank \io_k \geq n^{h}.
\label{e:energy ell ineq}
\end{equation}

 \clearemptydoublepage

\chapter{Morse-Bott homology}
\label{chapter:morse-bott homology}
In order to prove Theorem B, we use Morse--Bott homology and its correspondence to Floer homology. This chapter is devoted to the definition of Morse--Bott homology. In the first section we give a definition of a generic hypersurface in order to achieve a Morse--Bott situation. In the second section, we associate to a relevant Hamiltonian an additional perturbation in order to obtain an isomorphism between its Floer homology and Morse--Bott homology. The main tool of this isomorphism is the {\it Correspondence Theorem} due to Bourgeois--Oancea, \cite{BO09}, which will be discussed in the last section.

\section{A Morse-Bott situation}
Let $(X,g)$ be a Riemannian manifold. A smooth function $f\in C^\infty (M, \re)$ is called {\it Morse-Bott} if 
$$
\crit (f) := \{x\in X \mid df(x) = 0\}
$$
is a submanifold of $X$ and for each $x\in \crit (f)$ we have
$$
T_x\crit(f) = \ker (\Hess(f)(x)).
$$
Let $F: \Tm \ra\re$ be a Hamiltonian such that $F|_\Si \equiv 1$. The action functional $\A_F$ is invariant under the $S^1$-action on $\La M$ given by $\ga (t) \mapsto \ga(t+\cdot)$. In order that it's critical points are Morse--Bott manifolds we make the following nondegeneracy assumption on the Reeb flow on $\Si$.

\begin{itemize}
\item[(A)] The closed Reeb orbits of $\Si$ are of Morse-Bott type, i.e. for each $\tau$ the set $\Od_R(\tau)$ of $\tau$-periodic Reeb orbits is a closed submanifold and every closed Reeb orbit is transversally nondegenerate, i.e.
$$\det(1-d\vp_R^\tau (\ga(0))|\xi) \neq 0.$$  
\label{assumption}
\end{itemize}
Assumption (A) is generically satisfied. We will then say that $\Si$ is generic. If (A) is satisfied, then the action functional $\A_F$ is Morse-Bott. \\

\ni
There are several ways to deal with Morse--Bott situations. The first possibility we are going to use, is to choose an additional small perturbation to get a Morse situation. Our perturbation is the one introduced in \cite{CFHW96}. The second possibility is to choose an additional Morse function on the critical manifold. The chain complex is then generated by the critical points of this Morse function while the boundary operator is defined by counting trajectories with cascades. This approach was carried out by Frauenfelder in \cite[Appendix A]{Frauen04} and by Bourgeois and Oancea in \cite{BO09}. \\

\ni
In \cite{BO09}, Bourgeois and Oancea studied a particular class of {\it admissible Hamiltonians} corresponding in our setting to Hamiltonians $H:\Tm \ra\re$ such that
\begin{itemize}
\item[(i)] $H|_{D(\Si)}$ is a $C^2$-small Morse function and $H<0$ on $D(\Si)$;
\item[(ii)]$H(q,p)=h(p)$ outside $D(\Si)$, where $h$ is a strictly increasing function, convex at infinity and such that the 1-periodic orbits of $X_h$ are in one-to-one correspondence with closed Reeb orbits.    
\end{itemize}
The 1-periodic orbits of $X_H$ then fall into two classes:
\begin{itemize}
\item[(1)] critical points of $H$ in $D(\Si)$;
\item[(2)] nonconstant 1-periodic orbits of $X_h$.
\end{itemize}
For such Hamiltonians, they obtained an isomorphism of the homology of the Morse-Bott complex with the Floer homology with respect to the same Hamiltonian with an additional perturbation.\\ 

\ni
In the following we will follow their approach. However in view of our particular choice of Hamiltonian, we will deal with a slightly simpler situation. Instead of dealing with two classes of periodic orbits as in \cite{BO09}, we only have nonconstant 1-periodic orbits. Moreover, as we are working below an energy level $a$, we are able to apply their {\it Correspondence Theorem} to our situation.

\section{An additional perturbation}
\label{section:an additional perturbation}
Let $\Si$ be a fiberwise starshaped hypersurface and assume $\Si$ is generic in the sense of~(A).
As in section \ref{section:relevant hamiltonians}, we can define a Hamiltonian function $F:\Tm\ra\re$ by the two conditions
$$
F|_\Si \equiv 1, \quad F(q,sp)=s^2F(q,p), \quad s\geq 0 \mbox{ and }(q,p)\in \Tm,
$$
and smoothen $F$ by composing it with the smooth function $f:\re\ra\re$.\\

\ni
By the assumption of genericity, all the elements of $\P (f\circ F)$ are transversally nondegenerate. Thus every orbit $\ga \in \P (f\circ F)$ gives rise to a whole circle of nonconstant 1-periodic orbits of $X_{f\circ F}$ whose parametrizations differ by a shift $t\in S^1$. We denote by $\Sg$ the set of such orbits, so that $S_\ga = S_{\ga(\cdot + t)}$ for all $t\in S^1$. The Maslov index is still well-defined for these orbits. But as the eigenspace to the eigenvalue 1 is two dimensional, their index will be a half integer, see section \ref{maslov index}.\\

\ni
The following construction is the one described in \cite[Proposition 2.2]{CFHW96} and \cite{BO09}. For each $\ga \in \P (f\circ F)$, we choose a Morse function $\fg : \Sg \ra \re$ with exactly one maximum $Max$ at $t_1$ and one minimum $min$ at $t_2$. We denote by $\lga \in \N$ the maximal natural number so that $\ga (t+1/\lga) = \ga(t)$ for all $t\in S^1$. We choose a symplectic trivialization $\psi := (\psi_1, \psi_2): U_\ga \ra V \subset S^1 \times \re^{2n-1}$ between open neighborhoods $U_\ga \subset T^*M$ of $\ga(S^1)$ and $V$ of $S^1 \times \{0\}$ such that $\psi_1 (\ga(t)) = \lga t$. Here $S^1\times\re^{2n-1}$ is endowed with the symplectic form $\om_0 := \sum_{i=1}^{2n} dp_i \wedge dq_i$, $q_1 \in S^1$, $(p_1, q_2, p_2, \ldots, p_n) \in \re^{2n-1}$. Let $\rho: S^1\times \re^{2n-1} \ra [0,1]$ be a smooth cutoff function supported in a small neighborhood of $S^1 \times \{0\}$ so that $\rho|_{S^1 \times \{0\}} \equiv 1$. For $\delta>0$ and $(t,q,p) \in S^1\times U_\ga$, we define
$$
\Fd (t,q,p) := f\circ F(q,p) + \delta \rho(\psi(q,p)) \fg (\psi_1(q,p)-\lga t).
$$
We will denote the perturbation added to $f\circ F$ by $h_\delta$.\\

\ni
This perturbation destroys the critical circles and gives rise to two obvious solutions of $\dot{x} = X_\Fd (t,x)$, namely 
$$
\gaM(t) = \ga(t+ t_1/\lga)
$$
and
$$
\gam(t) = \ga(t+ t_2/\lga).
$$

\ni
By construction these orbits are nondegenerate, their indexes are then integer. It is shown in \cite[Proposition 2.2]{CFHW96} that for $\delta$ sufficiently small $\gam$ and $\gaM$ are the only elements of $\P(\Fd)$ in $U_\delta$ and their indexes are given by
$$
\mu(\gam) = \mu(\ga)-\demi \quad \mbox{and} \quad \mu(\gaM) = \mu(\ga)+\demi.
$$
Recall from section \ref{maslov index} that $\mu(\ga)$ is a half integer. Therefore $\mu(\gam)$ and $\mu(\gaM)$ are integers.

\section{Morse-Bott homology}
Consider the space of autonomous smooth Hamiltonian functions 
$$
\H'_6 (\gg) =\{H: T^*M \ra \re \mid H=\gg \mbox{ on } S^1\times T^*M \backslash D(6) \}.
$$
Let $H \in \H'_6(\gg)$. 
Fix $a < n+1$, and suppose that every orbit $\ga \in \P^a (H)$ gives rise to a whole circle of non-constant periodic orbits $\ga$ of $X_{H}$, which are transversally non-degenerate. We denote by $S_\ga$ the set of such orbits. 

\subsubsection{The chain groups}

For each $S_\ga$, $\ga \in \P^a(H)$ choose a Morse function $f_\ga : S_\ga \ra \re$ with exactly one maximum and one minimum. We denote by $\gaM$, $\gam$ the orbits in $S_\ga$ starting at the maximum and the minimum of $f_\ga$ respectively. \\

\ni
The $k^{th}$ Morse-Bott chain group is defined as the finite dimensional $\F_p$-vector space freely generated by the the set of $\gam$, $\gaM$ of Maslov index $k$ so that $\ga$ is an element of $\P^a(H)$, and the full chain group is defined as
$$
BC^a_* (H; \F_p) := \bigoplus_{\ga \in \P^a(H)} \F_p \lg \ga_m , \ga_M \rg
$$
where the grading is given by the Maslov index. 

\subsubsection{Almost complex structures}

Following section \ref{almost complex structures} in the definition of Floer homology, we are considering the set $\Js$ of $t$-dependent $\om$-compatible almost complex structures $\J$ on $T^*M$ such that $\J$ is convex and independent of $t$ on $T^*M \backslash \Dcirc(6)$. Recall that an $\om$-compatible almost complex structure $J$ on $T^*M$ is {\it convex} on $T^*M \backslash \Dcirc(6)$ if
$$\begin{aligned}
J\xi = \xi, & \\
\om(Y(x),J(x)Y(x))=1,&\quad \mbox{for } x\in \partial D(6)\\
d\psi_t(x)J(x) = J(\psi_t(x))d\psi_t(x),& \quad \mbox{for } x\in \partial D(6) \mbox{ and } t\geq 0,
\end{aligned}$$
where $Y$ denote the Liouville vector field $\sum_i p_i\partial p_i$. 

\subsubsection{Trajectories with cascades}

Fix $\J \in\Js$, $\oga, \uga \in \P^a(H)$. We denote by
$$
\widehat{\M}(S_\oga, S_\uga; H, \J)
$$
the space of solutions $u:\re\times S^1 \ra T^*M$ of Floer's equation 
\begin{equation}
\partial_s u + J_t (u) (\partial_t u - X_{H}(t,u))=0 
\end{equation}
subject to the asymptotic conditions 
$$
\begin{aligned}
\lim_{s\ra -\infty} u(s,t) &= \oga(t), \\
\quad \lim_{s\ra +\infty} u(s,t)& = \uga(t),\\
\quad \lim_{s\ra \pm\infty} \partial_s u (s,t) &= 0 
\end{aligned}
$$
uniformly in $t$. The Morse-Bott moduli spaces of Floer trajectories are defined by 
$$
\Modgg :=  \widehat{\M}(S_\oga, S_\uga; H, \J) /\re.
$$

\ni
It is shown in \cite[Proposition 3.5]{BO09} that there exists a dense subset $\Jr(H) \subset \Js$ such that given $\J \in \Jr(H)$ the Morse-Bott moduli spaces of Floer trajectories are smooth manifolds, and their dimensions are 
$$
\dim \Modgg = \mu(\oga) - \mu(\uga).
$$

\ni
We have natural evaluation maps
$$
\evo : \Modgg \ra S_\oga \quad
$$
and
$$
\evu : \Modgg \ra S_\uga
$$
defined by 
$$
\evo([u]) := \lim_{s\ra-\infty} u(s,\cdot), \quad \evo([u]) := \lim_{s\ra \infty} u(s,\cdot).
$$

\bigskip
\ni
For each $S_\ga$, $\ga \in \P^a(H)$, consider the Morse function $f_\ga : S_\ga \ra \re$ and its negative gradient flow $\phi^s_\ga$ with respect to $-\nabla f_\ga$. Define the stable an unstable manifold by
$$
\begin{aligned}
W^s (p;-\nabla f_\ga ) &:= \{z\in S^1 \mid \lim_{s\ra+\infty} \phi^s_\ga (z) = p\}, \\
W^u (p; -\nabla f_\ga) &:= \{z\in S^1 \mid \lim_{s\ra-\infty} \phi^s_\ga (z) = p\}.
\end{aligned}
$$ 
Then $W^u(\max)= S_\ga \backslash \{\min\}$, $W^s(\max) = \{\max\}$, $W^u(\min) = \{\min\}$, $W^s(\min)= S_\ga \backslash \{\max\}$.\\

\ni
It is shown in \cite{BO09} that for a generic choice of these Morse functions, all the maps $\evo$ are transverse to the unstable manifolds $W^u (p)$, $p\in \crit(f_\ga)$, all the maps $\evu$ are transverse to the stable manifolds $W^s (p)$, $p\in \crit(f_\ga)$, and all pairs
$$
(\evo, \evu) : \Modgg \ra S_\oga \times S_\uga,
$$
$$
(\evo, \evu) : \M(S_\oga, S_{\ga_1}; H, \J)\; { }_\evu\! \times_\evo \; \M(S_{\ga_1}, S_{\uga}; H, \J)\ra S_\oga \times S_\uga
$$
are transverse to products $W^u(p)\times W^s(q)$, $p\in \crit(f_\oga)$, $q\in \crit(f_\uga)$. We denote by $\Fr(H, \J)$ the set consisting of collections $\{f_\ga\}$ of Morse functions which satisfy the above transversality conditions.\\

\ni
Let now $\J \in \Jr(H)$ and $\{f_\ga\} \in \Fr(H,\J)$. For $p\in \crit(f_\ga)$ we denote the Morse index by 
$$
\ind (p) := \dim W^u(p; -\nabla f_\ga).
$$
Let $\oga, \uga \in \P^a(H)$ and $p\in \crit(f_\oga)$, $q\in \crit(f_\uga)$. For $m\geq 0$ denote by 
$$\Modmpq$$
the union for $\tilde{\ga}_1, \ldots, \tilde{\ga}_{m-1} \in \P^a(H)$ of the fiber products
\begin{equation}
W^u(p) \times_\evo \; (\M(S_\oga, S_{\tilde{\ga}_1}; H, \J) \times \re^+ ) \; { }_{\vp_{f_{\tilde{\ga}_1}}\circ\evu}\! \times_\evo \; (\M(S_{\tilde{\ga}_1}, S_{\tilde{\ga}_2}; H, \J)\times \re^+)
\label{e:brokentraj}
\end{equation}
$$
\; { }_{\vp_{f_{\tilde{\ga}_2}}\circ \evu}\! \times_\evo \; \ldots \; { }_{\vp_{f_{\tilde{\ga}_{m-1}}} \circ \evu}\! \times_\evo \; \M(S_{\tilde{\ga}_{m-1}}, S_{\uga}; H, \J) \; { }_{\evu} \! \times W^s(q).
$$
This is a smooth manifold of dimension 
$$
\begin{aligned}
\dim \Modmpq &= (\mu(\oga) + \ind(p)) - (\mu(\uga) + \ind (q)) -1 \\
 &= \mu(\oga_p) - \mu(\uga_q) -1 .
\end{aligned}
$$
We denote 
$$
\Modpq = \bigcup_{m\geq0} \Modmpq
$$ 
and we call this the moduli space of {\it Morse-Bott trajectories with cascades}, whereas the space $ \M_m (p, q; H, \{f_\ga \}, \J) $ is called the {\it moduli space of trajectories with cascades with $m$ sublevels}. \\

\begin{figure}[h]
  \centering
  \psfrag{a}[][][1]{$p$}
  \psfrag{b}[][][1]{$q$}
  \psfrag{c}[][][1]{$-\nabla f$}
  \psfrag{d}[][][1]{$u$}
  \includegraphics[width=10cm]{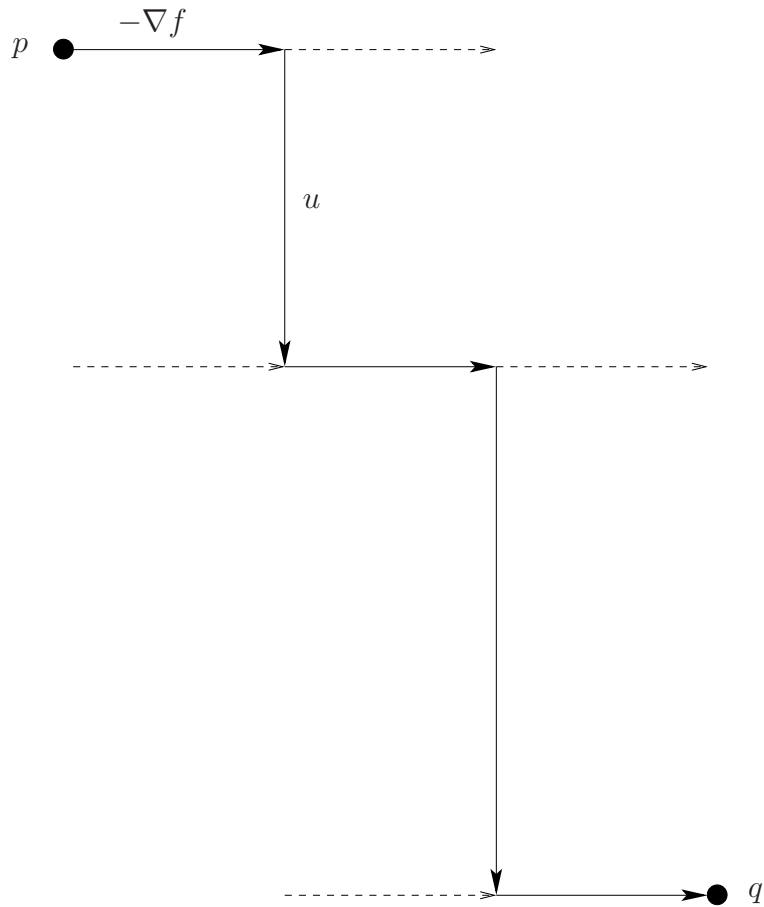}
  \caption{Trajectory with cascades.}
\end{figure}


\subsubsection{The Morse-Bott differential and the Correspondence Theorem}

Let $p \in \crit (f_\ga)$, then $\ga_p \in \P(H_\de)$ for all $\de \in \,]0,\de_0]$ if $\de_0$ is small enough. Consider
$$
\M_{]0,\de_0[}(\oga_p, \uga_q; H, \{f_\ga\}, \J):= \bigcup_{0<\de<\de_0} \{\de\} \times \M(\oga_p, \uga_q; H_\de,  \J),
$$
with 
$$
\mu(\oga_p)-\mu(\uga_q) = 1,
$$
where
$$
\oga, \uga \in \P^a(H), \quad p\in \crit(f_\oga), \quad q\in \crit(f_\uga), \quad \J \in \Js.
$$

\begin{thm}{\it (Correspondence Theorem).}
Let $H \in \H'_6 (\gg)$, $\J \in \Jr(H)$ and $\{f_\ga\} \in \Fr(H, \J)$. There exists
$$
\de_1 := \de_1 (H,\J) \in \,]0,\de_0[
$$
such that for any
$$\oga, \uga \in \P^a(H), \quad p\in \crit(f_\oga), \quad q\in \crit(f_\uga)$$
with
$$
\mu(\oga_p) - \mu(\uga_q) = 1,
$$
the following hold:
\begin{itemize}
\item[(i)] $\J$ is regular for $\M(\oga_p, \uga_q; H_\de,  \J)$ for all $\de\in ]0,\de_1[$; 
\item[(ii)] the space $\M_{]0,\de_1[}(\oga_p, \uga_q; H, \{f_\ga\}, \J)$ is a one-dimensional manifold having a finite number of components that are graphs over $]0,\de_1[$;
\item[(iii)] there is a bijective correspondence between points
$$[u] \in \M(\oga_p, \uga_q; H, \{f_\ga\}, \J)$$
and connected components of $\M_{]0,\de_1[}(\oga_p, \uga_q; H, \{f_\ga\}, \J)$.
\end{itemize}
\end{thm}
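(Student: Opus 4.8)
The plan is to follow Bourgeois--Oancea \cite{BO09}: regard the perturbed Hamiltonian $H_\de$ (obtained from $H$ by the $\Sg$-localized perturbation of Section \ref{section:an additional perturbation}) as a small deformation of $H$, and analyze its Floer trajectories as $\de \to 0$ by a \emph{parametrized} gluing-and-compactness argument in which $\de$ itself plays the role of the gluing parameter. The first step is to fix the local model near each critical circle: by the construction of Section \ref{section:an additional perturbation} and \cite[Proposition 2.2]{CFHW96}, for $\de$ small the only elements of $\P(H_\de)$ near $S_\ga$ are the nondegenerate orbits $\gam$ and $\gaM$, with $\mu(\gam) = \mu(\ga) - \demi$ and $\mu(\gaM) = \mu(\ga) + \demi$. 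Since we work below the fixed level $a < n+1$, the relevant set of orbits is finite, and by Lemma \ref{l:compactness} together with the $C^1$-smallness of the perturbation every relevant Floer trajectory of $H_\de$ stays in $D(6)$. In our setting this is genuinely simpler than \cite{BO09}, since there is no extra class of ``critical points of $H$ in $D(\Si)$'' to carry along.

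For the constructive half I would prove a quantitative implicit function theorem. Start from a Morse--Bott trajectory with cascades representing a point of $\M(\oga_p, \uga_q; H, \{f_\ga\}, \J)$ with $\mu(\oga_p) - \mu(\uga_q) = 1$: a broken configuration made of finitely many rigid Floer cascades of $H$ connecting critical manifolds $S_\oga = S_{\tilde\ga_0}, S_{\tilde\ga_1}, \ldots, S_{\tilde\ga_m} = S_\uga$, interspersed with $-\nabla f_{\tilde\ga_i}$-gradient segments. For each small $\de$ I would preglue an approximate solution $u_\de$ of the $H_\de$-Floer equation, inserting near each $S_{\tilde\ga_i}$ a long neck of length $T_i = T_i(\de) \to \infty$ on which $u_\de$ shadows the relevant gradient segment, and then correct $u_\de$ to an exact solution by Newton iteration in exponentially weighted Sobolev spaces. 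The linearized operator at $u_\de$ is uniformly invertible: on the cascade pieces this uses surjectivity coming from $\J \in \Jr(H)$, on the neck pieces it uses the Morse--Smale property of $f_{\tilde\ga_i}$, and the matching across necks uses exactly the transversality of the evaluation maps $\evo, \evu$ to the (un)stable manifolds that defines $\{f_\ga\} \in \Fr(H, \J)$. Uniform invertibility gives at once that the exact solution is unique near $u_\de$, depends smoothly on $\de$, and undergoes no bifurcation; this yields (i) and shows that each such broken configuration produces exactly one connected component of $\M_{]0,\de_1[}(\oga_p, \uga_q; H, \{f_\ga\}, \J)$, a component that is a graph over $]0,\de_1[$.

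The converse --- that every component arises this way, hence the bijection of (iii) and the finiteness of (ii) --- follows from Gromov--Floer compactness adapted to the Morse--Bott setting. Along any sequence $\de_k \to 0$ with $u_k \in \M(\oga_p, \uga_q; H_{\de_k}, \J)$, Lemma \ref{l:compactness} prevents escape to infinity and exactness of $\om$ (which vanishes on $\pi_2(T^*M)$) excludes bubbling, so a subsequence converges to a broken Floer trajectory of $H$; on the long neck regions, where $\partial_s u_k$ is governed by the negative gradient flow of $f$ after time rescaling up to an error of order $\de_k$, the limit consists of $-\nabla f$-gradient segments. The dimension count $\mu(\oga_p) - \mu(\uga_q) = 1$ forces the limit to be exactly a Morse--Bott trajectory with cascades, i.e. a point of $\M(\oga_p, \uga_q; H, \{f_\ga\}, \J)$, which is therefore a finite set. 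Combining this with the graph property from the previous step gives the stated bijective correspondence, and hence (ii).

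The main obstacle is the parametrized gluing estimate: proving that the linearized operator stays invertible \emph{uniformly} as the neck lengths $T_i(\de)$ diverge. This forces a careful choice of exponential weights at the necks and precise bookkeeping of how the Morse differential on $\crit(f_\ga)$ and the Floer differential of $H$ assemble into the cascade differential --- this is the content of \cite[Section 3]{BO09} (compare \cite[Appendix A]{Frauen04}), and in our situation it is only streamlined by the absence of the extra class of constant orbits and by restricting throughout to action below $a$.
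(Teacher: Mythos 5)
The paper does not prove this theorem itself: it cites it directly from Bourgeois--Oancea \cite{BO09} and merely observes that the present setting (only nonconstant orbits below the action cutoff $a$, no second class of critical points of $H$ in $D(\Si)$) is a simplification of theirs. Your sketch accurately outlines the \cite{BO09} argument --- the parametrized gluing/Newton iteration with $\de$ as the gluing parameter, uniform invertibility coming from $\J \in \Jr(H)$ together with the Morse--Smale and evaluation-map transversality built into $\Fr(H,\J)$, and the converse via Gromov--Floer compactness with the dimension count --- and you correctly identify the same simplifications the paper points out, so this is essentially the same route.
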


\ni
For each $[u] \in \Modpq$, the sign $\eps([u_\de])$, defined in section \ref{floer boundary operator}, is constant on the corresponding connected component $C_{[u]}$ for continuity reasons. We define a sign $\overline{\eps} ([u])$ by
\begin{equation}
\label{e:morse-bott sign}
\overline{\eps}([u]) := \eps([u_\de]), \quad \de\in ]0,\de_1[, \, (\de, [u_\de]) \in C_{[u]}.
\end{equation}

\bigskip 
\ni  
We define the Morse-Bott differential
$$
\partial : BC^a_* (H; \F_p) \ra BC^a_{*-1} (H; \F_p)
$$
by 
\begin{equation}
\label{e:mb differential}
\partial\ga_p := \sum_{\substack{\uga \in \P^a(H), \,q\in\crit (f_\uga),\\  \mu(\ga_p) - \mu(\ga_q)=1}}\, \sum_{[u]\in\M (\ga_p, \uga_q ; H, \{f_\ga \}, \J)} \overline{\eps} (u) \uga_q\, , \quad p\in \crit(f_\ga).
\end{equation}

\ni
For $\delta$ sufficiently small, the definitions imply an isomorphism of free modules
\begin{equation}
\label{e:morsebott gp isom}
CF^a_*(H_\delta; \F_p) \simeq BC^a_* (H; \F_p).
\end{equation}
Moreover the Correspondence Theorem and definition \eqref{e:morse-bott sign} of signs in the Morse-Bott complex implies that the corresponding differentials also coincide. As a consequence, it holds that
$$
H_*(BC_*^a (H; \F_p), \partial) = HF_*^a(H_\de,\J).
$$

 \clearemptydoublepage

\chapter{Proofs of theorem A and theorem B}

\section{Proof of theorem A}

\begin{thm1}
Let $M$ be a closed, connected, orientable, smooth manifold and let $\Si \subset T^*M$ be a fiberwise starshaped hypersurface. Let $\vp_R$ be the Reeb flow on $\Si$, and let $N_R$, $n_R$, $E(M)$ and $e(M)$ be defined as in section \ref{section:free loop space}. Then

\begin{itemize}
\item[(i)]
$N_R \geq E(M)$;
\item[(ii)]
$n_R \geq e(M) - 1$.
\end{itemize}

\end{thm1}

\begin{proof}
The starting point of the proof is the crude estimate 
$$
\sum_{k\geq 0} \dim \io_k H_k (\La^a ; \F_p) \geq \dim \io_0 H_0(\La^a ;\F_p).
$$
Denote by $\cc(M)$ the set of conjugacy classes in $\pi_1(M)$. Then
$$
\La M \,=\, \coprod_{\al\,\in\,\cc (M)} \La_\al M .
$$
For each element $\al \in \cc(M)$ denote by $e(\al)$ the infimum of the energy of a closed curve representing $\al$. Consider the energy sublevels
$$
\cc^a (M) := \left\{ \al \in \cc(M) \mid e(\al) \leq a \right\}.
$$
Then
$$
\dim \io_0 H_0(\La^a ;\F_p) = \# \cc^a (M).
$$

\ni
Consider the Hamiltonian function $F:\Tm \ra\re$ defined by
$$
F|_\Si \equiv 1, \quad F(q,sp)=s^2F(q,p), \quad s\geq 0 \mbox{ and }(q,p)\in \Tm.
$$
Use the cut-off function $f:\re\ra\re$ defined in section \ref{section:relevant hamiltonians} to obtain the smooth function $f\circ F$.\\

\ni
Fix $n\in \N$. Choose a sequence of perturbations 
$$
\left(V_{n,l} (t,q)\right)_{l\in \N}
$$
such that all 1-periodic solutions of $\dot{x}(t) = X_{n(f\circ F)+V_{n,l}} $ are nondegenerate, for all $l\in \N$, and 
$$
\| V_{n,l}(t,q) \|_{C^\infty} \, \ra \, 0 \mbox{ for } l\ra \infty.
$$
In order to apply the results of chapter \ref{chapter:convex to starshaped} we can, without loss of generality, suppose that $\| V_{n,l} (t,q) \|_{C^1}=: c_l < c < \frac{1}{4}$ for all $l \in \N$, where $c$ is the constant defined in section \ref{section:relevant hamiltonians}. In the following, we will use the notation
$$
F_{n,l} := n (f\circ F) + V_{n,l}.
$$

\ni
For each $l\in \N$ and an action level $a_{n,l} >n-2c_l$, we estimate
\begin{equation}
\begin{aligned}
\#\P^{a_{n,l}}(F_{n,l}) &= \dim CF^{a_{n,l}}_*(F_{n,l}; \F_p) \\
 & \geq \dim HF^{a_{n,l}}_* (F_{n,l};\F_p) \\
 & \geq \dim \io_0 H_0(\La^{na_{n,l}} ;\F_p) \\
 & = \# \cc^{na_{n,l}} (M),
\end{aligned}
\end{equation}
where the second inequality follows from Theorem \ref{t:exp growth}.
Moreover, if $\al\in\cc^{na_{n,l}}(M)$, then the set
$$
\P^{a_{n,l}} (F_{n,l};\al) := \{x =(q,p) \in \P^{a_{n,l}}(F_{n,l}) \mid q \in \al\}
$$
is not empty. This follows from the Abbondandolo--Schwarz chain isomorphisms
$$CF^{a_{n,l}}_k (\gg;\F_p) \xleftarrow{\theta^{a_n}_k} CM^{a_{n,l}}_k (L;\F_p)$$
between the Floer and the Morse chain complexes.\\

\ni
Fix $a_n$ such that $a_n \in  [a_{n,l}, n+1]$ for all $l\in \N$, and fix a non-zero conjugacy class $\al\in \cc^{na_n} (M)$. Consider a sequence $(x_l)_{l\in \N}$ such that $x_l \in \P^{a_{n,l}}(F_{n,l}; \al)$. By Proposition~\ref{action2}, 
$$
x_l \subset D(\Si) \quad \mbox{for all }l \in \N.
$$
The set $D(\Si)$ is compact. Since $V_{n,l} \,\ra\, 0$ in $C^\infty$, $X_{F_{n,l}} \ra X_{F_n}$ in $C^\infty$ on $D(\Si)$. By the Ascoli--Arzel\`a theorem, there exists a subsequence $x_{l_j}$ of $x_l$ such that $x_{l_j} \ra x$
as $j\ra\infty$, with convergence in $C^0$. Then, by making use of the Hamilton equation
$$
\dot{x}_{l_j} (t) = X_{F_{n,l_j}} (x_{l_j}(t)),
$$
we obtain $x_{l_j} \ra x$ when $j\ra\infty$ in $C^\infty$, and $x\in \P^{a_n}(F_n)$. Since $[x_{l_j}] = c$ for all $l_j$ and $x_{l_j} \ra x$, we see that $[x] =\al$. Hence 
$$
x\in \P^{a_n}(F_n;\al) \quad \mbox{and} \quad x\in D(\Si).
$$
We have thus shown that
$$\# \P^{a_n} (nF) \geq \# \cc^{n a_n} (M) - 1$$
as $\al$ is a non-trivial conjugacy class.\\

\ni
For a closed orbit $\ga$, denote by $\ga^k$ its $k$th iterate $\ga^k(t)= \ga(kt)$. The iterates of a closed orbit are geometrically the same. We want to control the contribution of the iterates of a closed orbit to $\P^{a_n}(nF)$. We are considering orbits with action $\A_{nF} (\ga) \leq a_n$, so there are finitely many of them. There exists a lower bound $\aa>0$ of the action of these closed orbits. We have 
$$
\A_{nF}(\ga^k) = k\aa.
$$
Hence at most $a_n /\aa$ iterates of such orbits will have action $\leq a_n$. Notice that if we increase the action level, $\aa$ will still be a lower bound.\\

\ni
Denote by $\tilde{\P}(nF)$ the set of geometrically different closed 1-periodic orbits. Then
$$
\# \tilde{\P}^{a_n} (nF) \geq \frac{\aa}{a_n} \# \cc^{n a_n} (M) -1
$$
which implies the estimates $(i)$ and $(ii)$ of Theorem A.
\end{proof}
\newpage

\section{Proof of theorem B}

\begin{thm2}
Let $M$ be a closed, connected, orientable smooth manifold and let $\Si \subset \Tm$ be a generic fiberwise starshaped hypersurface. Let $\vp_R$ be the Reeb flow on $\Si$, and let $N_R$, $n_R$, $C(M,g)$ and $c(M,g)$ be defined as in section \ref{section:free loop space}. Then

\begin{itemize}
\item[(i)]
$N_R \geq C(M,g)$.

\item[(ii)]
$n_R \geq c(M,g)-1$.
\end{itemize}
\end{thm2}

\subsubsection{Relevant Hamiltonians}

Let $\Si \subset \Tm$ be a fiberwise starshaped hypersurface, generic in the sense of (A), i.e. all closed Reeb orbits of $\Si$ are of Morse-Bott type.\\

\ni
Consider the Hamiltonian function $F:\Tm \ra\re$ defined by
$$
F|_\Si \equiv 1, \quad F(q,sp)=s^2F(q,p), \quad s\geq 0 \mbox{ and }(q,p)\in \Tm.
$$
Use the cut-off function $f:\re\ra\re$ defined in section \ref{section:relevant hamiltonians} to obtain the smooth function~$f\circ F$.\\

\ni
The Hamiltonian vectore field $X_F$ is a nonvanishing smooth vector field on $\Si$ which is compact. Thus there exists $\tau>0$ such that every closed orbit $x$ of $X_F$ restricted to $\Si$ has period at least $\tau$. Consider a $T$-periodic orbit $x(t) =(q(t), p(t))$ on $\Si$ and define for $s>0$
$$
x_s(t) := (q(st), sp(st)),
$$ 
then $x_s$ is a periodic orbit of $X_F$ of period $\frac{T}{s}$. Moreover all the nonconstant periodic orbits of $X_F$ are of this form. Recall the definition of the cut-off function $f:\re\ra\re$,
$$\begin{cases}
f(r) = 0 & \mbox{if } r\leq \ep^2,\\
f(r) = r  & \mbox{if } r\geq \ep  \\
f'(r) >0 & \mbox{if } r > \ep^2, \\
0\leq f'(r)\leq2 & \mbox{for all } r,
\end{cases}$$
and choose $\ep$ such that $\ep < \frac{\tau^2}{4}$. Then the closed orbits $x$ of $X_{f\circ F}$ with $f\circ F (x) \geq \ep$ agree with those of $X_F$, while the nonconstant periodic orbits $\tilde{x}$ of $X_{f\circ F}$ with $f\circ F (\tilde{x}) < \ep$ have period at least 
$$
\frac{1}{f'(F (\tilde{x}))} \cdot  \frac{\tau}{\sqrt{\ep}} >1.
$$
Hence the elements of $\P (f\circ F)$ are the same as the elements of $\P(F)$. \\

\ni
Fix $n\in \N$. In order to apply the results of chapter~\ref{chapter:convex to starshaped} and chapter~\ref{chapter:morse-bott homology}, we need to determine a suitable perturbation $V_n$ of $n(f\circ F)$. It will be the sum of a pertubation $h_{n,\delta}$ following section~\ref{section:an additional perturbation} with support in $\Tm \backslash \Dcirc(\ep - d)$ and a positive Morse function $U: \Tm \ra\re$ with support in $D(\ep^2+d)$, where $d< \frac{\ep-\ep^2}{3}$. \\

\ni
By the assumption of genericity, all the nonconstant elements of $\P(n(f\circ F))$ are transversally nondegenerate. Thus every nonconstant orbit $\ga \in \P(n(f\circ F))$ gives rise to a whole circle of nonconstant 1-periodic orbits of $X_{n(f\circ F)}$ whose parametrizations differ by a shift $t\in S^1$. We can thus apply the construction of the perturbation described in section \ref{section:an additional perturbation} and consider the perturbed Hamiltonian
$$
\Fnd (t,q,p) := n(f\circ F(q,p)) + \delta \rho(\psi(q,p)) \fg (\psi_1(q,p)-\lga t).
$$
We will denote the perturbation added to $n(f\circ F)$ by $h_{n,\delta}$ and assume that the support of $h_{n,\delta}$ is contained in $\Tm \backslash \Dcirc(\ep-d)$, where $d< \frac{\ep-\ep^2}{3}$.\\

\ni
Recall that this perturbation destroys the critical circles and gives rise to two solutions of $\dot{x} = X_\Fnd (t,x)$, namely 
$\gam(t)$ and $\gaM(t)$. \\

\ni
Let $U: \Tm \ra\re$ be a positive Morse function with support in $D(\ep^2+d)$. We define the perturbation $V_n: S^1 \times \Tm \ra\re$ by 
\begin{equation}
\label{e:perturbation morse}
V_n (t,q,p) =  h_{n,\delta} (t,q,p) + U(q,p).
\end{equation}  
Without loss of generality, we can choose $\delta$ and $U$ so that 
$$
\| V_n (t,q,p) \|_{C^1} <c <\frac{1}{4},
$$
where $c$ is the constant defined in section \ref{section:relevant hamiltonians}.\\

\ni
The 1-periodic orbits of $X_{n(f\circ F) + V_n}$ fall in two classes:
\begin{enumerate}
\item critical points of $n(f\circ F) + U$ in $D(\ep^2 +d)$,
\item nonconstant 1-periodic orbits of $n(f\circ F) +  h_{n,\delta}$ in $\Tm \backslash \Dcirc(\ep-d)$.
\end{enumerate}

\ni
Set 
$$\begin{aligned}
\gg(t,q,p) & := n \si (f\circ G)(q,p) +W_n (t,q) + U(q,p), \\
K_n(t,q,p) & := (1-\tau_n(|p|) \left(n (f\circ F) + V_n\right) (t,q,p)+\tau_n(|p|)\;  \gg(t,q,p), \\
\gl(t,q,p) & := (1-\tau_n(|p|) \left(n (f\circ G) + W_n \right)(t,q,p) + U(q,p) +\tau_n(|p|)\;  \gg(t, q,p) \\
 	      & \phantom{i}= n \bigl( (1-\tau_n(|p|)  (f\circ G )(q,p)+\tau_n(|p|)\;  \si G(q,p) \bigr) + W_n(t,q) + U(q,p),
\end{aligned}$$
where $\tau$ and $W_n$ are defined as in section~\ref{section:relevant hamiltonians}. Moreover we assume that 
\begin{itemize}
\item[(B1)] $\ep^2 < \frac{\tilde{\tau}^2}{4}$ , where $\tilde{\tau}$ is the minimal period of closed orbits of $X_F$, $X_G$ and $X_{\si G}$ restricted to $\Si$.
\item[(B2)] $f'(\ep - d) > \demi$.
\item[(B3)] $W_n$ as support in $\Tm \backslash \Dcirc(\ep - d)$.    
\item[(B4)] $c_n < \frac{n}{2} (\ep-d)^2$. 
\end{itemize}
Assumptions (B1) and (B3) imply that the 1-periodic orbits of $X_{\gg}$ fall in two classes:
\begin{enumerate}
\item critical points of $n\si (f\circ G) + U$ in $D(\ep^2 +d)$,
\item nonconstant 1-periodic orbits of $n\si (f\circ G) + W_n$ in $\Tm \backslash \Dcirc(\ep-d)$
\end{enumerate}
and similarly for $\gl$. Assumptions (B2) and (B4) will be used in the next section.

\subsubsection{The Non-crossing Lemma}

\ni
Following section~\ref{section:non-crossing lemma}, 
for $s\in [0,1]$ define the functions
\begin{equation}
\gs  =(1-\be(s)) \gl + \be(s) \gg 
\end{equation}
and
$$
a(s)=\frac{a}{1+\be(s)(\si-1)}
$$
where $a\in\, ]n-2c,(n+1)-2c[$.

\begin{lem}
If $[a-c_n , a+c_n]\cap \Sp(\gl) = \emptyset$, then $a(s)\notin \Sp(\gs)$ for $s\in[0,1]$. Moreover $0\notin \Sp(\gs)$ for $s\in[0,1]$.
\label{crossing2}
\end{lem}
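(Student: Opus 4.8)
The plan is to adapt the proof of Lemma~\ref{crossing} almost verbatim, with the extra twist that we now also have to control the action near the zero section, where the Morse function $U$ and the cutoff $f$ interact. As before, take $\ga = (q(t),p(t)) \in \P(\gs)$ and split into the three regimes $|p(t_0)| > 3$, $|p(t_0)| \leq \ep - d$, and $\ga \subset D(3) \setminus D(\ep - d)$ for some $t_0 \in S^1$. In the outer regime $|p(t_0)| > 3$ the potential $U$ and the perturbation $h_{n,\delta}$ have disappeared (both are supported inside $D(\ep^2+d) \subset D(3)$, respectively inside a shell disjoint from the region $|p|>3$), so $\gs$ there coincides with the function treated in Lemma~\ref{crossing} up to the harmless term $W_n$; Lemma~\ref{lemma:savior} together with Euler's identity gives $\A_{\gs}(\ga) \geq 2n \geq n+1 > a \geq a(s) > 0$ exactly as before, which also takes care of $0 \notin \Sp(\gs)$ for such an orbit.

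In the intermediate shell $D(3) \setminus D(\ep - d)$ one has $\tau_n(|p|) = 0$, the perturbations $U$ vanishes, and $f \circ G(\ga) = G(\ga)$ because $|p| \geq \ep - d$ forces $G \geq \demi(\ep-d)^2 > \ep^2$ once $\ep$ is small; so $\gs(\ga) = (1 + \be(s)(\si-1)) nG(\ga) + W_n(t,q)$ and Lemma~\ref{action}(ii) together with the definition of $a(s)$ shows that $\A_{\gs}(\ga) = a(s)$ would force $[a-c_n,a+c_n] \cap \Sp(nG+W_n) \neq \emptyset$, contradicting the hypothesis; and since $nG(\ga) \geq \frac n2 (\ep-d)^2 > c_n$ by (B4), Lemma~\ref{action}(i) combined with $f' \geq \demi$ on $[\ep-d,\infty)$ (assumption (B2)) and a short estimate shows $\A_{\gs}(\ga) > 0$, so $0 \notin \Sp(\gs)$ here too.

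The genuinely new regime is $|p(t_0)| \leq \ep - d$: by Lemma~\ref{lemma:savior} the whole orbit stays in a slightly larger sublevel of $G$, so $\tau_n \equiv 0$ along $\ga$, $W_n$ vanishes (assumption (B3)), and $\gs(\ga) = n\big((1-\be(s)) f\circ G + \be(s) \si G\big)(\ga) + U(q)$. Now either $\ga$ is the constant critical point of $n\si(f\circ G) + U$ sitting at a critical point of $U$ — in which case $\A_{\gs}(\ga)$ is essentially $-U$ evaluated there, but $U > 0$, so one must check the sign carefully; in fact for a constant orbit $\A_{\gs}(\ga) = -\gs(\ga) = -U(q) < 0$, hence such orbits contribute only negative action values, staying well below $a(1) = a/\si$ and, crucially, these are the orbits realizing the spectral value near $0$, so one verifies directly that their action is strictly negative, giving $0 \notin \Sp(\gs)$ — or $\ga$ is nonconstant, living in the region where $f \circ G$ is genuinely nonlinear, and then the computation of Lemma~\ref{crossing} (case $|p(t_0)| \leq \ep$) applies with the extra bounded term $\int_0^1 U$, yielding $\A_{\gs}(\ga) \leq 2n\si\ep^2 + 5c_n + \|U\|_{C^0} < (n-2c)/\si \leq a(1) \leq a(s)$ provided $\ep$ and $\|U\|_{C^0}$ are chosen small enough (strengthening \eqref{epsilon} accordingly).

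The main obstacle I expect is precisely the bookkeeping around the zero section: one must confirm that the constant $1$-periodic orbits coming from critical points of $n(f\circ F)+U$ (and $n\si(f\circ G)+U$) all have action in a controlled range bounded away from both $a(s)$ and $0$ — the positivity of $U$ makes their action negative, which is what we want, but one has to be sure no nonconstant orbit can sneak into this region with action close to $a(s)$ or $0$, and this is exactly what assumptions (B1)--(B4) and the sharpened smallness conditions on $\ep$ and $\|U\|_{C^0}$ are there to rule out. Everything else is a routine transcription of Lemma~\ref{crossing}, using Lemma~\ref{lemma:savior}, Lemma~\ref{action}, and Euler's identity.
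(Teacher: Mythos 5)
Your proposal follows the same three-region decomposition and the same toolkit (Lemma~\ref{lemma:savior}, Lemma~\ref{action}, Euler's identity) as the paper's proof, and it arrives at the same conclusions; it is essentially a faithful transcription of that argument. Two small points are worth noting. First, in the inner ball $\Dcirc(\ep-d)$ you allow for possible nonconstant $1$-periodic orbits and estimate their action separately, which forces you to strengthen the smallness hypotheses on $\ep$ and $\|U\|_{C^0}$. The paper avoids this subcase entirely: by assumptions (B1) and (B3), the minimal period of closed characteristics of $X_G$ and $X_{\si G}$ on $\Si$ rules out nonconstant $1$-periodic orbits of $X_{\gs}$ inside $\Dcirc(\ep-d)$, so every orbit there is a constant orbit sitting at a critical point of $U$, with action $-U(\ga)<0$. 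Recognizing this makes your extra estimate unnecessary. Second, in the outer region $|p|>3$ you speak of the perturbation $h_{n,\delta}$ having "disappeared", but $h_{n,\delta}$ never enters $\gs$ at all — the Non-crossing Lemma concerns only the interpolation $\gs$ between the two Riemannian-type Hamiltonians $\gl$ and $\gg$, whose perturbations are $W_n$ and $U$; the Morse--Bott perturbation $h_{n,\delta}$ lives only in $K_n$. Finally, the justification "$G>\ep^2$ forces $f\circ G=G$" is not quite right — $f(r)=r$ holds only for $r\geq\ep$, while $r>\ep^2$ only gives $f(r)>0$ — though this bookkeeping point is inherited from the corresponding step in Lemma~\ref{crossing} and does not change the shape of your argument.
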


\begin{proof}
Take $\ga = (q(t),p(t)) \in \P(\gs)$.\\
Assume first that $|p(t_0)| > 3$ for some $t_0 \in S^1$, then $U(\ga)=0$. Then the statement follows from the proof of Lemma~\ref{crossing}.\\
Assume next that $\ga \in D(3)\backslash \Dcirc(\ep-d)$. Then $U(\ga)=0$ and $\tau_n (|p(t)|) =0$ for all $t\in [0,1]$, which yields $\gl (\ga) = n(f\circ G) (\ga) + W_n(\ga)$ and
$$\begin{aligned}
\gs(\ga) & = n\bigl( 1-\be(s) \bigr) \bigl( f\circ G \bigr) (\ga) + n \be(s) \si (f \circ G) (\ga) +W_n(\ga)\\
              & = n\Bigl( \bigl ((1-\be(s) \bigr) + \be(s) \si )  f\circ  \Bigr) G(\ga) + W_n(\ga).
\end{aligned}$$
Hence the first part of the statement follows from the proof of Lemma~\ref{crossing}. We need to show that $\A_{\gs} (\ga) \neq 0$. By Lemma~\ref{action} (i) we have
$$
\A_{\gs} (\ga) = \int_0^1 2n  \Bigl( \bigl ((1-\be(s)) + \be(s) \si \bigr)  f' \bigl(G(\ga) \bigr) \Bigr) G(\ga) - \gs (\ga) \,dt.
$$
Together with $f'(\ep-d) > \demi$ and the choice of $c_n$ we obtain that
$$\begin{aligned}
\A_{\gs} (\ga) & \geq \int_0^1 \bigl( 2n f'(\ep-d) -n \bigr) G(\ga) - W_n (\ga)) \, dt \\
                     & \geq \int_0^1 \frac{n}{2} (\ep-d)^2 - c_n \,dt \\
                     & > 0. 
\end{aligned}$$
Assume finally that $\ga$ lies in $\Dcirc (\ep-d)$. Then $\ga(t) = \ga(0)$ is a constant orbit, $W_n(\ga) =0$, $\tau(\ga) = 0$ and $U(\ga) >0$. Thus
$$\begin{aligned}
\A_{\gs} (\ga) &= \int_0^1 - n  \bigl ( (1-\be(s) ) + \be(s) \si \bigr) \; f(G(\ga)) - U(\ga) \, dt \\
                      &\leq -U(\ga) < 0.  
\end{aligned}$$
This conclude the proof of the lemma.                      
\end{proof}

\subsubsection{Floer homology groups}

For $H \in \H_6(\gg)$ and $a< (n+1)-2c$ we define
$$
\P^{(0,a]} (H) := \{x\in \P(H) \mid 0< \A_H(x) \leq a\} = \P^a (H) \backslash \P^0 (H).
$$
Suppose that $0, a \notin \Sp(H)$ and every 1-periodic orbits $x\in \P^{(0,a]}(H)$ is nondegenerate. The Floer chain group $CF^{(0,a]}_*(H;\F_p)$ is defined as the quotient 
$$
CF^{(0,a]}_*(H;\F_p) := CF^a_*(H;\F_p) / CF^0_*(H;\F_p).
$$
It is usefull to think of this chain complex as the $\F_p$-vector space freely generated by the elements of $\P^ {(0,a]}(H)$ graded by the Maslov index. By Lemma~\ref{l:decreasing action} the subcomplex $CF^0_*(H;\F_p)$ is invariant under the boundary operator defined in section~\ref{floer boundary operator}. We thus get an induced boundary operator  on the quotient. We denote the $k^{th}$ Floer homology group of the quotient complex by 
$$
HF^{(0,a]}_k(H;\F_p) := \frac{\ker \partial_k(\J) :  CF^{(0,a]}_k (H;\F_p)\ra CF^{(0,a]}_{k-1} (H;\F_p) }{\im \partial_{k+1}(\J) : CF^{(0,a]}_{k+1} (H;\F_p)\ra CF^{(0,a]}_k (H;\F_p)}.
$$

\subsubsection{Continuation homomorphisms}

Let $a_n$ be the action level \eqref{def:a} of section~\ref{section:Continuation homomorphisms2}. 

\begin{prop}
\label{p:diagram2}
$\dim HF^{(0,a_n]}_k (K_n) \geq \rank \left( HF_k (\io): HF^{(0,a_n / \si]}_k (\gg) \ra HF^{(0,a_n]}_k (\gg) \right) $. 
\end{prop}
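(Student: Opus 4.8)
The plan is to repeat, almost verbatim, the argument that produced Proposition~\ref{p:diagram} and Corollary~\ref{corollary:bouded homology} in Chapter~\ref{chapter:convex to starshaped}, but now working with the quotient complexes $CF^{(0,a]}_*(\cdot)$ in place of $CF^a_*(\cdot)$. First I would observe that every ingredient used in Chapter~\ref{chapter:convex to starshaped} survives passage to the quotient by $CF^0_*$: by Lemma~\ref{l:decreasing action} the action is non-increasing along Floer trajectories and along continuation solutions, so the subcomplexes $CF^0_*$ are preserved by all the boundary operators, by the Floer chain maps $\phi_{H'H}$, and by the inclusion-induced maps $\io_n$; hence each of them descends to the quotients and induces a homomorphism on $HF^{(0,a]}_*$. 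Consequently the whole commutative diagram \eqref{diagram} has a faithful analogue with $HF^{a_n}_k$, $HF^{a_n/\si}_k$ replaced by $HF^{(0,a_n]}_k$, $HF^{(0,a_n/\si]}_k$, with the same maps $\widehat{\Phi}_{\gg\gl}$, $\Phi_{\gg\gl}$, $\Phi_{K_n\gl}$, $\Phi_{\gg K_n}$, $HF_k(\io_n)$ now understood on the quotients.

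Next I would check that $\widehat{\Phi}_{\gg\gl}$ remains an isomorphism on the quotient level. Here the key new input is the second assertion of Lemma~\ref{crossing2}: the Non-crossing Lemma in the present setting guarantees not only that $a_n(s)\notin\Sp(\gs)$ but also that $0\notin\Sp(\gs)$ for all $s\in[0,1]$, so the action value $0$ is never crossed along the whole homotopy $\gs$, $s\in[0,1]$. Therefore the isomorphisms $\Phi_{G_{j+1}G_j}$ of Lemma~\ref{isom}, being defined by counting continuation solutions whose action lies in the relevant band, restrict to isomorphisms of the subcomplexes $CF^0_*$ and hence descend to isomorphisms $\widehat{\Phi}_{G_{j+1}G_j}:HF^{\aa_j,(0,\,\cdot\,]}_*(G_j)\to HF^{\aa_{j+1},(0,\,\cdot\,]}_*(G_{j+1})$ exactly as in Section~\ref{section:Continuation homomorphisms2}; composing them gives that $\widehat{\Phi}_{\gg\gl}$ is an isomorphism on the quotients. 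The commutativity of the upper and lower triangles follows, as before, from the concatenation property \eqref{concatenation} and from $\gl\leq K_n\leq\gg$.

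Finally, from the quotient diagram one reads off, exactly as in the proof of Corollary~\ref{corollary:bouded homology}, that
$$
\dim HF^{(0,a_n]}_k(K_n) \;=\; \dim CF^{(0,a_n]}_k(K_n) \;\geq\; \dim HF^{(0,a_n]}_k(K_n)
$$
is bounded below by $\rank\bigl(HF_k(\io):HF^{(0,a_n/\si]}_k(\gg)\to HF^{(0,a_n]}_k(\gg)\bigr)$, since $HF_k(\io)=\Phi_{\gg K_n}\circ\Phi_{K_n\gl}\circ(\widehat{\Phi}_{\gg\gl})^{-1}$ factors through $HF^{(0,a_n]}_k(K_n)$. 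I expect the main obstacle to be bookkeeping rather than substance: one must make sure that the Correspondence Theorem of Chapter~\ref{chapter:morse-bott homology} is compatible with the action-window truncation (so that $HF^{(0,a_n]}_*(K_n)$ is the homology of the truncated Morse--Bott complex, with the constant orbits in $D(\ep^2+d)$, which have action $\leq 0$ by the last computation in the proof of Lemma~\ref{crossing2}, correctly quotiented out), and that the splitting of $\P(K_n)$ into the two classes is respected by all the maps. Once that is in place, the proposition is a formal consequence of the chain-level constructions already carried out, together with Lemma~\ref{crossing2}.
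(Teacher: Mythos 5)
Your proposal follows essentially the same route as the paper: use the extra assertion $0\notin\Sp(\gs)$ from Lemma~\ref{crossing2} to conclude that $\widehat{\Phi}_{\gg\gl}$ descends to an isomorphism of the quotient complexes, then reproduce the commutative diagram of Proposition~\ref{p:diagram} with $HF^{a}_*$ replaced by $HF^{(0,a]}_*$ and read off the bound from the factorization of $HF_k(\io)$ through $HF^{(0,a_n]}_k(K_n)$. The only blemish is the garbled display $\dim HF^{(0,a_n]}_k(K_n)=\dim CF^{(0,a_n]}_k(K_n)\geq\dim HF^{(0,a_n]}_k(K_n)$, which is not what is needed and not generally true; the correct and sufficient statement, which you give immediately after, is simply that $\rank HF_k(\io)\leq\dim HF^{(0,a_n]}_k(K_n)$ because $HF_k(\io)=\Phi_{\gg K_n}\circ\Phi_{K_n\gl}\circ(\widehat{\Phi}_{\gg\gl})^{-1}$ factors through it.
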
 

\begin{proof}
In view of the Non-crossing Lemma~\ref{crossing2}, the isomomorphism $
\widehat{\Phi}_{\gg \gl} : HF_*^{a_n}(\gl) \ra HF_*^{a_n/\si}(\gg)
$ of section~\ref{section:Continuation homomorphisms2} induces an isomorphism 
$$
\widetilde{\Phi}_{\gg \gl} : HF_*^{(0,a_n]}(\gl) \ra HF_*^{(0,a_n / \si]}(\gg). 
$$ 
Recalling the proof of Proposition~\ref{p:diagram}, we thus have for each $k$ a commutative diagram of homomorphisms
\begin{equation}
\xymatrix{
  & HF^{(0,a_n / \si]}_k(\gg) \ar[rd]^{HF_k (\io_n)} & \\
  HF^{(0,a_n]}_k(\gl) \ar[ru]^{\widetilde{\Phi}_{\gg \gl}} \ar[rr]^{\Phi_{\gg \gl}} \ar[rd]_{\Phi_{K_n \gl}} & & HF^{(0,a_n]}_k(\gg)   \\
&  HF^{(0,a_n]}_k(K_n) \ar[ru]_{\Phi_{\gg K_n}} & }
\label{diagram2}
\end{equation}
Moreover $\widetilde{\Phi}_{\gg \gl}$ is an isomorphism. This concludes the proof of the proposition.
\end{proof}

\subsubsection{To the homology of the free loop space}

\begin{prop}
\label{p:lower bound}
Let $(M,g)$ be a closed, orientable Riemannian manifold and $K_n$ be as above. It holds that for $k\geq 1$
$$\dim HF^{(0,a_n]}_k (K_n; \F_p) \geq \rank \io_k H_k (\La^{n a_n} ; \F_p).$$
\end{prop}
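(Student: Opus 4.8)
The plan is to follow the proof of Theorem~\ref{t:exp growth} from Chapter~\ref{chapter:convex to starshaped} line by line, systematically replacing the Floer groups $HF^a_*$ by their truncations $HF^{(0,a]}_*$ and Corollary~\ref{corollary:bouded homology} by Proposition~\ref{p:diagram2}. By Proposition~\ref{p:diagram2} it suffices to show that for $k\ge 1$
$$
\rank\Bigl(HF_k(\io)\colon HF^{(0,a_n/\si]}_k(\gg;\F_p)\ra HF^{(0,a_n]}_k(\gg;\F_p)\Bigr)\ \ge\ \rank\bigl(\io_k\colon H_k(\La^{na_n};\F_p)\ra H_k(\La M;\F_p)\bigr),
$$
so the real content is a truncated analogue of Proposition~\ref{p:commut}.

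The first step is to describe $\P(\gg)$. Here $\gg=n\si(f\circ G)+W_n+U$, and its $1$-periodic orbits split into the constant orbits sitting at the critical points of the small positive Morse function $U$ near the zero section, and the nonconstant orbits of $X_{n\si(f\circ G)+W_n}$; by the choice of $\ep$ (which forces every nonconstant $1$-periodic orbit into $\{G\ge\ep\}$) together with assumption (B3) all of the latter lie in $\Tm\setminus\Dcirc(\ep-d)$. Exactly as in the proof of Lemma~\ref{crossing2}, the constant orbits have action $-U(\cdot)<0$, bounded away from $0$, whereas the nonconstant ones have positive action. Hence $CF^{(0,a_n]}_*(\gg)=CF^{a_n}_*(\gg)/CF^0_*(\gg)$ is freely generated precisely by the nonconstant orbits, and on the region $\Tm\setminus\Dcirc(\ep-d)$ — into which every Floer cylinder between such orbits is confined by the maximum principle (Lemma~\ref{l:compactness}) and the exactness of $\om$ — the Hamiltonian $\gg$ agrees with the fiberwise strictly convex geodesic Hamiltonian $n\si G+W_n$ of Chapter~\ref{chapter:convex to starshaped}. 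Consequently the Abbondandolo--Schwarz and Abbondandolo--Mayer isomorphisms apply to this subquotient just as in the proof of Proposition~\ref{p:commut}, with Legendre transform $L(t,q,v)=\tfrac1{n\si}\demi|v|^2-W_n(t,q)$ and $\La^b_L$ retracting onto $\La^{n\si b}$ by Lemma~\ref{lem:legendre}. Moreover, with the index normalization of Section~\ref{maslov index}, the generators of the constant-orbit subcomplex $CF^0_*(\gg)$ (the Morse complex of the small positive function $U$) all have Maslov index $\le 0$, so $HF^0_k(\gg)=0$ for $k\ge 1$ and the projection $HF^{a}_k(\gg)\ra HF^{(0,a]}_k(\gg)$ is an isomorphism for $k\ge 1$. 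Combining, one obtains for each $k\ge 1$ a commutative diagram
$$
\xymatrix{
HF^{(0,a_n/\si]}_k(\gg;\F_p)\ar[r]^-{\cong}\ar[d]_{HF_k(\io)} & H_k(\La^{na_n};\F_p)\ar[d]^{H_k(\io)}\\
HF^{(0,a_n]}_k(\gg;\F_p)\ar[r]^-{\cong} & H_k(\La^{n\si a_n};\F_p)
}
$$
with horizontal isomorphisms and the right vertical map induced by the inclusion $\La^{na_n}\hra\La^{n\si a_n}$.

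Granting this diagram the proof finishes as at the end of Chapter~\ref{chapter:convex to starshaped}: the commutative triangle~\eqref{e:diagram loop space} induced by $\La^{na_n}\subset\La^{n\si a_n}\subset\La M$ gives
$$
\rank\bigl(HF_k(\io)\bigr)=\rank\bigl(H_k(\io)\colon H_k(\La^{na_n};\F_p)\ra H_k(\La^{n\si a_n};\F_p)\bigr)\ \ge\ \rank\bigl(\io_k\colon H_k(\La^{na_n};\F_p)\ra H_k(\La M;\F_p)\bigr),
$$
and together with Proposition~\ref{p:diagram2} this yields $\dim HF^{(0,a_n]}_k(K_n;\F_p)\ge\rank\io_k H_k(\La^{na_n};\F_p)$ for $k\ge1$.

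The main obstacle I expect is the middle step: transferring the Abbondandolo--Schwarz isomorphism of \cite{AS06}, which is stated for fiberwise convex Hamiltonians, to the present $\gg$, which is convex only away from $\Dcirc(\ep-d)$. This rests on (a) the confinement, via the strong maximum principle, of all generators of $CF^{(0,a_n]}_*(\gg)$ and of all connecting Floer cylinders to the region $\{G\ge\ep\}\supset$ support of the nonconstant orbits, where $\gg=n\si G+W_n$ — here the period estimate behind the choice of $\ep$ and assumption (B3) are essential; and (b) the index bookkeeping showing that truncating at action $0$ removes exactly the constant-orbit part of the complex and nothing else, so that the truncation matches the passage to the part of loop space homology captured above the constant loops. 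Point (b) is precisely why the statement is restricted to $k\ge1$: in degree $0$ the constant loops do contribute, and the comparison with $\io_0 H_0(\La^{na_n})=\#\cc^{na_n}(M)$ is the separate content of Theorem~A. The remaining commutativities are, as in Chapter~\ref{chapter:convex to starshaped}, just the naturality of the Abbondandolo--Schwarz and Abbondandolo--Mayer chain isomorphisms with respect to the action-truncation inclusions.
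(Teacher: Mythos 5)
Your proposal follows essentially the same route as the paper: apply the Abbondandolo--Schwarz and Abbondandolo--Mayer \emph{chain-level} isomorphisms to the action-truncated complexes, observe that the truncation at action $0$ removes only the constant-orbit contribution, and combine with Proposition~\ref{p:diagram2} and the commutative triangle coming from $\La^{na_n}\subset\La^{n\si a_n}\subset\La M$. The one cosmetic difference is where you dispose of the constant orbits: you do it on the Floer side (claiming the constant orbits of $\gg$ have Maslov index $\le 0$, so $HF^a_k(\gg)\cong HF^{(0,a]}_k(\gg)$ for $k\ge 1$), whereas the paper does it on the loop-space side (noting constant loops have Morse index zero, so $H_k(\La^{(0,b]}_L)=H_k(\La^b_L)$ for $k\ge 1$); both versions of this index claim are stated somewhat loosely once the Morse perturbation $U$ is taken into account — the constant orbits in fact spread over a bounded range of degrees $\le\dim M$ — but this bounded discrepancy is precisely what is absorbed by the ``$-m$'' correction appearing later in the proof of Theorem~B, so neither argument is damaged.
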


\begin{proof}
Let $L:S^1\times TM \ra \re$ be the Legendre transform of $\gg$, let
$$\E_L (q) := \int_0^1 L(t,q(t), \dot{q}(t))\,dt$$
be the corresponding functional on $\La M$, and let 
$$\La^{(0,b]}_L := \La^b_L \backslash \La^0_L , \quad \La^b_L:=\{ q \in \La M \mid \E_L (q) \leq b\}.$$
Recall that both the Abbondandolo--Schwarz isomorphism, \cite{AS06}, and Abbondandolo--Mayer isomorphism, \cite{AM06}, are chain complex isomorphisms. Following the proof of Proposition~\ref{p:commut}, we have for each $k$ a commutative diagram of homomorphisms
$$
\xymatrix{
HF^{(0,a_n / \si]}_k(\gg;\F_p) \ar[r]^\cong \ar[d]_{HF_k (\io)} & H_k(\La^{(0,n a_n] } ;\F_p) \ar[d]^{H_k(\io)} \\
HF^{(0,a_n]}_k( \gg;\F_p) \ar[r]^\cong &  H_k(\La^{(0,n\si a_n]} ;\F_p)  } 
$$
where the horizontal maps are isomorphisms and the right map $H_k(\io)$ is induced by the inclusion $\La^{(0,n a_n] } \hookrightarrow \La^{(0,n\si a_n]}$.\\
By definition $\La^0$ is the set of constant loops. Hence every critical point $q \in \La^0$ of $\E$ has Morse index zero. This yields for $k\geq 1$
$$
H_k(\La^{(0, b] } ;\F_p) = H_k(\La^{b } ;\F_p).
$$
Consider now the commutative diagram 
\begin{equation} 
\xymatrix{
H_k(\La^{na_n};\F_p) \ar[d] \ar[dr]^{\io_k} \\
H_k(\La^{n\si a_n};\F_p) \ar[r] & H_k(\La M;\F_p)   }
\end{equation}
induced by the inclusion $\La^{na_n} \subset \La^{n\si a_n} \subset \La M$. Together with Proposition~\ref{p:diagram2} this yields for $k\geq 1$
$$
\dim HF^{(0,a_n]}_k (K_n; \F_p) \geq \rank \left( \io_k: H_k(\La^{na_n};\F_p) \ra H_k(\La M;\F_p)\right),
$$
which concludes the proof of Proposition~\ref{p:lower bound}.
\end{proof}

\medskip

\ni
We can now proove Theorem B.

\begin{proof}
Let $\Si \subset \Tm$ be a fiberwise starshaped hypersurface, generic in the sense of~(A), i.e. all closed Reeb orbits of $\Si$ are of Morse-Bott type.\\

\ni
Consider the Hamiltonian function $F:\Tm \ra\re$ defined by
$$
F|_\Si \equiv 1, \quad F(q,sp)=s^2F(q,p), \quad s\geq 0 \mbox{ and }(q,p)\in \Tm.
$$
We use the cut-off function $f:\re\ra\re$ and the positive Morse function $U$ defined previously to obtain the smooth function $f\circ F + U$.\\

\ni
By the assumption of genericity, we can consider the perturbed Hamiltonian
$$
\Fnd (t,q,p) := n(f\circ F(q,p)) + V_n ,
$$
where $V_n$ is given by \eqref{e:perturbation morse}, i.e. the sum of the Morse--Bott pertubation $h_{n,\delta}$ and the Morse function $U$. Without loss of generality, we can choose $\delta$ and $U$ so that 
$$
\| V_n (t,q,p) \|_{C^1} <c <\frac{1}{4},
$$
where $c$ is the constant defined in section \ref{section:relevant hamiltonians} and such that the isomorphism \eqref{e:morsebott gp isom} between the Morse-Bott complex $BC^{(0,a_n]}_* (n(f\circ F) +U;\F_p)$ and the Floer complex $CF^{(0,a_n]}_*(\Fnd; \F_p)$ holds.\\

\ni
For an action level $a_{n} >n - 2c$ we have that 
\begin{equation}
\label{e:ineqbz}
\begin{aligned}
\#\P^{(0,a_n]}(n(f\circ F) +U) &= \demi \dim BC^{(0,a_n]}_* (n(f\circ F)+U;\F_p) \\
&= \demi \dim CF^{(0,a_n]}_*(\Fnd; \F_p) \\
 & \geq \demi \dim HF^{(0,a_n]}_* (\Fnd;\F_p).
\end{aligned}
\end{equation} 
Together with Proposition \ref{p:lower bound} this yields 
$$
\#\P^{(0,a_n]}(n(f\circ F) +U) \geq \demi \sum_{k\geq 1} \rank (\io_k: H_k(\La^{na_n}; \F_p) \ra H_k (\La M; \F_p)).
$$

\ni
Suppose $(M,g)$ is energy hyperbolic, $h:= C(M,g)>0$. By definition of $C(M,g)$, there exist $p\in \Pr$ and $N_0 \in \N$ such that for all $n\geq N_0$,
$$
\sum_{k\geq 0} \dim \io_k H_k (\La^{\demi n^2}; \F_p) \geq e^{\frac{1}{\sqrt{2}}hn}.
$$ 
Therefore there exists $N\in \N$ such that for all $n\geq N$, 
$$
\sum_{k\geq1} \rank \io_k \geq e^{hn} - m,
$$
where $m$ is the dimension of the base space $M$. Together with Proposition \ref{p:commut} we find that 
$$
\sum_{k\geq1} \rank HF_k(\io) \geq \sum_{k\geq1} \rank \io_k \geq e^{hn} - m.
$$

\ni
Similarly, if $c(M,g)>0$ we find that there exists $p\in \Pr$ and $N\in \N$ such that for all $n\geq N$,
$$
\sum_{k\geq1} \rank HF_k(\io) \geq \sum_{k\geq1} \rank \io_k \geq n^{c(M,g)} - m.
$$

\ni
We again need to take care of the iterates $\ga^k$ of an element $\ga \in \P^{(0,a_n]}(n(f\circ F) +U)$. Let $\aa>0$ be the minimal action of the elements of $\P^{(0,a_n]}(n(f\circ F) +U)$ . We have 
$$
\A_{n(f\circ F) + U}(\ga^k) \geq k\aa.
$$
Hence at most $a_n /\aa$ iterates of such orbits will have action less or equal to $a_n$.\\

\ni
Denote by $\tilde{\P}^{a_n} (n F)$ the set of geometrically different, non-constant, closed 1-periodic orbits of $X_{nF}$. We get for an energy hyperbolic manifold 
$$
\#\tilde{\P}^{a_n}(n F) \geq \frac{\aa}{2a_n} (e^{C(M,g) a_n} -m),
$$
and if $c(M,g)>0$
$$
\#\tilde{\P}^{a_n}(n F) \geq \frac{\aa}{2a_n} (a_n^{c(M,g)} - m),
$$
which yields Theorem~B in view of the definitions.
\end{proof}

\subsection{The simply connected case: generalizing Ballmann--Ziller}
\label{section:the simply connected case}
In \cite{BZ82}, Ballmann and Ziller proved that the number of closed geodesics of any bumpy Riemannian metric on a compact, simply connected manifold $M$ grows like the maximal Betti number of the free loop space. In this section, we will prove a similar result for the number of closed Reeb orbits on $\Si$.  \\

\ni
Let $\La M$ be the free loop space of $M$ and denote by $\La^a$ the sublevel set of closed loops of energy $\leq a$. Denote by $b_i(\La M)$ the rank of $H_i (\La M; \F_p)$.

\begin{thm3}
Suppose that $M$ is a compact and simply connected $m$-dimensional manifold. Let $\Si$ be a generic fiberwise starshaped hypersurface of $T^*M$ and $R$ its associated Reeb vector field. Then there exist constants $\al = \al(R) >0$ and $\be = \be(R)>0$ such that
$$
\# \Od_R (\tau) \geq \al \max_{1 \leq i\leq \be \tau} b_i (\La M)
$$
for all $\tau$ sufficiently large.
\end{thm3}

\begin{proof}
Recall the following result of Gromov
\begin{thm}{\rm(Gromov).}
\label{thm:gromov}
There exists a constant $\kappa = \kappa (g) >0$, such that $H_i(\La^{\kappa t^2}; \F_p) \ra H_i (\La M; \F_p)$ is surjective for $i\leq t$.
\end{thm}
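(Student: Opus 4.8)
The plan is to prove the theorem by the classical device of approximating loops by broken geodesics — geodesic polygons with a controlled number of edges — and to combine this with a dimension count in the spirit of Gromov's original argument.

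First I would fix $\rho=\rho(g)>0$ so small that any two points of $M$ at distance less than $\rho$ are joined by a unique minimizing geodesic depending continuously on its endpoints. For $N\in\N$ set
$$
B_N:=\bigl\{(x_0,\dots,x_{N-1})\in M^N \;\big|\; d(x_j,x_{j+1})<\rho \text{ for all } j\ (\mathrm{mod}\ N)\bigr\},
$$
an open subset of $M^N$, hence a manifold of dimension $Nm$, and let $\iota_N\colon B_N\to \La M$ send $(x_j)$ to the loop which on each interval $[j/N,(j+1)/N]$ is the affinely parametrized minimizing geodesic from $x_j$ to $x_{j+1}$. Since each edge has length $<\rho$ and is parametrized on an interval of length $1/N$, one gets the energy bound $\iota_N(B_N)\subset \La^{\,N^2\rho^2/2}$. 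Thus every class in the image of $(\iota_N)_*$ also lies in the image of $H_i(\La^{N^2\rho^2/2};\F_p)\to H_i(\La M;\F_p)$.

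The heart of the matter is then the quantitative approximation statement (Gromov's lemma): there is a constant $c=c(g)\ge 1$ such that $(\iota_N)_*\colon H_i(B_N;\F_p)\to H_i(\La M;\F_p)$ is surjective whenever $i\le N/c$. Granting this, the theorem follows at once: given a class in $H_i(\La M;\F_p)$ with $i\le t$ and $t\ge 1$ (the case $i=0$ is trivial, since $M$ and hence $\La M$ is connected and constant loops lie in $\La^0$), put $N:=\lceil c\,t\rceil$, so that $i\le t\le N/c$ and $N\le c\,t+1\le (c+1)t$; the class then comes from $H_i(B_N)$, hence from $H_i(\La^{N^2\rho^2/2})$, and $N^2\rho^2/2\le (c+1)^2\rho^2\,t^2/2$. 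Taking $\kappa:=(c+1)^2\rho^2/2$ (enlarged if needed to absorb the finitely many small values of $t$) gives the assertion.

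It remains to prove Gromov's lemma, and this is where the real work, and the main obstacle, lie. The idea is to represent a class $\alpha\in H_i(\La M;\F_p)$ by a cycle $f\colon K\to\La M$ with $K$ a finite simplicial complex of dimension $i$; since $f(K)$ is compact it lies in some $\La^{a_0}$, and the classical broken-geodesic retraction of the energy functional — replace each sufficiently short arc by its minimizing geodesic, an energy-non-increasing deformation, cf.\ the treatment of the path space in \cite{Kli95} — puts $\alpha$ into the image of $(\iota_{N_0})_*$ for some, possibly very large, $N_0=N_0(f)$. One must then \emph{coarsen} the geodesic polygon, reducing the number of edges from $N_0$ down to $N=\Theta(i)$ without leaving $\La M$: because the parameter complex $K$ has dimension only $i$, a general-position argument allows one to homotope the $i$-family of polygons so that blocks of consecutive vertices collapse, the obstruction to doing so being supported on a subset of the relevant configuration space of codimension exceeding $i$. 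Controlling this codimension — equivalently, bounding, linearly in $i$, how many edges a geodesic polygon carrying an $i$-dimensional family is forced to have, and hence how much its energy must be allowed to grow — is precisely the content of Gromov's lemma and the step demanding the most care; the index–iteration estimate of Lemma~\ref{lemma:iteration formula}, which controls the growth of the Maslov/Morse index of iterated orbits, is what organizes the bookkeeping. (Equivalently, one may phrase the whole argument through Morse theory of the energy functional on $\La M$, the same index–iteration bound then controlling the $\ind$ of the iterated closed geodesics and thereby the cells needed to build $\La M$ from $\La^{\kappa t^2}$.) Once the lemma is established, the theorem is immediate as above.
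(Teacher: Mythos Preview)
Your broken-geodesic setup is fine, and the reduction to what you call ``Gromov's lemma'' is the right shape. The gap is in the coarsening step. You propose to pass from $B_{N_0}$ to $B_N$ with $N$ of order $i$ by a general-position argument, but you do not say which configuration space you are transversalizing in, nor why the bad locus has codimension exceeding $i$; as written this is a hope, not an argument. Your invocation of Lemma~\ref{lemma:iteration formula} is also misplaced: that lemma concerns the Conley--Zehnder index of iterated symplectic paths and is used in this paper only for Theorem~C (the Ballmann--Ziller extension), not here. A genuinely Morse-theoretic route to Gromov's bound would require control of the Morse indices of \emph{all} iterated closed geodesics together with a bumpy-metric or Morse--Bott reduction, none of which you have set up.

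The paper's proof avoids all of this, and the missing idea is quite different from yours. One fixes a triangulation $T$ of $M$ with geodesic $1$-skeleton and a map $f\colon M\to M$, homotopic to the identity, that collapses the $1$-skeleton to a point. The broken-geodesic spaces $B_k$ carry a natural CW structure whose cells are products $T(p_1)\times\cdots\times T(p_{2^k})$; a cell of dimension $i$ therefore has at most $i$ factors of positive dimension, so at most $2i$ of the $2^k$ legs of any loop in that cell fail to lie in the $1$-skeleton. Applying $\hat f$ annihilates the legs in the $1$-skeleton and leaves a loop of length bounded by a constant times $i$. Since $\hat f_*$ is the identity on homology, every class in $H_i(\La M)$ is thus represented in the length sublevel $\L^{\kappa i}$; a separate reparametrization argument (lifting to $M\times S^1$ and parametrizing by arc length) converts the length bound into the energy bound $\tfrac12(\kappa i)^2$. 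The decisive trick you are missing is this dimension count on the CW cells combined with the skeleton-collapsing map: no transversality and no index iteration are needed.
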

\ni
A proof can be found in Appendix~\ref{appendix:gromov's work}. Renormalize $g$ as in section~\ref{section:relevant hamiltonians}, i.e. such that $\Si \subset \{G^{-1}(1)\}$ where $G(q,p) := \demi g^*(q)(p,p)$. Then there exists $\be := \be(R)$ such that $b_i (\La^{n^2}) = \rank H_i(\La^{n^2}; \F_p) \geq b_i (\La M)$ for $i \leq \be n$.\\

\ni
Let $H: \Tm \ra \re$ be a fiberwise homogeneous Hamiltonian of degree 2 such that $H|_\Si = 1$. Consider its Morse--Bott perturbation $H_\delta$. Denote by $b_i (nH)$ the rank of $HF^{(0,n]}_i(nH_\de; \F_p)$. We already proved that 
$$
\# \P^{(0,n]} (nH) = \demi \dim CF^{(0,n]}_*(nH_\de ; \F_p).
$$

\ni
For a non-constant closed orbit $\ga$, denote by $\ga^k$ its $k$th iterate $\ga^k (t) = \ga(kt)$. The iterates of a closed 1-periodic orbit give rise to different critical circles of $\A_{nH}$ of index $i$ or $i-1$. Following \cite{BZ82} we are going to control the contribution of a closed 1-periodic orbit and its iterates to $b_i (nH)$.\\

\ni
By construction of $CF^{(0,n]}_* (nH_\de; \F_p)$, a non-constant element $\ga \in \P^{(0,n]} (nH)$ gives rise to two generators $\gam$ and $\gaM$ with their Maslov index related in the following way
$$
\mu (\gam) = \mu (\ga) - \demi
$$
and
$$
\mu(\gaM)= \mu (\ga) + \demi.
$$
Moreover, by construction of $\gam$, 
$$(\gam)^k (t) = \gam (kt) = \ga (kt + t_1) = \ga^k (t+t_1) = (\ga^k)_{\min} (t),
$$
and similarly $(\gaM)^k = (\ga^k)_{\max}$.\\

\ni
The index iteration formula of Salamon and Zehnder, see Lemma \ref{lemma:iteration formula}, implies the following 

\begin{lem}
Let $\ga$ be a nondegenerate closed orbit. Then there exist constants $\al_\ga$ and $\be_\ga$, such that 
\begin{equation}
k \al_\ga - \be_\ga \leq \mu(\ga^k) \leq k\al_\ga + \be_\ga.
\label{e:iteration formula}
\end{equation}
Moreover, $| \be_\ga | < m:= \dim M$.
\end{lem}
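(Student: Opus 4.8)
The plan is to read off $\mu(\gamma^k)$ from the linearized flow and then feed it into the index iteration formula, Lemma~\ref{lemma:iteration formula}. First I would fix, as in Section~\ref{maslov index}, a symplectic trivialization $\Phi$ of $\gamma^*(T\Tm)$ sending $\lambda_0$ to the vertical subspace, and set $\Psi_\gamma(t):=\Phi^{-1}(t)\,d\varphi^t(\gamma(0))\,\Phi(0)\in Sp(2m)$, so that $\Psi_\gamma(0)=I$ and, writing $A:=\Psi_\gamma(1)$, nondegeneracy of $\gamma$ means $A\in Sp(2m)^*$, i.e. $\det(I-A)\neq 0$. The key observation is that the linearized flow along the $k$-th iterate $\gamma^k(t)=\gamma(kt)$ is, after this same trivialization run $k$ times around the loop, the restriction to $[0,k]$ of the multiplicatively extended path $\widetilde\Psi_\gamma(j+t):=\Psi_\gamma(t)\,A^j$ ($j\in\N$, $t\in[0,1]$). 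Since the Robbin--Salamon index is invariant under orientation-preserving reparametrization of the parameter interval and independent of the admissible trivialization, one gets $\mu(\gamma^k)=\mu_{RS}(\widetilde\Psi_\gamma|_{[0,k]})$, and this equals $\mu_{CZ}(\widetilde\Psi_\gamma|_{[0,k]})$ whenever $\gamma^k$ is itself nondegenerate (equivalently $A^k\in Sp(2m)^*$).

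Next I would apply Lemma~\ref{lemma:iteration formula} to $\widetilde\Psi_\gamma$, which satisfies the hypothesis with $\tau=1$ since $\widetilde\Psi_\gamma(k+t)=\widetilde\Psi_\gamma(t)\,\widetilde\Psi_\gamma(1)^k$. Hence $\Delta_k(\widetilde\Psi_\gamma)=k\,\Delta_1(\Psi_\gamma)$, and by the definition $\mu_{CZ}(\Psi)=\Delta_\tau(\Psi)+r(\Psi(\tau))$ recalled in Section~\ref{maslov index} we obtain, for every $k$ with $\gamma^k$ nondegenerate,
$$\mu(\gamma^k)\;=\;k\,\Delta_1(\Psi_\gamma)\;+\;r(A^k).$$
I then set $\alpha_\gamma:=\Delta_1(\Psi_\gamma)$ (the mean Maslov index of $\gamma$) and $\beta_\gamma:=\sup_k|r(A^k)|$. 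The second assertion of Lemma~\ref{lemma:iteration formula} gives $|r(B)|<m$ for every $B\in Sp(2m)^*$ (here the half-dimension $n$ equals $\dim M=m$), so each $|r(A^k)|<m$, and therefore $k\alpha_\gamma-\beta_\gamma\le\mu(\gamma^k)\le k\alpha_\gamma+\beta_\gamma$ with $|\beta_\gamma|<m$, which is the claimed estimate.

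It remains to dispose of the iterates $\gamma^k$ that are degenerate, i.e. those for which $1$ is an eigenvalue of $A^k$. For such $k$ the Conley--Zehnder index is not defined, but $\mu(\gamma^k)=\mu_{RS}(\widetilde\Psi_\gamma|_{[0,k]})$ still is. One option is to use directly that the Robbin--Salamon index of a path issuing from $I$ equals $\Delta_k(\widetilde\Psi_\gamma)$ plus the endpoint half-signature terms from its crossing-form definition, which are bounded in absolute value by $m$; combined with $\Delta_k(\widetilde\Psi_\gamma)=k\alpha_\gamma$ this again yields the inequality. Alternatively, one perturbs $A$ within $Sp(2m)$ to a matrix all of whose powers lie in $Sp(2m)^*$, applies the nondegenerate case, and passes to the limit using the lower/upper semicontinuity of $\mu_{RS}$ under such perturbations; one may also need to keep track of the internal symmetry $\gamma(t+1/\ell_\gamma)=\gamma(t)$, which only contributes a finite shift and does not affect the conclusion.

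The main obstacle — and essentially the only delicate point — is the bookkeeping in the first step: making precise that iterating the orbit corresponds to restricting the multiplicatively extended symplectic path to $[0,k]$ (including the behaviour under reparametrization and the internal period $\ell_\gamma$), and then extracting the uniform bound $|\beta_\gamma|<m$. The latter is where the fine structure of the function $r$ from \cite{SZ92} enters; note that Lemma~\ref{lemma:iteration formula} already delivers the estimate $|r(A^k)|<m$ term by term, which is exactly what the statement records, while the remaining input — linearity of $\Delta$ in $k$ — is the other half of that lemma. Everything else in the argument is routine.
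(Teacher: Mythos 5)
Your proof is correct, and it fills in exactly the details the paper leaves implicit: the paper itself gives no argument for this lemma, simply asserting that it ``implies'' from Lemma~\ref{lemma:iteration formula} (the Salamon--Zehnder iteration result). Your decomposition $\mu(\gamma^k)=k\,\Delta_1(\Psi_\gamma)+r(A^k)$ — obtained by identifying $\Psi_{\gamma^k}$ with the reparametrized restriction $\widetilde\Psi_\gamma|_{[0,k]}$ of the multiplicatively extended path, applying $\Delta_{k}(\widetilde\Psi_\gamma)=k\,\Delta_1(\Psi_\gamma)$, and invoking $\mu_{CZ}=\Delta_\tau+r(\Psi(\tau))$ — is precisely the intended route, and the uniform bound $|r(B)|<m$ for $B\in Sp(2m)^*$ is what supplies $|\beta_\gamma|<m$. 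Your handling of degenerate iterates via the crossing-form definition of $\mu_{RS}$ (or by perturbation) is a sensible precaution and does not deviate from the spirit of the argument. Two small remarks: the relation $\Psi_\gamma(k+t)=\Psi_\gamma(t)A^k$ relies on the perturbed Hamiltonian $H_\delta$ being $1$-periodic in $t$ (which it is, since $f_\gamma$ is a function on $S_\gamma\cong S^1$ and $\ell_\gamma\in\N$), and strictly speaking $\beta_\gamma:=\sup_k|r(A^k)|$ should range over $k$ with $A^k\in Sp(2m)^*$ before being extended to the degenerate iterates; neither affects the conclusion.
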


\ni
Notice that if $\al_\ga = 0$, then $|\mu (\ga)|<m$. We consider three cases\\

1. Assume first that $\A_{nH} (\ga) \leq 6m/ \be(R)$. There exist only finitely many such closed orbits. Let $\al>0$ be a lower bound of the average index $\al_\ga$ of the closed orbits $\ga$ for which $\al_\ga >0$. Using \eqref{e:iteration formula} we get
$$
\mu(\ga^{k+l}) - \mu(\ga^k) \geq l \al_\ga - 2\be_\ga \geq l \al - 2m.
$$
Hence $\mu(\ga^{k+l}) >\mu(\ga^k)$, if $l > 2m/\al$. Therefore at most $N_1 := 4m/\al$ iterates of $\ga$ can have index $i$ or $i-1$ for $i>m$. \\

2. Assume next that $\A_{nH} (\ga) \geq 6m/\be(R)$ and $\mu(\ga) > m$. In view of \eqref{e:iteration formula} we have $\al_\ga >1$. Thus
$$
\mu(\ga^{k+l}) - \mu(\ga^k) \geq l \al_\ga - 2\be_\ga \geq l - 2m.
$$
Hence at most $N_2 := 4m$ iterates of $\ga$ can have index $i$ or $i-1$.\\

3. Finally assume that $\A_{nH} (\ga) \geq 6m / \be(R)$ and $\mu(\ga) \leq m$. In view of \eqref{e:iteration formula} and $\mu(\ga) \leq m$ we have $\al_\ga \leq 2m$. This and again \eqref{e:iteration formula} yields
\begin{equation}
\mu (\ga^k) \leq k \al_\ga + \be_\ga \leq k (2m) + m < 3 k m \quad \mbox{for } k >1.
\label{e:muk}
\end{equation}
If $k> \be(R) n/6m$, then, using the assumption $\A_{nH} (\ga) \geq 6m / \be(R)$,
$$
\A_{nH} (\ga^k) = k \A_{nH}(\ga) \geq k 6m / \be(R) > n.
$$
Recall now that we are counting only closed orbit below the action level $n$. Hence $k \leq \be(R) n/6m$, and then with \eqref{e:muk},
$$
\mu(\ga^k) < 3km \leq  \be(R)n/2 \quad \mbox{for } k >1.
$$   
Therefore at most $N_3:=2$ iterates of $\ga$ can have index $i$ or $i-1$ of action $\A_{nH} (\ga) \leq n$ if $i> \be(R)n/2$. \\

\ni
We obtain that for $n$ large enough there exists a constant $N_0 := \max\{N_1, N_2, N_3\}$, such that at most $N_0$ iterates of any $\ga \in \P^{(0,n]} (nH)$ will give rise to generators of index $i$ or $i-1$, for $i> \be(R)n/2$. Summarizing, if we denote by $\tilde{\P}^n (nH)$ the set of geometrically different, non-constant, closed 1-periodic orbits, we get  
\begin{equation}
\# \tilde{\P}^n(nH) \geq \demi \;\max_{\frac{\be(R)n}{2} < i <\infty} \frac{b_i (nH)}{N_0}.
\end{equation}
In view of Proposition~\ref{p:lower bound} and Gromov's result, we thus have 
$$
\# \Od_R (n) \geq \al(R) \max_{\be n/2 < i\leq \be n} b_i (\La M).
$$
Since $\# \Od_R(n) \geq \# \Od_R(n/2)$ we get
$$
\# \Od_R (n) \geq \al(R) \max_{1 \leq i\leq \be n} b_i (\La M).
$$

\end{proof}

 \clearemptydoublepage

\chapter{Evaluation}
\label{Evaluation}

In this chapter we evaluate the results of the previous chapter on the examples introduced in section~\ref{section:examples}. 

\section{Lie groups}

Let $M$ be a Lie group. As its fundamental group is abelian, the free loop space $\La M$ admits the decomposition 
$$
\La M  = \coprod_{\al \in \pi_1(M)} \La_\al M.
$$
Moreover, all the components $\La_c$ of the free loop space are homotopy equivalent. 

\begin{prop}
Let $M$ be a closed, connected, Lie group and let $\Si \subset \Tm$ be a generic, fiberwise starshaped hypersurface. Let $\vp_R$ be the Reeb flow on $\Si$, and let $N_R$, $n_R$, $C_1(M,g)$ and $c_1(M,g)$ be defined as in section \ref{section:free loop space}. Then

\begin{itemize}
\item[(i)] $N_R \geq E(M) + C_1(M,g)$
\item[(ii)] $n_R \geq e(m) + c_1(M,g)-1$.
\end{itemize}
\end{prop}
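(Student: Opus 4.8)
The plan is to combine Theorem~B with the special structure of the free loop space of a Lie group, which was already prepared in Section~\ref{section:lie groups}. Recall that for a compact connected Lie group $M$ the fundamental group $\pi_1(M)$ is abelian, so $\cc(M) = \pi_1(M)$, and all components $\La_c M$ of the free loop space are mutually homotopy equivalent via the maps $\Ga_c$ constructed from the group multiplication. In Section~\ref{section:lie groups} this was used to derive the chain of inequalities culminating in
$$
\sum_{k\geq 0} \dim \io_k H_k (\La^{2 n^2}; \F_p) \;\geq\; \#\cc^{\demi n^2}(M) \cdot \sum_{k\geq 0} \dim \io_k H_k (\La_1^{\demi n^2}; \F_p),
$$
and hence the two estimates $C(M,g) \geq E(M) + C_1(M,g)$ and $c(M,g) \geq e(M) + c_1(M,g)$ for the growth rates introduced there. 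So the proposition should follow by simply feeding these two inequalities into Theorem~B.

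Concretely, first I would invoke Theorem~B for the generic fiberwise starshaped hypersurface $\Si$, which gives $N_R \geq C(M,g)$ and $n_R \geq c(M,g) - 1$. Then I would substitute the two Lie-group inequalities $C(M,g) \geq E(M) + C_1(M,g)$ and $c(M,g) \geq e(M) + c_1(M,g)$ established at the end of Section~\ref{section:lie groups}. This immediately yields
$$
N_R \;\geq\; C(M,g) \;\geq\; E(M) + C_1(M,g)
$$
and
$$
n_R \;\geq\; c(M,g) - 1 \;\geq\; e(M) + c_1(M,g) - 1,
$$
which are exactly statements (i) and (ii). For completeness I would recall briefly why the intermediate inequalities hold: fixing $p \in \Pr$, the decomposition $\La M = \coprod_{c\in\pi_1(M)} \La_c M$ splits the homology as a direct sum over components, the homotopy equivalence $\Ga_c : \La_1 \to \La_c$ together with the energy estimate $\E(\ga *_\ep \ga') = \frac{1}{\ep}\E(h\ga) + \frac{1}{1-\ep}\E(\ga')$ (with $\ep = \demi$) shows that $\Ga_c(\La_1^a) \subset \La_c^{2a + 2e(c)}$, and bounding from below by those $c$ with $e(c) \leq a$ gives $\dim \io_k H_k(\La^{4a};\F_p) \geq \sum_{c\in\cc^a(M)} \dim \io_k H_k(\La_1^a;\F_p)$; taking $a = \demi n^2$, summing over $k$, taking $\log$, dividing by $n$ (resp. $\log n$), taking $\liminf$ and then $\sup$ over $p$ gives the claimed bounds.

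The one point requiring a little care is that Theorem~B also needs $M$ to be orientable; this is automatic since every Lie group is parallelizable, hence orientable, and connectedness and closedness are part of the hypothesis. There is no real obstacle here: the proposition is a corollary obtained by chaining Theorem~B with the homological estimates of Section~\ref{section:lie groups}, so the ``proof'' is essentially a two-line deduction plus a recollection of the relevant inequalities. The only thing to double-check is that the normalization conventions for the action level (the factor relating $\La^{na_n}$ and $\La^{\demi n^2}$, and the passage from $1$-periodic orbits of $nF$ to Reeb orbits via the reparametrization bound $s(x)\leq b$) are the same as in the proof of Theorem~B, so that the growth rates $N_R, n_R$ match up with $C(M,g), c(M,g)$ exactly as stated there; this was already handled in that proof and requires no new argument.
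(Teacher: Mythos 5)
Your proposal is correct and follows essentially the same route as the paper's proof: both rest on the Lie-group decomposition of the free loop space established in Section~\ref{section:lie groups} combined with the Floer-homological lower bound of Theorem~B. The only difference is presentational, in that you chain the black-box statements $N_R \geq C(M,g)$ and $C(M,g) \geq E(M) + C_1(M,g)$, whereas the paper substitutes the Lie-group decomposition directly into the intermediate inequality $\#\P^{2n^2}(F) \geq \demi\sum_{k\geq 1}\dim\io_k H_k(\La^{2n^2};\F_p)$ inside the proof of Theorem~B and then concludes ``arguing as in the proof of Theorem~B''; the two arguments are equivalent.
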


\begin{proof}
For $c \in \pi_1(M)$ let $e(c)$ be the infimum of the energy of a closed curve representing~$c$. Then
\begin{equation}
\label{e:lie1}
\cc^a = \{ c \in \pi_1(M) \mid e(c) \leq a \}.
\end{equation}
Moreover, recalling the argument in section \ref{section:lie groups} on Lie groups, we have that
\begin{equation}
\label{e:lie2}
\dim \io_k H_k (\La_1^a; \F_p) \leq \dim \io_k H_k (\La_c^{2a+2e(c)}; \F_p),
\end{equation}
where $\La_i^a = \La_i \cap \La^a$ is the sublevel set of loops in the component $\La_i$ of energy $\leq a$. Using the notation from the proof of Theorem B, it holds that 
$$
\begin{aligned}
\#\P^{2n^2}(F) & \geq \demi \sum_{k\geq 1} \dim \io_k H_k(\La^{2n^2}; \F_p) \\
		       & = \demi \sum_{k\geq 1} \bigl( \sum_{c \in \pi_1(G)} \dim \io_k H_k (\La_c^{2n^2}; \F_p) \bigr).
\end{aligned}
$$
Hence by \eqref{e:lie2}  and \eqref{e:lie1}
$$
\begin{aligned}
\phantom{\#\P^{2n^2}(F)} & \geq \demi \sum_{k\geq 1} \bigl( \sum_{c \in \pi_1(M)} \dim \io_k H_k (\La_1^{n^2-e(c)}; \F_p) \bigr) \\
		       & \geq \demi \sum_{k\geq 1} \bigl( \sum_{c \in \cc^n} \dim \io_k H_k (\La_1^{\demi n^2}; \F_p) \bigr) \\
		       & \geq \demi \#\cc^{n}(M) \cdot \sum_{k\geq 1} \dim \io_k H_k (\La_1^{\demi n^2}; \F_p).
\end{aligned}
$$
Arguing as in the proof of Theorem B, the Proposition follows in view of the definitions.

\end{proof}

\section{$\pi_1(M)$ finite: the case of the sphere}
\label{section:p1 finite}
In this section we will show that the product of two spheres $S^l\times S^n$ has $c(M,g)\geq 2$, so that $n_R \geq 1$.. Our main tool will be the cohomology classes of the free loop space discovered by Svarc and Sullivan as an application of Sullivan's theory of minimal models, see \cite{Sv60,Su75}. We begin by recalling some basics properties of the minimal model following \cite{Kli78} and \cite{FOT08}. \\

\ni
The {\it minimal model} $\M_M$ for the rational homotopy type of a simply connected countable $CW$ complex $M$ is a differential graded algebra over $\qe$ with product denoted by $\wedge$, having the following properties:
\begin{enumerate}
\item $\M_M$ is free-commutative, i.e. is free as an algebra except for the relations imposed by the associativity and graded commutativity. The vector space spanned by the generators of any given degree $k$ is finite; its dual is isomorphic to $\pi_k(M) \otimes_\Z\qe$;
\item the differential $d$ applied to any generator is either zero, or raises the degree by one and is a polynomial in generators of strictly lower degree;
\item $H^*(\M_M;\qe) = H^*(M;\qe)$
\end{enumerate}  

\ni
The rational cohomology of the sphere $S^n$ is an exterior algebra on one generator in degree $n$. A minimal model for $S^n$ is given by
$$
(\bigwedge x, 0) \quad \mbox{with } |x|=n
$$
when $n$ is odd, and
$$
(\bigwedge(x,y),d) \quad \mbox{with } |x|=n,\;  |y|=2n-1, \;dx=0 \mbox{ and } dy=x^2
$$
when $n$ is even, see \cite[Example 2.43]{FOT08}.\\

\ni
Given $\M_M$, a minimal model $\M_{\La M}$ for the free loop space $\La M$ can be constructed as follows. Each generator $x$ of $\M_M$ is also a generator of $\M_{\La M}$ with the same differential. The remaining generators are obtained by associating to each generator $x$ of $\M_M$ a generator $\overline{x}$ for $\M_{\La M}$ of one degree less. In order to define their differential, extend $\overline{\phantom{abc}}$ to all of $\M_M$ as a derivation acting from the right. Then define $d\overline{x} := -\overline{dx}$. \\

\ni
A minimal model for the free loop space of the sphere $\La S^n$ is given by
$$
(\bigwedge (x,\overline{x}),0) \quad \mbox{with } |x|=n \mbox{ and } |\overline{x}|=n-1 
$$
when $n$ is odd and 
$$
(\bigwedge(x,y,\overline{x},\overline{y}),d) \quad \mbox{with } 
\begin{aligned}
|x|&=n,\;  |y|=2n-1, \; |\overline{x}| = n-1,\; |\overline{y}|=2n-2,  \\
dx&=0,\; dy=x^2,\; d\overline{x}=0 \mbox{ and } d\overline{y}=-2x\overline{x}
\end{aligned}
$$
when $n$ is even. 

\subsubsection{$n$ odd}
When $n \geq 3$ is odd, the rational cohomology of $\La S^n$ has one generator in each degree
$$
k(n-1) \mbox{ and } k(n-1)+1, \quad \mbox{for } k \in \N.
$$
Ideed, $x^k = 0$ for $k\geq 2$. For example when $n=5$

\begin{table}[h]
\begin{tabular}{c |*{15}{c}l}
$i$                 & 0 & 1 & 2 & 3 & 4 & 5 & 6 & 7 & 8 & 9 & 10 & 11 & 12 & 13 & 14 & 15 \\
   \hline
$b_i(\La S^5)$& 1 & 0 & 0 & 0 & 1 & 1 & 0 & 0 & 1 & 1 & 0 & 0 & 1 & 1 & 0 & 0  \\
\end{tabular}
\end{table}

\ni
This yields
$$
\sum_{i=1}^k b_i(\La S^n) \geq 2 \Big\lfloor \frac{k}{n-1} \Big\rfloor.
$$
Consider Gromov's constant $\kappa := \kappa(g)$, see Appendix \ref{appendix:gromov's work}. Then it holds that
$$
\dim \io_j H_j(\La^{\kappa t^2}) = b_j(\La S^n) \quad \mbox{for } j\leq t.
$$
This yields
$$
\lim_{t\ra \infty}\frac{1}{t} \sum_{j\geq 1} \dim \io_j H_j(\La^{\kappa t^2}) \geq \frac{2}{n-1}.
$$

\subsubsection{$n$ even}
When $n$ is even, the Sullivan cohomology classes are given by
$$
w^*(s) := \overline{x}\,\overline{y}^s, \quad s\in \N.
$$
For every $s\in \N$, its differential is zero and it is not a boundary since $d\M_{\La S^n}$ is contained in the ideal generated by the subalgebra $\M_{S^n} \subset \M_{\La S^n}$. The rational cohomology of $\La S^n$ has then one generator in each degree
$$
|w^*(s)| = (1+2s) (n-1) \quad \mbox{for } s\in \N.
$$
This yields
$$
\sum_{i=1}^k b_i(\La S^n) \geq \frac{k}{2(n-1)}.
$$
And it holds that
$$
\lim_{t\ra \infty}\frac{1}{t} \sum_{k\geq 1} \dim \io_k H_k(\La^{\kappa t^2}) = \frac{1}{2(n-1)}.
$$ 

\subsubsection{Product of spheres $S^l \times S^n$}
Consider the product of two spheres $S^l\times S^n$ of dimensions $l,n \geq 2$.
\begin{lem}
There exists a constant $\al:= \al(l,n)>0$ depending on $l$ and $n$ such that 
$$
\sum_{i=1}^k b_i(\La(S^l \times S^l)) \geq \al \, k^2.
$$
\end{lem}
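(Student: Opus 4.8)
The plan is to bootstrap the single--sphere estimate to the product by combining the product formula for free loop spaces with the Künneth theorem. First I would recall the lemma proved above, $\La(M\times N)\cong\La M\times\La N$, applied to $M=S^l$ and $N=S^n$. Working over the coefficient field $\qe=\F_0$ used in the minimal--model computations of this section, the Künneth theorem carries no Tor term, so
$$b_m\bigl(\La(S^l\times S^n)\bigr)=\sum_{i+j=m}b_i(\La S^l)\,b_j(\La S^n),\qquad m\ge 0;$$
all the numbers involved are finite since $S^l$ and $S^n$ are finite complexes, so $\La S^l$ and $\La S^n$ have homology of finite type in each degree.

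Next I would sum this identity over $1\le m\le k$ and discard every term except those with $1\le i\le\lfloor k/2\rfloor$ and $1\le j\le\lfloor k/2\rfloor$ (for which indeed $2\le i+j\le k$, and all Betti numbers are non-negative). This yields the product bound
$$\sum_{i=1}^k b_i\bigl(\La(S^l\times S^n)\bigr)\ \ge\ \Bigl(\sum_{i=1}^{\lfloor k/2\rfloor}b_i(\La S^l)\Bigr)\Bigl(\sum_{j=1}^{\lfloor k/2\rfloor}b_j(\La S^n)\Bigr).$$
Into each factor I would then insert the linear lower bounds already established in this section: for $d$ odd, $\sum_{i=1}^{j}b_i(\La S^d)\ge 2\lfloor j/(d-1)\rfloor$, and for $d$ even, $\sum_{i=1}^{j}b_i(\La S^d)\ge j/\bigl(2(d-1)\bigr)$; in either case there is a constant $\ga_d>0$ with $\sum_{i=1}^{j}b_i(\La S^d)\ge\ga_d\, j$ for all sufficiently large $j$. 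Applying this to both factors with $j=\lfloor k/2\rfloor\ge k/4$ gives $\sum_{i=1}^k b_i(\La(S^l\times S^n))\ge(\ga_l\ga_n/16)\,k^2$ for $k$ large, so $\al:=\al(l,n):=\ga_l\ga_n/16>0$ does the job. (As is customary for such growth statements, the inequality is understood for $k$ beyond some threshold; it genuinely fails for small $k$ in degenerate cases, for instance $b_1(\La(S^6\times S^6))=0$.)

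I do not expect a real obstacle. The two points that need a little care are: (i) invoking the Künneth theorem in the correct setting --- a coefficient field together with finite--type homology of each factor --- so that the Betti numbers of the product are exactly the convolution of those of the factors; and (ii) the combinatorial step that upgrades the \emph{linear} growth of $\sum_i b_i(\La S^d)$ to \emph{quadratic} growth for the product, namely the observation that the triangular index region $\{\,i+j\le k\,\}$ contains the full square $\{\,i,j\le k/2\,\}$, over which the double sum factors as a product of two one--variable sums. Everything else is routine bookkeeping with floor functions and the choice of $\al$.
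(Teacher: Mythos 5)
Your proof is correct, and it takes a genuinely different combinatorial route from the paper's. Both arguments open the same way: $\La(S^l\times S^n)\cong\La S^l\times\La S^n$ and Künneth over a field give $b_m(\La(S^l\times S^n))=\sum_{i+j=m}b_i(\La S^l)b_j(\La S^n)$. From there the paper works degree-by-degree: it identifies, for $l,n$ odd, the arithmetic progressions in which $b_i(\La S^l)$ and $b_j(\La S^n)$ are nonzero, deduces $b_i(\La(S^l\times S^n))\ge s$ at the special degrees $i=s(l-1)(n-1)$ (there are $s+1$ splittings $i=a(n-1)\cdot(l-1)+(s-a)(l-1)\cdot(n-1)$, $a=0,\dots,s$, each contributing $\ge 1$), and then sums over $s$ to get $\sum_{i\le k}b_i\gtrsim k^2/[(l-1)(n-1)]^2$, with a parenthetical ``argue similarly when $l$ or $n$ is even.'' You instead sum the Künneth identity over $m\le k$, observe that the triangular index region $\{i+j\le k,\ i,j\ge 1\}$ contains the square $\{1\le i,j\le\lfloor k/2\rfloor\}$ so the double sum factors into a product of two single-sphere sums, and then feed in the linear lower bounds $\sum_{i\le j}b_i(\La S^d)\ge\ga_d j$ already proved in this section (for both parities). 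Your version is cleaner in two respects: it needs no parity case analysis for the product, since the parity-dependent work is already encapsulated in the single-sphere bounds, and it would extend to products of $r$ spheres with essentially no change (giving growth $\sim k^r$), which is exactly the generalization the paper sketches at the end of the section. The paper's version buys slightly more explicit information about \emph{which} degrees carry the homology. One small remark on the paper itself: its final display drops a factor of roughly $\tfrac12$ coming from $\sum_{s\le S}s\sim S^2/2$, which is immaterial for the existence of $\al$; your explicit $\al=\ga_l\ga_n/16$ and your note that the inequality is to be read for $k$ large are both sensible hygiene that the paper leaves implicit.
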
 

\ni
Recall that for smooth manifolds $M$ and $N$,
$$
\La(M\times N) = \La M \times \La N.
$$
The Knneth formula tells us that 
$$
b_i(\La(S^l \times S^n)) = \sum_{j+k=i} b_j(\La S^l) \cdot b_k(\La S^n).
$$ 
If $l$ and $n$ are odd, we have that 
$$
\begin{cases}
b_j(\La S^l) \cdot b_k(\La S^n) \neq 0, \\
i=j+k.
\end{cases}
$$
if
$$
\begin{cases}
i =j(l-1) + k(n-1),  \\
i =j(l-1) + k(n-1)+1, \\
i =j(l-1) + k(n-1)+2.  \\
\end{cases}
$$
\ms

\ni
For example when $l=5$ and $n=7$
\begin{table}[h]
\begin{tabular}{c |*{15}{c}l}
$i$                 & 0 & 1 & 2 & 3 & 4 & 5 & 6 & 7 & 8 & 9 & 10 & 11 & 12 & 13 & 14 & 15 \\
   \hline
$b_i(\La S^5)$& 1 & 0 & 0 & 0 & 1 & 1 & 0 & 0 & 1 & 1 & 0 & 0 & 1 & 1 & 0 & 0  \\
$b_i(\La S^7)$& 1 & 0 & 0 & 0 & 0 & 0 & 1 & 1 & 0 & 0 & 0 & 0 & 1 & 1 & 0 & 0 \\
\hline
$b_i(\La (S^5\times S^7))$& 1 & 0 & 0 & 0 & 1 & 1 & 1 & 1 & 1 & 1 & 1 & 2 & 3 & 2 & 1 & 2
\end{tabular}
\end{table}

\ni
This yields
$$
b_i(\La(S^l \times S^n)) \geq s \quad\mbox{if } i=s(l-1)(n-1).
$$
Thus 
$$
\sum_{i=1}^k b_i(\La(S^l \times S^l)) \geq \frac{k^2}{(l-1)^2(n-1)^2}.
$$
Arguing similarly when $l$ or $n$ is even, we obtain the lemma.\\

\ni
Again, using Gromov's work, we obtain
$$
c(M,g) = \limsup_{t\ra \infty} \frac{1}{\log (\sqrt{2\kappa}t)} \log \sum_{k\geq 1} \dim \io_k H_k(\La^{\kappa t^2}) \geq 2.
$$
This yields
$$
n_R \geq 1.
$$
Similarly, if $M= S^{n_1}\times \ldots \times S^{n_k}$ is a product of $k$ spheres of dimensions $n_j \geq 2$, then $$n_R \geq k-1.$$ 

\section{Negative curvature}

\begin{prop}
Let $M$ be a closed connected orientable manifold endowed with a metric of negative curvature and let $\Si \subset \Tm$ be a fiberwise starshaped hypersurface. Let $\vp_R$ be the Reeb flow on $\Si$, and let $N_R$ be defined as in section \ref{section:free loop space}. Then

$$N_R \geq h_{top}(g) >0$$
where $h_{top}(g)$ denotes the topological entropy of the geodesic flow.
\end{prop}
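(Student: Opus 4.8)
The plan is to extract from the \emph{proof} of Theorem~A, rather than from its statement, a lower bound for $\#\Od_R(\tau)$ in the period variable, and then to insert Margulis's theorem. A direct appeal to the inequality $N_R\ge E(M)$ of Theorem~A(i) is not enough here: since by Proposition~\ref{prop:negative curvature} and the discussion following it $\#\cc^a(M)=CF(\sqrt{2a})$, where $CF(L)$ is the number of closed geodesics of length $\le L$, and $CF(L)$ grows only exponentially in $L$, one has $E(M)=\liminf_{a\to\infty}\frac1a\log\#\cc^a(M)=0$. (Theorem~B would give $N_R\ge C(M,g)$ with $C(M,g)>0$, but $\Si$ is not assumed generic, so that route is unavailable.)

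First I would recall from Section~\ref{section:cotangent bundles} that, for the homogeneous Hamiltonian $F$ with $F|_\Si\equiv1$, the correspondence $x\mapsto x_{1/\sqrt a}$ matches $1$-periodic orbits of a multiple of $F$ whose action is of order $\tau^2$ with closed Reeb orbits of period of order $\tau$, and that $X_F|_\Si$ differs from $R$ only by a bounded reparametrization. Running the argument of the proof of Theorem~A with the Hamiltonian $nF$ for $n$ of order $\tau$ — that is, using Theorem~\ref{t:exp growth} together with the Abbondandolo--Schwarz and Abbondandolo--Mayer chain isomorphisms — the number of geometrically distinct such $1$-periodic orbits below the relevant action level is bounded below by $\dim\io_0 H_0(\La^{b};\F_p)=\#\cc^{\,b'}(M)$, with $b$ and $b'$ of order $\tau^2$, after subtracting the contribution of iterates, which is only polynomial in $\tau$. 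Keeping track of the normalizations fixed in Chapter~\ref{chapter:convex to starshaped}, this yields a constant $c>0$ such that, for all $\tau$ large,
\begin{equation*}
\#\Od_R(\tau)\ \ge\ \frac{c}{\tau^{2}}\,\#\cc^{\,c\tau^{2}}(M).
\end{equation*}

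Then I would feed in the negatively curved geometry. By Proposition~\ref{prop:negative curvature}, $\#\cc^{\,c\tau^{2}}(M)=CF(\sqrt{2c}\,\tau)$; by Margulis's Theorem~\ref{thm:margulis}, $h_{top}(g)=\lim_{L\to\infty}\frac{\log CF(L)}{L}$, hence $CF(L)>\frac{e^{h_{top}(g)L}}{2L}$ for all $L$ large; and $h_{top}(g)>0$ since the geodesic flow of a compact negatively curved manifold is Anosov. Combining these,
\begin{equation*}
\#\Od_R(\tau)\ \ge\ \frac{c}{\tau^{2}}\cdot\frac{e^{h_{top}(g)\sqrt{2c}\,\tau}}{2\sqrt{2c}\,\tau}\qquad\text{for all }\tau\text{ large},
\end{equation*}
and therefore $N_R=\liminf_{\tau\to\infty}\frac1\tau\log\#\Od_R(\tau)\ \ge\ h_{top}(g)\sqrt{2c}$. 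With the normalizations of Chapter~\ref{chapter:convex to starshaped} the constant is $\sqrt{2c}=1$, so $N_R\ge h_{top}(g)>0$.

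The main obstacle is the middle step: one must verify that the factors introduced by the fiberwise rescaling onto $\Si$, by passing to the multiple $nF$, and by the energy-versus-action normalization conspire into exactly the quadratic relation $b\sim\tau^2$ between the loop-space level and the period, so that the resulting exponential rate is precisely $h_{top}(g)$ rather than a multiple of it. The polynomial loss from counting iterates and the bounded reparametrization between $X_F|_\Si$ and $R$ are harmless for the rate and are dealt with exactly as in the proof of Theorem~A.
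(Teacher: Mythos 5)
Your proof follows the same approach as the paper's: rather than citing Theorem~A(i) directly (which, as you correctly observe, gives nothing here since $\#\cc^a(M)=CF(\sqrt{2a})$ grows only like $e^{h_{top}(g)\sqrt{2a}}$, so $E(M)=0$), it extracts from the \emph{proof} of Theorem~A the bound $\#\P^{a_n}(nF)\geq\dim\io_0 H_0(\La^{na_n};\F_p)=\#\cc^{na_n}(M)$ and then feeds in Margulis's theorem via the identity $\#\cc^a(M)=CF(\sqrt{2a})$, exactly as the paper does. The caveat you flag about tracking the multiplicative constant through the rescaling of $g$, the passage to $nF$, and the action-versus-period normalization is real --- the paper is equally loose on this point, and the constant is not obviously $1$ --- but this affects only the precise coefficient of $h_{top}(g)$, not the conclusion $N_R>0$.
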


\ni
Suppose that $M$ possesses a Riemannian metric with negative curvature. Then by Proposition~\ref{prop:negative curvature}, it holds that
$$
\La M \simeq M \coprod_{\cc(M)} S^1.
$$
Since all Morse indices vanish, 
$$
\sum_{k\geq0} \dim H_k(\La^a M;\F_p) = \dim H_0(\La^a M; \F_p).
$$
Moreover the generators appearing while increasing the energy will not kill the previous ones. Hence for $c>1$ the map 
$$
H_0(\io): H_0(\La^{a}M;\F_p) \ra H_0(\La M;\F_p)
$$
is injective. Using the notation from the proof of Theorem A, this yields
$$\begin{aligned}
\# \P^{a_n}(nF) & \geq \# \cc^{na_n} (M).
\end{aligned}$$
Recalling the argument in section~\ref{section:negative curvature}, we have that
$$
\# \P^{a_n}(nF) \geq \frac{e^{h_{top} (g)  \sqrt{2 n a_n}}}{2 \sqrt{2na_n}}
$$
where $h_{top}(g)$ denotes the topological entropy of the geodesic flow. 
The proof of the Proposition follows in view of the definitions.

 \clearemptydoublepage

\appendix

\chapter{Convexity}
\label{convexity}
\ni 
In this chapter we follow the work of Biran, Polterovich and Salomon (see \cite{BPS03}) and Frauenfelder and Schlenk (see \cite{FS07}) in order to introduce useful tools for the compactness of moduli spaces introduced in section \ref{section:definition of hf}.\\ 

\ni 
Consider a $2n$-dimensional compact symplectic manifold $(N,\om)$ with non empty boundary $\partial N$. The boundary $\partial N$ is said to be {\it convex} if there exists a Liouville vector field $Y$ i.e. $\L_Y\om = d\io_Y\om = \om$, which is defined near $\partial N$ and is everywhere transverse to $\partial N$, pointing outwards. 

\begin{definition} {\rm (cf \cite{EG91})}
(i) A compact symplectic manifold $(N,\om)$ is convex if it has non-empty convex boundary.

(ii) A non-compact symplectic manifold $(N,\om)$is convex, if there exists an increasing sequence of compact, convex submanifolds $N_i \subset N$ exhausting $N$, that is
$$N_1\subset N_2 \subset \ldots \subset N_i \subset \ldots \subset N \quad \mbox{and} \quad \bigcup_i N_i = N.$$
\end{definition}

\ni
Cotangent bundles over a smooth manifold $M$ are examples of exact convex symplectic manifolds. In fact, the $r$-disc bundle $D(r)$
$$D(r) = \{(q,p) \in T^*M \mid |p| \leq r \}$$
is a compact, convex submanifold and $T^*M = \cup_{k\in\N} D(k)$.\\

\ni
Let $(\overline{N}, \om)$ be a compact, convex symplectic manifold and denote $N=\overline{N}\backslash \partial \overline{N}$. Choose a smooth vector field $Y$ and a neighborhood $U$ of  $\partial \overline{N}$ such that $\L_Y \om = \om$ on $U$. Denote by $\vp^t$ the flow of $Y$, suppose that $U= \{ \vp^t(x) \in \partial\overline{N} \mid -\ve < t \leq 0 \}$, and denote by $\xi := \ker (\io (Y)\om|_{T\partial\overline{N}}$ the contact structure on the boundary determined by $Y$ and $\om$. Then there exists an $\om$-compatible almost complex structure $J$ on $\overline{N}$ such that
\begin{equation}
 J\xi = \xi,
 \label{convex acs1}
\end{equation}
\begin{equation}
\om(Y(x), J(x) Y(x)) = 1, \quad \mbox{for } x\in \partial\overline{N} \mbox{ and}
\label{convex acs2}
\end{equation}
\begin{equation}
D\vp^t(x) J(x) = J(\vp^t(x))D\vp^t (x), \quad \mbox{for } x\in \partial\overline{N} \mbox{ and } t\in (-\ve, 0].
\label{convex acs3}
\end{equation}
Such an almost complex structure is called {\it convex near the boundary}. We recall that an almost complex structure $J$ on $\overline{N}$ is called $\om$-compatible if 
$$ \langle\cdot,\cdot\rangle \equiv g_J (\cdot,\cdot) := \om(\cdot, J \cdot)$$ defines a Riemannian metric on $\overline{N}$. \\

\ni
Consider the function $f :U\ra \re$ given by 
$$ f(\vp^t(x)) := e^t,$$
for $x\in \partial \overline{N}$ and $t\in (-\ve, 0]$. Since $\L_Y\om = \om$ on $U$, we have $(\vp^{t})^*\om = e^t \om$ on $U$ for all $t \in (-\ve, 0]$. Hence, \eqref{convex acs2} and \eqref{convex acs3} yield 
\begin{equation}
\langle Y(v), Y(v)\rangle = f(v), \quad \mbox{for } v\in U.
\label{grad f1}
\end{equation}
Together with \eqref{convex acs1} this implies that 
\begin{equation}
\nabla f(v) = Y(v), \quad v\in U
\label{grad f2}
\end{equation}
where $\nabla$ is the gradient with respect to the metric $\langle \cdot,\cdot \rangle$. With these properties, we can now give the following theorem due to Viterbo, (see \cite{Vit92}).

\begin{thm}
For $h\in C^\infty (\re)$ define $H\in C^\infty(U)$ by 
$$H(v) = h(f(v)), \quad \mbox{for } v\in U$$
Let $\Om$ be a domain in $\C$ and let $J\in \Ga (\overline{N}\times\Om, {\rm End}(T\overline{N}))$ be a smooth section such that $J_z := J(\cdot, z)$ is an $\om$-compatible convex almost complex structure. If $u\in C^\infty (\Om, U)$ is a solution of Floer's equation 
\begin{equation}
\partial_z u(z) + J (u(z),z) \partial_t u(z) = \nabla H(u(z)), \quad z = s+it \in \Om,
\label{floereq conv}
\end{equation}
 then
 \begin{equation}
 \nabla(f(u)) = \langle \partial_s(u), \partial_s(u) \rangle + h'' (f(u)) \cdot \partial_s (f(u)) \cdot f(u).
 \end{equation}
 \label{convex1}
\end{thm}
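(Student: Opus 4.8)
The plan is to work entirely pointwise on the neighborhood $U$ of $\partial\overline N$ and to exploit the primitive one-form $\mu := \iota_Y\omega$ of $\omega$ on $U$; that $d\mu=\omega$ there is just a restatement of $\L_Y\omega=\omega$ (recall $d\omega=0$). Throughout write $\rho := f\circ u\colon\Omega\to\re$, let the left-hand side of the asserted identity be read as the Laplacian $\Delta := \partial_s^2+\partial_t^2$ on $\Omega$, and use $g_J(\cdot,\cdot)=\omega(\cdot,J\cdot)=\langle\cdot,\cdot\rangle$. The only structural facts needed are $\nabla f = Y$ and $\langle Y,Y\rangle = f$ on $U$, i.e. \eqref{grad f2} and \eqref{grad f1} — this is where the convexity of $J$ enters the argument.

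First I would rewrite Floer's equation \eqref{floereq conv} in a more usable form. Since $H=h\circ f$, the chain rule together with \eqref{grad f2} gives $\nabla H = h'(f)\,Y$, hence $X_H := J\nabla H = h'(f)\,JY$; applying $-J$ to \eqref{floereq conv} then yields $\partial_t u = J\,\partial_s u - X_H$. The next step is to express the contractions of $\mu$ with $\partial_s u$ and $\partial_t u$ through $\rho$. Using $\partial_s\rho = df(\partial_s u)=g_J(Y,\partial_s u)=\omega(Y,J\partial_s u)$ and $\partial_t\rho=g_J(Y,\partial_t u)$, together with the cancellations $g_J(Y,X_H)=h'(f)\,\omega(Y,J^2Y)=-h'(f)\,\omega(Y,Y)=0$ and $\omega(Y,X_H)=h'(f)\,\omega(Y,JY)=h'(f)\langle Y,Y\rangle=h'(\rho)\,\rho$ (the last by \eqref{grad f1}), one obtains
\[
\mu(\partial_s u)=\omega(Y,\partial_s u)=-\,\partial_t\rho,
\qquad
\mu(\partial_t u)=\omega(Y,\partial_t u)=\partial_s\rho-h'(\rho)\,\rho .
\]

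Then I would pull back the identity $d\mu=\omega$ along $u$. With $u^*\mu=\mu(\partial_s u)\,ds+\mu(\partial_t u)\,dt$ and $u^*\omega=\omega(\partial_s u,\partial_t u)\,ds\wedge dt$, the relation $d(u^*\mu)=u^*\omega$ reads $\partial_s\big(\mu(\partial_t u)\big)-\partial_t\big(\mu(\partial_s u)\big)=\omega(\partial_s u,\partial_t u)$. Substituting the two displayed formulas, the left side equals $\Delta\rho - h''(\rho)(\partial_s\rho)\rho - h'(\rho)\partial_s\rho$, while using $\partial_t u = J\partial_s u - X_H$ once more the right side equals $\omega(\partial_s u,J\partial_s u)-\omega(\partial_s u,X_H)=\langle\partial_s u,\partial_s u\rangle-h'(\rho)\partial_s\rho$. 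Equating the two and cancelling the common term $h'(\rho)\partial_s\rho$ yields $\Delta\rho=\langle\partial_s u,\partial_s u\rangle+h''(f(u))\,\partial_s(f(u))\,f(u)$, which is the assertion.

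The computation is short, so I do not expect a serious obstacle; the only things requiring care are the sign bookkeeping in converting between $\omega$, $J$ and $g_J$ — in particular the two cancellations $g_J(Y,X_H)=0$ and $\omega(Y,JY)=\langle Y,Y\rangle=f$ — and the observation that all manipulations are legitimate because they take place in the region $U$ where \eqref{grad f1} and \eqref{grad f2} hold. One can alternatively argue more computationally, differentiating $\rho$ twice with the Levi--Civita connection of $g_J$, using $\mathrm{Hess}\,f=\tfrac12 g_J$ on $U$ (a consequence of $\nabla f=Y$, $\L_Y\omega=\omega$ and $\L_Y J=0$) and simplifying the resulting tension term via \eqref{floereq conv}; this leads to the same identity but introduces curvature-type terms that the primitive-one-form argument avoids.
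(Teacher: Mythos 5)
Your proof is correct and follows essentially the same route as the paper: both rest on the Liouville identity $d\iota_Y\omega=\omega$ together with the two structural facts $\nabla f=Y$ and $\langle Y,Y\rangle=f$, the only difference being organizational — you compute $u^*(\iota_Y\omega)$ componentwise (yielding $\mu(\partial_su)=-\partial_t\rho$ and $\mu(\partial_tu)=\partial_s\rho-h'(\rho)\rho$) and then apply $d$, while the paper expresses the same one-form as $-d^c(f\circ u)-h'(f(u))f(u)\,dt$ before differentiating. The two computations are identical in substance, and your bookkeeping is, if anything, a bit cleaner.
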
 

\begin{proof}
We abbreviate $d^c (f(u)) := d(f(u))\circ i = \partial_t(f(u)) ds - \partial_s(f(u)) dt$. Then 
\begin{equation}
-dd^c(f(u)) = \nabla (f(u)) \, ds\wedge dt.
\label{ddc}
\end{equation}
In view of the identities \eqref{grad f1}, \eqref{grad f2} and \eqref{floereq conv} we compute
$$\begin{aligned}
-dd^c (f(u))  =  &\; - (df(u)) \partial_s u)\, ds + (df(u)\partial_s u )\,dt \\
                   = &\; -\bigl( df(u) \left(J(u,z) \partial_tu \right) \bigr) \, dt - \bigl( df(u) \left(J(u,z) \partial_s u\right) \bigr) \, ds \\
                      &\; + \bigl( df(u) \left( \partial_s u + J(u,z) \partial_t u \right) \bigr) \, dt + \bigl( df(u) \left( J(u,z)\partial_s u -  \partial_t u \right) \bigr) \, ds \\
                     = &\; \om \left( Y(u),\partial_t u \right) \, dt + \om \left( Y(u),\partial_s u \right) \, ds \\
                      &\; + \langle \nabla f(u) , \nabla H(u) \rangle \, dt +  \langle \nabla f(u) , J(u,z) \nabla H(u) \rangle \, ds \\
                      = &\; u^* \io_Y \om +  \langle Y(u) , h'(f(u))
                     Y(u) \rangle \, dt +0 \\
                      = &\; u^* \io_Y \om + h'(f(u)) f(u) \, dt.
\end{aligned}$$
As $d\io_Y\om = \L_Y\om = \om$ we obtain with \eqref{floereq conv} that
$$\begin{aligned}
du^*\io_Y\om = u^*\om = &\; \om \bigl(\partial_s u , J(u,z) \partial_s u - J(u,z) \nabla H(u) \bigr) \, ds\wedge dt \\
 = & \; \bigl( \langle \partial_s u, \partial_s u \rangle - dH (u) \partial_s u \bigr) \, ds \wedge dt \\
 = & \; \bigl( \langle \partial_s u, \partial_s u \rangle - \partial_s (h(f(u))) \bigr) \, ds \wedge dt.
\end{aligned}$$
Together with the previous equality it follows that
$$\begin{aligned}
-dd^c (f(u)) = &\; \bigl( \langle \partial_s u, \partial_s u \rangle - \partial_s (h(f(u)))  + \partial_s (h'(f(u)) f(u)) \bigr) \, ds \wedge dt \\
 = & \; \bigl( \langle \partial_s u, \partial_s u \rangle + h''(f(u)) \cdot \partial_s f(u) \cdot f(u) \bigr) \, ds\wedge dt,
\end{aligned}$$
and hence \eqref{ddc} yields the statement of the theorem.
\end{proof}

\begin{rmk}
\label{time dependent ham}
{\rm (Time dependent Hamiltonian). Repeating the calculation in the proof of Theorem~\ref{convex1}, one shows the following more general result.} Let $h\in C^\infty (\re^2,\re)$ and define $H \in C^\infty(U \times \re)$ by 
$$ H(v,s) = h(f(v),s), \quad v\in U, s\in \re.$$
If $\Om$ is a domain in $\C$ and if $u\in C^\infty (\Om, U)$ is a solution of the time-dependent Floer equation
\begin{equation}
\partial_s u(z) + J(u(z),z) \partial_t u(z) = \nabla H(u(z),s), \quad z=s+it \in \Om,
\label{floereqt conv}
\end{equation}
then
\begin{equation}
\nabla f(u) = \langle \partial_s u , \partial_s u \rangle + \partial^2_1 h(f(u),s) \cdot \partial_s f(u) \cdot f(u) + \partial_1 \partial_2 h(f(u),s) \cdot f(u).
\end{equation}
\end{rmk}

\ni
Theorem~\ref{convex1} implies

\begin{cor}
{\rm (Maximum Principle).} Assume that $u\in C^\infty (\Om, U)$ and that either $u$ is a solution of Floer's equation \eqref{floereq conv} or $u$ is a solution of the time-dependent Floer equation \eqref{floereqt conv} and $\partial_1 \partial_2 h \geq 0$. Then if $f\circ u$ attains its maximum on $\Om$, we have that $f\circ u$ is constant.
\label{compactness}
\end{cor}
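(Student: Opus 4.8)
The plan is to deduce this Maximum Principle directly from the pointwise identities of Theorem~\ref{convex1} and Remark~\ref{time dependent ham}, combined with E.~Hopf's strong maximum principle for linear elliptic differential inequalities. First I would set $v := f\circ u \in C^\infty(\Om)$ and note that $v>0$ on all of $\Om$, since $f(\vp^t(x)) = e^t$; this strict positivity, together with the sign hypothesis in the time-dependent case, is exactly what lets the lower-order terms be absorbed.

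Suppose $u$ solves Floer's equation \eqref{floereq conv}. Theorem~\ref{convex1} gives, on $\Om \subset \C \cong \re^2$,
$$
\Delta v \;=\; \langle \partial_s u, \partial_s u \rangle \;+\; h''(v)\, v\, \partial_s v ,
$$
where $\Delta v$ denotes the Laplacian of $v$, i.e. the quantity written $\nabla(f(u))$ in that theorem (recall $-dd^c v = \Delta v\, ds \wedge dt$). Since $\langle \cdot, \cdot \rangle = g_J(\cdot,\cdot)$ is a Riemannian metric, the first summand is nonnegative, so $v$ satisfies
$$
\Delta v \;-\; \bigl( h''(v)\, v \bigr)\, \partial_s v \;\geq\; 0 .
$$
This has the form $\Delta v + b \cdot \nabla v \geq 0$ with $b = (-h''(v)\, v,\, 0)$ continuous, hence locally bounded on $\Om$, and with vanishing zeroth-order coefficient. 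Hopf's strong maximum principle then yields: if $v$ attains its maximum at an interior point of the connected open set $\Om$, then $v$ is constant, i.e. $f\circ u$ is constant. For the time-dependent equation \eqref{floereqt conv} I would run the same argument using the formula of Remark~\ref{time dependent ham},
$$
\Delta v \;=\; \langle \partial_s u, \partial_s u \rangle \;+\; \partial_1^2 h(v,s)\, v\, \partial_s v \;+\; \partial_1 \partial_2 h(v,s)\, v ,
$$
in which the extra term $\partial_1 \partial_2 h(v,s)\, v$ is $\geq 0$ precisely because $\partial_1 \partial_2 h \geq 0$ and $v > 0$; absorbing it into $\langle \partial_s u, \partial_s u \rangle \geq 0$ gives once more $\Delta v - \bigl( \partial_1^2 h(v,s)\, v \bigr)\, \partial_s v \geq 0$, and the same application of the strong maximum principle concludes.

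There is no serious analytic obstacle here; the only points requiring care are (a) rewriting the differential inequality so that the first-order coefficient is locally bounded and the zeroth-order coefficient is $\leq 0$ (in fact $0$) — which is exactly where the positivity of $f\circ u$ and the hypothesis $\partial_1 \partial_2 h \geq 0$ enter, so that Hopf's theorem applies with no sign restriction on the value of the maximum — and (b) passing from the local conclusion (constancy in a small ball around an interior maximum) to constancy on all of $\Om$ by connectedness of the domain. This Corollary is then precisely the tool invoked for the compactness statement Lemma~\ref{l:compactness}, applied with $h$ the relevant radial profile of the Hamiltonian near $\partial D(6)$, to show that Floer trajectories cannot escape $D(6)$.
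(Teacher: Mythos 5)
Your proof is correct and is essentially the paper's: read off the identity of Theorem~\ref{convex1}, drop the nonnegative source term $\langle\partial_s u,\partial_s u\rangle$, and apply Hopf's strong maximum principle (\cite[Theorem 3.5]{GT83}) to the resulting linear elliptic inequality $\Delta v + b(z)\,\partial_s v \geq 0$ for $v=f\circ u$ with $b(z)=-h''(v)\,v$ locally bounded, then use connectedness of $\Om$. The only (cosmetic) difference is in the time-dependent case: the paper keeps $-\partial_1\partial_2 h(v,s)\,v$ as a nonpositive zeroth-order coefficient and invokes the version of the strong maximum principle for a nonnegative interior maximum, whereas you absorb $\partial_1\partial_2 h(v,s)\,v\geq 0$ (using $v>0$) into the nonnegative source term so that the simpler version with vanishing zeroth-order coefficient suffices; both are valid and both hinge on the strict positivity $f\circ u>0$, which you make explicit and the paper leaves implicit.
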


\begin{proof}
Assume that $u$ solves equation \eqref{floereq conv}. We set 
$$ b(z) := -h''(f(u(z))) \cdot f(u(z)).$$
The operator $L$ on $C^\infty(\Om,\re)$ defined by $L(w) = \nabla w + b(z) \partial_s w$ is uniformly elliptic on relatively compact domains in $\Om$, and according to Theorem~\ref{convex1}, $L(f\circ u) \geq 0$. If $f\circ u$ attains its maximum on $\Om$, the strong Maximum Principle (see \cite[Theorem 3.5]{GT83}) thus implies that $f\circ u$ is constant. The other claim follows similarly from the second part of the strong Maximum Principle and Remark \ref{time dependent ham}.
\end{proof}

 \clearemptydoublepage

\chapter{Legendre transform}
Let $M$ be a smooth closed manifold of dimension $m$. Let $T^*M$ be the corresponding cotangent bundle. We will denote local coordinates on $M$ by $q=(q_1,\ldots,q_m)$, and on $T^*M$ by $x=(q,p)=(q_1,\ldots,q_m,p_1,\ldots,p_m)$. \\

\ni
Consider a Hamiltonian $H: S^1 \times \Tm \ra \re$ such that 
$$
\det \left( \frac{\partial^2 H}{\partial p_j \partial p_k} \right) \neq 0.
$$
We can introduce the variable $v$ defined by
\begin{equation}
\label{e:v}
v:= \frac{\partial H}{\partial p}.
\end{equation}
The {\it Legendre transform} of $H$ is given by the {\it Lagrangian} $L:S^1\times TM \ra \re$,
$$
L (t,q,v) := \sum_{j=1}^m p_j v_j - H (t,q,p).
$$
The Legendre transform $(t,q,p) \mapsto (t,q,v)$ establishes a one-to-one correspondence between the solutions of the first order Hamiltonian system on $\Tm$
$$
\begin{cases}
\dot{q}=\phantom{-} \partial_p H(t,q,p) \\
\dot{p}= -\partial_q H(t,q,p) 
\end{cases}
$$
and the second order Lagrangian system on $M$
\begin{equation}
\frac{d}{dt} \frac{\partial L}{\partial v} = \frac{\partial L}{\partial q}.
\label{e:lagrange}
\end{equation}
The set of $1$-periodic solutions of the {\it Euler-Lagrange equation} \eqref{e:lagrange} is the set of critical points of the Lagrangian action functional $\E_L : \La M \ra \re$ given by
$$
\E_L (q):= \int_0^1 L(t,q(t),\dot{q}(t)) \, dt.
$$
Consider the following Hamiltonian
$$
H(t,q,p) = \be \demi |p|^2 + W(t,q).
$$
Then its Legendre transform is 
$$
L(t,q,v) = \frac{1}{\be} \demi |v|^2 - W(t,q).
$$
Assume $x(t) = (q(t), p(t)) \in \P(H)$. Then using Lemma \ref{action} and \eqref{e:v}
$$\begin{aligned}
\A_H(x) & = \int_0^1 \be \demi |p(t)|^2 - W(t,q(t)) \, dt\\
	     & = \int_0^1 \frac{1}{\be} \demi |\dot{q}(t)|^2 - W(t,q(t)) \, dt \\
	     & = \E_L (q(t)) .
\end{aligned}$$
Thus $\A_H$ and $\E_L$ have the same critical values.
We introduce the notation
$$
\La^a_L :=\{q\in \La M \mid \E_L(q) \leq a \}. 
$$

\begin{lem}
\label{lem:legendre}
Let $H = \be \demi |p|^2 + W(t,q)$ with $\| W\|_{C^1} < c$. Assume that $[a-c, a+c]$ does not belong to the action spectrum of $H$. Then $\La^a_L$ retracts on $\La^{\be a}$. 
\end{lem}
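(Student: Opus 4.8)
The plan is to compare $\E_L$ directly with the rescaled energy functional $\tfrac1\be\E$, by interpolating between the two Hamiltonians they are associated to while keeping track of the level $a$.

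Write $\Phi(q):=\int_0^1 W(t,q(t))\,dt$. Since $\|W\|_{C^1}<c$ one has $|\Phi(q)|<c$ for every $q\in\La M$, and by the definition of the Legendre transform recalled above,
$$
\E_L(q)\;=\;\tfrac1\be\,\E(q)\,-\,\Phi(q),\qquad q\in\La M .
$$
So the two functionals differ by the uniformly bounded term $\Phi$; in particular $\{\E_L\le a-c\}\subseteq\La^{\be a}\subseteq\{\E_L\le a+c\}$ and $\La^{\be(a-c)}\subseteq\La^a_L\subseteq\La^{\be(a+c)}$, while the hypothesis says precisely that $\E_L$ has no critical value in $[a-c,a+c]$.

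Next I would introduce the homotopy $\E^\mu:=\E_L+\mu\Phi$, $\mu\in[0,1]$, with $\E^0=\E_L$ and $\E^1=\tfrac1\be\E$. Each $\E^\mu$ is the Lagrangian action functional of the mechanical Lagrangian $L^\mu(t,q,v)=\tfrac1{2\be}|v|^2-(1-\mu)W(t,q)$, i.e. of the Legendre transform $H^\mu:=\be\,\demi|p|^2+(1-\mu)W$ (so $H^0=H$). Hence, exactly as for $\E_L$ in \cite{AM06,AS06}, $\E^\mu$ is smooth on $\La M$, bounded below, satisfies the Palais--Smale condition, has a complete negative gradient flow, and its set of critical values equals $\Sp(H^\mu)$. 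Granting that $a\notin\Sp(H^\mu)$ for every $\mu$, the levels $\{\E^\mu=a\}$ have a uniform positive distance from the critical values, and integrating the vector field $\partial_\mu-\rho\,\Phi\,|\nabla\E^\mu|^{-2}\nabla\E^\mu$ on $\La M\times[0,1]$ --- where $\rho$ is a cutoff equal to $1$ near $\{\E^\mu=a\}$ and supported in $\{|\E^\mu-a|\le 2\ve\}$ with $\ve$ smaller than that gap --- produces an ambient isotopy of $\La M$ carrying $\{\E^0\le a\}=\La^a_L$ onto $\{\E^1\le a\}=\La^{\be a}$, a diffeomorphism isotopic to the identity; this yields the assertion.

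The one substantial point, which I expect to be the main obstacle, is the claim that $a$ stays a regular value of $\E^\mu$ throughout the homotopy, i.e. $a\notin\Sp(H^\mu)$ for $\mu\in[0,1]$; this is where the $C^1$-smallness of $W$ is genuinely used. The ingredients are an energy estimate in the spirit of Lemma~\ref{lemma:savior}: for $x=(q,p)\in\P(H^\mu)$, Euler's identity for the fibrewise quadratic part together with the $p$-independence of $W$ gives $\la(\dot x)=\be|p|^2$, so $\A_{H^\mu}(x)$ differs from $\int_0^1\be\,\demi|p(t)|^2\,dt$ by less than $c$; and $\tfrac{d}{dt}H^\mu(t,x(t))=(1-\mu)\,\partial_tW(t,q(t))$, so $H^\mu(t,x(t))$, hence $\be\,\demi|p(t)|^2$, varies by less than $c$ along $x$. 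Combining these and feeding them back into the bound on $\|W\|_{C^1}$, one checks that $\A_{H^\mu}(x)=a$ would force $\Sp(H)$ to meet $[a-c,a+c]$, against the hypothesis. Carrying out this last verification carefully is the delicate part; in every situation where the lemma is applied one has, by construction (e.g. the smallness condition (W2) on $W_n$), ample room in the $C^1$-smallness of $W$ to do so.
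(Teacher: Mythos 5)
Your first displayed relation and the sandwiching $\{\E_L \le a-c\} \subseteq \La^{\be a} \subseteq \{\E_L \le a+c\}$ are correct, and you even note that the hypothesis means $\E_L$ has no critical value in $[a-c,a+c]$. At that point the lemma already follows: $\E_L$ is smooth on $\La M$ and satisfies the Palais--Smale condition, so by the Gromoll--Meyer deformation lemma the nested sublevel sets $\La^{a-c}_L \subset \La^a_L \subset \La^{a+c}_L$ are all deformation retracts of one another along the negative gradient flow of $\E_L$, and since $\La^{a-c}_L \subset \La^{\be a} \subset \La^{a+c}_L$ the intermediate set $\La^{\be a}$ is carried along, giving the retraction of $\La^a_L$ onto $\La^{\be a}$. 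That is the paper's argument, and you already had every ingredient for it in your first paragraph.

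Instead you introduce the family $\E^\mu = \E_L + \mu\Phi$ and try to construct an ambient isotopy between the two sublevel sets. The step you flag as delicate --- that $a \notin \Sp(H^\mu)$ for every $\mu \in [0,1]$ --- is a genuine gap, and I do not see how your sketch closes it. Your energy estimate shows that a solution $x=(q,p)\in\P(H^\mu)$ with $\A_{H^\mu}(x)=a$ has $\E(q)\in[\be(a-c),\be(a+c)]$, but $q$ is then a critical loop of $\E^\mu$, not of $\E_L=\E^0$, so this does not produce any element of $\Sp(H)$ in $[a-c,a+c]$. The paper's Non-crossing Lemma controls the spectrum under a homotopy that \emph{rescales the kinetic term}, where the scaling identity $\Sp(rH+W)\subset\tfrac1r\Sp(H+W)+[-c,c]$ (Lemma~\ref{action}(ii)) applies; there is no analogous identity for a homotopy that turns the potential on or off, and doing so can create or destroy periodic orbits and hence move the action spectrum in an uncontrolled way. (Completeness of your isotopy vector field and a uniform gradient bound near the moving level set would also need attention, but the non-crossing claim is the central obstacle.) Dropping the interpolation and applying Palais--Smale directly to $\E_L$ as above avoids the issue entirely.
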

 
\begin{proof}
Let $q \in \La^{\be a}$, then
$$\begin{aligned}
\E_L (q) & = \int_0^1 \frac{1}{\be} \demi |\dot{q}|^2 - W(t,q) \,dt \\
	& \leq a - \int_0^1 W(t,q) \, dt \\
	& < a+c.
\end{aligned}$$
Similarly if $\E (q) > \be a$ then $\E_L(q) > a - c$. Thus
$$
\La^{a-c}_L \subset \La^{\be a} \subset \La^{a+c}_L.
$$

\ni
Now note that $\E_L$ is smooth on $\La M$ and satisfies the Palais--Smale condition, see \cite{Benci86}. We can thus apply Lemma 2 of \cite{GM169} which tells us that, as $[a-c, a+c]$ does not contain any critical value of $\E_L$, $\La^{a-c}_L$ and $\La^{a+c}_L$ are homotopy equivalent to $\La^{a}_L$.   
\end{proof}

 \clearemptydoublepage

\chapter{Gromov's theorem}
\label{appendix:gromov's work}

\ni
Let $M$ be a compact simply connected manifold and consider its free loop space $\La M = W^{1,2}(S^1,M)$. Let $g$ be a Riemannian metric on $M$. Recall that the energy functional $\E:= \E_g: \La M \ra\re$ is defined by 
$$
\E(q) = \demi \int_0^1 |\dot{q}(t)|^2 \, dt
$$
where $|\dot{q}(t)| = g_{q(t)} (\dot{q}(t),\dot{q}(t))$. The length functional $\L := \L_g: \La M \ra\re$ is defined by 
$$
\L(q) = \int_0^1 |\dot{q}(t)| dt.
$$
For $a>0$ we consider the sublevel sets 
$$
\La^a := \{q\in \La M \mid \E(q) \leq a\}
$$
and
$$
\L^a := \{q\in \La M \mid \L(q) \leq a\}.
$$

\begin{thm}{\rm(Gromov).}
Given a Riemannian metric $g$ on $M$ there exists a constant $\kappa = \kappa (g) >0$, such that $H_j(\La^{\kappa t^2}; \F_p) \ra H_j (\La M; \F_p)$ is surjective for $j\leq t$.
\end{thm}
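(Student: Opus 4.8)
The plan is to turn the statement into a quantitative question about representing homology classes by short loops, and then answer it by concatenation in the based loop space. Since $\demi\L^2\le\E$, a loop of length $\le\ell$ has energy $\le\demi\ell^2$, so it suffices to show that for every $j$ every class of $H_j(\La M;\F_p)$ is represented by a cycle supported on loops of length $\le C j$ for a constant $C=C(g)$. Indeed, such a cycle lies in $\La^{\demi C^2 j^2}$, and since the sublevel sets increase, for $j\le t$ the composite $H_j(\La^{\demi C^2 j^2})\to H_j(\La^{\kappa t^2})\to H_j(\La M)$ is onto with $\kappa:=\demi C^2$, which forces the second arrow to be onto. Note that $H_*$ commutes with direct limits and $\La M=\bigcup_a\La^a$, so surjectivity for \emph{some} $a$ is automatic; the entire content is the bound $a=O(j^2)$. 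Finally, using the evaluation fibration $\Omega M\to\La M\to M$ and the compactness of $M$ (to choose one action level that works over every fiber), a comparison of the Serre spectral sequences of $\La^a M\to M$ and $\La M\to M$ — or an induction over a finite CW decomposition of $M$ — reduces everything to the based loop space: it is enough that $H_j(\Omega^{cj^2}M;\F_p)\to H_j(\Omega M;\F_p)$ be onto for a constant $c=c(g)$.

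For $\Omega M$ I would use the Pontryagin product. By the Milnor--Moore and Borel structure theorems, the connected graded Hopf algebra $H_*(\Omega M;\F_p)$ is generated as an algebra by a set of homogeneous elements $\{b_i\}$ of degrees $d_i\to\infty$, with only finitely many below each bound. Since $\Omega M$ is $1$-connected, a monomial in the $b_i$ of homological degree $j$ has at most $j$ factors counted with multiplicity, and the sum of the degrees of its factors equals $j$. The Pontryagin product is induced by the concatenation map $(\Omega M)^{\times N}\to\Omega M$, and concatenating loops $\gamma_1,\dots,\gamma_N$ and reparametrizing the result by arc length produces a loop of length $\L(\gamma_1)+\dots+\L(\gamma_N)$, hence of energy $\demi(\L(\gamma_1)+\dots+\L(\gamma_N))^2$. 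Therefore, if each generator $b_i$ admits a cycle representative supported on loops of length $\le C_0\,d_i$, then every degree-$j$ monomial is represented on loops of total length $\le C_0\,j$, i.e. on loops of energy $\le\demi C_0^2 j^2$; running over a monomial basis of $H_j(\Omega M;\F_p)$ gives the required cycle in $\Omega^{cj^2}M$ with $c=\demi C_0^2$ (one enlarges $c$ to absorb the finitely many low degrees where the generator estimate might degenerate).

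The crux — and where I expect the real difficulty to be — is the uniform geometric estimate on the generators: that a generator $b\in H_d(\Omega M;\F_p)$ can be represented by a cycle supported on loops of length $O(d)$, with the constant depending only on $(M,g)$. This is precisely where Gromov's argument exploits the bounded geometry of a compact Riemannian manifold, and I would attack it combinatorially: fix a triangulation of $M$ with geometrically bounded simplices so that piecewise-geodesic edge-loops in a fixed subdivision of the $1$-skeleton have geometric length comparable to their combinatorial word-length, reducing the problem to showing that a degree-$d$ homology class of the loop space of a finite simply-connected complex is carried by edge-loops of word-length $O(d)$. For a wedge of spheres this is visible from the James filtration $J_1\subset J_2\subset\cdots$ of $\Omega\Sigma Y$ — classes of $H_d$ lie in $J_d$ because the cells of $J_k/J_{k-1}$ have dimension $\ge k$ — and in general one uses the corresponding word-length (bar/cobar, or Milnor $FK$) filtration of $\Omega M$, whose associated graded is built from smash powers of $M$, so that $H_d$ sits in filtration $\le d$; the subtlety to keep in mind is that $H_*(\Omega M;\F_p)$ need not be finitely generated as an algebra (divided powers in characteristic $p$), so one argues degree by degree with the finitely many generators of degree $\le j$ and only needs the constant $C_0$ to be uniform. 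An alternative, more analytic implementation of this step is a loop-shortening deformation in families — subdivide a $j$-cycle $f\colon P\to\Omega M$, replace each loop by a broken geodesic with a controlled number of vertices à la Milnor, and push the resulting finite-dimensional cycle down the negative gradient flow of $\E$ — bearing in mind that the obstructions to lowering the maximal energy are closed geodesics of index $\le j$, which is why this route needs care even though large-energy geodesics of small index may occur.
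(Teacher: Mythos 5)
Your overall strategy has the right shape, and the reparametrization step matches the paper's (which lifts to $M\times S^1$ to avoid trouble at constant loops). But for the core step you take a different route from the paper, and as written it contains a gap. The paper works with $\La M$ directly: it covers $M$ by geodesically convex sets, triangulates with geodesic $1$-skeleton, builds for each scale $2^{-k}$ a finite cell complex $B_k\subset\La M$ of broken geodesic loops that deformation-retracts an exhausting open subset of $\La M$, and then composes with the self-map $\hat{f}$ of $\La M$ induced by a map $f\colon M\to M$ which is homotopic to the identity and collapses the geodesic $1$-skeleton to a point. The decisive count is that a loop in an $i$-cell of $B_k$ has at most $2i$ legs off the $1$-skeleton, so $\hat{f}$ maps the $i$-skeleton of $B_k$ into the length sublevel $\L^{\kappa i}$ with $\kappa=2Kd$, $K$ bounding $\|df\|$ and $d$ the maximal diameter of a convex set; since $\hat{f}_*$ is onto, every degree-$i$ class is carried by a cycle in $\L^{\kappa i}$. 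No evaluation fibration, no Pontryagin algebra, no word-length filtration is invoked.

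Two of your steps do not go through as stated. First, the reduction to $\Omega M$ via the evaluation map is not formal: $\La^a M\to M$ is not a fibration (the fibers $\Omega_q^a M$ vary with $q$), and even granting a fibration-like comparison, surjectivity on fiber homology in a range of degrees does not automatically propagate through the Serre spectral sequence to surjectivity on the total space in that range — you would need a Zeeman-type comparison compatible with both the Leray filtration and the energy filtration, which is real work that the paper avoids by staying in $\La M$. Second, and essentially, what you correctly call the crux — a \emph{uniform} constant $C_0$ such that every algebra generator of $H_d(\Omega M;\F_p)$ is carried by loops of length $\le C_0 d$ — is not proved but only restated in the language of the James/word-length filtration: one must still produce a geometric model in which filtration-$k$ cycles are realized by loops of geometric length $O(k)$, and that model is precisely what the broken-geodesic complex plus collapse map supplies. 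The fallback of "absorbing the finitely many bad degrees into the constant" is not available over $\F_p$, since the Pontryagin ring is typically not finitely generated (divided-power generators $\gamma_{p^k}(x)$ appear in degrees $p^k|x|$ for all $k$), so uniformity has to be established for infinitely many generators — which is again the estimate you were trying to reduce away. Your observation that the analytic loop-shortening alternative is blocked by high-energy closed geodesics of small index is correct, and is exactly why the combinatorial collapse device is the heart of the argument.
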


\ni
The proof of this theorem follows from the following lemmata.

\begin{lem}
\label{lem:gromov 1}
Given a Riemannian metric $g$ on $M$ there exists a constant $\kappa = \kappa (g)>0$, such that every element in $H_j(\La M; \F_p)$ can be represented by a cycle in $\L^{\kappa j}$.
\end{lem}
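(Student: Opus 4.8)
The statement to prove is Lemma~\ref{lem:gromov 1}: there exists $\kappa = \kappa(g) > 0$ such that every class in $H_j(\La M;\F_p)$ can be represented by a cycle lying in the sublevel set $\L^{\kappa j}$ of the length functional. This is the combinatorial heart of Gromov's theorem, and the standard approach — going back to Gromov's paper and worked out carefully in Paternain's book — is to triangulate $M$ and build loops out of edge-paths in a fixed triangulation, bounding the length of the representing loops by the number of simplices they pass through, which in turn is controlled by the dimension of the homology class.

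\medskip

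The plan is as follows. First I would fix a finite triangulation $\mathcal{T}$ of $M$ (possible since $M$ is compact), and let $\ell_0 = \ell_0(g,\mathcal{T})$ be the maximal $g$-length of an edge. Next I would set up a simplicial model for the free loop space: use the subdivision/simplicial-approximation machinery to replace an arbitrary singular $j$-cycle $z$ in $\La M$ by a homologous cycle whose loops are piecewise-geodesic with vertices among the vertices of $\mathcal{T}$, i.e.\ loops that are concatenations of edges of $\mathcal{T}$. The key point — and this is where the quadratic-in-$t$ growth of $\kappa t^2$ ultimately comes from, though here it is phrased linearly in $j$ for the length functional — is a counting estimate: a generic $j$-dimensional family of edge-loops can be taken to have combinatorial length (number of edges) bounded by a constant multiple of $j$, because adding one dimension to the family costs only a bounded number of extra edge-insertions. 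Concretely, one builds a CW model whose cells in dimension $j$ correspond to edge-loops of combinatorial length $\le C j$ for a universal constant $C = C(\mathcal{T})$, shows the inclusion of this model into $\La M$ is a homology isomorphism in degree $\le j$ (or at least surjective), and then every class in $H_j(\La M;\F_p)$ is represented by a cycle supported on loops of combinatorial length $\le Cj$, hence of $g$-length $\le C\ell_0 \, j$. Setting $\kappa := C\ell_0$ gives the lemma.

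\medskip

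In more detail, the cleanest route is to use the Morse theory of the energy functional on $\La M$ together with the deformation lemma: one shows that the inclusion $\L^{a} \hookrightarrow \La M$ induces a surjection on $H_j$ once $a$ exceeds the energy of all geodesics of index $\le j$, and then one bounds the length of index-$\le j$ geodesics linearly in $j$ via a direct triangulation/broken-geodesic argument. Alternatively — and this is the more elementary and self-contained path I would follow — skip geodesics entirely and argue purely topologically with the edge-path model as above, quoting simplicial approximation to push an arbitrary cycle down into the finite-length model. Either way the essential input is the same: the number of top-dimensional cells needed to model $\La M$ up through dimension $j$ grows at worst linearly in $j$ when cells are organized by combinatorial loop length.

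\medskip

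I expect the main obstacle to be the careful bookkeeping in the counting/deformation step: one must verify that an arbitrary singular $j$-cycle can genuinely be deformed, within $\La M$ and keeping track of the $\F_p$-coefficients, onto a cycle made of short edge-loops, and that the constant absorbed into $\kappa$ depends only on $(M,g)$ and the chosen triangulation, not on the class or on $j$. The subtlety is that a single $j$-simplex of the cycle is a $j$-parameter family of loops, and one needs a uniform (in the parameter) simplicial-approximation / subdivision procedure so that the resulting edge-loops vary continuously and the length bound $\le Cj$ holds simultaneously over the whole simplex; handling the faces coherently so that the local replacements glue into a genuine cycle is the delicate combinatorial point. Everything else — compactness of $M$ to get $\ell_0 < \infty$, finiteness of the triangulation, and the passage from combinatorial length to $g$-length — is routine.
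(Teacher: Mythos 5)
Your proposal reaches for the right starting data (finite triangulation, uniform edge-length bound, broken-geodesic model of $\La M$), but the central counting claim is false as stated, and the missing ingredient is precisely the step that makes Gromov's argument work. You assert that ``a generic $j$-dimensional family of edge-loops can be taken to have combinatorial length (number of edges) bounded by a constant multiple of $j$'' and propose a CW model whose $j$-cells are loops with $\leq Cj$ edges. This is not true and cannot be arranged: already a single loop (a $0$-cell) can traverse arbitrarily many edges, and there is no homotopy within $\La M$ that shortens its combinatorial length; more to the point, the broken-geodesic complexes $B_k$ that actually exhaust $\La M$ have $j$-cells parametrizing loops with $2^k$ legs, so the number of edges in a cell is unbounded in $k$, not controlled by $j$. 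What \emph{is} controlled by $j$ in a $j$-cell $T(p_1)\times\cdots\times T(p_{2^k})$ is the number of indices for which $T(p_\ell)$ has positive dimension (since the cell dimensions add up to $j$); all the other $p_\ell$ are frozen at vertices, and the legs between two frozen vertices lie in the $1$-skeleton. That observation alone does not bound the $g$-length of the loop.

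The decisive idea you are missing is the collapsing map. Because $M$ is simply connected (an assumption your write-up does not invoke but which the paper uses essentially), one can choose a smooth map $f\colon M\to M$ homotopic to the identity that crushes the entire $1$-skeleton of the triangulation to a single point. Then $\hat{f}\colon\La M\to\La M$ kills the length contribution of every leg that lies in the $1$-skeleton, and in a $j$-cell of $B_k$ at most $2j$ legs touch a positive-dimensional face; hence $\hat{f}$ sends the $j$-skeleton of $B_k$ into $\L^{\kappa j}$ with $\kappa = 2Kd$ where $K=\max\|df\|$ and $d$ is the maximal simplex diameter. One then represents $\eta\in H_j(\La M)$ as $\hat f_*(\mu)$ (possible since $\hat f_*$ is an isomorphism), pushes a cycle for $\mu$ into the $j$-skeleton of some $B_k$ by cellular approximation, and applies $\hat f$. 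Your proposal replaces this with an unsupported claim that the family itself can be taken short. Your alternative Morse-theoretic route has the same problem in a different guise: there is no a priori linear-in-$j$ bound on the length of index-$\leq j$ closed geodesics for an arbitrary metric, and producing one is essentially what the lemma is for, so that route is circular.
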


\ni
Gromov's original proof of this result is very short. A detailed proof for the based loop space can be found in \cite{Pa97}, we will follow this work and the proof by Gromov in \cite[Chapter 7A]{Gr07}.  

\begin{proof}
Let $\{V_\al\}_{\al \in I}$ be a finite covering of $M$ by geodesically convex open sets. Consider a triangulation $T$ of $M$ such that each closed simplex lies in one of the $V_\al$. We assume that the 1-skeleton of $T$ consist of geodesic segments. For each $p\in M$ we define $T(p)$ as the closed face of $T$ of minimal dimension that contains $p$. For example, if $p$ is a vertex then $T(p) = \{p\}$. We also define $O(p)$ as the union of all the maximal simplices of $T$ that contain $p$.\\

\ni
Given $k\in\N$, we define open subsets $\La_k \subset \La M$ as follow. A loop $q$ belongs to $\La_k$ if for each $j=1, 2,\ldots,2^k$, 
$$
q \left( \left[\frac{j-1}{2^k},\frac{j}{2^k} \right]\right) \subset V_\al
$$
for some $\al \in I$ and
$$
O \left(q \left(\frac{j-1}{2^k} \right) \right) \cup O\left(q \left(\frac{j}{2^k}\right)\right) \subset V_\al
$$
for the same $\al$.

\ni
Let $B_k$ be the subset of loops $\ga\in\La_k$ such that $\ga$ is a broken geodesic and $\ga$ restricted to each subinterval $[(j-1) / 2^k,j/2^k]$ is a constant speed parametrized geodesic. Then each $\ga\in B_k$ determines a sequence
$$
\{p_j = \ga ( \,j/ 2^k) \}
$$
with the following properties:
\begin{itemize}
\item[(i)] $p_0 = p_{2^k}$ ;
\item[(ii)] $O(p_{j-1}) \cup O(p_j)$ lies in a single $V_\al$ for each $j=1,2,\ldots, 2^k$ .
\end{itemize}
Conversely each sequence $\{p_j\}$ with properties (i) and (ii) determines a broken geodesic loop in $B_k$. Moreover this correspondence is bijective. This yields a cell decomposition on $B_k$ as follow: a cell that contains $\ga$ is given by
$$
T(p_1)\times T(p_2) \times \ldots \times T(p_{2^k}).
$$
Hence $B_k$ is a finite cell complex. Moreover, using the methods of Milnor in \cite[Section 16]{Mi63}, we have that $B_k$ is a deformation retract of $\La_k$.\\

\ni
Since $M$ is simply connected, there exists a smooth map $f:M\ra M$ such that $f$ collapses the 1-skeleton of $T$ to a point and $f$ is smoothly homotopic to the identity. This map $f$ naturally induces a map $\hat{f} : \La M \ra \La M$. We need the following lemma.

\begin{lem}
\label{lem:gromov 2}
There exist a constant $\kappa >0$ such that for any integer $k >0$, we have 
$$
\hat{f} (\mbox{$i$-skeleton of } B_k) \subset \L^{\kappa i}
$$
for all $i \leq \dim B_k$.
\end{lem}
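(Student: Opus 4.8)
The plan is to estimate the length of each broken geodesic in the $i$-skeleton of $B_k$ after applying $\hat f$. First I would observe that a cell of dimension $i$ in $B_k$ is a product $T(p_1)\times\cdots\times T(p_{2^k})$, and its dimension $i$ equals $\sum_{j=1}^{2^k}\dim T(p_j)$. Since $f$ collapses the $1$-skeleton of the triangulation $T$ to a point, whenever a vertex $p_j$ moves along a cell of dimension $0$ the image point $f(p_j)$ is a fixed point; more importantly, the broken-geodesic edge from $p_{j-1}$ to $p_j$ gets mapped by $\hat f$ to a path $f\circ(\text{edge})$ whose length is controlled. The key point is that each edge of $B_k$ lies inside one geodesically convex chart $V_\al$, the charts are finitely many, and $f$ is a fixed smooth map; hence there is a uniform Lipschitz-type bound $C_0$ so that the image under $\hat f$ of the $j$-th broken-geodesic segment has length at most $C_0\, d(p_{j-1},p_j)\le C_0\,\mathrm{diam}(V_\al) =: C_1$ — but this only gives a bound linear in $2^k$, which is too weak. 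The refinement, following Gromov and the exposition in \cite{Pa97}, is that when $T(p_{j-1})$ and $T(p_j)$ are both $0$-dimensional (i.e.\ $p_{j-1},p_j$ are vertices of $T$), the edge between them lies in the $1$-skeleton and is therefore collapsed by $f$ to a point of zero length. So only those indices $j$ for which $p_{j-1}$ or $p_j$ is \emph{not} a vertex contribute to the length of $\hat f(\ga)$.

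Next I would count how many such ``contributing'' indices there can be on a cell of dimension $i$. If $\ga$ lies in the $i$-skeleton, then $\sum_j \dim T(p_j)\le i$, so at most $i$ of the vertices $p_j$ satisfy $\dim T(p_j)\ge 1$; hence at most $2i$ of the $2^k$ edges can have an endpoint that is not a vertex of $T$. Each such edge contributes length at most $C_1$ to $\L(\hat f(\ga))$ by the Lipschitz bound above, and all other edges contribute $0$. Therefore $\L(\hat f(\ga))\le 2C_1\, i$, and setting $\kappa := 2C_1$ (a constant depending only on $g$, on the fixed finite cover $\{V_\al\}$ and triangulation $T$, and on $f$) gives $\hat f(\text{$i$-skeleton of }B_k)\subset \L^{\kappa i}$ for every $k$ and every $i\le \dim B_k$, which is exactly the assertion. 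I would also note that the bound is genuinely independent of $k$: refining the subdivision only subdivides existing geodesic edges into shorter constant-speed pieces lying in the same charts, and does not increase the number of non-vertex endpoints along a cell of fixed dimension.

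The main obstacle — and the one point that needs care rather than routine estimation — is making precise the claim that an edge with both endpoints in the $1$-skeleton of $T$ is collapsed to a point, and more generally getting the uniform Lipschitz constant $C_1$ for $f\circ\gamma$ over \emph{all} constant-speed geodesic segments in \emph{all} the convex charts simultaneously. For the collapsing claim one uses that the $1$-skeleton consists of geodesic segments and that $f$ maps the whole $1$-skeleton to a single point, so $f$ restricted to any such segment is constant; for the Lipschitz bound one uses compactness of $M$, smoothness of $f$, and the fact that within a geodesically convex chart the constant-speed geodesic joining two points has length equal to their distance, so $\L(f\circ\gamma)\le (\sup\|df\|)\cdot\mathrm{diam}(V_\al)$. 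Once these two ingredients are in place, the dimension count is immediate and the lemma follows. I would then remark that combining Lemma~\ref{lem:gromov 2} with Lemma~\ref{lem:gromov 1} (every class in $H_j(\La M;\F_p)$ is represented by a cycle in $B_k$ for $k$ large, via the deformation retraction of $\La_k$ onto $B_k$, pushed forward by $\hat f\simeq \mathrm{id}$) and the relation $\L^a\subset\La^{a^2/2}$ coming from $\demi\L^2\le\E$ yields Gromov's theorem with a suitably adjusted constant $\kappa(g)$.
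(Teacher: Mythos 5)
Your proof is correct and follows essentially the same route as the paper: bound each leg's image length by $(\sup\|df\|)\cdot\max_\al\mathrm{diam}(V_\al)$, observe that legs with both endpoints vertices of $T$ lie in the $1$-skeleton and are collapsed by $f$, and count that on a cell $T(p_1)\times\cdots\times T(p_{2^k})$ of dimension $i$ at most $2i$ legs can have a non-vertex endpoint, giving $\kappa=2Kd$. One small slip in your closing remark: $\demi\L^2\le\E$ gives $\La^{a^2/2}\subset\L^a$, not $\L^a\subset\La^{a^2/2}$; passing from a length bound to an energy bound is not automatic and is handled in the paper by the extra lemma that reparametrizes the cycle proportionally to arc length (lifted to $M\times S^1$ to avoid vanishing tangent vectors).
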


\begin{proof}
Consider a cell
$$
C = T(p_1)\times T(p_2) \times \ldots \times T(p_{2^k})
$$
of dimension $i \leq \dim B_k$. Take a path $\ga \subset C$, then $\ga$ is a broken geodesic with each of its legs lying in one $V_\al$, $\al \in I$. Consider the following constants:
$$
\begin{aligned}
K &:= \max_{x\in M} \parallel d_x f \parallel \\
d &:= \max_{\al \in I} \{g\mbox{-diameter of } V_\al\} \\
N(\ga) &:= \# \{\mbox{legs of $\ga \nsubseteq$ 1-skeleton} \}.
\end{aligned}
$$ 
Since $f$ collapses the 1-skeleton of $T$ to one point it holds that 
$$
\L \left(\hat{f}(\ga)\right) \leq K \cdot d\cdot N(\ga).
$$
By assumption the 1-skeleton consists of geodesic segments, thus the legs of $\ga$ from $T(p_j)$ to $T(p_{j+1})$ belong to the 1-skeleton if $1\leq j < 2^k -1$ and $\dim T(p_j) = \dim T(p_{j-1}) =0$. Equivalently, the only legs that do not belong to the 1-skeleton are the legs which begin or end in a $T(p_j)$ of nonzero dimension. Thus 
$$
N(\ga) \leq 2i.
$$
If we set $\kappa = 2Kd$ we obtain
$$
\L \left(\hat{f}(\ga) \right) \leq \kappa i
$$
which concludes the proof of the lemma.   
\end{proof}

\ni
We shall show that any $\eta \in H_i(\La M)$ can be represented by a cycle whose image lies in $\L^{\kappa i}$, where $\kappa$ is the constant given by Lemma \ref{lem:gromov 2}. Since $f$ is a surjective map, $f$ has degree one since it is homotopic to the identity, this implies Lemma~\ref{lem:gromov 1}. \\

\ni
Observe that for all $k \in \N$
$$
\La_k \subset \La_{k+1}   
$$
and 
$$
\La M = \bigcup_{k=1}^\infty \La_k.
$$
Since $f$ is homotopic to the identity, there exists $\mu \in H_i(\La M)$ such that $\hat{f}_*(\mu) = \eta$. Let $C$ be a cycle that represents $\mu$. Then its image will lie in $\La_k$ for some $k$.  Retract $\La_k$ onto $B_k$. Then we can move $C$ by a homotopy into the $i$-skeleton of $B_k$. By Lemma~\ref{lem:gromov 2} $\hat{f}$ maps all points of the $i$-skeleton of $B_k$ to points in $\L^{\kappa i}$ and in particular the image of $C$  will lies in $\L^{\kappa i}$. Hence $\eta = \hat{f}_*(\mu)$ can be represented by a cycle whose image lies in $\L^{\kappa i}$. 
\end{proof}

\begin{lem}
There exist a constant $\kappa := \kappa(g)>0$ depending only on $g$ such that each element of $H_k(\La M; \F_p)$ can be represented by a cycle in $\La^{\demi (\kappa k)^2}$.  
\end{lem}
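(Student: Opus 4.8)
The plan is to deduce this from Lemma~\ref{lem:gromov 1} by comparing the length and the energy filtrations of $\La M$ via a reparametrization map. By Lemma~\ref{lem:gromov 1} there is a constant $\kappa_1 = \kappa_1(g)>0$ such that every class $\eta \in H_k(\La M;\F_p)$ is represented by a singular cycle $C$ whose image lies in the length sublevel set $\L^{\kappa_1 k}$. Since $\demi \L^2 \le \E$, an energy sublevel is contained in the corresponding length sublevel, but not conversely; so the content is to push $C$ into an energy sublevel of comparable size without changing its homology class.

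To do this I would construct, for a fixed regularization parameter $\ep>0$, a continuous map $P_\ep : \La M \to \La M$ reparametrizing each loop proportionally to a regularized arc length. For $q\in\La M$ put
$$
s_q(t) := \Bigl(\int_0^t (|\dot q(\sigma)|+\ep)\,d\sigma\Bigr)\Big/\bigl(\L(q)+\ep\bigr),
$$
a strictly increasing homeomorphism of $[0,1]$, and set $P_\ep(q) := q\circ s_q^{-1}$. Writing $\tau = s_q(t)$ one computes $|\tfrac{d}{d\tau}P_\ep(q)(\tau)| = |\dot q(t)|\,\bigl(\L(q)+\ep\bigr)/\bigl(|\dot q(t)|+\ep\bigr)\le \L(q)+\ep$, hence
$$
\E\bigl(P_\ep(q)\bigr)\;\le\;\demi\bigl(\L(q)+\ep\bigr)^2 .
$$
Moreover $P_\ep$ is homotopic to the identity of $\La M$: the functions $s_q^u := (1-u)\,\mathrm{id}+u\,s_q$ are orientation-preserving reparametrizations of the circle for all $u\in[0,1]$, and $H_u(q):=q\circ (s_q^u)^{-1}$ is a homotopy from $\mathrm{id}_{\La M}$ to $P_\ep$. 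The one genuinely delicate point is the continuity of $P_\ep$ (and of $H$) for the $W^{1,2}$-topology on the Hilbert manifold $\La M$; this is precisely why the regularization by $\ep>0$ is included (it makes $s_q$ and $s_q^{-1}$ uniformly bi-Lipschitz in $q$ on bounded sets), and it is a standard fact about $\La M$, see \cite{Kli95}. I expect this continuity issue to be the main obstacle, and I would quote it rather than reprove it.

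Granting this, take $\ep=1$ and apply $P_1$ to the cycle $C$ above. Then $(P_1)_*C$ represents the same class $\eta$ because $P_1\simeq\mathrm{id}$, and by the displayed inequality its image lies in $\La^{\demi(\kappa_1 k+1)^2}$. Since $\kappa_1 k+1\le(\kappa_1+1)k$ for every $k\ge 1$, setting $\kappa:=\kappa_1+1$ yields a cycle in $\La^{\demi(\kappa k)^2}$ representing $\eta$. The case $k=0$ is trivial: $M$ is simply connected, hence $\La M$ is connected, and $H_0$ is represented by a constant loop, which lies in $\La^0$. This proves the lemma (and, taking $j\le t$ so that $\La^{\demi(\kappa j)^2}\subset\La^{\demi(\kappa t)^2}$, it gives Gromov's theorem with constant $\demi\kappa^2$).
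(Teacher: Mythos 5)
Your reparametrization formula and the resulting energy bound $\E(P_\ep(q))\le\demi(\L(q)+\ep)^2$ are correct, and the regularization by $\ep$ plays exactly the role that the paper's lift to $M\times S^1$ plays (there, the auxiliary $S^1$-factor forces the speed $\sqrt{|\dot q|^2+1}$ to be bounded below, which is the same trick with $\sqrt{a^2+1}$ instead of $a+\ep$). So the geometric idea is the same.

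The genuine difference, and the gap, is structural. The paper does \emph{not} construct a global self-map of $\La M$: it first replaces the cycle $\psi$ by a homologous cycle $\psi_1$ consisting of \emph{smooth} loops parametrized by a compact simplex $\Delta^k$, then lifts, reparametrizes, and projects. For a smooth finite-dimensional family the reparametrization and the homotopy to the identity are manifestly continuous, so there is nothing to check. You instead posit a globally defined continuous map $P_\ep\colon\La M\to\La M$ (and a continuous homotopy $H_u$) on the infinite-dimensional Hilbert manifold $\La M$ with its $W^{1,2}$-topology, and assert that this continuity ``is a standard fact about $\La M$, see \cite{Kli95}.'' That attribution is not accurate: Klingenberg's book establishes the Hilbert manifold structure and the density of smooth loops, but it contains no statement about continuity of a $q$-dependent reparametrization $q\mapsto q\circ s_q^{-1}$ in $W^{1,2}$. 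Such a statement is plausible (your regularization makes $s_q^{-1}$ uniformly Lipschitz, $q\mapsto|\dot q|$ is $1$-Lipschitz from $W^{1,2}$ to $L^2$, etc.), but because $s_q$ itself is only $W^{1,2}$ and not Lipschitz, the inversion $q\mapsto s_q^{-1}$ and the composition $q\mapsto q\circ s_q^{-1}$ require a real argument (change of variables and, e.g., Vitali-type convergence), not a citation. As written, the proof reduces the lemma to an unproven continuity lemma.

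Two remarks that would close the gap cheaply. First, you could adopt the paper's device and avoid the continuity question entirely: by density of $C^\infty(S^1,M)$ in $W^{1,2}(S^1,M)$ (this \emph{is} in \cite{Kli95}), replace the cycle in $\L^{\kappa_1 k}$ by a homologous cycle of smooth loops in $\L^{(\kappa_1+1)k}$, and then apply your $P_1$ (or the paper's $M\times S^1$ lift) only to that finite-dimensional family, where continuity of the reparametrization and of the homotopy $H_u$ is elementary. Second, if you do insist on the global map $P_\ep$, you should at least state and prove (or correctly reference) the continuity lemma; it is the crux of your argument, not a routine technicality.
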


\begin{proof}
Let $\Delta^k$ be the standard $k$-simplex, and let 
$$
\psi= \sum_i n_i \psi_i : \Delta^k \ra\L^{(\kappa -2)k}
$$
be an integral cycle, where $\kappa$ is the constant given by Lemma \ref{lem:gromov 1}. For convenience of notation we pretend that $\psi$ consists only on one simplex. As $W^{1,2}(S^1,M)$ is a completion of $C^\infty(S^1,M)$, see \cite{Kli95}, 
we can replace $\psi$ by a homotopic and hence homologous cycle 
$$
\psi_1: \Delta^k \ra \L^{(\kappa -1)k}
$$
consisting of smooth loops. We identify $\psi_1$ with the map 
$$\Delta^k \times S^1 \ra M, \quad (s,t)\mapsto \psi_1(s,t)=(\psi_1(s))(t).$$
Endow the manifold $M\times S^1$ with the product Riemannian metric. We lift $\psi_1$ to the cycle $\widetilde{\psi}_1 : \Delta^k \ra \La(M \times S^1)$ defined by 
$$
\widetilde{\psi}_1(s,t) = (\psi_1(s,t),t).
$$
This cycle consists of smooth loops whose tangent vectors do not vanish. For each $s$ let $\widetilde{\psi} (\si(s))$ be the reparametrization of $\widetilde{\psi}(s)$ proportional to arc length. The homotopy $\Psi : [0,1] \times \Delta^k \ra \La (M\times S^1)$ defined by 
$$
(\Psi(\tau,s))(t) = \widetilde{\psi}_1 (s,(1-\tau) t +\tau \si(s)) 
$$
shows that $\widetilde{\psi}_1$ is homologous to the cycle $\widetilde{\psi}_2(s) := \Psi (1,s)$. Its projection $\psi_2$ to $\La M$ is homologous to $\psi_1$ and lies in $\L^{(\kappa -1)k}$. Since for each $s$ the loop $\widetilde{\psi}_2(s)$ is parametrized proportional to the arc length, we conclude that 
$$\begin{aligned}
\E(\psi_2 (s)) \leq \E(\widetilde{\psi}_2(s)) & = \demi \left(\L\big (\widetilde{\psi}_2(s)\big)\right)^2 \\
 & = \demi \left( \big(\L (\psi_2(s)) \big)^2 +1 \right) \\
 & \leq \demi (\kappa -1)^2 k^2 +\demi \\
 & \leq \demi (\kappa k )^2
\end{aligned}$$
for each $s$, so that indeed $\psi_2 \in \La^{\demi (\kappa k)^2}$. This concludes the proof.
\end{proof}

 \clearemptydoublepage

\addcontentsline{toc}{chapter}{\protect\numberline{}{Bibliography}}

\pagestyle{fancy}
\fancyhf{} 
\fancyhead[LE,RO]{\bfseries\thepage} \fancyhead[LO]{\bfseries
Bibliography} \fancyhead[RE]{\bfseries Bibliography}
\renewcommand{\headrulewidth}{0.5pt}
\renewcommand{\footrulewidth}{0pt}
\addtolength{\headheight}{0.5pt} 
\fancypagestyle{plain}{ 
\fancyhead{} 
\renewcommand{\headrulewidth}{0pt} 
}
\bibliographystyle{plain}
\bibliography{mybiblio}

\def\cprime{$'$}
\begin{thebibliography}{10}

\bibitem{AM06}
A.~Abbondandolo and P.~Majer.
\newblock Lectures on the {M}orse complex for infinite-dimensional manifolds.
\newblock In {\em Morse theoretic methods in nonlinear analysis and in
  symplectic topology}, volume 217 of {\em NATO Sci. Ser. II Math. Phys.
  Chem.}, pages 1--74. Springer, Dordrecht, 2006.

\bibitem{AS06}
A.~Abbondandolo and M.~Schwarz.
\newblock On the {F}loer homology of cotangent bundles.
\newblock {\em Comm. Pure Appl. Math.}, 59(2):254--316, 2006.

\bibitem{BZ82}
W.~Ballmann and W.~Ziller.
\newblock On the number of closed geodesics on a compact {R}iemannian manifold.
\newblock {\em Duke Math. J.}, 49(3):629--632, 1982.

\bibitem{Benci86}
V.~Benci.
\newblock Periodic solutions of {L}agrangian systems on a compact manifold.
\newblock {\em J. Differential Equations}, 63(2):135--161, 1986.

\bibitem{Be03}
Marcel Berger.
\newblock {\em A panoramic view of {R}iemannian geometry}.
\newblock Springer-Verlag, Berlin, 2003.

\bibitem{BPS03}
P.~Biran, L.~Polterovich, and D.~Salamon.
\newblock Propagation in {H}amiltonian dynamics and relative symplectic
  homology.
\newblock {\em Duke Math. J.}, 119(1):65--118, 2003.

\bibitem{BO09}
F.~Bourgeois and A.~Oancea.
\newblock Symplectic homology, autonomous {H}amiltonians, and {M}orse-{B}ott
  moduli spaces.
\newblock {\em Duke Math. J.}, 146(1):71--174, 2009.

\bibitem{CFH95}
K.~Cieliebak, A.~Floer, and H.~Hofer.
\newblock Symplectic homology. {II}. {A} general construction.
\newblock {\em Math. Z.}, 218(1):103--122, 1995.

\bibitem{CFHW96}
K.~Cieliebak, A.~Floer, H.~Hofer, and K.~Wysocki.
\newblock Applications of symplectic homology. {II}. {S}tability of the action
  spectrum.
\newblock {\em Math. Z.}, 223(1):27--45, 1996.

\bibitem{CH08}
V.~Colin and Ko~Honda.
\newblock Reeb vector fields and open book decompositions.
\newblock {\em arXiv:0809.5088}, 2008.

\bibitem{CZ84}
C.~Conley and E.~Zehnder.
\newblock Morse-type index theory for flows and periodic solutions for
  {H}amiltonian equations.
\newblock {\em Comm. Pure Appl. Math.}, 37(2):207--253, 1984.

\bibitem{EG91}
Y.~Eliashberg and M.~Gromov.
\newblock Convex symplectic manifolds.
\newblock In {\em Several complex variables and complex geometry, {P}art 2
  ({S}anta {C}ruz, {CA}, 1989)}, volume~52 of {\em Proc. Sympos. Pure Math.},
  pages 135--162. Amer. Math. Soc., Providence, RI, 1991.

\bibitem{FOT08}
Y.~F{\'e}lix, J.~Oprea, and D.~Tanr{\'e}.
\newblock {\em Algebraic models in geometry}, volume~17 of {\em Oxford Graduate
  Texts in Mathematics}.
\newblock Oxford University Press, Oxford, 2008.

\bibitem{Flo881}
A.~Floer.
\newblock A relative {M}orse index for the symplectic action.
\newblock {\em Comm. Pure Appl. Math.}, 41(4):393--407, 1988.

\bibitem{Flo882}
A.~Floer.
\newblock The unregularized gradient flow of the symplectic action.
\newblock {\em Comm. Pure Appl. Math.}, 41(6):775--813, 1988.

\bibitem{Flo889}
A.~Floer.
\newblock Witten's complex and infinite-dimensional {M}orse theory.
\newblock {\em J. Differential Geom.}, 30(1):207--221, 1989.

\bibitem{FHS95}
A.~Floer, H.~Hofer, and D.~Salamon.
\newblock Transversality in elliptic {M}orse theory for the symplectic action.
\newblock {\em Duke Math. J.}, 80(1):251--292, 1995.

\bibitem{Frauen04}
U.~Frauenfelder.
\newblock The {A}rnold-{G}ivental conjecture and moment {F}loer homology.
\newblock {\em Int. Math. Res. Not.}, (42):2179--2269, 2004.

\bibitem{FS05}
U.~Frauenfelder and F.~Schlenk.
\newblock Volume growth in the component of the {D}ehn-{S}eidel twist.
\newblock {\em Geom. Funct. Anal.}, 15(4):809--838, 2005.

\bibitem{FS06}
U.~Frauenfelder and F.~Schlenk.
\newblock Fiberwise volume growth via {L}agrangian intersections.
\newblock {\em J. Symplectic Geom.}, 4(2):117--148, 2006.

\bibitem{FS07}
U.~Frauenfelder and F.~Schlenk.
\newblock Hamiltonian dynamics on convex symplectic manifolds.
\newblock {\em Israel J. Math.}, 159:1--56, 2007.

\bibitem{GT83}
D.~Gilbarg and N.~S. Trudinger.
\newblock {\em Elliptic partial differential equations of second order}, volume
  224 of {\em Grundlehren der Mathematischen Wissenschaften [Fundamental
  Principles of Mathematical Sciences]}.
\newblock Springer-Verlag, Berlin, second edition, 1983.

\bibitem{GM169}
D.~Gromoll and W.~Meyer.
\newblock On differentiable functions with isolated critical points.
\newblock {\em Topology}, 8:361--369, 1969.

\bibitem{Gr78}
M.~Gromov.
\newblock Homotopical effects of dilatation.
\newblock {\em J. Differential Geom.}, 13(3):303--310, 1978.

\bibitem{Gr07}
M.~Gromov.
\newblock {\em Metric structures for {R}iemannian and non-{R}iemannian spaces}.
\newblock Modern Birkh\"auser Classics. Birkh\"auser Boston Inc., Boston, MA,
  english edition, 2007.
\newblock Based on the 1981 French original, With appendices by M. Katz, P.
  Pansu and S. Semmes, Translated from the French by Sean Michael Bates.

\bibitem{HV88}
H.~Hofer and C.~Viterbo.
\newblock The {W}einstein conjecture in cotangent bundles and related results.
\newblock {\em Ann. Scuola Norm. Sup. Pisa Cl. Sci. (4)}, 15(3):411--445
  (1989), 1988.

\bibitem{Kli78}
W.~Klingenberg.
\newblock {\em Lectures on closed geodesics}.
\newblock Springer-Verlag, Berlin, 1978.
\newblock Grundlehren der Mathematischen Wissenschaften, Vol. 230.

\bibitem{Kli95}
W.~P.~A. Klingenberg.
\newblock {\em Riemannian geometry}, volume~1 of {\em de Gruyter Studies in
  Mathematics}.
\newblock Walter de Gruyter \& Co., Berlin, second edition, 1995.

\bibitem{MS10}
Macarini L. and Schlenk F.
\newblock Positive topological entropy of reeb flows on spherizations.
\newblock {\em Mathematical Proceedings of the Cambridge Philosophical Society,
  Available on CJO 2006 doi:10.1017/S0305004111000119}.

\bibitem{MMP11}
Merry~W. Macarini~L. and Paternain G.
\newblock On the growth rate of leaf-wise intersections.
\newblock {\em arXiv:1101.4812v1}.

\bibitem{Ma69}
G.~A. Margulis.
\newblock Certain applications of ergodic theory to the investigation of
  manifolds of negative curvature.
\newblock {\em Funkcional. Anal. i Prilo\v zen.}, 3(4):89--90, 1969.

\bibitem{Mi63}
J.~Milnor.
\newblock {\em Morse theory}.
\newblock Based on lecture notes by M. Spivak and R. Wells. Annals of
  Mathematics Studies, No. 51. Princeton University Press, Princeton, N.J.,
  1963.

\bibitem{Pa97}
G.~Paternain.
\newblock Topological entropy for geodesic flows on fibre bundles over
  rationally hyperbolic manifolds.
\newblock {\em Proc. Amer. Math. Soc.}, 125(9):2759--2765, 1997.

\bibitem{Pa99}
G.~Paternain.
\newblock {\em Geodesic flows}, volume 180 of {\em Progress in Mathematics}.
\newblock Birkh\"auser Boston Inc., Boston, MA, 1999.

\bibitem{PP06}
Gabriel~P. Paternain and Jimmy Petean.
\newblock Zero entropy and bounded topology.
\newblock {\em Comment. Math. Helv.}, 81(2):287--304, 2006.

\bibitem{Rab78}
P.~H. Rabinowitz.
\newblock Periodic solutions of {H}amiltonian systems.
\newblock {\em Comm. Pure Appl. Math.}, 31(2):157--184, 1978.

\bibitem{RS93}
J.~Robbin and D.~Salamon.
\newblock The {M}aslov index for paths.
\newblock {\em Topology}, 32(4):827--844, 1993.

\bibitem{Sal90}
D.~Salamon.
\newblock Morse theory, the {C}onley index and {F}loer homology.
\newblock {\em Bull. London Math. Soc.}, 22(2):113--140, 1990.

\bibitem{SZ92}
D.~Salamon and E.~Zehnder.
\newblock Morse theory for periodic solutions of {H}amiltonian systems and the
  {M}aslov index.
\newblock {\em Comm. Pure Appl. Math.}, 45(10):1303--1360, 1992.

\bibitem{Schw93}
M.~Schwarz.
\newblock {\em Morse homology}, volume 111 of {\em Progress in Mathematics}.
\newblock Birkh\"auser Verlag, Basel, 1993.

\bibitem{Schw00}
M.~Schwarz.
\newblock On the action spectrum for closed symplectically aspherical
  manifolds.
\newblock {\em Pacific J. Math.}, 193(2):419--461, 2000.

\bibitem{Sei08}
P.~Seidel.
\newblock A biased view of symplectic cohomology.
\newblock In {\em Current developments in mathematics, 2006}, pages 211--253.
  Int. Press, Somerville, MA, 2008.

\bibitem{Su75}
D.~Sullivan.
\newblock Differential forms and the topology of manifolds.
\newblock In {\em Manifolds---{T}okyo 1973 ({P}roc. {I}nternat. {C}onf.,
  {T}okyo, 1973)}, pages 37--49. Univ. Tokyo Press, Tokyo, 1975.

\bibitem{Sv60}
A.~S. {\v{S}}varc.
\newblock The homologies of spaces of closed curves.
\newblock {\em Trudy Moskov. Mat. Ob\v s\v c.}, 9:3--44, 1960.

\bibitem{Vit92}
C.~Viterbo.
\newblock Symplectic topology as the geometry of generating functions.
\newblock {\em Math. Ann.}, 292(4):685--710, 1992.

\bibitem{Vit99}
C.~Viterbo.
\newblock Functors and computations in {F}loer homology with applications. {I}.
\newblock {\em Geom. Funct. Anal.}, 9(5):985--1033, 1999.

\bibitem{Wein78}
A.~Weinstein.
\newblock Periodic orbits for convex {H}amiltonian systems.
\newblock {\em Ann. of Math. (2)}, 108(3):507--518, 1978.

\end{thebibliography}

\end{document}